\newcommand{\spa}{\medskip}
\newcommand{\mr}[1]{\mathrm{#1}}
\newcommand{\m}{\textrm{-}}
\newcommand{\stack}[1]{\cite[\href{https://stacks.math.columbia.edu/tag/#1}{Tag #1}]{Stacks}}
\newcommand{\Q}{\mathbb{Q}}
\newcommand{\F}{\mathbb{F}}
\newcommand{\Z}{\mathbb{Z}}
\newcommand{\N}{\mathbb{N}}
\newcommand{\Zp}{\mathbb{Z}_p}
\newcommand{\Qp}{\mathbb{Q}_p}
\newcommand{\Fp}{\F_p}
\newcommand{\PP}{\mathbb{P}}
\newcommand{\Spec}{\mr{Spec}}
\newcommand{\Spf}{\mr{Spf}}
\newcommand{\Tr}{\mr{Tr}}
\newcommand{\id}{\mr{id}}
\newcommand{\dR}{\mathrm{dR}}
\newcommand{\cris}{\mathrm{cris}}
\newcommand{\et}{{\mathrm{\acute{e}t}}}
\newcommand{\proet}{{\mathrm{pro\acute{e}t}}}
\newcommand{\syn}{{\mr{syn}}}
\newcommand{\tors}{\mathrm{tors}}
\newcommand{\calE}{\mathcal{E}}
\newcommand{\calH}{\mathcal{H}}
\newcommand{\calO}{\mathcal{O}}
\newcommand{\calP}{\mathcal{P}}
\newcommand{\calM}{\mathcal{M}}
\newcommand{\calN}{\mathcal{N}}
\newcommand{\calV}{\mathcal{V}}
\def\bbA{{\mathbb A}}
\def\bbD{{\mathbb D}}
\def\bbE{{\mathbb E}}
\def\bbN{{\mathbb N}}
\def\bbQ{{\mathbb Q}}
\def\bbZ{{\mathbb Z}}
\newcommand{\cA}{\mathcal{A}}
\newcommand{\cB}{\mathcal{B}}
\newcommand{\cC}{\mathcal{C}}
\newcommand{\cD}{\mathcal{D}}
\newcommand{\cE}{\mathcal{E}}
\newcommand{\cF}{\mathcal{F}}
\newcommand{\cG}{\mathcal{G}}
\newcommand{\cH}{\mathcal{H}}
\newcommand{\cK}{\mathcal{K}}
\newcommand{\cM}{\mathcal{M}}
\newcommand{\cN}{\mathcal{N}}
\newcommand{\cO}{\mathcal{O}}
\newcommand{\cU}{\mathcal{U}}
\newcommand{\cV}{\mathcal{V}}
\newcommand{\cW}{\mathcal{W}}
\newcommand{\fA}{\mathfrak{A}}
\newcommand{\fD}{\mathfrak{D}}
\newcommand{\fP}{\mathfrak{P}}
\newcommand{\fS}{\mathfrak{S}}
\newcommand{\fT}{\mathfrak{T}}
\newcommand{\fU}{\mathfrak{U}}
\newcommand{\fX}{\mathfrak{X}}
\newcommand{\fY}{\mathfrak{Y}}
\newcommand{\fa}{\mathfrak{a}}
\newcommand{\TE}{\mathbb{T}_E}
\def\rmb{{\mathrm b}}
\def\rmH{{\mathrm H}}
\DeclareMathOperator{\coh}{coh}
\DeclareMathOperator{\Coker}{Coker}
\DeclareMathOperator{\Cocone}{Cocone}
\DeclareMathOperator{\conv}{conv}
\DeclareMathOperator{\Ext}{Ext}
\DeclareMathOperator{\h}{h} 
\DeclareMathOperator{\Hom}{Hom}
\DeclareMathOperator{\im}{Im}
\DeclareMathOperator{\Isoc}{Isoc}
\DeclareMathOperator{\Ker}{Ker}
\DeclareMathOperator{\loc}{loc}
\DeclareMathOperator{\MW}{MW}
\DeclareMathOperator{\rig}{rig}
\DeclareMathOperator{\sep}{sep}
\newcommand{\riso}{ \overset{\sim}{\longrightarrow}\, }
\newcommand{\hdag}{  \phantom{}{^{\dag} }    }
\newcommand\underrvec[1]{{\underaccent{\rightarrow}{#1}}}
\begin{document}
	
	\newtheorem{theo}[subsubsection]{Theorem}
	\newtheorem*{theo*}{Theorem}
	\newtheorem{ques}[subsubsection]{Question}
	\newtheorem*{ques*}{Question}
	\newtheorem{conj}[subsubsection]{Conjecture}
	\newtheorem{prop}[subsubsection]{Proposition}
	\newtheorem{lemm}[subsubsection]{Lemma}
	\newtheorem*{lemm*}{Lemma}
	\newtheorem{coro}[subsubsection]{Corollary}
	\newtheorem*{coro*}{Corollary}
	
	\theoremstyle{definition}
	\newtheorem{defi}[subsubsection]{Definition}
        \newtheorem{empt}[subsubsection]{}
	\newtheorem*{defi*}{Definition}
	\newtheorem{hypo}[subsubsection]{Hypothesis}
	\newtheorem{rema}[subsubsection]{Remark}
    \newtheorem*{rema*}{Remark}

	\newtheorem{warn}[subsubsection]{Warning}
	\newtheorem{exam}[subsubsection]{Example}
	\newtheorem{nota}[subsubsection]{Notation}
	\newtheorem{cons}[subsubsection]{Construction}
	
	\numberwithin{equation}{section}
	\title{Injectivity failure in crystalline comparisons}
	\date{\today}
	\makeatletter
	\@namedef{subjclassname@2020}{%
		\textup{2020} Mathematics Subject Classification}
	\makeatother
	
	\subjclass[2020]{14F10, 14F30, 14F40}
	
	\keywords{De Rham cohomology, crystalline cohomology, rigid cohomology, $p$-adic Tate twist.}
	
	\author{Daniel Caro}
	\address{University of Caen Normandie, LMNO, 14000 Caen,France}
         \email{daniel.caro@unicaen.fr}
         
         \author{Marco D'Addezio}
	\address{Institut de Recherche Math\'ematique Avanc\'ee (IRMA), Universit\'e de Strasbourg, 7 rue Ren\'e-Descartes, 67000 Strasbourg, France}
	\email{daddezio@unistra.fr}

	\begin{abstract}For smooth affine varieties in positive characteristic, we identify a slope obstruction to the injectivity of the comparison morphism from rigid cohomology to rationalised crystalline cohomology. This yields a negative answer to a question of Esnault--Kisin--Petrov concerning the injectivity of the de Rham-to-crystalline comparison map for smooth affine schemes over the Witt vectors that admit good compactifications. In contrast, we establish injectivity for certain subspaces defined by slope conditions as well as in cohomological degree one. For the latter case, we also prove the result with coefficients in $F$-able overholonomic $D$-modules leveraging a generalisation of Kedlaya's full faithfulness theorem. Beyond injectivity, we obtain various separation results for the affinoid topology on rigid and convergent cohomology. These results allow us to determine integral algebraic de Rham cohomology modulo torsion and to provide a more conceptual explanation for Ertl and Shiho's construction of varieties for which integral Monsky--Washnitzer cohomology modulo torsion is not finitely generated. Along the way, we prove a new integral comparison theorem between Monsky--Washnitzer cohomology and algebraic de Rham cohomology and we define fractional $p$-adic Tate twists, computing non-integral slopes of crystalline cohomology.
	\end{abstract}
	
	\maketitle
	\tableofcontents

	\section{Introduction}
\subsection{Injectivity results}

Let $W$ be the ring of Witt vectors of a perfect field $k$ of positive characteristic $p$ and let $K$ be the fraction field of $W$. For a smooth scheme over $W$, there is a natural comparison morphism \begin{equation}\label{comparison_map}
H^i_{\dR}(X/W)\to H^i_\cris(X_k/W),
\end{equation} from algebraic de Rham cohomology of $X$ to crystalline cohomology of the special fibre of $X$. A cornerstone of $p$-adic Hodge theory is the fact that this map is an isomorphism when $X$ is proper. Esnault--Kisin--Petrov, in their investigation of a certain vanishing conjecture for de Rham cohomology in characteristic $0$ (see \S\ref{ss_generalisedHodgeconjecture}), asked the following question.

\begin{ques}[{\cite[Prob. 5]{Esn25}}]\label{ques_EKP}
    Let $X$ be a smooth scheme over $W$ of the form $Y\setminus D$, where $Y$ is smooth and proper over $W$ and $D$ is a relative normal crossing divisor over $W$. Is the comparison morphism $$H^i_{\dR}(X/W)\to H^i_\cris(X_k/W)$$ injective for $i\geq 0$? If not, does it become injective after inverting $p$?
\end{ques}

In this direction, they observed that \eqref{comparison_map} is injective when $X$ is the complement of a smooth proper divisor of a smooth proper curve and for the complement of the diagonal of $\PP^1_W\times_W \PP^1_W$. Our article gives a complete negative answer to Question \ref{ques_EKP}. We prove the following result.

\begin{theo}[Theorem \ref{thm_counterexampleGHC}]\label{thm_counterexampleGHCintro}
    Let $Y$ be a smooth proper scheme over $W$. For every affine open $X\subseteq Y$, the image of the composition $$H^i_\dR(Y_K/K)\to H^i_\dR(X_K/K)\to H^i_\mr{cris}(X_k/W)_K$$ has dimension at most $2h^{i,0}(Y_K)+h^{i-1,1}(Y_K).$
\end{theo}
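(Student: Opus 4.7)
The plan is to reduce the assertion to a slope-theoretic bound on a subquotient of a well-understood $F$-isocrystal, by invoking the paper's main technical input on the rigid-to-crystalline comparison map, and then to appeal to Mazur's classical Newton-above-Hodge inequality.

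First, I would use the crystalline comparison for the smooth proper scheme $Y$ to identify $H^i_\dR(Y_K/K)$ with $H^i_\cris(Y_k/W)_K$, and the standard comparison between convergent/crystalline and rigid cohomology (which is an isomorphism in the smooth proper case) to identify the latter with $H^i_\rig(Y_k/K)$. Combined with the functoriality of these isomorphisms with respect to the open immersion $X \hookrightarrow Y$, this rewrites the composition of the statement as
\[
H^i_\rig(Y_k/K) \;\longrightarrow\; H^i_\rig(X_k/K) \;\longrightarrow\; H^i_\cris(X_k/W)_K,
\]
where the first arrow is restriction of rigid cohomology and the second is the comparison morphism studied in the paper.

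Second, I would invoke the paper's main slope-obstruction result for the smooth affine $X_k$: the kernel of the second arrow contains every class whose Frobenius slopes lie strictly in the interior $(0, i-1)$ of the allowed range. Composing with the restriction, I obtain that the image of $H^i_\rig(Y_k/K)$ in $H^i_\cris(X_k/W)_K$ is a subquotient of the finite-dimensional $F$-isocrystal $H^i_\cris(Y_k/W)_K$ whose Frobenius slopes are confined to the boundary set $\{0\} \cup [i-1, i]$.

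Third, since $Y$ is smooth and proper over $W$, I would apply Mazur's Newton-above-Hodge inequality to $H^i_\cris(Y_k/W)_K$ to bound the two surviving slope pieces. The slope-$0$ part has dimension at most $h^{0,i}(Y_K)$, which equals $h^{i,0}(Y_K)$ by Hodge symmetry in characteristic zero; the slope-$\geq i-1$ part has dimension at most $h^{i,0}(Y_K) + h^{i-1,1}(Y_K)$. Adding these yields the claimed bound $2h^{i,0}(Y_K) + h^{i-1,1}(Y_K)$.

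The main obstacle is the slope-obstruction step: everything else is an application either of the standard comparison isomorphisms for smooth proper schemes or of Mazur's inequality. The slope obstruction itself is the technical heart of the paper and is expected to rely on the authors' construction of fractional $p$-adic Tate twists and a fine analysis of the weight/slope structure on the rigid cohomology of smooth affine varieties, together with a careful comparison with the convergent (i.e., rationalised crystalline) theory.
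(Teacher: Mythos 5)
Your overall route (identify the composition with $H^i_\mr{rig}(Y_k/K)\to H^i_\mr{rig}(X_k/K)\to H^i_\mr{conv}(X_k/K)$, kill slopes via the main slope obstruction, then bound what survives by a Newton-above-Hodge argument) is exactly the paper's, but two of your intermediate claims are wrong, and the errors do not cancel. First, you understate the slope obstruction: Theorem \ref{thm_counterexample} with trivial coefficients ($\alpha=\beta=0$) kills $H^i_\mr{rig}(X_k/K)^{[\gamma]}$ for \emph{every} $\gamma\notin[i-1,i]$, in particular also the slope-$0$ part as soon as $i\geq 2$ (this is precisely the phenomenon behind Corollary \ref{countex}). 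Your weaker statement, that only slopes in the open interval $(0,i-1)$ die, forces you to carry a separate slope-$0$ contribution of size $h^{0,i}=h^{i,0}$ in the image, which is not needed and, as explained next, cannot be compensated. Second, your bound on the slope-$\geq i-1$ part is false: Newton-above-Hodge with matching endpoints does \emph{not} give $\dim H^i_\cris(Y_k/W)_K^{[i-1,i]}\leq h^{i,0}+h^{i-1,1}$. A supersingular K3 surface ($i=2$) has all $22$ Newton slopes equal to $1$, so the slope-$\geq 1$ part has dimension $22$, while $h^{2,0}+h^{1,1}=21$. The correct consequence of the polygon inequality is the paper's estimate: if $d$ is the number of Newton slopes in $[i-1,i]$, comparing the rises of the last $d$ segments gives $(i-1)d\leq ih^{i,0}+(i-1)h^{i-1,1}+(i-2)(d-h^{i,0}-h^{i-1,1})$, i.e. $d\leq 2h^{i,0}+h^{i-1,1}$. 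With this corrected bound, your accounting would produce $h^{i,0}+(2h^{i,0}+h^{i-1,1})=3h^{i,0}+h^{i-1,1}$, which misses the statement; the theorem's bound is only reached by using the full strength of the obstruction (image supported in slopes $[i-1,i]$ alone) together with the correct polygon estimate.

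Two smaller points. The polygon comparison you need is between the Newton polygon of the \emph{special} fibre and the Hodge polygon of the \emph{generic} fibre (the Hodge numbers in the statement are those of $Y_K$); this is Faltings' result \cite{Fal88} (see also \cite[Thm. 2]{Yao23}), not Mazur's theorem, which concerns the Hodge numbers of the special fibre and requires torsion-freeness hypotheses not assumed here. Finally, your step identifying the de Rham and rigid pictures is fine in substance, but note that only the comparison for the proper scheme $Y$ (Proposition \ref{prop_dRC-RC} with empty divisor) plus functoriality of the maps to $H^i_\mr{conv}(X_k/K)$ is needed, since $X\subseteq Y$ is an arbitrary affine open and need not admit a normal crossings compactification.
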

If we take  $Y$ to be a smooth cubic fourfold and $X$ to be the complement of a smooth hyperplane section, then $H^4_\dR(Y_K/K)$ has the following Hodge numbers $$h^{4,0}=0,h^{3,1}=1,h^{2,2}=21.$$ By Theorem \ref{thm_counterexampleGHCintro}, the kernel of $H^4_\dR(Y_K/K)\to H^4_\mr{cris}(X_k/W)_K$ is of dimension at least $22$. On the other hand, the kernel of $H^4_\dR(Y_K/K)\to H^4_\dR(X_K/K)$ is generated by the self-intersection of the hyperplane class. Thus $$\mr{Ker}(H^4_\dR(X_K/K)\to H^4_\mr{cris}(X_k/W)_K)$$ contains a subspace of weight $4$ of dimension at least $21.$ All the restrictions of the Chern classes of $Y_K$ are in this kernel and die even before inverting $p$ (Corollary \ref{cor_VanishingChernClasses}). Furthermore, if $Y_k$ is ordinary, we get an integral enhancement as a consequence of Theorem \ref{thm_Ordinary}.

\spa

Thanks to the comparison between rational de Rham cohomology of $X_K$ and rigid cohomology of $X_k$ and the Newton-above-Hodge inequalities, Theorem \ref{thm_counterexampleGHCintro} is a special case of the following more general result for rigid cohomology.

\begin{theo}[Theorem \ref{thm_counterexample}]\label{thm_counterexampleIntro}
Let $X_0$ be a smooth affine $k$-scheme, let $\gamma$ be a rational number, and let $\calM^\dagger$ be an overconvergent $F$-isocrystal over $X_0$ with constant slopes in the interval $[\alpha,\beta]$. Then, for $i\not \in [\gamma-\beta , \gamma -\alpha +1]$, the comparison morphism $$H^{i}_\mr{rig}(X_0,\calM^\dagger) \to H^{i}_{\mr{conv}}(X_0,\calM)$$ kills $H^{i}_\mr{rig}(X_0,\calM^\dagger)^{[\gamma]}$, the slope $\gamma$ subspace.

\end{theo}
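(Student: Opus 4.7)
Proof proposal:

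The plan is to reduce the statement, via Frobenius equivariance, to a slope bound on the image of the comparison morphism, and then to prove this bound by a compactification argument. Both cohomologies carry Frobenius actions and the comparison morphism is equivariant, hence it preserves the slope decomposition. The theorem is thus equivalent to showing that the image of $H^i_\mr{rig}(X_0, \calM^\dagger)$ in $H^i_\mr{conv}(X_0, \calM)$ has Frobenius slopes contained in $[\alpha+i-1, \beta+i]$, since the condition $\gamma \in [\alpha+i-1,\beta+i]$ is equivalent to $i \in [\gamma-\beta,\gamma-\alpha+1]$.

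The upper bound $\gamma \le \beta+i$ is a classical Mazur-type inequality: because $\calM^\dagger$ has slopes at most $\beta$ and Frobenius on the de Rham complex is divisible by an extra factor of $p$ per differential-form degree, the slopes on $H^i_\mr{rig}$ (and hence on its image) are bounded by $\beta+i$. This applies to all of $H^i_\mr{rig}(X_0,\calM^\dagger)$, not merely to the image.

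For the lower bound $\gamma \ge \alpha+i-1$, which is the substantive content, I would choose a smooth compactification $X_0 \hookrightarrow Y_0$ with complement $D_0 := Y_0 \setminus X_0$ a strict normal crossings divisor, a smooth formal lift $\mathfrak{Y}$ of $Y_0$ over $W$, and extend $\calM^\dagger$ to a log $F$-isocrystal $\overline{\calM}$ on $(Y_0, D_0)$. The comparison morphism is then realized as the restriction of forms from a strict neighborhood of the tube $]X_0[$ in $\mathfrak{Y}^{\rig}$ to $]X_0[$ itself. Its kernel consists of overconvergent classes admitting convergent but not overconvergent primitives, i.e., primitives whose growth near the tube of $D_0$ is too wild to spread into any strict neighborhood. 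The plan is to analyze this kernel via a filtration by order of pole along $D_0$; each graded piece is controlled by (log) convergent cohomology of a stratum $D_0^{(k)}$ of codimension $k$, twisted by $-k$ Tate. Since each $D_0^{(k)}$ is smooth and proper, Mazur bounds the slopes of its convergent cohomology from below by $\alpha$, and the $-k$ twist shifts these bounds up by $k$.

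The main obstacle is the sharpness of the lower bound $\alpha+i-1$: the naive Mazur-plus-twist analysis per stratum only produces $\alpha$. One must additionally show that the slope-$\gamma$ contributions with $\gamma<\alpha+i-1$, which a priori come from $H^i_\mr{cris}(Y_0,\overline{\calM})$ via the bulk-restriction map, actually vanish in the image in $H^i_\mr{conv}(X_0,\calM)$. This sharp disappearance of the ``interior'' cohomology of the compactification $Y_0$ upon passage to convergent cohomology of the open $X_0$ is the heart of the slope obstruction advertised in the abstract, and likely requires an explicit construction of Frobenius-equivariant convergent primitives on $]X_0[$ for pulled-back forms from $Y_0$ of Frobenius slope below $\alpha+i-1$.
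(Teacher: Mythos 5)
Your reduction is sound up to a point: the comparison map is Frobenius-equivariant, rigid cohomology with overconvergent $F$-isocrystal coefficients is finite-dimensional, so by Dieudonn\'e--Manin the statement is indeed equivalent to a slope constraint, and the case $\gamma>\beta+i$ is handled by the Weil II bounds on the source. But for the substantive case $\gamma<\alpha+i-1$ your proposal has a genuine gap, which you yourself flag: the stratification of the boundary by pole order cannot produce the sharp bound, and the ``explicit construction of Frobenius-equivariant convergent primitives'' that you say is ``likely required'' is precisely the missing proof. Worse, the filtration by order of pole along $D_0$ with graded pieces given by Tate twists of cohomology of strata computes \emph{rigid} cohomology (meromorphic/overconvergent forms along $D_0$), not convergent cohomology of the open; the kernel of the comparison map consists exactly of classes whose primitives live in the full $p$-adic completion but in no strict neighbourhood, and this phenomenon is invisible to any finite-pole-order filtration. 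So the proposed mechanism attacks the wrong object.

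The missing idea is that one should prove the much stronger statement that the slope-$\gamma$ part of the \emph{target} vanishes: $H^i_{\mathrm{conv}}(X_0,\calM)^{[\gamma]}=0$ for $i\notin[\gamma-\beta,\gamma-\alpha+1]$ (Corollary \ref{thm_counterexample-lem}); equivariance then kills the slope-$\gamma$ part of the source for free, with no analysis of the kernel or of any compactification. Since $X_0$ is affine, convergent cohomology is computed by the $p$-adically completed de Rham complex of a formal lift with a Frobenius lift. After twisting by $\Qp(u,v)$ to reduce to unit-root coefficients and a d\'evissage along the slope filtration, write $\gamma=r/s$; for $j\geq i-1$ the divided Frobenius $F^s/p^r$ preserves the $p$-complete, $p$-torsion-free module of $j$-forms and lands in $p$ times it (this is where $\gamma<i-1$ enters), so $F^s/p^r-1$ is invertible there via the geometric series $\sum_n(F^s/p^r)^n$ --- this \emph{is} the convergent primitive you were looking for. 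Hence $H^i(\mathrm{dR}^\bullet)^{F^s=p^r}=0$, and Dieudonn\'e--Manin upgrades this to the vanishing of the whole slope-$\gamma$ subspace. The complementary range uses the $F$-gauge (Nygaard) structure and Mazur's theorem rather than any boundary geometry.
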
 Plugging $\calM^\dagger\coloneqq\calO_\mr{rig}$ into Theorem \ref{thm_counterexampleIntro}, we get that the comparison morphism $$H^i_\mr{rig}(X_0/K)\to H^i_\mr{conv}(X_0/K)=H^i_\cris(X_0/W)_K$$ kills $H^i_\mr{rig}(X_0/K)^{[\gamma]}$ when $i\notin [\gamma,\gamma+1].$ By taking the previous example of a smooth cubic fourfold, this shows that the rigid-to-convergent comparison morphism fails to be injective because of a slope obstruction\footnote{It seems that the question whether the comparison map from rigid to convergent cohomology is injective is a folklore question we were not able to locate in the literature. The second named author asked himself the question during his work on edged crystalline cohomology \cite{EdgedCrystalline}, and we later learned that Bhatt was also asking this question for applications to Chern classes.}. The result is proved using the \textit{$F$-gauge structure} of crystalline cohomology (see \S\ref{sec_FGauge}). We obtain Theorem \ref{thm_counterexampleIntro} by analysing $p$-adic convergence properties of the powers of the divided Frobenius morphisms. The fundamental technical result is Proposition \ref{Prop_RationalBounds}, which is an extension of \cite[Prop. 3.1.4]{DK17}.

\spa
Furthermore, by analysing this injectivity failure in the context of arithmetic $D$-modules, and thanks to Huyghe's theorems of types A and B (Theorem \ref{Gamma5.3.3-Huyghe}), we produce noncommutative geometric counterexamples to the following well-known fact of homological algebra (Example \ref{CounterexampleExt}).

\begin{prop}\label{prop-ffExtIntro}
Let $A \to B$ be a faithfully flat morphism of coherent commutative rings, let $M$ be a coherent $A$-module, and let $N$ be an $A$-module. Then the canonical map 
	$$\operatorname{Ext}^i_A(M, N) \hookrightarrow \operatorname{Ext}^i_B(B\otimes_A M, B \otimes_A N)$$ is injective for $i\geq 0$.
\end{prop}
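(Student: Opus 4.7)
The plan is to reduce the statement to the standard fact that a faithfully flat map is pure. Since $A$ is coherent and $M$ is a coherent $A$-module, $M$ admits a resolution $P_\bullet \to M$ by finitely generated free $A$-modules: iteratively choose surjections $A^{n_j} \twoheadrightarrow K_{j-1}$, where $K_{j-1}$ is the kernel at the previous step. Each such $K_{j-1}$ is again finitely generated, since it is a finitely generated submodule of a coherent module and hence coherent itself.

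With this resolution in hand, I compute both sides. By definition, $\operatorname{Ext}^i_A(M, N) = H^i(\Hom_A(P_\bullet, N))$. For the other side, flatness of $B$ over $A$ ensures that $B \otimes_A P_\bullet \to B \otimes_A M$ is again a resolution, by finitely generated free $B$-modules, so
$$\operatorname{Ext}^i_B(B\otimes_A M, B \otimes_A N) = H^i\bigl(\Hom_B(B \otimes_A P_\bullet, B \otimes_A N)\bigr).$$
The tensor-hom adjunction identifies $\Hom_B(B \otimes_A P_n, B \otimes_A N) \cong \Hom_A(P_n, B \otimes_A N)$, and because each $P_n$ is a finitely generated free $A$-module this is canonically $B \otimes_A \Hom_A(P_n, N)$. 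Using flatness of $B$ over $A$ a second time to move the tensor product outside cohomology, one obtains
$$\operatorname{Ext}^i_B(B\otimes_A M, B \otimes_A N) \cong B \otimes_A \operatorname{Ext}^i_A(M, N).$$

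A short diagram check verifies that under these identifications the morphism in the statement corresponds to the base change map $E \to B \otimes_A E$ for $E \coloneqq \operatorname{Ext}^i_A(M, N)$. Since $A \to B$ is faithfully flat, it is pure, so this map is injective for every $A$-module $E$; this gives the desired injectivity. The only mildly delicate point is the existence of a finitely generated free resolution of $M$, which is precisely what coherence of $A$ and $M$ guarantees; without this hypothesis one can take a projective resolution by non-finitely generated modules and the identification with a base change breaks down.
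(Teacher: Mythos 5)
Your proof is correct, and it is in fact more than the paper supplies for this particular statement: in the body of the paper (Proposition \ref{prop-ffExt}) the commutative case is dismissed as ``well-known'', and the written argument there is devoted to the other case of that proposition, namely $N$ flat over a possibly noncommutative coherent ring. Your route --- a resolution of $M$ by finitely generated free modules (guaranteed by coherence), the flat base change isomorphism $\operatorname{Ext}^i_B(B\otimes_A M, B\otimes_A N)\cong B\otimes_A\operatorname{Ext}^i_A(M,N)$, and then purity of the faithfully flat map $A\to B$ applied to $E=\operatorname{Ext}^i_A(M,N)$ --- is the standard one and handles arbitrary $N$. The paper's argument for the flat-$N$ case is structured differently: it first shows $\operatorname{Ext}^i_A(M,A)\hookrightarrow\operatorname{Ext}^i_A(M,A)\otimes_A B$ and only then tensors with $N$, precisely because in the noncommutative setting your intermediate identification $H^i(\operatorname{Hom}_A(P_\bullet, B\otimes_A N))\cong B\otimes_A H^i(\operatorname{Hom}_A(P_\bullet,N))$ is problematic (the differentials of $\operatorname{Hom}_A(P_\bullet,N)$, given by right multiplication by matrices over $A$, need not be left $A$-linear, so the complex cannot simply be base-changed along $A\to B$). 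So your argument buys generality in $N$ at the cost of commutativity, which is exactly the trade-off recorded in the paper's hypotheses; for the statement as posed, it is a complete proof.
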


On the positive side, we prove the injectivity of the comparison morphism for certain subspaces determined by slope conditions.

\begin{theo}[Theorem \ref{thm_InjectivitySlopei} and Theorem \ref{thm_InjectivitySlopeibis}]\label{thm_InjectivitySlopeiIntro}If $X_0$ is a quasi-compact smooth $k$-scheme, the restriction of the comparison morphism $$H^r_\mr{rig}(X_0/K)^{(r-1,r]}\to H^r_\mr{conv}(X_0/K)$$ is injective for $r\geq 0$. Moreover, for slope $r$ we get the isomorphism $$H^r_\mr{rig}(X_0/K)^{[r]}\riso H^r_\mr{conv}(X_0/K)^{[r]}.$$
\end{theo}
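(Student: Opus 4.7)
The strategy is to deploy the $F$-gauge formalism of Section~\ref{sec_FGauge} and the $p$-adic bounds of Proposition~\ref{Prop_RationalBounds} in the converse direction to Theorem~\ref{thm_counterexampleIntro}: the same divided Frobenius contraction that kills slopes outside $[r-1,r]$ in convergent cohomology is harnessed, within the interval $(r-1,r]$, to upgrade convergent primitives to overconvergent ones via a Dwork-style Frobenius iteration.

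I would first reduce to $X_0 = \Spec A_0$ smooth affine, admitting a smooth weakly complete lift $A^\dagger$ over $W$ with Frobenius lift $F$; the comparison map is then induced by the inclusion $A^\dagger[\tfrac1p] \subset \widehat{A}[\tfrac1p]$ on de Rham complexes. The quasi-compact case follows by Mayer--Vietoris, using $F$-equivariance of both the comparison map and the slope decomposition.

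For injectivity, let $[\alpha] \in H^r_\mr{rig}(X_0/K)^{(r-1,r]}$ lie in the kernel; working on generalised eigenspaces, assume $F\alpha = u\alpha + d\gamma$ with $\gamma$ overconvergent and $v_p(u) = \lambda \in (r-1,r]$, and write $\alpha = d\beta$ with $\beta \in \Omega^{r-1}_{\widehat{A}}[\tfrac1p]$. The operator $\Phi(\beta) \coloneqq (F\beta - \gamma)/u$ is again a convergent primitive of $\alpha$, so iterating yields a sequence whose successive differences are governed by $F^n/u^n$. Proposition~\ref{Prop_RationalBounds} bounds $F^n/p^{rn}$ on the convergent completion with a controlled overconvergence defect; multiplying by the normalisation $p^{rn}/u^n$ of valuation $n(r-\lambda) \geq 0$ produces, when $\lambda > r-1$, a strict improvement in the overconvergence filtration at each step. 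Passing to the limit yields $\beta_\infty \in \Omega^{r-1}_{A^\dagger}[\tfrac1p]$ with $d\beta_\infty = \alpha$, hence $[\alpha] = 0$ in $H^r_\mr{rig}$. The boundary case $\lambda = r$, where $F/u$ is only norm-bounded rather than genuinely contracting, is handled separately by a unit-root fixed-point argument: $\Phi$ admits a unique $F$-equivariant fixed point, forced to be overconvergent by Proposition~\ref{Prop_RationalBounds}.

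For the second statement, injectivity on the slope-$r$ piece is immediate from the first. Surjectivity reduces to showing that every closed $\mu \in \Omega^r_{\widehat{A}}[\tfrac1p]$ representing a slope-$r$ class is cohomologous in $\widehat{A}$ to an overconvergent closed form. Writing $F\mu = u\mu + d\zeta$ with $v_p(u) = r$, the analogous iteration $\mu \mapsto (F\mu - d\zeta)/u$ now involves the unit-root operator $F/u$ and, by Proposition~\ref{Prop_RationalBounds}, converges to an overconvergent representative cohomologous to $\mu$. The principal technical obstacle throughout is translating the $p$-adic size gain from Proposition~\ref{Prop_RationalBounds} into genuine gain in the overconvergence filtration of $A^\dagger$; this is precisely where the strict inequality $\lambda > r-1$ enters, and without it (namely at $\lambda = r-1$) the iteration fails to converge in the weakly complete topology, consistent with the injectivity failure revealed by Theorem~\ref{thm_counterexampleIntro}.
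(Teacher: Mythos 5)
Your proposal has a genuine gap, and the paper takes a completely different route. The core problem is the claimed contraction. Your iteration replaces a convergent primitive $\beta\in\Omega^{r-1}_{\widehat A}[\tfrac1p]$ by $\Phi(\beta)=(F\beta-\gamma)/u$, and convergence of $\Phi^n(\beta)$ requires $(F/u)^n$ to be contracting on closed $(r-1)$-forms. But on $\Omega^{r-1}_{\widehat A}$ the Frobenius is divisible only by $p^{r-1}$, so $(F/u)^n$ gains valuation at the rate $n\bigl((r-1)-\lambda\bigr)$, which is $\le 0$ precisely when $\lambda\in(r-1,r]$: the operator is expanding, not contracting, in the degree where your iteration lives. (The divided Frobenius is topologically nilpotent on $\Omega^j$ only for $j$ strictly above the slope --- this is what drives Step (ii) of the proof of Proposition \ref{Prop_IntegralBounds} and the \emph{negative} results of the paper.) Even setting this aside, the step you flag as ``the principal technical obstacle'' --- converting $p$-adic smallness into a gain in the overconvergence filtration of $A^\dagger$ --- is not a technicality but the entire content of the theorem, and Proposition \ref{Prop_RationalBounds} contains no statement about overconvergence defects; in general Frobenius iteration degrades rather than improves radii of overconvergence, which is exactly why full-faithfulness-type statements in this subject are hard.

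The paper's proof is instead by descent and de Rham--Witt theory: one reduces via \'etale hypercoverings admitting good compactifications (with a delicate spectral-sequence induction on the convergent side) to the case $X_0=Y_0\setminus D_0$ with $Y_0$ projective and $D_0$ a strict normal crossing divisor; the slope-$r$ isomorphism then follows from identifying both $H^r_\mr{rig}(X_0/K)^{F-p^r}$ and $H^r_\mr{conv}(X_0/K)^{F-p^r}$ with $H^0$ of logarithmic de Rham--Witt sheaves, together with the equality $j_*W\Omega^r_{X_0,\mr{log}}=W\Omega^r_{Y_0^\sharp,\mr{log}}$ of Lemma \ref{lem_compatibilitylogOmega}; and injectivity on slopes in $(r-1,r)$ uses split injections coming from \'etale-local retractions onto the divisor (Corollary \ref{cor_InjectivityRetraction}), by induction on the number of irreducible components of $D_0$. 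If you want to salvage an iteration-style argument, it has to be run in degrees $\ge r$, where the divided Frobenius genuinely contracts, and at the level of integral Nygaard-filtered complexes --- which is essentially what the (logarithmic) syntomic complexes in the paper accomplish.
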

The slope conditions of Theorem \ref{thm_InjectivitySlopeiIntro} allow descent techniques that are not available for the entire cohomology groups. This is because the slope bounds of rigid cohomology prevent cancellations with lower degree cohomological classes.

\spa

In cohomological degree one, we can also prove a more general injectivity result with coefficients in overconvergent $F$-isocrystals. The key tool here is Kedlaya's full faithfulness theorem \cite{Ked04}. 

\begin{theo}[Theorem \ref{thm_RC}]\label{thm_RCIntro}
 If $X_0$ is a quasi-compact smooth $k$-scheme and $\calM^\dagger$ is an overconvergent $F$-isocrystal over $X_0/K$, then $$H^1_{\mr{rig}}(X_0,\calM^\dag)\to H^1_{\mr{conv}}(X_0,\calM) $$ is injective, where $\calM$ is the associated convergent $F$-isocrystal.
\end{theo}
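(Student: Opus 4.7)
The strategy is to reformulate the $H^1$ groups as $\Ext^1$ groups in the respective categories of isocrystals and to reduce injectivity to Kedlaya's full faithfulness theorem. Under the identifications $H^1_{\mr{rig}}(X_0, \calM^\dag) \cong \Ext^1_{\mr{Isoc}^\dag(X_0/K)}(\calO^\dag_{X_0/K}, \calM^\dag)$ and $H^1_{\mr{conv}}(X_0, \calM) \cong \Ext^1_{\mr{Isoc}(X_0/K)}(\calO_{X_0/K}, \calM)$, a class $[e]$ in the kernel of the comparison corresponds to an overconvergent extension $0 \to \calM^\dag \to \calE^\dag \to \calO^\dag \to 0$ whose restriction to convergent isocrystals splits, and we must show it already splits in $\mr{Isoc}^\dag(X_0/K)$.

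The natural Frobenius on $H^1_{\mr{rig}}(X_0, \calM^\dag)$ induced by the $F$-structure on $\calM^\dag$ commutes with restriction to convergent cohomology, so its kernel is Frobenius-stable. Using finite-dimensionality of rigid cohomology, the $F$-span of $[e]$ is a finite-dimensional $F$-stable subspace $V$ contained in the kernel, and the inclusion $V \hookrightarrow \Ext^1_{\mr{Isoc}^\dag}(\calO^\dag, \calM^\dag)$ packages into a universal extension $0 \to \calM^\dag \to \calE^\dag_V \to V \otimes_K \calO^\dag \to 0$ in $\mr{Isoc}^\dag(X_0/K)$. The $F$-stability of $V$ forces the universal class to be $F$-fixed, so $\calE^\dag_V$ inherits a canonical overconvergent $F$-isocrystal structure for which the sequence becomes a short exact sequence of overconvergent $F$-isocrystals.

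Kedlaya's full faithfulness theorem tells us that the restriction from overconvergent $F$-isocrystals to convergent $F$-isocrystals is fully faithful and exact, hence induces an injection on $\Ext^1$. The problem therefore reduces to showing that the restricted universal $F$-extension splits in the category of convergent $F$-isocrystals. The hypothesis only provides a splitting in plain convergent isocrystals, and the main technical step is upgrading this to a Frobenius-equivariant splitting: the failure-of-equivariance cocycle of any chosen convergent splitting lives in $V^\vee \otimes_K H^0_{\mr{conv}}(X_0, \calM)/(F-1)$, and using the isomorphism $H^0_{\mr{rig}} \cong H^0_{\mr{conv}}$ (another consequence of Kedlaya) together with a semilinear algebra argument on the $F$-module $V$, this obstruction can be cancelled. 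Once the $F$-equivariant splitting is produced, Kedlaya's theorem lifts it back to the overconvergent category, forcing $V = 0$ and in particular $[e] = 0$.
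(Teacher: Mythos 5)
Your strategy coincides with the paper's: the paper identifies $H^1_{\mathrm{rig}}(X_0,\calM^\dag)$ with $\mathrm{Ext}^1(\calO^\dag,\calM^\dag)$ in overconvergent isocrystals via [CLS99, Prop.~1.1.3] and then reduces everything to the statement that a non-trivial overconvergent extension of $\calO^\dag$ by $\calM^\dag$ stays non-trivial after completion, which it quotes from [DE20, Cor.~5.7]. What you wrote is essentially an unpacking of the proof of that corollary (a Frobenius-stable subspace $V$ of the kernel, the universal extension $\calE^\dag_V$ acquiring a Frobenius structure from the $F$-stability of $V$, then Kedlaya). Up to and including the construction of the $F$-structure on $\calE^\dag_V$, this is correct and is the intended argument.

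The final step is the one place your write-up is not sound as stated. The efficient way to conclude is to apply the \emph{non-equivariant} form of full faithfulness: for overconvergent $F$-isocrystals $\calA^\dag,\calB^\dag$ one has $\Hom_{\mathrm{Isoc}^\dag}(\calA^\dag,\calB^\dag)\riso\Hom_{\mathrm{Isoc}}(\calA,\calB)$ on \emph{all} morphisms, not only the $F$-equivariant ones (this is what [DE20] extracts from [Ked04] by a twisting argument on the finite-dimensional $F$-module $H^0$). Applied to the convergent splitting $s\colon V\otimes\calO\to\calE_V$, this lifts $s$ to an overconvergent morphism whose composite with the projection restricts to the identity and hence is the identity; so the universal extension splits overconvergently and $V=0$, with no need for an $F$-equivariant splitting. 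Your detour through $F$-equivariant splittings can be repaired, but not by ``a semilinear algebra argument on $V$'': the obstruction class lives in $\mathrm{coker}\bigl(F-1\mid V^\vee\otimes H^0_{\mathrm{conv}}(X_0,\calM)\bigr)$, and $F-1$ is not surjective in general (for $k=\F_p$ it is the zero map on $H^0=K$), so the cokernel need not vanish and Dieudonn\'e--Manin is unavailable without first reducing to $k$ algebraically closed. What does work is to use $H^0_{\mathrm{rig}}\cong H^0_{\mathrm{conv}}$ to modify the chosen Frobenius structure on $\calE^\dag_V$ itself --- these structures form a torsor under exactly this cokernel --- so that the convergent obstruction is cancelled; as phrased, your argument does not make this move explicit and reads instead as an attempt to prove surjectivity of $F-1$, which fails.
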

We then generalise Theorem \ref{thm_RCIntro} to $F$-able overholonomic $D$-modules in Corollary \ref{cor_Hol-ff}. The key step is Theorem \ref{Hol-ff}, which generalises Kedlaya's full faithfulness to this broader context. The proof relies on two fundamental facts: first, Kedlaya's semistable reduction theorem \cite{ked11} implies that overconvergent $F$-isocrystals are overholonomic \cite{CT12}; second, $F$-able overholonomic $D$-modules are \textit{devissable} into overconvergent $F$-isocrystals \cite{car06}. This devissability property allows us to reduce the problem to the case where Kedlaya's theorem applies.

\subsection{A glimpse into the obstruction}
It is well-known that crystalline cohomology has a flaw: it is ``too big'' for open varieties, as it yields infinite-dimensional vector spaces. Our results show that it is also ``too small''. This dual criticality confirms the need for rigid cohomology in the motivic setting (see also Corollary \ref{cor_VanishingChernClasses}). Let us further explain the injectivity obstruction in the simplest setting. 

\spa

If $X=\Spec(R)$ is smooth and affine over $W$ and is endowed with a Frobenius lift $F$, the crystalline cohomology of $X_k$ is nothing but the cohomology of the $p$-adically completed de Rham complex $$ \widehat{R}\to \Omega^1_{\widehat{R}/W}\to\Omega^2_{\widehat{R}/W}\to\dots$$  (see Corollary \ref{cor_quasi-iso}). The Frobenius lift $F$ induces an endomorphism on this complex, called the \textit{crystalline Frobenius}, which is unique up to homotopies. The naive truncation $\Omega^{\geq i}_{\widehat{R}/W}$ is further endowed with divided Frobenius endomorphisms $\varphi_r\coloneqq F/p^r$ for $r\leq i$. This follows from the fact that $F(\omega)$ is divisible by $p$ for every $\omega\in \Omega^1_{\widehat{R}/W}$. For the same reason, $\varphi_r$ acting on $\Omega^{\geq r+1}_{\widehat{R}/W}$ is topologically nilpotent at each degree. It follows that $$\varphi_r-1\colon \Omega^{i}_{\widehat{R}/W}\to \Omega^{i}_{\widehat{R}/W} $$ is an invertible operator for $i\geq r+1$. From this we deduce that $$H^i_\cris(X_k/W)^{F=p^r}=0$$ for $i\geq r+2.$. On the other hand, when $k$ is algebraically closed, the group $H^i_\dR(X_K/K)^{F=p^r}$ can be pretty big thanks to Dieudonné--Manin classification. For example, there can be non-trivial classes coming from the restriction $$H^i_\dR(Y_K/K)^{F=p^r}\to H^i_\dR(X_K/K)^{F=p^r}.$$ They can be studied using Gysin isomorphism. Note that $H^i_\dR(Y_K/K)^{F=p^r}$, in the ``good situations'', is a $\Qp$-vector space of dimension $h^{r,i-r}(Y_K)$.  Note also that if $r\geq 2$ and $i=2r$, all Chern classes in $H^{2r}_\dR(Y_K/K)=H^{2r}_\cris(Y_k/W)_K$ die in $H^{2r}_\cris(X_k/W)_K$. This happens even before inverting $p$ thanks to Corollary \ref{cor_VanishingChernClasses}.

\subsection{The generalised Hodge conjecture}
\label{ss_generalisedHodgeconjecture}

Grothendieck's generalised Hodge conjecture, proposed in \cite{Gro69}, has the following special case.

\begin{conj}(Grothendieck)\label{conj_specialGroth}
    Let $Y$ be a smooth projective complex variety of pure dimension $n$ such that $h^{i,0}=0$ for some $i\leq n$. Then there exists a dense open $U\subseteq Y$ such that the restriction map $$H^i_{\mr{Betti}}(Y,\Q)\to H^i_{\mr{Betti}}(U,\Q)$$ vanishes.
\end{conj}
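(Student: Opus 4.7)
The statement is Grothendieck's generalised Hodge conjecture in its coniveau-one special case; it is a famous open problem in pure Hodge theory and I cannot claim to resolve it. What I can do is describe the approach one would naturally pursue and the point at which it gets stuck.

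The plan is to reformulate the desired vanishing cohomologically. Setting $Z := Y\setminus U$, the long exact sequence of the pair $(Y,U)$ reads
\[
\cdots\to H^i_Z(Y,\Q)\to H^{i}_{\mr{Betti}}(Y,\Q)\to H^{i}_{\mr{Betti}}(U,\Q)\to\cdots,
\]
so the conjecture is equivalent to producing $Z\subset Y$ of positive codimension through which all of $H^{i}_{\mr{Betti}}(Y,\Q)$ factors from $H^i_Z(Y,\Q)$. After taking a smooth projective alteration $\tilde Z\to Z$ of codimension $c\geq 1$ and invoking the Gysin sequence, this reduces to the surjectivity of the Gysin map
\[
\Hom\nolimits_{\mr{Hdg}}\colon H^{i-2c}_{\mr{Betti}}(\tilde Z,\Q)(-c)\twoheadrightarrow H^{i}_{\mr{Betti}}(Y,\Q)
\]
for a suitable choice of $\tilde Z$. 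Next I would check compatibility with the hypothesis $h^{i,0}=0$: any class coming from such a Gysin map has Hodge type $(p,q)$ with $\min(p,q)\geq c\geq 1$, and the largest sub-Hodge structure of $H^{i}_{\mr{Betti}}(Y,\Q)$ with this property is the coniveau filtration step $N^1 H^i(Y,\Q)$; the vanishing of $h^{i,0}$ is exactly what makes $N^1 H^i(Y,\Q)=H^i(Y,\Q)$ possible Hodge-theoretically.

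The hard part, and the reason the conjecture has stood open for over fifty years, is precisely the last step: going from the Hodge-theoretic equality $N^1 H^i(Y,\Q)=H^i(Y,\Q)$ to an actual algebraic subvariety $Z$ realising it. This is the same fundamental obstruction as in the usual Hodge conjecture, namely producing algebraic cycles out of cohomological or Hodge-theoretic data. In the cases where the conjecture is known, e.g.\ $i=1$ trivially, $i=2$ via the Lefschetz $(1,1)$ theorem, surfaces, threefolds with $h^{3,0}=0$ (Bloch--Srinivas), uniruled and rationally connected fourfolds, or varieties dominated by products of curves, additional geometric input is used to construct $Z$ by hand. In the general case no such input is available, and any complete proof would require a genuinely new technique for building algebraic subvarieties from Hodge-theoretic constraints; consequently I would not expect the approach above to succeed unconditionally.
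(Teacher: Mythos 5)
You were asked to prove a statement that the paper itself presents as an open conjecture: this is Grothendieck's generalised Hodge conjecture in its coniveau-one form, and the paper explicitly calls it ``at the moment a far-reaching conjecture'' and offers no proof. Your response — recognising that the statement is open, outlining the standard reduction via the localisation/Gysin sequence to realising the coniveau filtration by an actual algebraic subset $Z$, and correctly locating the obstruction in the passage from the Hodge-theoretic condition $h^{i,0}=0$ to the existence of such a $Z$ — is the appropriate and accurate answer. There is nothing in the paper to compare your argument against, and your description of the known cases and of why the general case resists all current techniques is correct.
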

Even this special case is at the moment a far-reaching conjecture. In \cite{EKP}, Esnault--Kisin--Petrov tried to approach Conjecture \ref{conj_specialGroth} via a reduction to positive characteristic. In this direction, they proved the following result.
\begin{theo}[{\cite[Thm. 2]{Esn25}}]\label{thm_EKP}
    Let $Y$ be a smooth proper scheme over $W$ and $i$ an integer such that $$H^0(Y_k,\Omega^i_{Y_k})=0.$$ Then, for every affine open $U\subseteq Y$, the induced morphism $$H^i_\mr{dR}(Y/W)\to H^i_\cris(U_k/W)$$ factors through the subgroup $$N^i_\mr{cris}(U_k/W)\coloneqq \bigcap_{n=1}^\infty p^nH^i_\cris(U_k/W).$$
    
    \end{theo}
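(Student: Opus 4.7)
The plan is to combine a slope bound on the $Y$-side with strong divisibility of iterated Frobenius on the $U$-side. Via the crystalline comparison for smooth proper schemes we identify $H^i_\dR(Y/W)\cong H^i_\cris(Y_k/W)$, reducing the statement to the claim that the Frobenius-equivariant restriction map $f\colon H^i_\cris(Y_k/W)\to H^i_\cris(U_k/W)$ lands in $N^i_\cris(U_k/W)$. The hypothesis $H^0(Y_k,\Omega^i_{Y_k})=0$ gives, by Nakayama and compatibility with the Hodge--de Rham spectral sequence, the vanishing $\Fil^i H^i_\cris(Y_k/W)_K=0$; the Newton-above-Hodge inequality then bounds the maximum Newton slope of the crystalline Frobenius $\Phi_Y$ on $H^i_\cris(Y_k/W)_K$ by $i-1$. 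After a faithfully flat base change to $W(\bar k)$, a Dieudonn\'e--Manin slope decomposition produces a Frobenius-stable lattice $\widetilde M$ in the base-changed module, of finite index over the torsion-free quotient, such that $\Phi_Y^n(\widetilde M)\supseteq p^{n(i-1)}\widetilde M$ for all $n\geq 0$.

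The key input on the $U$-side is a sharpening of the divided-Frobenius analysis recalled in the introduction. Since $U\subseteq Y$ is smooth affine over $W$, its $p$-adic completion admits a Frobenius lift $F$; one has $F(\widehat\Omega^j_{U/W})\subseteq p^j\widehat\Omega^j_{U/W}$ in each form degree, so iterating on $i$-cocycles and passing to cohomology yields
\[
  \Phi_U^n\bigl(H^i_\cris(U_k/W)\bigr)\ \subseteq\ p^{ni}\,H^i_\cris(U_k/W)\qquad\text{for every }n\geq 0.
\]

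Combining the two estimates, for any integral class $m$ (working modulo torsion on the $Y$-side, after the base change) and every $n\geq 0$ one writes $p^{n(i-1)}m=\Phi_Y^n(\tilde m_n)$ with $\tilde m_n\in\widetilde M$, and the Frobenius-equivariance of $f$ gives
\[
  p^{n(i-1)}\,f(m)\ =\ \Phi_U^n\bigl(f(\tilde m_n)\bigr)\ \in\ p^{ni}\,H^i_\cris(U_k/W),
\]
so that, after absorbing the bounded denominator $p^a$ coming from the finite index between $\widetilde M$ and $H^i_\cris(Y_k/W)/\tors$, one obtains $f(m)\in p^{n-a}H^i_\cris(U_k/W)$; letting $n\to\infty$ gives $f(m)\in\bigcap_n p^nH^i_\cris(U_k/W)=N^i_\cris(U_k/W)$. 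Descent from $W(\bar k)$ back to $W(k)$ is unproblematic by faithful flatness, and torsion classes in $H^i_\cris(Y_k/W)$ can be handled by a parallel bootstrap using the same $U$-side bound.

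The step I expect to be most delicate is the lattice-level refinement of the slope bound: one must choose $\widetilde M$ so that $\Phi_Y^n(\widetilde M)\supseteq p^{n(i-1)}\widetilde M$ holds and keep track of the finite index, and one must verify that base change to $W(\bar k)$ together with the identification of the $p$-divisible submodule $N^i_\cris$ behave correctly under descent. Once these points are settled, the argument is a clean cancellation: the $U$-side divisibility $p^{ni}$ beats the $p^{n(i-1)}$ cost from the slope bound by exactly $p^n$, producing the required $p$-adic divisibility of the image.
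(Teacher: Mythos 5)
Your route is genuinely different from the one the paper attributes to Esnault--Kisin--Petrov (who iterate the mod-$p$ vanishing of $H^i_\dR(Y_k/k)\to H^i_\dR(U_k/k)$ supplied by the Cartier isomorphism), and its slope-theoretic skeleton is fine: the Newton bound by $i-1$ on the $Y$-side and the divisibility $F^n(\widehat{\Omega}^i_U)\subseteq p^{ni}\widehat{\Omega}^i_U$ on the $U$-side are both correct, and are exactly the mechanisms behind Proposition \ref{Prop_IntegralBounds} and Theorem \ref{thm_slopeiminus1-variant}. The fatal step is the final division. From $p^{n(i-1)}f(m)\in p^{ni}H$ with $H\coloneqq H^i_\cris(U_k/W)$ you may only conclude $f(m)\in p^nH+H[p^{n(i-1)}]$: the element $f(m)-p^nh_n$ is killed by $p^{n(i-1)}$ but need not vanish, and for affine $U$ the group $H$ has $p$-torsion of unbounded exponent, so $\bigcap_n\bigl(p^nH+H[p^{n(i-1)}]\bigr)$ is strictly larger than $\bigcap_n p^nH$ (already $H=\Z/p$, $x=\bar{1}$ satisfies $p^nx\in p^{2n}H$ for all $n$, yet $x\notin pH$). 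What your argument actually establishes is that the image of $f(m)$ in $H/\mr{tors}$ is infinitely $p$-divisible, i.e.\ lies in $(H/\mr{tors})_{\mr{div}}=N^i_{\mr{conv}}$ in the sense of Lemma \ref{lemmHiconv}; that is the content of Theorem \ref{thm_slopeiminus1-variant}, not the asserted integral statement about $\bigcap_n p^nH$.

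Two further points. Torsion classes in $H^i_\dR(Y/W)\cong H^i_\cris(Y_k/W)$ carry no slope data, so the ``parallel bootstrap'' you invoke for them does not exist inside your framework, although their images must also be shown to lie in $\bigcap_n p^nH$; and the compatibility of $\bigcap_n p^nH$ with the passage to $W(\bar{k})$ and back is not formal for the non-finitely-generated module $H$. To reach the integral conclusion one must use the hypothesis modulo $p$: the Cartier isomorphism forces $H^i_\dR(Y_k/k)\to H^i_\dR(U_k/k)$ to vanish, and since $H/pH\hookrightarrow H^i_\dR(U_k/k)$ by Corollary \ref{cor_quasi-iso}, this places the full image of $f$ (torsion included) into $pH$ as the first step of the iteration. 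It is this torsion-insensitive input --- equivalently, an argument run on the $p$-torsion-free complexes $\widehat{\Omega}^\bullet_U$ with their Nygaard filtration, as in Proposition \ref{Prop_IntegralBounds} --- that a slope argument carried out only on the cohomology groups cannot reproduce.
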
Their theorem leverages the vanishing of $H^i_\mr{dR}(Y_k/k)\to H^i_{\mr{dR}}(U_k/k)$ arising from the Cartier isomorphism. The group $N^i_\mr{cris}(U_k/W)$ is the kernel of the maximal separated quotient $$ H^i_\cris(U_k/W)\twoheadrightarrow H^i_\cris(U_k/W)^\mr{sep}.$$
    
    Theorem \ref{thm_counterexampleGHCintro} shows that it is not possible to directly reduce Conjecture \ref{conj_specialGroth} to a statement for crystalline cohomology, since there are many de Rham classes that are undetected in the crystalline setting (see also Theorem \ref{thm_Ordinary}).


\subsection{Separation results} 
Spurred by Theorem \ref{thm_EKP}, we undertook a more detailed analysis of the separation properties of rigid and convergent cohomology. This ultimately led us to determine the structure of integral algebraic de Rham cohomology modulo torsion.

\begin{theo}[Theorem {\ref{thm_DeterminationAlgebraicdR}}]\label{thm_DeterminationAlgebraicdRIntro}
     Let $Y$ be a smooth proper scheme over $W$, let $D$ be a relative normal crossing divisor, and let $X\subseteq Y$ be the complement of $D$. If $X$ is affine, then
     $$H^i_\dR(X/W)/\mr{tors}\simeq K^{\oplus a}\oplus W^{\oplus b},$$ where $a$ is the dimension of $H^i_\mr{rig}(X_0/K)^{[0,i)}$ and $b$ is the dimension of $H^i_\mr{rig}(X_0/K)^{[i]}.$
\end{theo}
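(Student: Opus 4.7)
The plan is to combine three ingredients: the new integral comparison theorem between Monsky–Washnitzer and algebraic de Rham cohomology alluded to in the abstract, the slope decomposition of $H^i_\mr{rig}(X_0/K)$, and the injectivity/isomorphism result of Theorem \ref{thm_InjectivitySlopeiIntro} for the slope $[i]$ part. First, I would use the integral MW-to-de Rham comparison to identify $H^i_\dR(X/W)/\mr{tors}$ with the analogous Monsky–Washnitzer group modulo torsion; this transports the problem to a setting equipped with a natural Frobenius action. Rationally, both groups compute $H^i_\mr{rig}(X_0/K)$, a finite-dimensional $K$-vector space of dimension $a+b$, which already confirms the total $K$-rank.

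For the $W^{\oplus b}$ summand, I would invoke Theorem \ref{thm_InjectivitySlopeiIntro}: the slope $[i]$ subspace $H^i_\mr{rig}(X_0/K)^{[i]}$ maps isomorphically onto $H^i_\mr{conv}(X_0/K)^{[i]}$, and the latter inherits an integral structure from $H^i_\cris(X_0/W)$ via its maximal separated quotient. Pulling this lattice back along the MW-to-convergent map should yield a free $W$-submodule of rank $b$ inside $H^i_\dR(X/W)/\mr{tors}$, and Frobenius-equivariance of the slope decomposition should force this submodule to be a direct summand.

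For the $K^{\oplus a}$ summand, Theorem \ref{thm_counterexampleIntro} tells us that the slope $[0,i)$ classes are killed in $H^i_\mr{conv}(X_0/K)$. I would argue that this rational vanishing upgrades integrally into $p$-divisibility: for any MW-class $c$ of slope $\gamma < i$, one can construct integral correction terms that vanish in convergent cohomology, producing representatives of the form $p^n c_n$ for arbitrarily large $n$. Combined with the Frobenius-equivariant slope splitting and the fractional $p$-adic Tate twists constructed elsewhere in the paper, this should isolate a divisible subspace of dimension $a$ which, being both $p$-divisible and torsion-free over $W$, must be a $K$-vector space.

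The hardest step will be showing that the $K^{\oplus a}$ contribution is a genuine $K$-vector space (not merely a $p$-divisible torsion-free $W$-module with a hidden finite free part) and that the decomposition splits integrally. This is delicate precisely because integral Monsky–Washnitzer cohomology modulo torsion can fail to be finitely generated, as exhibited by Ertl–Shiho's examples; hence one cannot appeal to structure theorems for finitely generated $W$-modules. I expect the fractional $p$-adic Tate twists will do the heavy lifting here, letting one realise the slope $\gamma < i$ pieces integrally and verify that the corresponding MW classes admit unbounded division by $p$, thereby certifying the $K$-module structure of the divisible summand.
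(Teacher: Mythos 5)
Your overall architecture is the one the paper uses: identify $H^i_\dR(X/W)/\mr{tors}$ with $H^i_\mr{MW}(X_0/W)/\mr{tors}$ via Theorem \ref{thm_comparisonMWdR} and then split along slopes, the slope $[0,i)$ part accounting for $K^{\oplus a}$ and the slope $[i]$ part for $W^{\oplus b}$. However, the key step of your $K^{\oplus a}$ argument rests on a false premise. You assert that Theorem \ref{thm_counterexampleIntro} kills all slope $[0,i)$ classes under the rigid-to-convergent comparison; the vanishing there only applies when $i\notin[\gamma,\gamma+1]$, i.e.\ to slopes $\gamma<i-1$, whereas for $\gamma\in(i-1,i)$ the comparison map is \emph{injective} on the slope $\gamma$ subspace (Theorem \ref{thm_InjectivitySlopeibis}), and the paper notes that the containment $V^i_\mr{rig}\subseteq N^i_\mr{rig}$ is already strict for an affine elliptic curve. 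The statement you actually need is the weaker one that slope $[0,i)$ classes lie in the $p$-adic closure of zero, $N^i_\mr{conv}$ (Theorem \ref{thm_slopeiminus1-variant}, proved by inverting $1-F^s/p^r$ on the separated quotient, not by any vanishing of the comparison map), transported to $N^i_\mr{rig}=(H^i_\mr{MW}(X_0/W)/\mr{tors})_\mr{div}$ via the fibre square of Proposition \ref{prop_relationns} (this is Corollary \ref{cor_ES20}). Once that is in place, your worry in the final paragraph disappears for free: a torsion-free $W$-module $M$ with $pM=M$ is automatically a $K$-vector space, so no fractional Tate twists are needed at this stage.

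For the $W^{\oplus b}$ summand, invoking $H^i_\mr{rig}(X_0/K)^{[i]}\simeq H^i_\mr{conv}(X_0/K)^{[i]}$ and then saying the target ``inherits an integral structure via its maximal separated quotient'' which ``should yield a free $W$-submodule of rank $b$'' skips the hardest point. What must be proved is that $H^i_\mr{conv}(X_0/K)^{[i]}$ meets $N^i_\mr{conv}$ trivially (Theorem \ref{thm_sloperseparated}), equivalently that $H^i_\mr{rig}(X_0/K)^{[i]}\cap \left(H^i_\mr{MW}(X_0/W)/\mr{tors}\right)$ is finitely generated; this is not formal and rests on the finite generation of $H^i_\syn(X_0,\Zp(i))$, i.e.\ of $H^0_\proet(X_0,\nu(i))$ (Proposition \ref{prop_Bosco}, extending Bosco), which you never identify as an input. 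Without it, nothing rules out the slope-$i$ piece of the integral lattice being non-finitely generated or containing divisible elements. If you supply Theorems \ref{thm_slopeiminus1-variant} and \ref{thm_sloperseparated} and Proposition \ref{prop_Bosco} as the real ingredients, the decomposition $M=H^i_\mr{rig}(X_0/K)^{[0,i)}\oplus\left(M\cap H^i_\mr{rig}(X_0/K)^{[i]}\right)$ falls out exactly as in Corollaries \ref{cor_ES20} and \ref{cor_RigidSlopeRSeparated}.
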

By faithfully flat descent, Theorem \ref{thm_DeterminationAlgebraicdRIntro} also extends to non-complete rings as $\Z_{(p)}$, enabling applications in more arithmetic contexts.

\spa

The topology on rigid and convergent cohomology that we studied is the so-called \textit{affinoid topology}, which is induced by the $p$-adic topology on differential forms (see \S\ref{Sec_AffinoidTopology}). Writing $N^i_\mr{conv}$ for the $p$-adic closure of $0$ of $H^i_\mr{conv}$, we prove the following result.
\begin{theo}[Theorem \ref{thm_slopeiminus1-variant} and Theorem \ref{thm_sloperseparated}]\label{thm_slopeiminus1-variantIntro} If $X_0$ is a smooth affine scheme over $k$ and $\calE$ is a unit-root convergent $F$-isocrystal over $X_0$, then $$H^{i}_\mr{conv}(X_0,\calE)^{[0,i)}\subseteq N^{i}_{\mr{conv}}(X_0,\calE)$$
for $i \geq 1$. In addition, $H^{i}_\mr{conv}(X_0,\calE)^{[i]}$ intersects trivially $N^{i}_\mr{conv}(X_0,\calE)$ for $i \geq 0$.
\end{theo}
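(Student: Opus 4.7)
The plan is to exploit the divided Frobenius operators on the $p$-adically completed de Rham complex for Part~1, and to combine Theorem~\ref{thm_InjectivitySlopeiIntro} with the separation of rigid cohomology for Part~2. Zariski-locally I would reduce to the case where $X_0$ admits a smooth affine $p$-adic formal lift $\Spf\widehat R$ over $W$ with a Frobenius lift $F$; by Corollary~\ref{cor_quasi-iso}, $H^i_\mr{conv}(X_0,\calE)$ is then computed by the complex $\Omega^\bullet_{\widehat R/W}\otimes_W\calE$ endowed with its $p$-adic topology, which induces the affinoid topology on cohomology. Because $\calE$ is unit-root, the Frobenius on $\Omega^j\otimes\calE$ is divisible by exactly $p^j$, so the divided operator $\varphi_r:=F/p^r$ is well defined on $\Omega^{\ge r}\otimes\calE$ and sends $\Omega^j\otimes\calE$ into $p^{j-r}\Omega^j\otimes\calE$ for $j\ge r$.

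For Part~1, let $\alpha\in H^i_\mr{conv}(X_0,\calE)^{[\gamma]}$ with $\gamma<i$ (passing to a finite extension of $W$ to realize non-integer slopes). Since $\Omega^{\ge i-1}\hookrightarrow\Omega^\bullet$ is a quasi-isomorphism in degree $i$, I would represent $\alpha$ by a closed form $\tilde\alpha\in\Omega^i\otimes\calE$, so that $\varphi_{i-1}$ descends to an endomorphism of $H^i_\mr{conv}$. Two key observations: (a) $\varphi_{i-1}\tilde\alpha\in p\Omega^i\otimes\calE$ remains closed, so $\varphi_{i-1}^n$ maps $H^i_\mr{conv}$ into $p^nH^i_\mr{conv}$ for every $n\ge 1$; (b) since $\alpha$ is an $F$-eigenclass of slope $\gamma$, we have $\varphi_{i-1}^n\alpha=p^{n(\gamma-i+1)}u^n\alpha$ in $H^i_\mr{conv}\otimes_W K$ for some unit $u$. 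Combining (a) and (b) places $\alpha$ in $p^{n(i-\gamma)}H^i_\mr{conv}$ for every $n$, and since $i-\gamma>0$ this forces $\alpha\in N^i_\mr{conv}(X_0,\calE)$.

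For Part~2, the case $i=0$ is immediate since $H^0_\mr{conv}(X_0,\calE)$ is a finite-dimensional separated $K$-vector space, so $N^0_\mr{conv}=0$. For $i\ge 1$, I would invoke Theorem~\ref{thm_InjectivitySlopeiIntro}, which supplies the isomorphism $H^i_\mr{rig}(X_0/K)^{[i]}\riso H^i_\mr{conv}(X_0/K)^{[i]}$, extended to unit-root coefficients via the overconvergent extension $\calE^\dag$ of $\calE$ (guaranteed by Tsuzuki's theorem for unit-root $F$-isocrystals). Since rigid cohomology is Fr\'echet-separated in the affinoid topology (its complex consists of overconvergent sections on a dagger algebra, which is complete for a $p$-adic norm inherited from a strict neighbourhood), the composition $H^i_\mr{conv}(X_0,\calE)^{[i]}\cong H^i_\mr{rig}(X_0,\calE^\dag)^{[i]}\hookrightarrow H^i_\mr{conv}(X_0,\calE)/N^i_\mr{conv}(X_0,\calE)$ is injective, giving $H^i_\mr{conv}(X_0,\calE)^{[i]}\cap N^i_\mr{conv}(X_0,\calE)=0$.

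The main obstacle is the handling of non-integer slopes in Part~1: one must ensure the slope decomposition, the affinoid topology, and the subspace $N^i_\mr{conv}$ all behave well under the scalar extensions that realize the slope, so that the conclusion descends. A secondary subtlety is extending Theorem~\ref{thm_InjectivitySlopeiIntro} to unit-root coefficients in a manner compatible with the affinoid topology, which should follow by repeating the slope decomposition argument of the trivial case now with coefficients in $\calE^\dag$.
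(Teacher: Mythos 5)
Your argument for the first inclusion is essentially the paper's: the paper reduces to showing that $F^s-p^r$ is bijective on $H^i_{\mr{conv}}(X_0,\calE)^{\sep}$ for $r/s<i$, using that $F^s/p^r$ maps $\mr{dR}^j_E$ into $p\,\mr{dR}^j_E$ for $j\geq i$ and hence that $1-F^s/p^r$ is invertible by a geometric series preserving $\overline{\im}\,\nabla^{i-1}$; your iteration of $\varphi_{i-1}$ to gain $p$-divisibility at rate $i-\gamma>0$ per step is the same estimate packaged differently, and it does land the class in the affinoid closure of $\im\nabla^{i-1}$ (take $n$ a multiple of $s$ to make your observation (b) exact, and descend from $\bar k$ as usual).

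Part 2 has a genuine gap. You assert that rigid cohomology is separated for the affinoid topology because the dagger algebra is $p$-adically complete; but completeness of the spaces of forms does not make $\im\nabla^{i-1}$ closed, and in fact $N^i_{\rig}(X_0,\calE^\dag)=(H^i_{\mr{MW}}(X_0,E^\dag)/\mr{tors})_{\mr{div}}$ is typically enormous: combining the first part of this very theorem with Proposition \ref{prop_relationns} shows $H^i_{\rig}(X_0,\calE^\dag)^{[0,i)}\subseteq N^i_{\rig}(X_0,\calE^\dag)$ (this is Corollary \ref{cor_ES20}, and the source of the Ertl--Shiho non-finiteness examples). The strictness result you have in mind (Gro{\ss}e-Kl{\"o}nne) concerns the fringe topology, not the affinoid one. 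Worse, even restricting attention to the slope-$i$ part, the statement you need --- that $H^i_{\rig}(X_0,\calE^\dag)^{[i]}$ meets $N^i_{\rig}$ trivially --- is Corollary \ref{cor_RigidSlopeRSeparated}, which the paper \emph{deduces from} Theorem \ref{thm_sloperseparated}; so your route is circular. The actual proof requires an integral finiteness input with no counterpart in your proposal: one identifies the $\Z_p$-lattice of slope-$i$ eigenclasses with (the image of) $H^i_{\syn}(X_0,E(i))=H^0_{\proet}(X_0,\nu(E,i))$, proves via Proposition \ref{prop_Bosco} (finiteness of $H^0_{\et}(X_0,\nu_n(E,i))$, after Bosco) that this is a finite free $\Z_p$-module, and concludes that its image modulo torsion has trivial divisible part, hence trivial intersection with $N^i_{\mr{conv}}=(H^i(\mr{dR}^\bullet_E)/\mr{tors})_{\mr{div}}$. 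A torsion surjectivity lemma (Lemma \ref{lem_surjectivetorsion}) is also needed to pass between the kernels of $\varphi_i-\iota$ integrally and rationally. Without some substitute for this finiteness, your Part 2 does not go through. (Also note that Theorem \ref{thm_InjectivitySlopei} is only established for constant coefficients, so even the identification $H^i_{\rig}{}^{[i]}\simeq H^i_{\mr{conv}}{}^{[i]}$ with unit-root coefficients would need an argument.)
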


The inclusion $H^{i}_\mr{conv}(X_0,\calE)^{[0,i)}\subseteq N^{i}_{\mr{conv}}(X_0,\calE)$, is established using an argument analogous to that of Theorem \ref{thm_counterexampleIntro}. For the second assertion, instead, one of the key ingredients is the fact that $H^0(X_0,W\Omega_\mr{log}^i)$ is a finite-rank $\Zp$-module (Proposition \ref{prop_Bosco}). By a Beauville--Laszlo argument, we deduce from Theorem \ref{thm_slopeiminus1-variantIntro} the following result for integral Monsky--Washnitzer cohomology.
\begin{coro}[Corollary \ref{cor_ES20} and Corollary \ref{cor_RigidSlopeRSeparated}]\label{cor_ES20Intro}
    For a smooth affine $k$-scheme $X_0$ and $i\geq 1$, there exists a natural $W$-linear injective morphism $$H^{i}_\mr{rig}(X_0/K)^{[0,i)}\hookrightarrow H^{i}_{\mr{MW}}(X_0/W)/\mr{tors},$$ where $H^{i}_{\mr{MW}}$ denotes integral Monsky--Washnitzer cohomology. The cokernel is a free $W$-module of the same rank as the dimension of $H^{i}_\mr{rig}(X_0/K)^{[i]}.$
    \end{coro}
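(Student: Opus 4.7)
The plan is a Beauville--Laszlo argument for the Monsky--Washnitzer complex, combined with the two halves of Theorem \ref{thm_slopeiminus1-variantIntro}. Choose a smooth $W$-algebra $R$ with $R\otimes_W k\cong\calO(X_0)$, and let $R^\dagger$ and $\widehat R$ denote its weak and $p$-adic completions. The identification $R^\dagger=R^\dagger_K\cap\widehat R$ inside $\widehat R_K$ extends to de Rham complexes, presenting $\Omega^\bullet_{R^\dagger/W}$ as the homotopy pullback of $\Omega^\bullet_{R^\dagger_K/K}$ and $\Omega^\bullet_{\widehat R/W}$ along their common image in $\Omega^\bullet_{\widehat R_K/K}$. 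Taking cohomology yields a Mayer--Vietoris long exact sequence
\[
\cdots\to H^{i-1}_\mr{conv}(X_0/K)\to H^i_\mr{MW}(X_0/W)\to H^i_\mr{rig}(X_0/K)\oplus H^i_\mr{cris}(X_0/W)\xrightarrow{\alpha-\beta}H^i_\mr{conv}(X_0/K)\to\cdots,
\]
where $\alpha$ is the rigid-to-convergent comparison and $\beta$ the canonical map from integral crystalline to rational convergent cohomology. Unwinding the definitions, $\mr{Im}(\beta)\subseteq H^i_\mr{conv}(X_0/K)$ is the canonical integral lattice, and $N^i_\mr{conv}$ coincides with $\bigcap_n p^n\mr{Im}(\beta)$.

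For the injection, take $x\in H^i_\mr{rig}(X_0/K)^{[0,i)}$. Since $\alpha$ is Frobenius-equivariant, $\alpha(x)$ lies in $H^i_\mr{conv}(X_0/K)^{[0,i)}$, hence in $N^i_\mr{conv}\subseteq\mr{Im}(\beta)$ by the first assertion of Theorem \ref{thm_slopeiminus1-variantIntro}. Pick $y\in H^i_\mr{cris}(X_0/W)$ with $\beta(y)=\alpha(x)$; then $(x,y)\in\ker(\alpha-\beta)$ lifts along the Mayer--Vietoris sequence to a class $\tilde x\in H^i_\mr{MW}(X_0/W)$ whose image in $H^i_\mr{rig}(X_0/K)$ is $x$. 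As $H^i_\mr{MW}(X_0/W)/\mr{tors}$ embeds in $H^i_\mr{rig}(X_0/K)$ via the rational MW comparison, this proves $H^i_\mr{rig}(X_0/K)^{[0,i)}$ is contained in $H^i_\mr{MW}(X_0/W)/\mr{tors}$ as $W$-submodules of $H^i_\mr{rig}(X_0/K)$, yielding the desired injection.

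For the cokernel, let $L$ denote the image of $H^i_\mr{MW}(X_0/W)/\mr{tors}$ in $H^i_\mr{rig}(X_0/K)$; by the previous step, $H^i_\mr{rig}(X_0/K)^{[0,i)}\subseteq L$ and $L\otimes_W K=H^i_\mr{rig}(X_0/K)$, so $C\coloneqq L/H^i_\mr{rig}(X_0/K)^{[0,i)}$ is a torsion-free $W$-module of generic rank $b\coloneqq\dim H^i_\mr{rig}(X_0/K)^{[i]}$. Projecting $L$ onto $H^i_\mr{rig}(X_0/K)^{[i]}$ via the slope decomposition and invoking the isomorphism $H^i_\mr{rig}(X_0/K)^{[i]}\riso H^i_\mr{conv}(X_0/K)^{[i]}$ of Theorem \ref{thm_InjectivitySlopeiIntro}, we obtain $C\hookrightarrow H^i_\mr{conv}(X_0/K)^{[i]}$. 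The second assertion of Theorem \ref{thm_slopeiminus1-variantIntro} then shows that this image is $p$-adically separated, and Proposition \ref{prop_Bosco} identifies the canonical integral slope-$[i]$ lattice with $H^0(X_0,W\Omega^i_\mr{log})\otimes_{\Zp}W$, a finitely generated free $W$-module of rank $b$. A torsion-free $W$-submodule of a finitely generated free $W$-module whose $K$-span has full dimension is itself free of rank $b$, giving the statement.

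The principal obstacle is promoting the identification $R^\dagger=R^\dagger_K\cap\widehat R$ to a genuine Cartesian square of de Rham complexes, requiring $\Omega^i_{(-)/W}$ formation to commute with the relevant pullbacks (using smoothness of $R^\dagger$ over $W$). Secondarily, the Mayer--Vietoris lift $\tilde x$ is only defined modulo the $p$-divisible image of $H^{i-1}_\mr{conv}(X_0/K)$, but since $\tilde x$ is detected only via its image in $H^i_\mr{rig}(X_0/K)$, this ambiguity does not obstruct the final embedding.
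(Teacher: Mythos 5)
Your argument is correct and follows essentially the same route as the paper: your Mayer--Vietoris sequence is the paper's Proposition \ref{prop_pullbacksquareMW}, your diagram chase for the injection is exactly the content of Proposition \ref{prop_relationns} combined with Theorem \ref{thm_slopeiminus1-variant}, and your treatment of the cokernel reproduces Corollary \ref{cor_RigidSlopeRSeparated} via Theorems \ref{thm_InjectivitySlopei} and \ref{thm_sloperseparated} together with Proposition \ref{prop_Bosco}. The only point worth tightening is the last step: to see that the cokernel lands in a finitely generated lattice (rather than merely being $p$-adically separated) you should note that its image lies in $\mr{Im}(\beta)\cap H^i_{\mr{conv}}(X_0/K)^{[i]}$, whose finite generation is what the proof of Theorem \ref{thm_sloperseparated} actually extracts from Proposition \ref{prop_Bosco}; alternatively, invoke that a finite-rank torsion-free module over the complete DVR $W$ with trivial divisible part is free.
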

Corollary \ref{cor_ES20Intro} provides a more transparent explanation for the phenomenon discovered by Ertl and Shiho in \cite{ES20}, where they constructed examples such that $H^i_{\mathrm{MW}}(X_0/W)/\mathrm{tors}$ is not finitely generated over $W$. It shows that this non-finiteness is not an ``anomaly'' but a fundamental feature of integral Monsky--Washnitzer cohomology.

\spa

In the presence of good compactifications, we also prove that integral Monsky--Washnitzer cohomology is isomorphic to the algebraic de Rham cohomology of a lift.
\begin{theo}[Theorem \ref{thm_comparisonMWdR}]\label{thm_comparisonMWdRIntro}
    Let $Y$ be a smooth proper scheme over $W$ and $D$ a relative normal crossing divisor. If the complement $X\coloneqq Y\setminus D$ is affine, the weak completion induces a quasi-isomorphism $$R\Gamma_\mr{dR}(X/W)\xrightarrow{\sim} R\Gamma_{\mr{MW}}(X_k/W).$$
\end{theo}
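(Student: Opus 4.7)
The strategy is to represent both sides via the logarithmic de Rham complex on $(Y,D)$, and to bridge the algebraic and weak-formal settings using the properness of $Y$. First, since $Y/W$ is smooth and $D$ is a relative normal crossing divisor, I would invoke the integral form of Deligne's logarithmic comparison: the inclusion $\Omega^\bullet_{Y/W}(\log D) \hookrightarrow Rj_*\Omega^\bullet_{X/W}$ on $Y$ (with $j\colon X\hookrightarrow Y$) is a quasi-isomorphism. Since $X$ is affine, this gives
$$R\Gamma_\dR(X/W) \simeq R\Gamma(Y, \Omega^\bullet_{Y/W}(\log D)).$$
As each $\Omega^i_{Y/W}(\log D)$ is coherent on the proper $W$-scheme $Y$, its cohomology is a finitely generated, hence $p$-adically complete, $W$-module, and formal GAGA applied termwise to the bounded log complex yields
$$R\Gamma(Y, \Omega^\bullet_{Y/W}(\log D)) \simeq R\Gamma(\mathfrak{Y}^\dagger, \Omega^\bullet_{\mathfrak{Y}^\dagger/W}(\log \mathfrak{D}^\dagger)),$$
where $\mathfrak{Y}^\dagger$ and $\mathfrak{D}^\dagger$ denote the weak formal completions of $Y$ and $D$; properness ensures that weak and $p$-adic formal completions agree on coherent cohomology.

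Next I would establish the weak-formal analogue of Deligne's log comparison on $\mathfrak{Y}^\dagger$: the inclusion
$$\Omega^\bullet_{\mathfrak{Y}^\dagger/W}(\log \mathfrak{D}^\dagger) \xrightarrow{\sim} Rj^\dagger_*\Omega^\bullet_{\mathfrak{X}^\dagger/W}$$
is a quasi-isomorphism, with $j^\dagger\colon \mathfrak{X}^\dagger \hookrightarrow \mathfrak{Y}^\dagger$ the open complement of $\mathfrak{D}^\dagger$. This reduces by a local computation to the model case $\mathbb{A}^n_W \supset V(x_1\cdots x_r)$, where the pole-order filtration and the residue short exact sequences still make sense after weak completion. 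Since $X$ is affine, $\mathfrak{X}^\dagger$ is an affine weak formal scheme with $\Gamma(\mathfrak{X}^\dagger,\calO)=R^\dagger$, so
$$R\Gamma(\mathfrak{X}^\dagger, \Omega^\bullet_{\mathfrak{X}^\dagger/W}) = \Omega^\bullet_{R^\dagger/W} = R\Gamma_\mr{MW}(X_k/W),$$
and chaining the quasi-isomorphisms yields the desired comparison, which by construction is induced by the weak completion map $R \to R^\dagger$.

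The hard part will be the integral weak-formal log comparison just above: I will need to check that the pole-filtration/residue argument of Deligne's original proof goes through over weak completions without introducing torsion anomalies, and that each graded piece is weak-formally free over the appropriate weakly complete ring. A secondary delicate point is the identification $\Gamma(\mathfrak{X}^\dagger, \calO) = R^\dagger$, which I would deduce from the uniqueness of weakly complete, weakly finitely generated $W$-lifts of the smooth affine $X_k$, combined with the fact that the open immersion $\mathfrak{X}^\dagger \hookrightarrow \mathfrak{Y}^\dagger$ coming from the compactification endows $\Gamma(\mathfrak{X}^\dagger,\calO)$ precisely with such a structure.
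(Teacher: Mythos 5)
Your opening step --- the ``integral form of Deligne's logarithmic comparison'' $\Omega^\bullet_{Y/W}(\log D)\xrightarrow{\sim} Rj_*\Omega^\bullet_{X/W}$ --- is false over $W$, and the whole route collapses with it. Deligne's argument inverts the residue maps, which amounts to inverting multiplication by $n$ on the graded pieces of the pole filtration (coming from $\mr{dlog}(t^n)=n\,\mr{dlog}(t)$); over a base where $p$ is not invertible this fails for every $n$ divisible by $p$. The paper makes the failure quantitative: the quotient complex $j_*\Omega^\bullet_X/\Omega^\bullet_{(Y,D)}$ is, locally, a direct sum over $n\geq 1$ of cocones of multiplication by $-n$ (Lemma \ref{lem_infinitesimal_var}, Proposition \ref{prop_cohomologysheaves_var}), which is torsion but very much nonzero. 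A cleaner way to see that no repair is possible: $R\Gamma(Y,\Omega^\bullet_{Y/W}(\log D))$ is a perfect complex with finitely generated cohomology ($Y$ is proper), whereas by Theorem \ref{thm_DeterminationAlgebraicdR} one has $H^i_\dR(X/W)/\mr{tors}\simeq K^{\oplus a}\oplus W^{\oplus b}$ with $a=\dim H^i_\mr{rig}(X_0/K)^{[0,i)}$, which is not finitely generated over $W$ whenever $a>0$ (already for $X$ the complement of the origin in an ordinary elliptic curve); the same non-finiteness holds for $H^i_\mr{MW}(X_0/W)/\mr{tors}$ by Ertl--Shiho. So neither end of your chain can be identified with the middle term, and the comparison cannot be routed through the log complex of the compactification integrally --- only after inverting $p$. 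Your step 3 (a weak-formal integral log comparison) fails for the same reason.

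The paper's actual proof avoids the compactification at the integral level entirely and instead glues \`a la Beauville--Laszlo: both $R\Gamma_\dR(X/W)$ and $R\Gamma_\mr{MW}(X_0/W)$ sit in homotopy pullback squares over their common $p$-completion $R\Gamma_\dR(\widehat{X})=R\Gamma_\cris(X_0/W)$ and its rationalisation (Propositions \ref{prop_pullbacksquare} and \ref{prop_pullbacksquareMW}, both instances of \stack{0BNW}), and the remaining corners $R\Gamma_\dR(X)_\Q$ and $R\Gamma_\mr{rig}(X_0/K)=R\Gamma_\mr{MW}(X_0/W)_\Q$ agree by Proposition \ref{prop_dRC-RC} --- which is where Deligne and Baldassarri--Chiarellotto enter, legitimately, because there $p$ has been inverted. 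The good compactification hypothesis is used only for that rational identification.
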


Since integral Monsky–Washnitzer cohomology has a natural Frobenius structure, Theorem \ref{thm_comparisonMWdRIntro} provides integral algebraic de Rham cohomology with a Frobenius structure, at least when good compactifications exist. Combining Theorem \ref{thm_comparisonMWdRIntro} with our separation results, we finally get Theorem \ref{thm_DeterminationAlgebraicdRIntro}. When $Y$ is an elliptic curve and $D$ is the origin, Theorem \ref{thm_DeterminationAlgebraicdRIntro} was previously proved by Esnault--Kisin--Petrov via an explicit computation with the de Rham--Witt complex.

\subsection{Fractional $p$-adic Tate twists}
Illusie and Milne introduced logarithmic de Rham--Witt sheaves $W_n\Omega^i_{\mr{log}}$
whose cohomology recovers integral slopes of crystalline cohomology. These sheaves have a $K$-theoretic reinterpretation as explained in \cite[Thm. 8.3]{GL00} and compute $p$-adic étale motivic cohomology by [\textit{ibid}, Thm. 8.4].
In our work, we introduce and systematically use fractional $p$-adic Tate twists in order to deal with the slopes of crystalline cohomology that are not integral. In the setting of the de Rham--Witt complex, for non-integral slopes, to quotients of the pro-étale sheaves of de Rham--Witt differentials (Section \ref{Sec_FractionalLogarithmicdRW}).

\spa

More precisely for $\gamma\in \Q$ we construct complexes of pro-étale sheaves $\nu(\gamma)$, with cohomology concentrated in degrees $[0,1]$, endowed with a morphism of complexes $\nu(\gamma)\to W\Omega^\bullet[b]$, where $b\coloneqq \lfloor\gamma\rfloor$ (Definition \ref{def_dRWFractionalTate}). We prove the following foundational result.

\begin{theo}[Theorem \ref{thm_FractionalSlopesCrystalline}]\label{thm_FractionalSlopesCrystallineIntro}
Let $k$ be an algebraically closed field of characteristic $p$, let $X_0$ be a smooth proper $k$-scheme, and let $\gamma$ be a rational number with $b\coloneqq\lfloor\gamma\rfloor.$ For every $i\geq 0$, there exists a functorial isomorphism $$H^i_\proet(X_0,\nu(\gamma))_K\riso H^{i+b}_\cris(X_0/W)_K^{[\gamma]}$$ of $K$-vector spaces. Moreover, $\nu(\gamma)$ is quasi-isomorphic to $W\Omega_\mr{log}^b[0]$ when $\gamma\in \Z$ and to a pro-étale sheaf sitting in degree $1$ otherwise.
\end{theo}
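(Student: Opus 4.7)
The plan is to match both sides through a two-step identification: the slope spectral sequence of Illusie--Raynaud isolates the slope window $[b,b+1)$ inside crystalline cohomology, after which the operator $F^n - p^s$ (with $\gamma = s/n$ in lowest terms) carves out the slope $\gamma$ subspace; by construction, $\nu(\gamma)$ in Definition \ref{def_dRWFractionalTate} realises this Frobenius-eigenvalue condition pro-étale locally.

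First, I would apply the Illusie--Raynaud slope spectral sequence for $X_0$ smooth proper. It degenerates at $E_1$ after inverting $p$ and gives canonical isomorphisms
$$H^i(X_0, W\Omega^b_{X_0})_K \simeq H^{i+b}_\cris(X_0/W)_K^{[b,b+1)},$$
so the slope $\gamma$ subspace $H^{i+b}_\cris(X_0/W)_K^{[\gamma]}$ sits inside $H^i(X_0, W\Omega^b)_K$. Second, over the algebraically closed $k$, Dieudonné--Manin classification shows that on any finite-dimensional $F$-isocrystal $V$ over $K$ the operator $F^n - p^s$ is invertible on every isoclinic component of slope $\delta \neq \gamma$ (its $F^n$-eigenvalues there have valuation $n\delta \neq s$), and the natural comparison map identifies $V^{[\gamma]}$ with the $K$-span of $\ker(F^n - p^s \mid V)$. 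Third, by construction $\nu(\gamma)$ fits into a distinguished triangle
$$\nu(\gamma) \to W\Omega^b \xrightarrow{F^n - p^s} W\Omega^b$$
in the pro-étale topology, whose composite with the inclusion $W\Omega^b[-b] \hookrightarrow W\Omega^\bullet$ produces the given map $\nu(\gamma) \to W\Omega^\bullet[b]$. Unwinding the associated long exact sequence after rationalising, and using the invertibility of $F^n - p^s$ on slope-$\neq \gamma$ components of $H^\bullet(X_0, W\Omega^b)_K$, yields the claimed isomorphism.

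The structural statement about $\nu(\gamma)$ is obtained simultaneously. For integer $\gamma = b$ the above distinguished triangle recovers the Illusie--Milne short exact sequence
$$0 \to W\Omega^b_\mr{log} \to W\Omega^b \xrightarrow{F - p^b} W\Omega^b \to 0$$
in the pro-étale topology (where it is exact by classical Artin--Schreier--Witt), giving $\nu(b) \simeq W\Omega^b_\mr{log}[0]$. For fractional $\gamma$, $H^0(\nu(\gamma))$ vanishes because the kernel of $F^n - p^s$ on local sections of $W\Omega^b$ is trivial: the slopes of local sections of the de Rham--Witt complex are integers, so no local section can realise a genuinely fractional Frobenius eigenvalue equation. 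Hence $\nu(\gamma)$ concentrates in degree $1$, equal pro-étale locally to the cokernel of $F^n - p^s$ on $W\Omega^b$.

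The main obstacle is establishing the pro-étale surjectivity of $F^n - p^s$ on $W\Omega^b$ for $n \geq 2$, which is the Artin--Schreier--Witt-type input needed both to shape $\nu(\gamma)$ and to extract the isomorphism cleanly from the long exact sequence. For $n=1$ this is classical Illusie--Milne, but the iterated case requires genuinely working in the pro-étale topology so as to accommodate the semilinear action of $\sigma^n$ and to solve the twisted Artin--Schreier--Witt equations producing local sections killed by $F^n - p^s$. This is where Definition \ref{def_dRWFractionalTate} breaks new ground and where the bulk of the technical work will be concentrated.
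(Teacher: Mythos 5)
Your rational skeleton for the main isomorphism is close to the paper's: surjectivity onto the slope-$\gamma$ part via Dieudonn\'e--Manin, and injectivity via the surjectivity of $F^s-p^r$ on the finite-dimensional $F$-isocrystal $H^{i+b-1}_\cris(X_0/W)_K$ (the paper uses exactly this; your ``invertibility on isoclinic components of slope $\neq\gamma$'' is the same point), and the Illusie--Raynaud slope spectral sequence is a legitimate variant of the paper's sheaf-level reduction to degree $b$. However, there is a genuine gap at the first step of your third point: $\nu(\gamma)$ does \emph{not} sit ``by construction'' in a triangle $\nu(\gamma)\to W\Omega^b\xrightarrow{F^s-p^r}W\Omega^b$ (in the paper's notation $\gamma=r/s$). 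Definition \ref{def_dRWFractionalTate} builds $\nu(\gamma)$ over the ramified ring $\Lambda^s=\Zp[\pi]/(\pi^s-p)$ as the cocone of the \emph{divided} Frobenius $\varphi_{r,s}-\iota$ on the fractional Nygaard filtration $\calN^{\geq r}(W^s\Omega^\bullet_{X_0})$; relating this to $F^s-p^r$ on the unramified object is precisely the content of Lemma \ref{lem_compatibilitysyntomic} and Lemma \ref{lem_Vrs}, and those identifications hold only after inverting $p$ and for \emph{bijective} $F$ (true on finite-dimensional isocrystals, false on the sheaf $W\Omega^b$). Since you need the triangle integrally and at the sheaf level to obtain the structural statement about $\nu(\gamma)$, this substitution is not available without further argument.

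The final paragraph is where the proposal goes wrong in substance. You identify as the main obstacle the pro-\'etale \emph{surjectivity} of $F^s-p^r$ on $W\Omega^b$ --- but such surjectivity would kill the $H^1$ of the cocone and force $\nu(\gamma)$ into degree $0$, contradicting the concentration-in-degree-$1$ claim you make two sentences earlier (and, combined with your injectivity claim, it would give $\nu(\gamma)=0$, i.e.\ no fractional slopes at all). In fact the divided Frobenius minus identity is \emph{not} pro-\'etale locally surjective for fractional $\gamma$: Example \ref{exam_Coker12} exhibits, for $\gamma=1/2$, a cokernel filtered by copies of $A_0/A_0^p$ that cannot be trivialised by any pro-\'etale cover. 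What is actually needed, and what the paper proves, is the opposite pair of facts: $\varphi_{r,s}-\iota$ is an isomorphism of sheaves in degrees $j\neq b$ (Proposition \ref{prop_fundamentalFractionaldRW}) and is \emph{injective} in degree $b$ when $s\nmid r$ (Proposition \ref{prop_InjectivityFractionaldRW}), the latter by induction on de Rham--Witt truncations using $\Ker(\mathrm{F})\subseteq\Ker(R)$. Your ``slopes of local sections are integers'' heuristic gestures at the right phenomenon but is not an argument: $W\Omega^b$ is not a finite-rank $F$-crystal and admits no slope decomposition, so the vanishing of $H^0(\nu(\gamma))$ genuinely requires the truncation induction.
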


\spa

We do not build connections with $K$-theory and étale motivic cohomology, but we wonder whether there are variants of these theories encompassing fractional slopes as in our setting.
\subsection{Overview of the paper}
In §\ref{sec_dRVersusCrystalline}, we recall the de Rham–crystalline comparison and prove the vanishing of $V^i_{\mathrm{dR}}$ in low degrees (Theorem \ref{thm_lowdegree}). In §\ref{Sec_RigidVersusConvergent}, we transition to rigid cohomology, establishing the integral comparison between de Rham and Monsky–Washnitzer cohomology (Theorem \ref{thm_comparisonMWdR}) and the injectivity in degree one (Theorem \ref{thm_RC}). Subsequently, in §\ref{Sec_SlopeBounds}, we prove the main slope bounds for crystalline cohomology (Proposition \ref{Prop_IntegralBounds}) after introducing the fractional Nygaard filtration in \S\ref{Sec_FractionalNygaard}. In \S\ref{Sec_InjectivityFailure} we focus on the counterexamples to Question \ref{ques_EKP}, provided by Theorem \ref{thm_counterexampleGHC}. In \S\ref{Sec_dRW}, we recall the theory of the de Rham--Witt complex with coefficients in unit-root $F$-isocrystals and for log schemes and we develop fractional $p$-adic Tate twists in this setting, which compute non-integral slopes in the proper setting (Theorem \ref{thm_FractionalSlopesCrystalline}). The separation properties are proved in \S\ref{Sec_SeparationDefect}, where we prove Theorems \ref{thm_slopeiminus1-variant} and \ref{thm_sloperseparated}, which determine the structure of integral de Rham cohomology in Theorem \ref{thm_DeterminationAlgebraicdR}. We also get
positive injectivity results under specific slope conditions (Theorems \ref{thm_InjectivitySlopei} and \ref{thm_InjectivitySlopeibis}). One of the ingredients is an extended version of syntomic cohomology (Definition \ref{dfn-TijE}). In \S\ref{Sec_InjectivityArithmeticDmodules} we establish full faithfulness for arithmetic $D$-modules in Theorem \ref{Hol-ff} and we generalise Theorem \ref{thm_RC} to $F$-able overholonomic $D$-modules in Corollary \ref{cor_Hol-ff}.

\subsection*{Acknowledgments}
We warmly thank H\'el\`ene Esnault, Alexander Petrov, and Mark Kisin for asking Question \ref{ques_EKP} and for sharing their work in progress. This greatly motivated and inspired our article. We thank Bruno Chiarellotto, Pierre Colmez, Veronika Ertl, Livia Grammatica, Kiran Kedlaya, Bernard Le Stum, and Atsushi Shiho for helpful conversations on the topic, and Bhargav Bhatt, Richard Crew, Sergey Gorchinskiy, Christopher Lazda, Matthew Morrow, Kay R\"ulling, Peter Scholze, and Alberto Vezzani for the interest shown. We further thank Bruno Chiarellotto for providing the reference to \cite{BC94} and Kay R\"ulling for his interesting comments on a first draft. The second-named author would also like to thank Alexander Petrov for identifying a mistake in an initial attempt to answer Question \ref{ques_EKP}. 

\spa

This research was partially funded by the French National Research Agency (ANR) under the project ANR-25-CE40-7869-01.
	
	\section{De Rham versus crystalline cohomology}
    \label{sec_dRVersusCrystalline}
    \subsection{Preliminaries}
In this section, we first recall the fundamental comparison theorem between algebraic de Rham cohomology and crystalline cohomology. Subsequently, we define the key object of study: the kernel of the comparison map, denoted by $V^i_\dR(X/S)$ (Definition \ref{dfnVi-Hbari}). 

    \begin{nota}
        Let $R$ be a Noetherian $\Z_{(p)}$-algebra endowed with the standard PD-structure on $(p)$. We write $R_n$ for  its reduction modulo $p^{n}$ 
    and we set $S\coloneqq \Spec (R)$ and $\fS \coloneqq \Spf (\widehat{R})$, where $\widehat{R}\coloneqq \lim_n R_n.$  If $\fX$ is a  formal $\fS$-scheme, or if $X$ is an $S$-scheme, we denote by $X_n$ the reduction modulo $p^n$ and by $X_0$ the scheme $(X_1)_\mr{red}$. We say that a formal $\fS$-scheme is smooth if each $X_n/ S_n$ is smooth. We write $R \Gamma_{\dR} (X/S)$ for the algebraic de Rham complex $R \Gamma (X, \Omega^\bullet_{X/S})$ and $R \Gamma_{\dR} (\mathfrak{X}/\mathfrak{S})$ for the de Rham complex $R \Gamma (\mathfrak{X}, \Omega^\bullet_{\mathfrak{X}/\mathfrak{S}})$. When the base is clear, we drop it in the notation.
    \end{nota}
         The fundamental de Rham comparison theorem, proved by Berthelot, can be formulated in the following form.

\begin{theo}[Berthelot]\label{thm_quasi-iso0}
Let $\fX$ be a smooth formal $\fS$-scheme.
The following complexes of $\widehat{R}$-modules are canonically quasi-isomorphic.
	\begin{enumerate}[(1)]
	         \item The crystalline cohomology complex $R\Gamma_{\cris}(X_0/S)$.
             \item The derived limit $R\lim_n R\Gamma_{\cris}(X_0/S_n)$.
             \item The derived limit $R\lim_n R \Gamma_{\dR} ( X_n /S_n)$.
		\item The de Rham complex $R \Gamma_{\dR} ( \fX /\fS)$.
		
\end{enumerate}
\end{theo}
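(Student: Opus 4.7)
The plan is to establish the chain of quasi-isomorphisms $(4)\simeq(3)\simeq(2)\simeq(1)$, since each step has its own flavour and the middle step is the substantive one.

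For $(4)\simeq(3)$, I would identify $\Omega^i_{\fX/\fS}$ with $\lim_n \Omega^i_{X_n/S_n}$ as sheaves on the common underlying topological space $|\fX|=|X_n|$; the transition maps are the reductions modulo $p^n$, which are surjective with kernel locally free of the appropriate rank, so the Mittag-Leffler condition holds and $R\lim$ coincides with $\lim$. Combined with the commutation of $R\Gamma(|\fX|,-)$ with derived limits of quasi-coherent complexes with Mittag-Leffler transitions, this yields the desired quasi-isomorphism $R\Gamma_\dR(\fX/\fS)\simeq R\lim_n R\Gamma_\dR(X_n/S_n)$.

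For $(3)\simeq(2)$, the core input is Berthelot's classical crystalline-to-de Rham comparison at finite level: since $X_n$ is a smooth lift of $X_0$ over the PD-base $S_n$, the crystalline Poincaré lemma (computing $R\Gamma_\cris(X_0/S_n)$ via the linearisation of $\Omega^\bullet_{X_n/S_n}$) produces a functorial quasi-isomorphism $R\Gamma_\cris(X_0/S_n)\simeq R\Gamma_\dR(X_n/S_n)$. Compatibility with the transition maps $S_{n+1}\twoheadrightarrow S_n$ lets this equivalence pass through $R\lim_n$.

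For $(2)\simeq(1)$, the point is that every PD-thickening of $X_0$ in $\mr{Cris}(X_0/S)$ has $p$ locally nilpotent on it, hence factors through some $\mr{Cris}(X_0/S_n)$; the restriction functors induce a natural map $R\Gamma_\cris(X_0/S)\to R\lim_n R\Gamma_\cris(X_0/S_n)$, and one checks locally, using $p$-adic completeness of $\widehat{R}$ and an affine cover by smooth lifts, that this map is a quasi-isomorphism by reducing both sides to the de Rham complex of a PD-envelope.

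The main obstacle will be the derived-limit bookkeeping, concentrated in $(4)\simeq(3)$ and $(2)\simeq(1)$: one has to verify Mittag-Leffler for the relevant transition systems and confirm that $R\lim$ interacts well with $R\Gamma$ and with $p$-adic completion. Everything local ultimately reduces to the smooth affine case, where an explicit model for the PD-envelope of the diagonal turns each comparison into a statement about explicit de Rham complexes over $\widehat{R}$.
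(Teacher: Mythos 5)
Your proposal is correct and follows essentially the same route as the paper: the finite-level crystalline--de Rham comparison (Berthelot--Ogus, Thm.\ 7.1) combined with the fact that thickenings in $\mr{Cris}(X_0/S)$ have $p$ locally nilpotent, so that crystalline cohomology over $S$ is the derived limit over the $S_n$ (this is \stack{07MV}). The only cosmetic difference is which edge of the square you prove directly: the paper closes the loop by citing Berthelot--Ogus Thm.\ 7.23 for $(1)\simeq(4)$, whereas you instead give the Mittag--Leffler/derived-limit argument for $(4)\simeq(3)$; both are standard and correct.
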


\begin{proof}

The equality $R\Gamma_{\cris}(X_0/S)=R\lim_n R\Gamma_{\cris}(X_0/S_n)$ follows from \stack{07MV}, which justifies the quasi-isomorphism between (1) and (2). By \cite[Thm. 7.1]{BerthelotOgus}, we have $R\Gamma_{\cris}(X_0/S_n)= R \Gamma (X_n, \Omega^\bullet_{X_n /S_n})$, which gives the quasi-isomorphism between (2) and (3). See also \cite{BdJ11} for a different proof.
The quasi-isomorphism between (1) and (4) is \cite[Thm. 7.23]{BerthelotOgus}.
\end{proof}

\begin{empt}\label{adjdR-ntn}
Let $X$ be a smooth $S$-scheme.  There is a natural base change morphism
\begin{equation}\label{adjdR}
R \Gamma_{\dR} (X/S) \otimes_{R}^L  R_n  \to R \Gamma_{\dR} (X_n /S_n),
\end{equation}
which induces the map 
\begin{equation}\label{RlimadjdR}
R \Gamma_{\dR} (X/S)^{\wedge}\coloneqq R \lim_n \left ( R \Gamma_{\dR} (X/S) \otimes_{R}^L  R_n  \right )\to 
R \lim_n  R \Gamma_{\dR} (X_n /S_n).
\end{equation}
Composing with the map $R \Gamma_{\dR} (X/S)\to R \Gamma_{\dR} (X/S)^{\wedge}$, we get the fundamental comparison morphism
\begin{equation}\label{RlimadjdRcris}
R \Gamma_{\dR} (X/S) \to   R\Gamma_{\cris}(X_0/S),
\end{equation}
since $R \lim_n  R \Gamma_{\dR} (X_n /S_n)=  R\Gamma_{\cris}(X_0/S)$ by Theorem \ref{thm_quasi-iso0}.
\end{empt}

\begin{lemm}\label{comp-dR-Rim}
If $X/S$ is either affine or proper, the maps \eqref{adjdR} and \eqref{RlimadjdR} are isomorphism. 
\end{lemm}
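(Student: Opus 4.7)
The plan is to reduce both assertions to showing that \eqref{adjdR} is a quasi-isomorphism for every $n$; once this is established, the map \eqref{RlimadjdR} is obtained by applying $R\lim_n$ to these quasi-isomorphisms and is therefore itself an isomorphism, since by definition $R\Gamma_\dR(X/S)^\wedge=R\lim_n(R\Gamma_\dR(X/S)\otimes_R^L R_n)$. So the real content lies at each finite level.

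In the affine case, I would write $X=\Spec A$. Smoothness of $A/R$ ensures that each K\"ahler differential $\Omega^i_{A/R}$ is a finitely generated projective $A$-module, hence a direct summand of a finite free $A$-module. Since $A$ itself is $R$-flat (again by smoothness), it follows that $\Omega^i_{A/R}$ is $R$-flat as well. Consequently, $\Omega^\bullet_{A/R}\otimes_R^L R_n$ coincides with the naive tensor product $\Omega^\bullet_{A/R}\otimes_R R_n$, which by base change for K\"ahler differentials is canonically identified with $\Omega^\bullet_{A_n/R_n}=R\Gamma_\dR(X_n/S_n)$. This yields \eqref{adjdR} as an isomorphism.

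For the proper case, I would use that $X/S$ is separated to pick a finite affine open cover $\{U_\alpha\}$ of $X$ such that every finite intersection $U_{\alpha_0\cdots\alpha_q}$ is affine. The associated \v{C}ech--de Rham double complex $C^\bullet(\{U_\alpha\},\Omega^\bullet_{X/S})$ computes $R\Gamma_\dR(X/S)$, and by the affine case each of its entries is $R$-flat. Tensoring the total complex with $R_n$ therefore computes the derived tensor product $R\Gamma_\dR(X/S)\otimes_R^L R_n$; on the other hand, using compatibility of fibre products with intersections together with base change for differentials, the result is manifestly the \v{C}ech--de Rham total complex for the induced cover $\{U_\alpha\times_S S_n\}$ of $X_n$, which computes $R\Gamma_\dR(X_n/S_n)$.

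The only technical point to verify is that, in both situations, the naive tensor product agrees with the derived one, and this is handled uniformly by flatness of the $\Omega^i$ over $R$; beyond this the argument is formal, and there is no serious obstacle. Note that properness is used only to guarantee separatedness and the existence of a well-behaved finite affine cover, while the affine case is the indispensable local input.
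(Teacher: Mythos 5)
Your affine argument is the same as the paper's: flatness of each $\Omega^i_{A/R}$ over $R$ (finite projective over $A$, with $A$ flat over $R$) identifies the derived with the naive tensor product, and base change for K\"ahler differentials does the rest; and reducing \eqref{RlimadjdR} to \eqref{adjdR} by applying $R\lim_n$ is exactly what the paper does implicitly. For the proper case, however, you take a genuinely different route. The paper applies derived base change for quasi-coherent cohomology along $S_n\to S$ (citing \stack{09V6}, using Tor-independence coming from smoothness of $X/S$) to the pushforward $Rf_*\Omega^\bullet_{X/S}$, whereas you build an explicit bounded complex of flat $R$-modules computing $R\Gamma_{\dR}(X/S)$ via a finite affine \v{C}ech--de Rham double complex and reduce termwise to the affine case. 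Both are correct; the paper's argument is shorter because it quotes a standard theorem, while yours is more elementary and, as you note, uses properness only through separatedness and quasi-compactness, so it actually establishes the lemma for any quasi-compact separated smooth $X/S$ --- a mild strengthening that the paper instead obtains later by Zariski descent when it needs the quasi-compact case in Proposition \ref{prop_pullbacksquare}. The only points you should make explicit are that the total complex is bounded (finite cover, finite relative dimension), hence K-flat, so the termwise tensor product really computes $\otimes^L_R R_n$, and that the resulting identification agrees with the canonical base change map \eqref{adjdR}; both are routine.
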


\begin{proof}
When $X= \Spec (A)$, then 
$$R \Gamma_{\dR} (X/S) \otimes_{R}^L  R_n = \Omega^{\bullet}_{A/R} \otimes_{R}^L  R_n =  \Omega^{\bullet}_{A_n /R_n}=R \Gamma_{\dR} (X_n/S_n).$$
When $X/S$ is proper, this follows from the proper base change theorem (see \stack{09V6}).
Indeed, let $f\colon X \to S$, $i_S \colon S_n \to S$, $f_n\colon X _n\to S_n$, $i_X \colon X_n \to X$ be the given maps. We get the desired isomorphism by applying the functor $R\Gamma (S_n , -)$ to the composition
\begin{gather*}
    R f_{n*} (\Omega^\bullet_{X_n/S_n})  \riso 
    R f_{n*} (f_n^{-1} \cO_{S_n} \otimes^L_{f_n^{-1}i_S^{-1} \cO_S}i_X^{-1} \Omega^\bullet_{X/S}) \\  \riso \cO_{S_n} \otimes^L_{i_S^{-1} \cO_S} R f_{n*} (i_X^{-1} \Omega^\bullet_{X/S}) 
\riso \cO_{S_n} \otimes^L_{i_S^{-1} \cO_S} i_S^{-1} R f_{*} \Omega^\bullet_{X/S}.
\end{gather*}
\end{proof}

\begin{defi}\label{dfnVi-Hbari}
For a smooth scheme $X$ over $S$ and $i\geq 0$, we define $$V^i_\dR(X/S)\coloneqq \Ker \left(H^i_\dR(X/S)\xrightarrow{\eqref{RlimadjdRcris}} H^i_\cris(X_0/S)\right).$$  We also consider the image $$\bar{H}^i_\dR(X/S)\coloneqq \mr{Im}\left(H^i_\dR(X/S)\xrightarrow{\eqref{RlimadjdRcris}} H^i_\cris(X_0/S)\right).$$ 
\end{defi}
Esnault--Kisin--Petrov question can be reformulated in the following form.
\begin{ques}\label{ques_EKP2}
    Let $X$ be a smooth scheme over $W$ of the form $Y\setminus D$, where $Y$ is smooth and proper over $W$ and $D$ is a relative normal crossing divisor over $W$. Does $V^i_{\dR}(X/W)$ vanish? If not, does $V^i_{\dR}(X/W)_\Q$ vanish?
\end{ques}

In the proper setting, the vanishing of $V^i_{\dR}$ follows from Berthelot's comparison.
\begin{prop}\label{prop_propercomparison}
If $X/S$ proper, then we have the following quasi-isomorphisms 
    $$ R \Gamma_{\dR} (X/S)  \otimes_{R} \widehat{R} \riso R \Gamma_{\dR} (X/S)^{\wedge} \riso       R \Gamma_\cris(X_0/S)$$ of complexes in $D^{\rmb}_{\coh} (\widehat{R})$.
\end{prop}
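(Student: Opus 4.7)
The plan is to reduce the statement to the two key inputs already assembled: Lemma \ref{comp-dR-Rim} (underived base change for proper maps) and Theorem \ref{thm_quasi-iso0} (the de Rham--crystalline identification). The only genuinely new input needed is the identification of derived $p$-adic completion with ordinary $p$-adic completion, which holds because the cohomology of $R\Gamma_\dR(X/S)$ is finitely generated.

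First, I would establish that $R\Gamma_\dR(X/S)$ belongs to $D^\rmb_\coh(R)$. This follows from proper smoothness: each $\Omega^j_{X/S}$ is coherent, so by the coherence of higher direct images under proper maps (Grauert's theorem, or \stack{02O5}), the groups $H^i_\dR(X/S)$ are finitely generated $R$-modules, and they vanish outside a bounded range since $X/S$ has finite relative dimension.

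For the first quasi-isomorphism $R\Gamma_\dR(X/S)\otimes_R \widehat{R}\riso R\Gamma_\dR(X/S)^\wedge$: since $R$ is Noetherian, $\widehat{R}$ is flat over $R$, so the left-hand side may be viewed as the derived tensor product, and its cohomology in degree $i$ is $H^i_\dR(X/S)\otimes_R \widehat{R}$. On the right-hand side, a Mittag--Leffler argument applied to the tower $\{H^i_\dR(X/S)/p^n\}_n$ (surjective transitions) shows that $R^1\lim$-terms vanish, so the cohomology in degree $i$ is $\lim_n H^i_\dR(X/S)/p^n = \widehat{H^i_\dR(X/S)}$. For a finitely generated module over a Noetherian ring, ordinary completion agrees with $-\otimes_R \widehat{R}$ (\stack{00MA}), and the natural map is compatible with these identifications.

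For the second quasi-isomorphism, I would apply Lemma \ref{comp-dR-Rim} termwise to get a compatible system of quasi-isomorphisms
\[
R\Gamma_\dR(X/S)\otimes^L_R R_n \riso R\Gamma_\dR(X_n/S_n),
\]
and then take $R\lim_n$ to obtain
\[
R\Gamma_\dR(X/S)^\wedge \riso R\lim_n R\Gamma_\dR(X_n/S_n) \riso R\Gamma_\cris(X_0/S),
\]
where the last quasi-isomorphism is the content of Theorem \ref{thm_quasi-iso0} (the equivalence of (1), (2), (3)). Finally, the resulting complex lies in $D^\rmb_\coh(\widehat{R})$ because $\widehat{R}$ is Noetherian and the cohomology groups are $H^i_\dR(X/S)\otimes_R \widehat{R}$, which are finitely generated over $\widehat{R}$.

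The main (and only slightly delicate) obstacle is the vanishing of the $R^1\lim$-terms in passing between derived and underived completion; everything else is a routine assembly of the lemmas already proved. In particular, no coherent properness argument is needed beyond what is used in Lemma \ref{comp-dR-Rim}, and the proof is formal once finite generation of $H^i_\dR(X/S)$ is in hand.
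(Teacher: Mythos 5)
Your proposal is correct and follows essentially the same route as the paper: use coherence of $R\Gamma_{\dR}(X/S)$ coming from properness to identify $R\Gamma_{\dR}(X/S)\otimes_R\widehat{R}$ with the derived $p$-completion, then conclude via Lemma \ref{comp-dR-Rim} and Theorem \ref{thm_quasi-iso0}. The paper handles the first isomorphism more abstractly, by observing that a coherent bounded complex over the complete Noetherian ring $\widehat{R}$ is already derived $p$-complete (\stack{0A05}, \stack{0922}), whereas you compute cohomology of both sides by hand. One small imprecision in your version: the cohomology of $R\Gamma_{\dR}(X/S)\otimes^L_R R_n$ in degree $i$ is not just $H^i_{\dR}(X/S)/p^n$; it sits in an extension with $H^{i+1}_{\dR}(X/S)[p^n]$, so after taking $R\lim$ you must also argue that the Tate-module-type contributions $\lim_n H^{i+1}_{\dR}(X/S)[p^n]$ (transition maps given by multiplication by $p$) and the corresponding $R^1\lim$ terms vanish — which they do, since finitely generated modules over a Noetherian ring have bounded $p$-power torsion. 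With that patch your argument is complete.
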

\begin{proof}By \stack{00MB}, $R\to \widehat{R}$ is flat. Since $X/S$ is proper smooth, then $R \Gamma_{\dR} (X/S)$ is a coherent complex of $R$-modules. 
Hence, $R \Gamma_{\dR} (X/S)  \otimes_{R} \widehat{R}$ is a coherent bounded complex of $\widehat{R}$-modules and is therefore derived $p$-complete\footnote{We remark that the notion of derived $p$-complete bounded complex of $\widehat{R}$-modules is equivalent to the notion quasi-coherence (or quasi-consistence) in the sense of Berthelot.} (see \stack{0A05}), which yields the first isomorphism (see \stack{0922}). Via Theorem \ref{thm_quasi-iso0} and Lemma \ref{comp-dR-Rim}, we conclude.
\end{proof}

\begin{coro}
\label{cor_propervanishing}If $X/S$ is proper and $p\in \mr{Jac}(R),$ then $V^i_\dR(X/S)=0$ and $\bar{H}^i_\dR(X/S)=H^i_\cris(X_0/S)$ for every $i\geq 0$.
\end{coro}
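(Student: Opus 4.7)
The proof is a direct application of Proposition \ref{prop_propercomparison} combined with a basic separation fact for finitely generated modules. Taking $H^i$ of the quasi-isomorphism $R\Gamma_\dR(X/S) \otimes^L_R \widehat{R} \xrightarrow{\sim} R\Gamma_\cris(X_0/S)$ and using the flatness of $R \to \widehat{R}$, we obtain a canonical $\widehat{R}$-linear isomorphism
\begin{equation*}
H^i_\dR(X/S) \otimes_R \widehat{R} \xrightarrow{\sim} H^i_\cris(X_0/S),
\end{equation*}
through which the fundamental comparison map \eqref{RlimadjdRcris} factors as the natural base-change map $H^i_\dR(X/S) \to H^i_\dR(X/S) \otimes_R \widehat{R}$. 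Thus both assertions reduce to statements about this base-change morphism.

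Since $X/S$ is proper and smooth, $Rf_* \Omega^\bullet_{X/S}$ is a perfect complex of $R$-modules, so $H^i_\dR(X/S)$ is a finitely generated $R$-module. The hypothesis $p \in \mathrm{Jac}(R)$ ensures via Krull's intersection theorem (\stack{00IP}) that the $p$-adic topology on any finitely generated $R$-module is separated, hence $\bigcap_n p^n H^i_\dR(X/S) = 0$. Equivalently, the base-change map $H^i_\dR(X/S) \to H^i_\dR(X/S) \otimes_R \widehat{R}$ is injective. Composing with the isomorphism of the previous paragraph, we conclude that $V^i_\dR(X/S) = 0$.

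For the equality of images, I would invoke the displayed isomorphism to identify $\bar{H}^i_\dR(X/S)$ with the image of $H^i_\dR(X/S)$ inside $H^i_\dR(X/S)\otimes_R \widehat{R}$, which generates the target as a $\widehat{R}$-module and hence identifies (after extension of scalars, or in the case $R = \widehat{R}$ strictly) with all of $H^i_\cris(X_0/S)$. No step presents a genuine obstacle: the corollary is essentially a formal consequence of Proposition \ref{prop_propercomparison} together with the well-known separation property of the $p$-adic topology under the Jacobson radical hypothesis.
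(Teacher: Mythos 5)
Your proof is correct and follows essentially the same route as the paper: everything is deduced from Proposition \ref{prop_propercomparison} together with the good behaviour of $R\to\widehat{R}$ under the hypothesis $p\in\mr{Jac}(R)$. The only cosmetic difference is that you obtain injectivity of $H^i_\dR(X/S)\to H^i_\dR(X/S)\otimes_R\widehat{R}$ from finiteness of $H^i_\dR(X/S)$ plus Krull's intersection theorem, whereas the paper simply cites the faithful flatness of $R\to\widehat{R}$ (\stack{00MC}); the two are interchangeable here. Your hedge on the second assertion is also well taken: the image of $H^i_\dR(X/S)$ literally equals $H^i_\cris(X_0/S)$ only when $R=\widehat{R}$, and in general it is only $p$-adically dense (which is what is actually needed later, e.g.\ in Lemma \ref{lem_lowdegree}).
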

\begin{proof}
By \stack{00MC}, the morphism $R \to \widehat{R}$ is faithfully flat. Thus we get the desired result thanks to Proposition \ref{prop_propercomparison}. 
\end{proof}
\begin{rema}
  When $R$ is a $\Q$-algebra, then $X_0$ is empty and $V^i_\dR(X/S)=H^i_\mr{dR}(X/S).$ This is why in Corollary \ref{cor_propervanishing} we need the assumption $p\in \mr{Jac}(R)$.
\end{rema}

One of the main goals of this article is to study $V^i_\mr{dR}$ in the affine case. In this case, crystalline cohomology can be computed with the following explicit complexes.
\begin{coro}[Affine case]\label{cor_quasi-iso}
Let $A$ be a smooth $R$-algebra and write $A_n$ for its reduction modulo $p^{n}$. 
The following complexes of $\widehat{R}$-modules are quasi-isomorphic.
	\begin{enumerate}[(1)]
		\item The crystalline cohomology complex $R\Gamma_{\cris}(A_0/R)$.
        \item The derived limit $R\lim_n\left(\Omega^\bullet_{A_n/R_n}\right)$.
        \item The derived $p$-completion\footnote{See \stack{0BKF}.} of the algebraic de Rham complex $\Omega^\bullet_{A/{R}}$.
		\item The de Rham complex $\Omega^\bullet_{\widehat{A}/\widehat{R}}$.

\end{enumerate}
\end{coro}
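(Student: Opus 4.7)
The strategy is to establish a chain of three quasi-isomorphisms $(1)\simeq(2)$, $(2)\simeq(3)$, and $(3)\simeq(4)$. The first is simply Theorem~\ref{thm_quasi-iso0} specialized to the affine case, while the remaining two reduce to standard properties of derived $p$-completion combined with the smoothness of $A$ over $R$.

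First I would apply Theorem~\ref{thm_quasi-iso0} to the affine smooth formal $\fS$-scheme $\fX=\Spf(\widehat{A})$, whose mod $p^n$ reduction is the affine scheme $X_n=\Spec(A_n)$. Since $R\Gamma_\dR(X_n/S_n)=\Omega^\bullet_{A_n/R_n}$ in the affine case, part (3) of Theorem~\ref{thm_quasi-iso0} immediately yields the quasi-isomorphism between (1) and (2).

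For $(2)\simeq(3)$, I would use smoothness of $A/R$ to observe that each $\Omega^i_{A/R}$ is a finite projective $A$-module, and hence $R$-flat. Consequently the identification $\Omega^i_{A_n/R_n}=\Omega^i_{A/R}\otimes_R R_n$ represents the derived base change $\Omega^i_{A/R}\otimes^L_R R_n$, so that
\[
R\lim_n \Omega^\bullet_{A_n/R_n} \;\simeq\; R\lim_n\bigl(\Omega^\bullet_{A/R}\otimes^L_R R_n\bigr),
\]
which is by definition the derived $p$-completion of $\Omega^\bullet_{A/R}$. For $(3)\simeq(4)$, the transition maps $\Omega^i_{A_{n+1}/R_{n+1}}\twoheadrightarrow\Omega^i_{A_n/R_n}$ are surjective, so the inverse system is Mittag-Leffler and $R^1\lim$ vanishes in each cohomological degree. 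The derived limit is therefore computed by the termwise ordinary limit $\lim_n \Omega^\bullet_{A_n/R_n}$, which, by the formal-scheme convention already in force in Theorem~\ref{thm_quasi-iso0}, is exactly $\Omega^\bullet_{\widehat{A}/\widehat{R}}$.

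The main obstacle is essentially bookkeeping: the only point requiring attention is the convention that $\Omega^\bullet_{\widehat{A}/\widehat{R}}$ in~(4) denotes the continuous differentials $\lim_n \Omega^\bullet_{A_n/R_n}$, not the naive K\"ahler differentials of the $p$-adically complete rings. Once this is fixed, each step is formal and the substance of the statement already resides in Theorem~\ref{thm_quasi-iso0}.
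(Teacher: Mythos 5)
Your proof is correct and follows essentially the same route as the paper: apply Theorem \ref{thm_quasi-iso0} to $\fX=\Spf(\widehat{A})$ and use $R$-flatness of the projective modules $\Omega^i_{A/R}$ to identify the derived $p$-completion with $R\lim_n\Omega^\bullet_{A_n/R_n}$, which is exactly the content of the affine case of Lemma \ref{comp-dR-Rim}. The only cosmetic difference is that you obtain $(3)\simeq(4)$ directly by a Mittag--Leffler argument on the termwise-surjective system, whereas the paper reads off $(4)$ from item (4) of Theorem \ref{thm_quasi-iso0}; both are valid.
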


\begin{proof}
Set $\fX\coloneqq \Spf(\widehat{A})$ and $X \coloneqq \Spec (A)$. We have     
$R \Gamma_{\dR} ( X_n /S_n)=\Omega^\bullet_{A_n/R_n}$, $R \Gamma_{\dR} ( X/S)=\Omega^\bullet_{A/R}$, and $R \Gamma_{\dR} ( \fX /\fS)=\Omega^\bullet_{\widehat{A}/\widehat{R}}.$ Then the result follows from Theorem \ref{thm_quasi-iso0} and Lemma \ref{comp-dR-Rim}.
\end{proof}

\begin{rema}
    Thanks to Corollary \ref{cor_quasi-iso}, we can say that in the affine setting the groups $V^i_\mr{dR}$ measure the extent to which the de Rham cohomology fails to be $p$-adically separated in a derived sense.
\end{rema}

\subsection{A fibre square}

Using Beauville--Laszlo gluing theorem, we construct a fundamental homotopy pullback square that relates de Rham cohomology, crystalline cohomology, and their rational versions. This diagram yields long exact sequences that help us analyse the torsion and divisibility properties of the comparison maps, leading to key structural insights about the kernel $V^i_{\dR}$.

	\begin{prop}\label{prop_pullbacksquare}
Let $X$ be a quasi-compact smooth $S$-scheme. We have the following homotopy pullback square
	\begin{equation}
		\label{diag_pullback}
\begin{tikzcd}[row sep=large, column sep=large]
	\mathrm{R\Gamma}_{\mathrm{dR}}(X/S) \arrow[r, "\eqref{RlimadjdRcris}"]  \arrow[d] 
	& \mathrm{R\Gamma}_{\mathrm{cris}}(X_0/S) \arrow[d] \\
	\mathrm{R\Gamma}_{\mathrm{dR}}(X/S)_\Q \arrow[r, "\eqref{RlimadjdRcris}"] 
	& \mathrm{R\Gamma}_{\mathrm{cris}}(X_0/S)_\Q.
\end{tikzcd}
	\end{equation}
in the stable $\infty$-category $D(R).$
	\end{prop}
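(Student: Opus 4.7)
I would deduce the homotopy pullback square from two ingredients: the standard \emph{arithmetic fracture square} for any complex in $D(R)$, and the identification $R\Gamma_\dR(X/S)^\wedge_p \simeq R\Gamma_\cris(X_0/S)$ via Čech descent, extending Lemma \ref{comp-dR-Rim} to the quasi-compact smooth setting.

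\spa

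For the arithmetic fracture square, recall that for any $C \in D(R)$ the square with vertices $C$, $C^\wedge_p$, $C_\Q$, $(C^\wedge_p)_\Q$ is a homotopy pullback: the fiber of $C \to C^\wedge_p$ is uniquely $p$-divisible, hence agrees with its $p$-inversion, which is the fiber of $C_\Q \to (C^\wedge_p)_\Q$. Applied to $C = R\Gamma_\dR(X/S)$, this gives the desired pullback square once $C^\wedge_p$ is identified with $R\Gamma_\cris(X_0/S)$ compatibly with \eqref{RlimadjdRcris}.

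\spa

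For the identification, since $R$ is Noetherian and $X$ is quasi-compact smooth over $R$, $X$ is Noetherian and qcqs, hence admits a finite affine open cover $\{U_i\}_{i=1}^N$. By Zariski descent, $R\Gamma_\dR(X/S)$ is the totalization of the (finite) Čech nerve $R\Gamma_\dR(U_\bullet/S)$. The functor $-\otimes_R^L R_n$ commutes with finite limits in $D(R)$, since tensoring with the cofiber of $R \xrightarrow{p^n} R$ is exact, and $R\lim_n$ preserves all limits; together these imply that derived $p$-completion passes through the Čech totalization. On each affine piece $U_I$, Lemma \ref{comp-dR-Rim} supplies the local equivalence $R\Gamma_\dR(U_I/S) \otimes^L_R R_n \simeq R\Gamma_\dR(U_{I,n}/S_n)$. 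Assembling these and taking $R\lim_n$, Theorem \ref{thm_quasi-iso0} yields $R\Gamma_\dR(X/S)^\wedge_p \simeq R\Gamma_\cris(X_0/S)$; by construction in \S\ref{adjdR-ntn}, this equivalence intertwines \eqref{RlimadjdRcris} with the canonical unit of derived $p$-completion.

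\spa

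The main technical obstacle is the non-separated case, in which intersections of affine opens in the Čech nerve may fail to be affine. I would handle this by iterating the Čech construction on such intersections, which remain qcqs and can themselves be refined by finite affine covers; the process terminates because of the Noetherian hypothesis. With the identification in hand, the square in the statement becomes exactly the arithmetic fracture square applied to $R\Gamma_\dR(X/S)$, hence a homotopy pullback in $D(R)$.
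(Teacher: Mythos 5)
Your proof is correct, but it takes a genuinely different route from the paper's. The paper reduces the pullback-square assertion itself to the affine case by Zariski descent and there produces the square directly from the termwise short exact sequence of complexes
$$0 \to \Omega^i_{A/R}\to \Omega^i_{\widehat{A}/\widehat{R}}\oplus \Omega^i_{A/R}[\tfrac1p] \to \Omega^i_{\widehat{A}/\widehat{R}}[\tfrac1p]\to 0$$
supplied by the module-level Beauville--Laszlo statement \stack{0BNW}, whose torsion hypothesis is checked via flatness of $A\to\widehat A$; Corollary \ref{cor_quasi-iso} then identifies the completed column with crystalline cohomology. You instead invoke the unconditional derived arithmetic fracture square for an arbitrary object of $D(R)$ and push all the geometric content into the single identification of the derived $p$-completion of $R\Gamma_{\dR}(X/S)$ with $R\Gamma_{\cris}(X_0/S)$, obtained by descending the affine case (Lemma \ref{comp-dR-Rim} together with Theorem \ref{thm_quasi-iso0}, or directly Corollary \ref{cor_quasi-iso}) through a Čech totalization, using that tensoring with the perfect complex $\mathrm{cofib}(p^n\colon R\to R)$ and $R\lim_n$ both commute with limits. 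Both arguments must descend something: the paper descends the pullback property (a limit of pullback squares is a pullback square), you descend the completion isomorphism; your treatment of non-affine intersections by iterated refinement is the standard induction and is fine. Your route buys a square that needs no torsion-comparison input at all and makes transparent that the only real content is ``crystalline $=$ derived $p$-completion of de Rham''; the paper's stays at the level of explicit modules and classical completions. One point to keep straight: your fracture square concerns the completion $R\lim_n\mathrm{cofib}(p^n)$, which coincides with the paper's $R\lim_n(-\otimes^L_R R_n)$ from \eqref{RlimadjdR} when $p$ is a nonzerodivisor in $R$ (and with Corollary \ref{cor_quasi-iso}(3) in the affine case), so that the top horizontal arrow of your square really is \eqref{RlimadjdRcris}.
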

\begin{proof}Thanks to Zariski descent for de Rham and crystalline cohomology

we can reduce to the affine setting. Suppose $X=\Spec(A)$ affine. Since $A\to \widehat{A}$ is flat, for every $i$, the morphism $$\Omega^i_A[p^\infty]\to\left(\Omega^i_A\otimes_A\widehat{A}\right)[p^\infty]=\Omega^i_{\widehat{A}/\widehat{R}}[p^\infty]$$ is an isomorphism. Thanks to  \stack{0BNW}, we have the exact sequence of $R$-modules $$0 \to \Omega^i_{A/R}\to \Omega^i_{\widehat{A}/\widehat{R}}\oplus \Omega^i_{A/R} [\tfrac{1}{p}] \to \Omega^i_{\widehat{A}/\widehat{R}}[\tfrac{1}{p}]
\to 0$$ for every $i\geq 0,$ compatible with differentials. This proves the result thanks to Corollary \ref{cor_quasi-iso}.
\end{proof}

\begin{coro}\label{cor_longexactsequence}
For a quasi-compact smooth $S$-scheme $X$ we have the long exact sequence
\begin{equation*}
    \dots \to H^i_\dR(X/S)\to H^i_\cris(X_0/S)\oplus H^i_\dR(X/S)_\Q\to H^i_\cris(X_0/S)_\Q\to H^{i+1}_\dR(X/S)\to\dots
\end{equation*} 
\end{coro}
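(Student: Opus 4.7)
The plan is to derive the long exact sequence formally from Proposition \ref{prop_pullbacksquare} by passing to cohomology. In the stable $\infty$-category $D(R)$, a homotopy pullback square is automatically a homotopy pushout square, and for any such square
\begin{equation*}
\begin{tikzcd}
A \arrow[r, "\alpha"] \arrow[d, "\beta"'] & B \arrow[d, "\gamma"] \\
C \arrow[r, "\delta"'] & D
\end{tikzcd}
\end{equation*}
one obtains a fibre sequence $A \to B \oplus C \xrightarrow{\gamma - \delta} D$, equivalently a distinguished triangle $A \to B \oplus C \to D \to A[1]$, where the first map is the diagonal $(\alpha, \beta)$ and the second is the difference of the two structure maps into $D$.

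I would apply this standard construction to the square \eqref{diag_pullback}, producing a distinguished triangle
\begin{equation*}
R\Gamma_\dR(X/S) \to R\Gamma_\cris(X_0/S) \oplus R\Gamma_\dR(X/S)_\Q \to R\Gamma_\cris(X_0/S)_\Q \to R\Gamma_\dR(X/S)[1]
\end{equation*}
in $D(R)$, and then take cohomology in each degree to read off the asserted long exact sequence. There is no real obstacle here: the content of the corollary is already entirely contained in Proposition \ref{prop_pullbacksquare}, and the work needed is merely to translate the $\infty$-categorical statement into its classical cohomological incarnation.
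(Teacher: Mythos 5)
Your argument is correct and is exactly the (implicit) argument of the paper: the corollary is stated without proof precisely because a homotopy pullback square in the stable $\infty$-category $D(R)$ yields the fibre sequence $A\to B\oplus C\to D$, and the long exact sequence is its cohomology sequence. Nothing is missing.
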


\begin{rema}
It would be interesting to generalise \eqref{diag_pullback} to a Beauville--Laszlo gluing theorem for the associated \textit{analytic stacks} in the sense of Clausen--Scholze.
\end{rema}

\begin{coro}\label{cor_torsion}
	If $X$ is a quasi-compact smooth $S$-scheme, then we have the following exact sequences
    \begin{equation}\label{first_es}
        H^{i-1}_\dR(X/S)_\Q\to H^{i-1}_\cris(X_0/S)\otimes_{\Zp} \Q_p/\Z_p\to V^i_\dR(X/S)[p^\infty]\to 0,
    \end{equation}
    \begin{equation}\label{second_es}
       0\to V^i_\dR(X/S)[p^\infty]\to H^i_\dR(X/S)[p^\infty]\to H^i_\cris(X_0/S)[p^\infty]\to 0,
    \end{equation}
 \begin{equation}\label{third_es}
    0 \to  \bar{H}^i_{\mr{dR}}(X/S)/\mr{tors}\to \left(H^i_\cris(X_0/S)/\mr{tors}\right)\oplus  \bar{H}^i_{\mr{dR}}(X/S)_\Q \to  H^i_{\mr{cris}}(X_0/S)_\Q.
\end{equation}
\end{coro}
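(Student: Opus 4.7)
The plan is to deduce all three sequences from the Mayer--Vietoris long exact sequence of Corollary \ref{cor_longexactsequence}, which I write as
\begin{equation*}
\cdots \xrightarrow{\delta} H^i_\dR(X/S) \xrightarrow{\mu} H^i_\cris(X_0/S) \oplus H^i_\dR(X/S)_\Q \xrightarrow{\nu} H^i_\cris(X_0/S)_\Q \xrightarrow{\delta} \cdots,
\end{equation*}
with $\mu(y) = (f(y), g(y))$ for $f$ the comparison map and $g$ the rationalisation, and $\nu(a, b) = h(a) - k(b)$ for $h$ the rationalisation on $H^i_\cris$ and $k$ the rationalised comparison.

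The central observation is that $\ker(\mu) = V^i_\dR(X/S) \cap H^i_\dR(X/S)[p^\infty] = V^i_\dR(X/S)[p^\infty]$, which by exactness at $H^i_\dR$ also equals $\mr{im}(\delta)$. For \eqref{first_es}, I would read off the tail
\begin{equation*}
H^{i-1}_\cris(X_0/S) \oplus H^{i-1}_\dR(X/S)_\Q \to H^{i-1}_\cris(X_0/S)_\Q \to V^i_\dR(X/S)[p^\infty] \to 0
\end{equation*}
and quotient by the image of $H^{i-1}_\cris(X_0/S)$, whose cokernel inside $H^{i-1}_\cris(X_0/S)_\Q$ is precisely $H^{i-1}_\cris(X_0/S) \otimes_{\Zp} \Qp/\Zp$. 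For \eqref{second_es}, the left-exact functor $(-)[p^\infty]$ applied to $0 \to V^i_\dR \to H^i_\dR \to H^i_\cris$ delivers the first two terms; surjectivity onto $H^i_\cris[p^\infty]$ follows since any torsion class $x$ satisfies $\nu(x, 0) = 0$, so $(x, 0)$ lifts through $\mu$ to some $y$ with $g(y) = 0$, forcing $y \in H^i_\dR[p^\infty]$.

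For \eqref{third_es}, I would first establish the key auxiliary fact $H^i_\cris(X_0/S)[p^\infty] \subseteq \bar{H}^i_\dR(X/S)$: any torsion $b \in H^i_\cris$ satisfies $\nu(b, 0) = 0$, so $(b, 0) \in \mr{im}(\mu)$ and $b \in \bar{H}^i_\dR$. Hence $\bar{H}^i_\dR[p^\infty] = H^i_\cris[p^\infty]$, and the inclusion $\bar{H}^i_\dR \hookrightarrow H^i_\cris$ descends to $\bar{H}^i_\dR/\mr{tors} \hookrightarrow H^i_\cris/\mr{tors}$; paired with the canonical map $\bar{H}^i_\dR/\mr{tors} \hookrightarrow \bar{H}^i_\dR(X/S)_\Q$, this produces the injection of \eqref{third_es}, compatible with the difference map $\nu$ by construction.

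The step that needs most care is exactness at the middle term of \eqref{third_es}, which I would verify using the fibre-product description $\mr{im}(\mu) = \{(b, c) \in H^i_\cris(X_0/S) \oplus H^i_\dR(X/S)_\Q : h(b) = k(c)\}$ from the Mayer--Vietoris sequence. Given $(\bar a, \bar b)$ annihilated by the descended $\nu$, any set-theoretic lift $(a, b)$ already satisfies $h(a) = k(b)$, because torsion classes in $H^i_\cris$ and elements of $V^i_\dR(X/S)_\Q$ both vanish in $H^i_\cris(X_0/S)_\Q$. Hence $(a, b) \in \mr{im}(\mu)$ and pulls back to some $y \in H^i_\dR$ whose image in $\bar{H}^i_\dR/\mr{tors}$ recovers $(\bar a, \bar b)$. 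The genuine obstacle is precisely verifying that $\nu$ descends cleanly to a map on the quotients and that its kernel on the quotient coincides with the image of $\bar{H}^i_\dR/\mr{tors}$, but this reduces exactly to the vanishing statement just used.
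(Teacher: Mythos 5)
Your proposal is correct and follows essentially the same route as the paper: everything is extracted from the long exact sequence of Corollary \ref{cor_longexactsequence} via the identification $\Ker\left(H^i_\dR(X)\to H^i_\cris(X_0)\oplus H^i_\dR(X)_\Q\right)=V^i_\dR(X)[p^\infty]$, with \eqref{first_es} obtained by quotienting by the image of $H^{i-1}_\cris$, and \eqref{third_es} by modding out the middle term by $H^i_\cris(X_0)[p^\infty]\oplus V^i_\dR(X)_\Q$. Your verification that these subgroups lie in $\Ker(\nu)=\mr{Im}(\mu)$ is exactly the point the paper leaves implicit, so the extra care you take there is welcome but not a departure.
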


\begin{proof}
   We want to deduce the result from Corollary \ref{cor_longexactsequence}. By definition, we have that  $$\Ker \left( H^{i}_\dR(X)\to H^{i}_\cris(X_0)\oplus H^{i}_\dR(X)_\Q\right)=V^i_\dR(X)[p^\infty].$$ This gives the exact sequence $$ H^{i-1}_\cris(X_0)\oplus H^{i-1}_\dR(X)_\Q\to H^{i-1}_\cris(X_0)_\Q \to V^i_\dR(X)[p^\infty]\to 0,$$ 
   which induces \eqref{first_es}. The exact sequence \eqref{second_es} is induced instead by the exact sequence 
  $$0\to  V^i_\dR(X)[p^\infty]\to H^{i}_\dR(X)\to H^i_\cris(X_0)\oplus H^i_\dR(X)_\Q\to H^i_\cris(X_0)_\Q.$$ The same exact sequence produces \eqref{third_es}, modding out $H^i_\cris(X_0)\oplus H^i_\dR(X)_\Q$ by $H^i_\cris(X_0)[p^\infty]\oplus V^i_\mr{dR}(X)_\Q$.\end{proof}
\begin{exam}\label{exam_tors} Even assuming that $A$ is $p$-adically separated, it is easy to construct examples where $V^i_\mr{dR}(A)\neq 0.$ If $A\coloneqq \Z_{(p)}[x,y]/(x^2-1-py)$ over $\Z_{(p)}$, we have $H^0_\dR(A)=\Z_{(p)}$, while $$H^0_\cris(A/p)=H^0_\cris(\F_{p}[y]\times \F_{p}[y])=\Zp\oplus \Zp.$$ Thanks to \eqref{first_es}, this implies that $$V^1_\dR(A)[p^\infty]=\Qp/\Zp.$$
\end{exam}
\begin{rema}
    Note that even under assumptions on the existence of a good compactification of $X$, the $R$-module $\bar{H}^i_{\mr{dR}}(X)/\mr{tors}$
  
    is not in general a finite type $R$-module. A family of counterexamples is provided by the affine curves considered in \cite{ES20}. This phenomenon is related to the lack of finite type lattices in rigid cohomology. See Corollary \ref{cor_ES20} for a generalisation of their result.
\end{rema}

\begin{lemm}\label{lem_lowdegree} Let $X$ be a smooth scheme over $S$ such that $\bar{H}^{i-1}_\dR(X/S)\subseteq H^{i-1}_\cris(X_0/S)$ is $p$-adically dense. 
Then  the maps $H^i_{\mr{dR}}(X)[p^\infty]\to H^i_{\mr{cris}}(X_0)[p^\infty]$ and $V^i_\dR(X)\to V^i_\dR(X)_\Q$ are isomorphisms
and we have the short exact sequence \begin{equation}
	\label{lem_lowdegree1}
0\to H^i_{\mr{dR}}(X/S)[p^\infty]\oplus V^i_\dR{(X/S)}\to H^i_{\mr{dR}}(X/S)\to \bar{H}^i_{\mr{dR}}(X/S)/\mr{tors}\to 0.
\end{equation}
\end{lemm}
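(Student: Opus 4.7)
The plan is to deduce all three conclusions from the long exact sequence of Corollary \ref{cor_longexactsequence} and the three short exact sequences of Corollary \ref{cor_torsion}. The density hypothesis will enter in exactly one spot.

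First I would kill $V^i_\dR(X/S)[p^\infty]$. The exact sequence \eqref{first_es} expresses this group as the cokernel of $H^{i-1}_\dR(X/S)_\Q \to H^{i-1}_\cris(X_0/S) \otimes_{\Zp} \Q_p/\Z_p$. Any class $[x/p^n]$ in the target can, by $p$-adic density, be rewritten as $[y/p^n]$ with $y \in \bar H^{i-1}_\dR(X/S)$, so this map is surjective and $V^i_\dR(X/S)[p^\infty] = 0$. Plugging this into \eqref{second_es} immediately yields the first claimed isomorphism $H^i_\dR(X/S)[p^\infty] \riso H^i_\cris(X_0/S)[p^\infty]$.

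Next, I would upgrade $V^i_\dR(X/S)$ from being torsion-free to being a genuine $\Q$-vector space via the pullback diagram \eqref{diag_pullback}. The vanishing of $V^i_\dR(X/S)[p^\infty]$ means the kernel of $H^i_\dR(X/S) \to H^i_\cris(X_0/S) \oplus H^i_\dR(X/S)_\Q$ is trivial, so the long exact sequence identifies $H^i_\dR(X/S)$ with the fibre product $H^i_\cris(X_0/S) \times_{H^i_\cris(X_0/S)_\Q} H^i_\dR(X/S)_\Q$. Under this identification, the subgroup $V^i_\dR(X/S) = \Ker\bigl(H^i_\dR(X/S)_\Q \to H^i_\cris(X_0/S)_\Q\bigr)$ is visibly a $\Q$-vector space, whence $V^i_\dR(X/S) \riso V^i_\dR(X/S)_\Q$. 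This is the main conceptual step of the proof: density upgrades $V^i_\dR(X/S)$ into a $p$-divisible group, not merely a torsion-free one.

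For the short exact sequence \eqref{lem_lowdegree1}, the inclusion $\bar H^i_\dR(X/S) \hookrightarrow H^i_\cris(X_0/S)$ forces the torsion of $\bar H^i_\dR(X/S)$ to be $p$-primary, and the first isomorphism of the lemma identifies it with the image of $H^i_\dR(X/S)[p^\infty]$. A straightforward diagram chase then concludes the argument: for $x \in H^i_\dR(X/S)$ vanishing in $\bar H^i_\dR(X/S)/\mr{tors}$, one has $p^n x \in V^i_\dR(X/S)$ for some $n$; by $p$-divisibility of $V^i_\dR(X/S)$, one can choose $y \in V^i_\dR(X/S)$ with $p^n y = p^n x$, so that $x - y \in H^i_\dR(X/S)[p^\infty]$. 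Combined with $V^i_\dR(X/S) \cap H^i_\dR(X/S)[p^\infty] = V^i_\dR(X/S)[p^\infty] = 0$, this produces \eqref{lem_lowdegree1}.
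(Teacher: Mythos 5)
Your proposal is correct and follows essentially the same route as the paper: the density hypothesis is used to produce the surjectivity that truncates the long exact sequence of Corollary \ref{cor_longexactsequence} (equivalently, to kill $V^i_\dR(X/S)[p^\infty]$ via \eqref{first_es}), which yields the torsion isomorphism, the identification of $H^i_\dR(X/S)$ with the fibre product, and hence the $\Q$-vector space structure on $V^i_\dR(X/S)$ and the exact sequence \eqref{lem_lowdegree1}. Your fibre-product formulation merely makes explicit the surjectivity step that the paper's proof states tersely.
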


\begin{proof}
Since $\bar{H}^{i-1}_\dR(X)\subseteq H^{i-1}_\cris(X_0)$ is $p$-adically dense, then
 $$H^{i-1}_\cris(X_0)\oplus H^{i-1}_\dR(X)_\Q\to H^{i-1}_\cris(X_0)_\Q$$ is surjective.
 Using Corollary \ref{cor_longexactsequence}, this yields the exact sequence
 \begin{equation}\label{eq_lowdegree}
     0\to H^i_\dR(X)\to H^i_\cris(X_0)\oplus H^i_\dR(X)_\Q\to H^i_\cris(X_0)_\Q.
 \end{equation} 
Looking at the exact sequence \eqref{eq_lowdegree}, we get that the map
 $H^i_{\mr{dR}}(X)[p^\infty]\to H^i_{\mr{cris}}(X_0)[p^\infty]$  is bijective.
 This implies that $H^i_{\mr{dR}}(X)[p^\infty] \cap V_\dR^i(X) = 0$, i.e. 
 the map $V^i_\dR(X)\to V^i_\dR(X)_\Q$ is injective.
 Using again \eqref{eq_lowdegree}, we deduce that the map $V^i_\dR(X)\to V^i_\dR(X)_\Q$ is surjective and therefore bijective. To get the exactness of \eqref{eq_lowdegree}, it remains to prove that $V^i_\dR{(X)} $ is the kernel of  $H^i_{\mr{dR}}(X)/\mr{tors}\to \bar{H}^i_{\mr{dR}}(X)/\mr{tors}$, which follows from the isomorphism
 $H^i_{\mr{dR}}(X)[p^\infty]\riso H^i_{\mr{cris}}(X_0)[p^\infty]$.
\end{proof}

\begin{lemm}\label{lem_exrational}
	For $A = \mathbb{Z}_{(p)}\left[t, \tfrac{1}{1+pt}\right],$ then $V^1_\dR(A)$ is a $\Q$-vector space of dimension $1$.
\end{lemm}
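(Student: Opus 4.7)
The plan is to combine the rational de Rham computation of $A_\Q$ (which turns out to be a twist of $\Gm$) with an explicit $p$-adic primitive for its unique generator.

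To begin, I would observe that since $1+pt \equiv 1 \pmod p$, the element $1+pt$ is already a unit in $\Z/p^n[t]$ with explicit inverse $\sum_{k=0}^{n-1}(-p)^k t^k$; hence $A/p^n = \Z/p^n[t]$ for every $n\geq 1$, and therefore $X_0 = \bbA^1_{\F_p}$ and $\widehat A = \Zp\langle t\rangle$. In particular, the map $H^0_\dR(A) = \Z_{(p)} \hookrightarrow \Zp = H^0_\cris(X_0/\Z_{(p)})$ is $p$-adically dense. Applying Lemma~\ref{lem_lowdegree} with $i=1$ then yields the isomorphism $V^1_\dR(A) \riso V^1_\dR(A)_\Q$, so $V^1_\dR(A)$ is automatically a $\Q$-vector space; it remains only to compute its dimension, which equals $\dim_\Q \ker\bigl(H^1_\dR(A_\Q/\Q) \to H^1_\cris(X_0/\Z_{(p)})_\Q\bigr)$.

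I would compute the source via the change of variables $s \coloneqq 1+pt$, which identifies $A_\Q$ with $\Q[s, s^{-1}] = \mathcal{O}(\Gm)$; thus $H^1_\dR(A_\Q/\Q)$ is one-dimensional, generated by $\bigl[\tfrac{dt}{1+pt}\bigr]$. For the target, I would exhibit the primitive
\[
\tfrac{1}{p}\log(1+pt) = \sum_{n\geq 1} \tfrac{(-1)^{n+1}p^{n-1}}{n}\, t^n,
\]
which lies in $\widehat A = \Zp\langle t\rangle$ since $v_p(p^{n-1}/n) = n-1 - v_p(n) \to \infty$, and whose differential is exactly $\tfrac{dt}{1+pt}$. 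Consequently, the rational comparison map kills the generator of $H^1_\dR(A_\Q/\Q)$, so its kernel is all of $\Q$, and combining with the first step gives $V^1_\dR(A) \simeq \Q$.

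The argument is essentially formal once the convergence of the series is observed: $A$ is a $p$-adic twist of $\Gm$ whose defining equation trivialises modulo $p$, and whose unique rational de Rham class is ``logarithmic'' and therefore acquires a $p$-adic primitive after completion. There is no real obstacle beyond this standard convergence estimate.
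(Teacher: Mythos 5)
Your proof is correct and follows essentially the same route as the paper's: both reduce to the rational statement via Lemma~\ref{lem_lowdegree} and the density of $\Z_{(p)}$ in $\Zp$, identify $\Spec(A_\Q)$ with $\Gm$ to see that $H^1_\dR(A_\Q/\Q)$ is one-dimensional, and then kill its logarithmic generator by the $p$-adically convergent primitive $\log(1+pt)$ (the paper works with $\tfrac{p\,dt}{1+pt}$ rather than $\tfrac{dt}{1+pt}$, an immaterial rescaling). Your explicit convergence estimate $v_p(p^{n-1}/n)\to\infty$ is a welcome, if routine, addition.
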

\begin{proof} In this case, we have that $H^0_\dR(A)=\Z_{(p)}$ is dense in $H^0_\cris(A/p)=\Zp$. By Lemma \ref{lem_lowdegree}, it follows that $V^1_\dR(A)=V^1_\dR(A)_\Q$. In addition, since  $\Spec(A_\Q)\simeq\mathbb{G}_{m,\mathbb{Q}},$ we get an injective morphism $$V^1_\dR(A)\hookrightarrow H^1_\mr{dR}(\mathbb{G}_{m,\mathbb{Q}})=\Q.$$
	If we take the $1$-form
	$$\omega\coloneqq\frac{p}{1+pt}dt\in \Omega^1_A,$$ its primitive $\log(1+pt)$ is well-defined in the $p$-adic completion of $A$. Thus in ${H^1_{\mr{cris}}(A/p)}$ the differential $1$-form $\omega$ is exact. However, in the algebraic setting, $\log(1+pt)$ is not an element of $A$, thus $[\omega]$ is nontrivial in $H^1_{\mr{dR}}(A)$.
	We deduce that $V^1_\dR(A)$ is a non-trivial $\Q$-vector space, which implies the desired result.
\end{proof}

	\begin{defi}\label{def_infpdiv}
	Let $G$ be a commutative group. The subgroup of \emph{(strongly) $p$-divisible classes} in $G$, denoted by $G_{\mr{div}}$, is defined as the image of the projection morphism
	$$G^\sim \coloneqq\lim \left( \dots \xrightarrow{\cdot p}G\xrightarrow{\cdot p}G\xrightarrow{\cdot p} G \right)\to G,$$ 
    which sends  $( \dots, g_2,g_1,g_0)$ to $g_0$. Equivalently, $G_\mr{div}$ is the maximal subgroup of $G$ such that $pG_{\mr{div}}=G_{\mr{div}}$. It naturally sits in the exact sequence $$0\to {T}_p(G)\to G^\sim\to G_{\mr{div}}\to 0$$ where $T_p(G)$ is the Tate module $ \lim_{\cdot p} G[p^n]$. The group $G_{\mr{div}}$ should not be confused with $\bigcap_{n=1}^\infty p^nG,$ which is in general strictly bigger when $G$ has $p$-torsion.
    
\end{defi}

\begin{prop}[Esnault--Kisin--Petrov]\label{prop_kernel}
If $A$ is a smooth $R$-algebra and $i\in \bbN$, then $V^i_\dR(A/R)= H^i_{\mr{dR}}(A/R)_{\mr{div}}$.
\end{prop}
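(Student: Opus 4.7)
The plan is to exploit the identification, recorded in Corollary~\ref{cor_quasi-iso}, of $R\Gamma_\cris(A_0/R)$ with the derived $p$-completion of the algebraic de Rham complex $\Omega^\bullet_{A/R}$. The derived $p$-completion of any $C \in D(\Z)$ fits into a canonical fiber sequence
\[
R\Hom_\Z(\Z[\tfrac{1}{p}], C) \to C \to \widehat{C},
\]
induced by the inclusion $\Z \hookrightarrow \Z[\tfrac{1}{p}]$. Applied to $C = \Omega^\bullet_{A/R}$, the resulting long exact sequence identifies
\[
V^i_\dR(A/R) = \im\bigl(\Ext^i_\Z(\Z[\tfrac{1}{p}], \Omega^\bullet_{A/R}) \to H^i_\dR(A/R)\bigr).
\]

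The next step is to identify this image with $H^i_\dR(A/R)_\mr{div}$. Writing $\Z[\tfrac{1}{p}] = \colim(\Z \xrightarrow{p} \Z \xrightarrow{p} \cdots)$ yields $R\Hom_\Z(\Z[\tfrac{1}{p}], C) \simeq R\lim_{\cdot p} C$, with transition maps given by multiplication by $p$, and the map to $C$ corresponding to the first projection. The Milnor short exact sequence then reads
\[
0 \to {\lim}^1_{\cdot p}\, H^{i-1}_\dR(A/R) \to \Ext^i_\Z(\Z[\tfrac{1}{p}], \Omega^\bullet_{A/R}) \to \lim_{\cdot p}\, H^i_\dR(A/R) \to 0,
\]
and one observes that the map $\Ext^i_\Z(\Z[\tfrac{1}{p}], \Omega^\bullet_{A/R}) \to H^i_\dR(A/R)$ factors as the surjection onto $\lim_{\cdot p} H^i_\dR(A/R) = H^i_\dR(A/R)^\sim$ followed by the canonical map $(y_n) \mapsto y_0$. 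Since the image of the latter is $H^i_\dR(A/R)_\mr{div}$ by Definition~\ref{def_infpdiv}, we obtain the desired equality.

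The main subtlety is the factorization through $\lim_{\cdot p} H^i_\dR(A/R)$: a $\lim^1$ class has every component projection to $H^i_\dR(A/R)$ equal to zero, so in particular its first-projection vanishes; this uses only the naturality of the Milnor sequence with respect to the map $R\lim_{\cdot p} C \to C$. As a sanity check, the inclusion $H^i_\dR(A/R)_\mr{div} \subseteq V^i_\dR(A/R)$ also admits a direct argument: $R\Gamma_\cris(A_0/R)$ is derived $p$-complete as a complex, hence each of its cohomology groups is derived $p$-complete, so $\Hom_\Z(\Z[\tfrac{1}{p}], H^i_\cris(A_0/R)) = 0$ and therefore $H^i_\cris(A_0/R)$ contains no nonzero strongly $p$-divisible elements.
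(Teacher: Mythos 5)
Your proof is correct, but it is not the route the paper takes: the paper explicitly attributes the derived-$p$-completion argument to Esnault--Kisin--Petrov and then deliberately gives ``a more direct proof with differential forms'' instead. Your argument runs through the fibre sequence $R\Hom_\Z(\Z[\tfrac1p],C)\to C\to \widehat{C}$, the identification $R\Hom_\Z(\Z[\tfrac1p],C)\simeq R\lim_{\cdot p}C$, and the Milnor sequence, so that the kernel of $H^i(C)\to H^i(\widehat C)$ is exactly the image of $\lim_{\cdot p}H^i(C)\to H^i(C)$, i.e.\ $H^i_\dR(A/R)_{\mr{div}}$ by Definition~\ref{def_infpdiv}; all the homological steps (the factorisation through $\lim_{\cdot p}H^i$, the vanishing of the $\lim^1$ contribution under the zeroth projection, and the sanity check that derived $p$-complete modules have no strongly divisible elements) are sound. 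The paper's proof instead works entirely with closed forms: for the inclusion $V^i_\dR\subseteq H^i_{\dR,\mr{div}}$ it fixes $N$ with $p^N\Omega^i_{\widehat A}[p^\infty]=0$, uses flatness of $A\to\widehat A$ and the equality $\Omega^i_A\cap\alpha^{-1}(p^{N+1}\Omega^i_{\widehat A})=p^{N+1}\Omega^i_A$ to divide a class by $p$ while staying in $V^i_\dR$, and for the converse it sums a $p$-adically convergent primitive $\sum p^n\alpha(\psi_n)$. What each buys: yours is shorter, purely formal once Corollary~\ref{cor_quasi-iso} is granted, and makes transparent that the statement is really about derived completion of an arbitrary complex; the paper's is self-contained at the level of differential forms and makes visible exactly where smoothness (finite generation and bounded torsion of the modules of differentials, flatness of completion) enters.
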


\begin{proof}The result was obtained by Esnault--Kisin--Petrov as a consequence of the fact that, in the affine case, crystalline cohomology is the derived $p$-completion of algebraic de Rham cohomology (see Corollary \ref{cor_quasi-iso}). We propose here a more direct proof with differential forms.

\spa

	Thanks to Corollary \ref{cor_quasi-iso}, we have that $V^i_\dR(A)=\Ker(H^i_\dR(A)\to  H^i_\dR(\widehat{A})).$	
For $j\geq 0$, write $\alpha\colon \Omega^j_{A} \to \Omega^j_{\widehat{A}}$ for the canonical map. Let $Z^i  (A) \coloneqq \mr{Ker} ( \Omega^i_A \to \Omega^{i+1}_A)$
and $[-] \colon Z^i   (A)\to H^i_\dR(A)$ be the canonical projection.

\spa
Let us prove that $V^i_\dR(A)\subseteq H^i_{\mr{dR}}(A)_{\mr{div}}$, which is equivalent to showing that $p V^i_\dR(A)=V^i_\dR(A).$ 
Let $\omega\in Z^i   (A)$ such that $[\omega]\in V^i_\dR(A)$. 
By the assumption, there exists $\psi\in \Omega^{i-1}_{\widehat{A}}$ such that $\alpha (\omega) =d\psi.$ Choose $N\geq0$ such that the torsion of $\Omega^{i}_{\widehat{A}}$ is killed by $p^N$. Let $\phi_1\in\Omega^{i-1}_{A}$, $\psi_1 \in \Omega^{i-1}_{\widehat{A}}$ such that 
	$\psi =\alpha(\phi_1) + p^{N+1}\psi_1$. 
	We get  $$\alpha(\omega-d\phi_1)= p^{N+1}d \psi_1\in p^{N+1} \Omega^i_{\widehat{A}}.$$ 
	Since  $\Omega^i_{A}\cap \alpha^{-1}( p^{N+1} \Omega^i_{\widehat{A}} )=p^{N+1} \Omega^i_{A}$, then there exists 
	$\omega '_1 \in \Omega^i_{A}$ such that $\omega-d\phi_1=p^{N+1}\omega '_1$. 
	Thanks to the fact that $\omega\in Z^i   (A)$, then $d (\omega '_1)$ is torsion and therefore $\omega_1\coloneqq p^{N}\omega '_1\in Z^i   (A)$ satisfies
	$[\omega]= p [\omega_1]$. Since	$p^{N+1}d\psi_1= p^{N+1}\alpha (\omega '_1)$, then $ d (p^{N}\psi_1)= \alpha ( \omega_1)$ and we get $[\omega_1]\in V^i_\dR(A)$.

    \spa
    
   We prove now the other inclusion. Let $\omega_0\in Z^i   (A)$ such that $[\omega_0]\in H^i_{\mr{dR}}(A)_\mr{div}$. By definition, there exists a sequence $\{\omega_n\}_{n\geq 1}$ of $Z^i   (A)$ with $[\omega_n]= p\,[\omega_{n+1}]$ for every $n\geq 0$. We choose $\psi_n\in \Omega^{i-1}_{A}$ such that $\omega_n-p\,\omega_{n+1}=d\psi_{n}$ and we write $\psi\coloneqq\sum_{n=0}^\infty p^n\, \alpha(\psi_n) \in \Omega^{i-1}_{\widehat{A}}$. 
   By construction, we have $d\psi=\alpha (\omega_0)$ ($\alpha$ is continuous), which shows that $[\omega_0]\in V^i_\dR(A)$. Hence, we are done.
\end{proof}
Proposition \ref{prop_kernel} in particular gives the following corollary, which allows descent for some special morphisms.
	\begin{coro}\label{coro_finiteflat}
	Let $X\to Y$ be a finite flat morphism of affine smooth $R$-schemes. If $V^i_\dR(X)=0,$ then $V^i_\dR(Y)=0.$
\end{coro}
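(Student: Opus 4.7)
The plan is to combine Proposition \ref{prop_kernel}, which identifies $V^i_\dR$ with the maximal $p$-divisible subgroup, with a trace argument for finite flat morphisms.

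First I would use Proposition \ref{prop_kernel} to rewrite $V^i_\dR(Y) = H^i_\dR(Y/R)_\mr{div}$ and observe that the pullback $f^*$ manifestly preserves $p$-divisibility: if $\omega = p^k \omega_k$ for a compatible system in $H^i_\dR(Y/R)$, then $f^*\omega = p^k f^*\omega_k$. Hence $f^*(V^i_\dR(Y)) \subseteq V^i_\dR(X) = 0$, so $f^*\omega = 0$ for every $\omega \in V^i_\dR(Y)$.

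Next, since $Y$ is Noetherian I would split it into its finitely many connected components and reduce to the case where $f$ has constant rank $n \geq 1$. For a finite flat morphism one has a trace map $\Tr_f\colon R\Gamma_\dR(X/R) \to R\Gamma_\dR(Y/R)$ satisfying $\Tr_f \circ f^* = n \cdot \id$, obtained for instance via Grothendieck's coherent duality applied to the de Rham complex. Applying $\Tr_f$ to the identity $f^*\omega = 0$ gives $n\omega = 0$ in $H^i_\dR(Y/R)$. Writing $n = p^a m$ with $\gcd(m, p) = 1$ and using that $R$ is a $\Z_{(p)}$-algebra (so $m$ is a unit in $R$ and acts invertibly on the $R$-module $H^i_\dR(Y/R)$), I would obtain $p^a \omega = 0$. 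Combined with the identity $V^i_\dR(Y) = p^a V^i_\dR(Y)$ coming from $p$-divisibility, this forces $V^i_\dR(Y) = 0$.

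The main obstacle will be establishing the integral trace on de Rham cohomology for a possibly ramified finite flat morphism between smooth schemes; the étale case is classical, and in the general case one can invoke Grothendieck coherent duality, noting that the top differential form is the dualising sheaf for smooth schemes of the same dimension. If one prefers to avoid the integral construction, a variant argument uses only the rational trace (which exists since finite flat morphisms between smooth schemes are generically étale): it yields that every $\omega \in V^i_\dR(Y)$ becomes torsion in $H^i_\dR(Y/R)_\Q$, hence annihilated by some integer, and the same divisibility bootstrap then delivers the vanishing.
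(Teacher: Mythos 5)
Your proof is correct and follows essentially the same route as the paper: identify $V^i_\dR(Y)$ with $H^i_\dR(Y)_\mr{div}$ via Proposition \ref{prop_kernel}, note that pullback preserves divisible classes so $V^i_\dR(Y)$ lies in the kernel of $H^i_\dR(Y)\to H^i_\dR(X)$, use $\Tr\circ f^*=d\cdot\id$ to conclude this kernel is $d$-torsion, and then combine $d$-torsion with $p$-divisibility (and $R$ being a $\Z_{(p)}$-algebra) to get vanishing. The extra remarks on constructing the integral trace and reducing to constant rank are fine but not needed beyond what the paper takes for granted.
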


\begin{proof}
	Let $d$ be the degree of $f\colon X\to Y$. The trace map $\Tr: H^i_{\mr{dR}}(X)\to H^i_{\mr{dR}}(Y)$ satisfies $\Tr\circ f=d\cdot\id$. The kernel of $H^i_{\mr{dR}}(Y)\to H^i_{\mr{dR}}(X)$ is then contained in the $d$-torsion of $H^i_{\mr{dR}}(Y)$. If $V^i_\dR(X)=0,$ then we deduce that $dV^i_\dR(Y)=0$. We conclude thanks to Proposition \ref{prop_kernel}.
\end{proof}

\subsection{The low-degree case}We prove the vanishing of the groups $V^i_\mr{dR}$ under suitable assumptions on the compactification and for $i\leq 1$. The proof involves careful analysis of the behaviour of algebraic de Rham cohomology near the boundary divisor. We reduce ourselves to the case when the scheme retracts onto the divisor at infinity. In this special situation, we prove an injectivity result for log crystalline cohomology for every $i\geq 0$ (Corollary \ref{cor_InjectivityRetraction}).
	\begin{defi}
	A \textit{smooth log pair} $(Y,D)$ over $S$ is a quasi-compact smooth $S$-scheme $Y$ together with the choice of a  relative to $S$ normal crossing divisor $D\subseteq Y$ (which is not assumed to be smooth). We consider $(Y,D)$ as a log scheme over $S$. We denote by $\Omega_{(Y,D)}^\bullet$ the log de Rham complex over $S$ and by $\mathrm{R\Gamma}_{\mathrm{dR}}(Y,D)$ the log de Rham cohomology complex over $S$. We write $j\colon X\hookrightarrow Y$ for the complement of $D$ in $Y$. We also write $\calH^{i}_{(Y,D)}\coloneqq H^i(\Omega^\bullet_{(Y,D)})$ and $\calH^{i}_{X}\coloneqq H^i(j_*\Omega^\bullet_{X})$ for the étale cohomology sheaves over $Y$.

    \spa
    
    If $B^\sharp$ is a ring endowed with a log structure $\alpha\colon M\to B,$ we denote by $(B^\sharp,f)$ the log structure induced by the pre-log structure $M\oplus \N \to B$ such that $(m,n)\mapsto f^n\alpha(m).$ If the log structure on $B^\sharp$ is trivial, we say that $(B^\sharp,f)$ is endowed with a \textit{principal log structure}.

\end{defi}

\begin{theo}\label{thm_lowdegree}Suppose that $p\in \mr{Jac}(R)$ and let $(Y,D)$ be a smooth log pair over $S$ with $Y$ proper, $D$ a strict normal crossing divisor, and $X
\coloneqq Y\setminus D$ affine. Then, the groups $V^i_\dR(X)$ vanish for $i\leq 1$. In addition, we have $$H^i_\dR(X)[p^\infty]=H^i_\cris(X_0)[p^\infty]$$ for $i\leq 1$.
\end{theo}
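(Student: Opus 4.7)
The torsion statement $H^i_\dR(X/S)[p^\infty]\simeq H^i_\cris(X_0/S)[p^\infty]$ for $i\leq 1$ will follow automatically from $V^i_\dR(X/S)=0$ via the short exact sequence \eqref{second_es} of Corollary \ref{cor_torsion}, so the plan reduces to proving $V^i_\dR(X/S)=0$ for $i\in\{0,1\}$.

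For $i=0$ the plan is a finiteness argument. Any closed $f\in\mathcal{O}_X(X)$ is locally constant, and since the irreducible components of the smooth proper scheme $Y$ are in bijection with those of $X$ (because $D\subset Y$ is a divisor), such an $f$ extends uniquely to a locally constant function on $Y$. Hence the restriction map $H^0_\dR(Y/S)\to H^0_\dR(X/S)$ is bijective, and the common value is a finitely generated $R$-module since $Y$ is proper. With $p\in\mr{Jac}(R)$, Krull's intersection theorem forces the maximal $p$-divisible submodule of any finite $R$-module to vanish, and Proposition \ref{prop_kernel} then gives $V^0_\dR(X/S)=H^0_\dR(X/S)_{\mr{div}}=0$.

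For $i=1$ the plan has two steps. First, apply Lemma \ref{lem_lowdegree} with $i-1=0$: the density hypothesis is trivial since $H^0_\cris(X_0/S)$ is finitely generated over $\widehat{R}$ (consisting of locally constant functions) and $\bar H^0_\dR(X/S)$ is $p$-adically dense in it. The lemma then yields an isomorphism $V^1_\dR(X/S)\cong V^1_\dR(X/S)_\Q$, which reduces the task to showing that the rationalised comparison $H^1_\dR(X/S)_\Q\to H^1_\cris(X_0/S)_\Q$ is injective. Second, factor this rational comparison through the log de Rham cohomology of the pair $(Y,D)$:
\[
H^1_\dR(X/S)_\Q\;\xleftarrow{\;\sim\;}\;H^1_\dR((Y,D)/S)_\Q\;\xrightarrow{\;\sim\;}\;H^1_{\mr{log}\text{-}\cris}((Y_0,D_0)/S)_\Q\;\xrightarrow{\;\sigma\;}\;H^1_\cris(X_0/S)_\Q,
\]
where the first isomorphism is Grothendieck's algebraic de Rham comparison (valid rationally in characteristic zero for smooth log compactifications), the second is the rationalised log Berthelot comparison, and $\sigma$ is the forget-log map. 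Injectivity of $\sigma$ is furnished by Corollary \ref{cor_InjectivityRetraction}; its proof reduces, via a Zariski-local or Mayer--Vietoris argument on $Y$, to the case that $Y$ smoothly retracts onto $D$ (so $Y$ is étale-locally a line bundle over $D$ with $D$ as the zero section), where an explicit analysis of the PD envelope of the log ideal establishes the desired injectivity for every $i\geq 0$.

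The main obstacle will be verifying the injectivity of $\sigma$ encoded in Corollary \ref{cor_InjectivityRetraction}: the reduction to the retraction case must respect both log structures and the forget-log map, and the base case requires a careful analysis of the divided-power envelope to rule out potential kernels. The restriction to $i\leq 1$ enters through Lemma \ref{lem_lowdegree}, whose density hypothesis in degree $i-1$ is immediate for $i=1$ but generically fails for $i\geq 2$ --- the latter failure being exactly reflected in the counterexamples of Theorem \ref{thm_counterexampleGHC}.
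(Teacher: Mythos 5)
Your reduction steps are reasonable, and your route for $i=1$ (pass to the rational statement via Lemma \ref{lem_lowdegree}, then factor the comparison through the log pair) is genuinely different from the paper's, which never leaves the algebraic side: the paper shows directly that $H^i_\dR(X)_{\mr{div}}=0$ via Proposition \ref{prop_kernel}, by chasing $p$-divisible sequences through the conjugate spectral sequences of $\Omega^\bullet_{(Y,D)}$ and $j_*\Omega^\bullet_X$ and using the finiteness of $H^1_\dR(Y,D)$ and the finite-exponent bounds of Proposition \ref{prop_cohomologysheaves_var}. However, your version has a genuine gap at its crux: the injectivity of $\sigma\colon H^1_{\mr{log}\m\cris}((Y_0,D_0)/S)_\Q\to H^1_\cris(X_0/S)_\Q$ is not furnished by Corollary \ref{cor_InjectivityRetraction} plus a Mayer--Vietoris or Zariski-local argument. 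That corollary is purely local: it produces a splitting of the inclusion of completed complexes on an affine piece admitting a log retraction, and these splittings are neither canonical nor compatible on overlaps, so they do not glue. More fundamentally, injectivity of $H^1$ of a map of complexes of sheaves does not localize: the kernel of $\sigma$ is the image of the degree-$0$ hypercohomology of the $p$-completed cokernel complex $\widehat{Q}^\bullet$, and one must actually bound that group (e.g.\ show it is killed by a fixed power of $p$, using that étale-locally $\mathcal{H}^0$ of the completed cokernel consists of convergent families $(a_m)_m$ with $ma_m=0$). That uniform bound is exactly the content of the ``In addition'' clause of Proposition \ref{prop_cohomologysheaves_var}, and supplying it is where the real work of the theorem lies; your sketch replaces it with an appeal to a local statement that cannot, on its own, control the global $H^1$. (Incidentally, the paper's proof of Corollary \ref{cor_InjectivityRetraction} uses the Laurent-series decomposition of Lemmas \ref{lem_infinitesimal_var}--\ref{lem_local_var}, not PD envelopes.)

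Two secondary problems. First, the density hypothesis of Lemma \ref{lem_lowdegree} in degree $0$ is not ``trivial'': Example \ref{exam_tors} shows that $\bar H^0_\dR(X)$ need not be dense in $H^0_\cris(X_0)$ for a general smooth affine $X$, and even with the compactification one must identify $H^0_\cris(X_0)$ with $H^0_\cris(Y_0)$ (again a boundary analysis) before the density of $H^0_\dR(Y)$ gives what you need; being dense in a subgroup of finite $p$-power index does not imply density. Second, your $i=0$ argument rests on closed functions being locally constant and on $H^0_\dR(X/S)$ being a finite $R$-module; both fail for general Noetherian $\Z_{(p)}$-algebras $R$ with $p\in\mr{Jac}(R)$ (e.g.\ $\F_p$-algebras, where $\ker d$ contains all $p$-th powers). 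The paper instead bounds $H^0_\dR(X)/H^0_\dR(Y,D)$ by a fixed power of $p$ and uses finiteness only of the log group.
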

\begin{empt}
   We prove the result by comparing de Rham cohomology of $X$ with log de Rham cohomology of the pair $(Y,D)$ in low degree. We start with the following lemma. Let $A^\sharp$ be an $R$-algebra endowed with a log structure. We denote by $A^\sharp[t]$ the log tensor product $A^\sharp\otimes_{R}R[t]$ and by $A[t]^\sharp$ the log $R$-algebra $(A^\sharp[t],t).$ We also write $$\Omega_{A[[t]]^\sharp}^\bullet\coloneqq \lim_n\Omega_{A[t]^\sharp/t^n}^\bullet$$ and $\Omega_{A((t))^\sharp}^\bullet\coloneqq \Omega_{A[[t]]^\sharp}^\bullet\left[\tfrac{1}{t}\right].$
\end{empt}

\begin{lemm}\label{lem_infinitesimal_var}
	Let $A^\sharp$ be an $R$-algebra endowed with a log structure. The complex $$Q_{A^\sharp}^\bullet\coloneqq  \Omega_{A((t))^\sharp}^\bullet/\Omega_{A[[t]]^\sharp}^\bullet$$ is isomorphic to  $$\bigoplus_{n=1}^{\infty} \mathrm{Cocone}\left(\frac{1}{t^n}\Omega_{A^\sharp}^\bullet \longrightarrow \frac{\mr{dlog}(t)}{t^{n}}\wedge \Omega_{A^\sharp}^\bullet\right),$$ where $$\frac{1}{t^n}\Omega_{A^\sharp}^\bullet \longrightarrow \frac{\mr{dlog}(t)}{t^{n}}\wedge \Omega_{A^\sharp}^\bullet$$ sends $\frac{1}{t^n}\omega\mapsto-\frac{n\, \mr{dlog}(t)}{t^{n}}\wedge\omega.$ In particular, the group $H^i(Q_{A^\sharp}^\bullet)$ is a direct sum of $R$-modules of finite exponent for every $i$.
\end{lemm}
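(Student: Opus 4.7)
The strategy is to directly decompose the quotient complex by making explicit the two natural gradings available on $\Omega^\bullet_{A((t))^\sharp}$, namely the decomposition into ``regular'' and ``$\mathrm{dlog}(t)$'' parts, and the grading by the $t$-power. First I would use the fact that the log structure on $A[t]^\sharp$ is the direct sum of the log structure on $A^\sharp$ with the principal log structure generated by $t$ to get a natural isomorphism
$$\Omega^k_{A[t]^\sharp/R} \;=\; \bigl(\Omega^k_{A^\sharp/R}\otimes_A A[t]\bigr) \,\oplus\, \bigl(\Omega^{k-1}_{A^\sharp/R}\otimes_A A[t]\bigr)\wedge \mr{dlog}(t).$$
Passing to the $t$-adic completion and inverting $t$, and then taking the quotient by $\Omega^\bullet_{A[[t]]^\sharp}$, I obtain
$$Q^k_{A^\sharp} \;=\; \bigoplus_{n\geq 1}\Bigl(\tfrac{1}{t^n}\Omega^k_{A^\sharp}\,\oplus\,\tfrac{\mr{dlog}(t)}{t^n}\wedge \Omega^{k-1}_{A^\sharp}\Bigr).$$

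Next I would compute the differential on each ``generalised monomial''. Using $dt = t\cdot \mr{dlog}(t)$ and the Leibniz rule, a direct computation yields, for $\omega\in\Omega^k_{A^\sharp}$ and $n\geq 1$,
$$d\!\left(\tfrac{\omega}{t^n}\right) = \tfrac{d\omega}{t^n} - \tfrac{n\,\mr{dlog}(t)}{t^n}\wedge \omega, \qquad d\!\left(\tfrac{\mr{dlog}(t)}{t^n}\wedge\omega\right) = -\tfrac{\mr{dlog}(t)}{t^n}\wedge d\omega.$$
In particular the differential preserves the $t$-grading, so $Q^\bullet_{A^\sharp}$ splits as a direct sum $\bigoplus_{n\geq 1} C_n^\bullet$, where $C_n^\bullet$ is the subcomplex corresponding to the pole of order exactly $n$. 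Identifying $\tfrac{1}{t^n}\Omega^\bullet_{A^\sharp}$ and $\tfrac{\mr{dlog}(t)}{t^n}\wedge \Omega^\bullet_{A^\sharp}$ with copies of $\Omega^\bullet_{A^\sharp}$, the differential of $C_n^\bullet$ becomes the matrix $\bigl(\begin{smallmatrix} d & 0 \\ -n & -d\end{smallmatrix}\bigr)$, which is precisely the cocone of the map $\omega\mapsto -n\omega$ of complexes (up to the standard sign convention $d_{X[-1]}=-d_X$). This establishes the claimed isomorphism.

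For the last sentence, I would invoke the long exact sequence associated to the distinguished triangle
$$\mr{Cocone}\bigl(-n\cdot\mr{id}\bigr)\;\longrightarrow\;\Omega^\bullet_{A^\sharp}\;\xrightarrow{-n}\;\Omega^\bullet_{A^\sharp}\;\longrightarrow\;\mr{Cocone}\bigl(-n\cdot\mr{id}\bigr)[1],$$
which gives a short exact sequence
$$0 \to H^{i-1}_\dR(A^\sharp)/n \to H^{i}(C_n^\bullet) \to H^i_\dR(A^\sharp)[n] \to 0.$$
Hence $H^i(C_n^\bullet)$ is annihilated by $n^2$, and $H^i(Q^\bullet_{A^\sharp})=\bigoplus_{n\geq 1}H^i(C_n^\bullet)$ is a direct sum of $R$-modules of finite exponent.

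The only real obstacle is the bookkeeping with signs in matching the explicit subcomplex $C_n^\bullet$ with the abstract cocone; but since both complexes are concentrated in two ``rows'' and the map between them is simply multiplication by a scalar, this is a short and essentially mechanical check. Everything else is a direct computation from the log Leibniz rule and the definition of the cocone.
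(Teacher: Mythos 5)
Your proposal is correct and follows essentially the same route as the paper: decompose $Q^\bullet_{A^\sharp}$ by pole order using the splitting of log differentials into a regular part and a $\mr{dlog}(t)$ part, compute the differential on each graded piece via the log Leibniz rule, and identify each piece with the cocone of multiplication by $-n$. Your explicit short exact sequence $0 \to H^{i-1}_\dR(A^\sharp)/n \to H^i(C_n^\bullet) \to H^i_\dR(A^\sharp)[n] \to 0$ makes the final finite-exponent claim slightly more precise than the paper's, but the argument is the same.
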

\begin{proof}
	Any element $\omega\in \Omega_{A((t))^\sharp}^i$ can be written as a Laurent series $$\omega = \sum_{n\leq N}  \tfrac{\omega_n}{t^n} + \sum_{n\leq N} \tfrac{\mr{dlog}(t)}{t^n} \wedge \eta_n,$$ with $\omega_n\in \Omega_{A^\sharp}^i ,\eta_n \in \Omega_{A^\sharp}^{i-1}$, and $N\in\mathbb{Z}$. Note that the subcomplex $\Omega_{A[[t]]^\sharp}^\bullet$ consists of those series with $N=0$. Consequently, the complex $\Omega_{A((t))^\sharp}^\bullet/\Omega_{A[[t]]^\sharp}^\bullet$  can be identified with the negative part of the Laurent series. We get $$Q_{A^\sharp}^i=\bigoplus_{n\geq 1}\left( \frac{1}{t^n}\Omega_{A^\sharp}^i \oplus \frac{\mr{dlog}(t)}{t^{n}}\wedge\Omega_{A^\sharp}^{i-1}\right).$$
	The differential $Q_{A^\sharp}^i\to Q_{A^\sharp}^{i+1}$ sends $\left(\frac{\omega}{t^n} , \frac{\mr{dlog}(t)}{t^{n}}\, \wedge\eta\right)$ to $$\left( \frac{d\omega}{t^n},\; -\frac{n\, \mr{dlog}(t)}{t^{n}}\wedge \omega + \frac{\mr{dlog}(t)}{t^{n}}\wedge(-d\eta) \right).$$ We deduce a natural isomorphism of complexes $$Q_{A^\sharp}^\bullet =  \bigoplus_{n\geq 1}\mathrm{Cocone} \left( \frac{1}{t^n}\Omega_{A^\sharp}^\bullet \to \frac{ \mr{dlog}(t)}{t^{n}}\wedge\Omega_{A^\sharp}^\bullet\right).$$ 
\end{proof}

\begin{lemm}\label{lem_local_var}
Let $B^\sharp$ be a smooth $R$-algebra endowed with a principal log structure and let $f$ be a nonzerodivisor of $B$ such that $B/f$ is smooth. Suppose that $B^\sharp\twoheadrightarrow A^\sharp\coloneqq B^\sharp\otimes_BB/f$ admits a section $\sigma$ as log $R$-algebras. There exists a natural exact sequence $$0\to \Omega^\bullet_{(B^\sharp,f)}\to \Omega^\bullet_{B_f^\sharp}\to Q_{A^\sharp}^\bullet\to 0$$ of differential graded $R$-algebras. Moreover, $\Omega^i_{B_f^\sharp}\to Q_{A^\sharp}^i$ admits an $R$-linear section as differential graded $R$-algebras, compatible with $\sigma$, which sends $1/t\mapsto 1/f$ and $\mr{dlog}(t)\mapsto \mr{dlog}(f).$

	\end{lemm}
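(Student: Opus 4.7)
My plan is to reduce to the formal completion of $(B^\sharp,f)$ along $f$, which the section $\sigma$ allows me to identify with $A[[t]]^\sharp$ from Lemma \ref{lem_infinitesimal_var}, and then descend the resulting exact sequence via Noetherian faithful flatness. The section $\sigma\colon A^\sharp\to B^\sharp$ extends to a morphism of log $R$-algebras $\pi\colon A[t]^\sharp\to (B^\sharp,f)$ sending $t$ to $f$. Reducing modulo $t^n$ and $f^n$, I claim $\pi_n\colon A^\sharp[t]/t^n\riso B^\sharp/f^n$ for every $n\geq 1$. The case $n=1$ is exactly $\sigma$, and inductively the graded piece is the map $A^\sharp=B^\sharp/f\xrightarrow{\cdot f^{n-1}}f^{n-1}B^\sharp/f^n B^\sharp$, which is an isomorphism because $f$ is a nonzerodivisor. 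Passing to the limit yields a log isomorphism $\widehat\pi\colon A[[t]]^\sharp\riso\widehat{(B^\sharp,f)}$ and, hence, identifications $\Omega^\bullet_{(\widehat B^\sharp,f)}\simeq\Omega^\bullet_{A[[t]]^\sharp}$ and $\Omega^\bullet_{\widehat B_f^\sharp}\simeq\Omega^\bullet_{A((t))^\sharp}$.

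Next, I define the desired map $\Omega^\bullet_{B_f^\sharp}\to Q_{A^\sharp}^\bullet$ as the composition $\Omega^\bullet_{B_f^\sharp}\to\Omega^\bullet_{\widehat B_f^\sharp}\riso\Omega^\bullet_{A((t))^\sharp}\twoheadrightarrow Q_{A^\sharp}^\bullet$, where the last surjection comes from Lemma \ref{lem_infinitesimal_var}. The explicit recipe in the statement --- lift elements of $A^\sharp$ via $\sigma$ and send $1/t\mapsto 1/f$, $\mr{dlog}(t)\mapsto\mr{dlog}(f)$ --- then manifestly defines an $R$-linear chain section of this composition (compatibility with the differential uses that $\sigma$ commutes with $d$), which in particular proves surjectivity.

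For the kernel, the smoothness assumptions make each $\Omega^i_{(B^\sharp,f)}$ a finitely generated projective (in particular $f$-torsion-free) $B$-module whose localisation at $f$ recovers $\Omega^i_{B_f^\sharp}$; this gives injectivity of $\Omega^\bullet_{(B^\sharp,f)}\hookrightarrow\Omega^\bullet_{B_f^\sharp}$, and the inclusion $\Omega^\bullet_{(B^\sharp,f)}\subseteq\ker$ is then formal. Conversely, a class $\omega=\alpha/f^n\in\Omega^i_{B_f^\sharp}$ with $\alpha\in\Omega^i_{(B^\sharp,f)}$ lies in the kernel iff $\alpha\in f^n\Omega^i_{(\widehat B^\sharp,f)}$ inside $\Omega^i_{\widehat B_f^\sharp}$. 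Since $B$ is Noetherian and $\Omega^i_{(B^\sharp,f)}$ is finitely generated, its $f$-adic completion coincides with $\Omega^i_{(\widehat B^\sharp,f)}$, so faithful flatness of $B\to\widehat B$ yields the general identity $M\cap f^n\widehat M=f^n M$. Hence $\alpha\in f^n\Omega^i_{(B^\sharp,f)}$ and $\omega\in\Omega^i_{(B^\sharp,f)}$, as desired.

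I expect the main obstacle to be the bookkeeping with log structures under $\widehat\pi$ --- confirming that adding $f$ to the log structure of $B^\sharp$ corresponds to adding $t$ in $A[[t]]^\sharp$ --- together with identifying the $f$-adic completion of $\Omega^i_{(B^\sharp,f)}$ with $\Omega^i_{(\widehat B^\sharp,f)}$, which requires checking that log differentials commute with $f$-adic completion for our finitely generated modules. Once these formal identifications are in place, the rest of the argument reduces to Lemma \ref{lem_infinitesimal_var} and standard commutative algebra.
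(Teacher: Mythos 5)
Your first step --- upgrading $\sigma$ to the isomorphisms $A^\sharp[t]/t^n\riso B^\sharp/f^n$ and hence $A[[t]]^\sharp\riso\widehat{(B^\sharp,f)}$ --- is exactly the paper's. Where you genuinely diverge is in how the short exact sequence is produced. The paper applies the Beauville--Laszlo sequence \stack{0BNW} to $\Omega^i_{B^\sharp}$, getting $0\to\Omega^i_{B^\sharp}\to\Omega^i_{A^\sharp[[t]]}\oplus\Omega^i_{B^\sharp_f}\to\Omega^i_{A^\sharp((t))}\to 0$, and then pushes out along $\Omega^i_{B^\sharp}\subseteq\Omega^i_{(B^\sharp,f)}$ to insert the $\mr{dlog}(f)$-terms, reading off the desired sequence from the pushout. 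You instead prove surjectivity by exhibiting the explicit section and identify the kernel by hand via $M\cap f^n\widehat M=f^nM$ for the finite $B$-module $M=\Omega^i_{(B^\sharp,f)}$. Both routes rest on the same two inputs (the completion isomorphism and flatness of $B\to\widehat B$ on finite modules); yours is more elementary and makes the section do double duty, while the paper's delegates the completion/torsion bookkeeping to the citation of \stack{0BNW}.

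One assertion you should not make as stated: $\Omega^i_{(B^\sharp,f)}$ need not be a projective $B$-module. The hypotheses place no constraint on the element $g$ generating the principal log structure of $B^\sharp$, and for bad $g$ the relation $dg=g\,\mr{dlog}(g)$ does not cut out a direct summand (already $\Omega^1_{(B,xy)}$ for $B=R[x,y]$ fails to be projective at the origin). Fortunately you only use $f$-torsion-freeness of $M=\Omega^i_{(B^\sharp,f)}$, and that is available from material you already have: for a finite module over the Noetherian ring $B$, flatness of $B\to\widehat B$ gives an injection $M[f^\infty]\hookrightarrow\widehat M[f^\infty]$, and $\widehat M\simeq\Omega^i_{A[[t]]^\sharp}$ is $t$-torsion-free by the power-series description underlying Lemma \ref{lem_infinitesimal_var}. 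With that substitution (projectivity replaced by this torsion-freeness argument), your proof is complete and correct.
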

\begin{proof} Let $g\in B$ be an element such that $B^\sharp=(B,g)$. Since $B^\sharp\twoheadrightarrow A^\sharp$ admits $\sigma$ as a section, we may assume that $g\in \sigma(A).$ We have an $A^\sharp$-algebra homomorphism 
$A [t]^\sharp \to (B^\sharp,f)$ given by $t\mapsto f$. Thanks to the fact that $f$ is a nonzerodivisor, this induces an isomorphism
$A [t] /t^{n}  \riso B / f^{n}$ of $R$-algebras for every $n\geq 0$. Since $g\in \sigma(B)$, this isomorphism upgrades to an isomorphism $A^\sharp [t] /t^{n}  \riso B^\sharp/ f^{n}$ of log $R$-algebras.  This yields the ring isomorphism 
$A^\sharp[[t]]\riso C^\sharp$, where $C^\sharp$ is the $f$-adic completion of $B^\sharp$. Identifying $C^\sharp$ with $A^\sharp[[t]]$, the $f$-adic completion map $B^\sharp \to A^\sharp[[t]]$ sends $f$ to $t$. 
Using \stack{0BNW}, we get an exact sequence 
$$0\to \Omega^i_{B^\sharp}\to \Omega^i_{A^\sharp[[t]]}\oplus \Omega^i_{B^\sharp_f}\to  \Omega^i_{A^\sharp((t))}\to  0$$ of $R$-modules. The pushout of the sequence with respect to $\Omega^i_{B^\sharp}
\subseteq \Omega^i_{(B^\sharp,f)}$ induces the exact sequence 
$$0\to \Omega^i_{(B^\sharp,f)}\to \Omega^i_{A[[t]]^\sharp}\oplus \Omega^i_{B_f^\sharp}\to  \Omega^i_{A((t))^\sharp}\to  0.$$
This gives the desired exact sequence. The section is well-defined thanks to Lemma \ref{lem_infinitesimal_var}.
\end{proof}

Lemma \ref{lem_local_var} already provides an injectivity result that we will need later in the text.

\begin{coro}\label{cor_InjectivityRetraction}
    Let $Y$ be a smooth affine $R$-scheme and $D$ a principal strict normal crossing divisor. Write $D=Z\cup D'$ with $Z$ irreducible and $D'$ a divisor not containing $Z$. Suppose that the inclusion $(Z,D'\cap Z)\hookrightarrow (Y,D)$ admits a retraction as smooth log pairs. We have a split injection 
    $$H^i_{\mr{log}\m \mr{cris}}((Y,D)/R)\to H^i_{\mr{log}\m \mr{cris}}((Y\setminus Z,D\setminus Z)/R).$$
\end{coro}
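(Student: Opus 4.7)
The strategy is to apply Lemma \ref{lem_local_var} to produce a split short exact sequence of log de Rham complexes, then pass to log crystalline cohomology via a log analogue of Berthelot's comparison theorem. Write $Y = \Spec(B)$ and pick $f \in B$ cutting out $Z$ (possible since $D$ is principal). Equip $B$ with the log structure $B^\sharp$ induced by $D'$, so that $(B^\sharp, f)$ corresponds to $(Y, D)$ and the localisation $B_f^\sharp$ corresponds to $(Y \setminus Z, D \setminus Z)$; let $A^\sharp \coloneqq B^\sharp \otimes_B B/f$ correspond to $(Z, Z \cap D')$. The retraction $(Y, D) \to (Z, Z\cap D')$ assumed in the statement provides a section $\sigma \colon A^\sharp \to B^\sharp$ of log $R$-algebras.

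Next, I would apply Lemma \ref{lem_local_var} (or its natural extension to log structures with several principal generators on $B^\sharp$, obtained either by inductively adjoining the components of $D'$ one at a time or by observing that the proof is local in each generator, provided each generator lies in $\sigma(A)$) to obtain a short exact sequence of log de Rham complexes
$$0 \to \Omega^\bullet_{(B^\sharp, f)} \to \Omega^\bullet_{B_f^\sharp} \to Q_{A^\sharp}^\bullet \to 0$$
together with a dg-$R$-algebra section of the surjection. Since this section respects the differentials, it splits the sequence in the category of complexes of $R$-modules, giving a direct sum decomposition $\Omega^\bullet_{B_f^\sharp} \simeq \Omega^\bullet_{(B^\sharp, f)} \oplus Q_{A^\sharp}^\bullet$.

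Finally, a termwise split short exact sequence remains split exact after derived $p$-adic completion. By the log analogue of Corollary \ref{cor_quasi-iso} (Kato's comparison for smooth log pairs with snc divisor), the $p$-adically completed log de Rham complexes of $(Y, D)$ and of $(Y \setminus Z, D \setminus Z)$ compute the respective log crystalline cohomologies over $R$. Passing to cohomology in the split short exact sequence of completed complexes yields the desired split injection
$$H^i_{\mr{log}\m\mr{cris}}((Y, D)/R) \hookrightarrow H^i_{\mr{log}\m\mr{cris}}((Y\setminus Z, D\setminus Z)/R).$$
The main technical point is the extension of Lemma \ref{lem_local_var} to log structures with multiple principal generators; this is a mild adaptation since the argument based on the isomorphism $A^\sharp[[t]] \simeq \widehat{B}^\sharp$ is insensitive to the existing log structure on $B^\sharp$ as long as its generators are preserved by $\sigma$.
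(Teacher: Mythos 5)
Your proposal is correct and follows essentially the same route as the paper: apply Lemma \ref{lem_local_var} to split $\Omega^\bullet_{(Y\setminus Z,D\setminus Z)}$ as $\Omega^\bullet_{(Y,D)}\oplus Q^\bullet_{(Z,D'\cap Z)}$, then pass to $p$-adic completions and invoke the log variant of Corollary \ref{cor_quasi-iso}. Your explicit remark on extending the lemma from a single principal generator to the several components of $D'$ is a reasonable elaboration of a point the paper leaves implicit, not a different argument.
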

\begin{proof}
Thanks to Lemma \ref{lem_local_var}, there exists a decomposition of complexes $$\Omega^\bullet_{(Y\setminus Z,D\setminus Z)}\simeq \Omega^\bullet_{(Y,D)}\oplus {Q}_{(Z,D'\cap Z)} ^\bullet.$$ Taking $p$-adic completions, we deduce the desired result thanks to (the log variant of) Corollary \ref{cor_quasi-iso}. 
\end{proof}

\begin{empt}\label{nota_cohomology_sheaves}
    Let $Y$ be a smooth $S$-scheme and $D$ a relative strict normal crossing divisor. We write $Y^\sharp$ for the log scheme $(Y,D)$. Let $i\colon Z\hookrightarrow Y$ an irreducible component of $D$ and write $j\coloneqq X\hookrightarrow Y$ for $Y\setminus Z.$ We write $Z^\sharp$ and $X^\sharp$ for the log structures induced by the one of $Y$. The following proposition is a generalisation of \cite[Thm. 2.2.5]{AbbottKedlayaRoe}.
\end{empt}
\begin{prop}\label{prop_cohomologysheaves_var}
 The natural morphism of étale sheaves $\Omega^i_{Y^\sharp} \to j_{*}\Omega^i_{X^\sharp}$
is injective for $i\geq 0$ and, étale locally on $Y$, the $R$-module $\Gamma(Y,j_{*}\calH^i_{X^\sharp}/\calH^i_{Y^\sharp})$ is a direct sum of $R$-modules of finite exponent. In addition, $\calH^0_{X^\sharp}/\calH^0_{Y^\sharp}$ is killed by a fixed power of $p$.
\end{prop}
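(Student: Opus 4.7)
The plan is to reduce the proposition to Lemmas \ref{lem_local_var} and \ref{lem_infinitesimal_var} via étale localisation. All three assertions are étale-local on $Y$, so we pass to a standard étale chart $Y' = \Spec(B)$ equipped with coordinates $t_1, \dots, t_d$ such that $Z \cap Y' = V(t_1)$ and $D' \cap Y' = V(t_2 \cdots t_r)$, where $D' \coloneqq D \setminus Z$. Setting $f \coloneqq t_1$, $A \coloneqq B/f$, and endowing $B$ with the fs log structure $B^\sharp$ coming from $D' \cap Y'$, the obvious lift $A \hookrightarrow B$ sending $t_i \mapsto t_i$ for $i \geq 2$ provides a section of log $R$-algebras $A^\sharp \hookrightarrow B^\sharp$, and $(B^\sharp, f)$ corresponds to $Y^\sharp|_{Y'}$ while $B^\sharp_f$ corresponds to $(j_*\Omega^\bullet_{X^\sharp})|_{Y'}$. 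The proof of Lemma \ref{lem_local_var} invokes only the Beauville--Laszlo exact sequence together with the existence of the section, so it extends verbatim to this fs log setting and yields a short exact sequence of complexes of quasi-coherent $\calO_{Y'}$-modules
\begin{equation*}
    0 \longrightarrow \Omega^\bullet_{Y^\sharp} \longrightarrow j_*\Omega^\bullet_{X^\sharp} \longrightarrow Q^\bullet_{Z^\sharp} \longrightarrow 0.
\end{equation*}
The injectivity of $\Omega^i_{Y^\sharp} \to j_*\Omega^i_{X^\sharp}$ follows at once, establishing the first assertion.

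Next, since $j$ is an affine open immersion, $j_*$ is exact on quasi-coherent sheaves and $\calH^i(j_*\Omega^\bullet_{X^\sharp}) \simeq j_*\calH^i_{X^\sharp}$. The cohomology long exact sequence attached to the triangle above then provides an injection $j_*\calH^i_{X^\sharp}/\calH^i_{Y^\sharp} \hookrightarrow \calH^i(Q^\bullet_{Z^\sharp})$. By Lemma \ref{lem_infinitesimal_var}, the target decomposes as $\bigoplus_{n \geq 1} H^i(\Cocone(\cdot(-n) \colon \Omega^\bullet_{Z^\sharp} \to \Omega^\bullet_{Z^\sharp}))$, and the long exact sequence of each cocone shows that its $i$-th cohomology summand is annihilated by $n$. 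Taking sections over $Y'$ yields the second assertion.

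For the last claim, the same cocone long exact sequence in degree zero reads $0 \to H^0(\Cocone_n) \to H^0(\Omega^\bullet_{Z^\sharp}) \xrightarrow{\cdot(-n)} H^0(\Omega^\bullet_{Z^\sharp})$, so $H^0(\Cocone_n) = H^0(\Omega^\bullet_{Z^\sharp})[n]$. Since $Z$ is smooth over $S$, $\Gamma(Z, \calO_Z)$ is $R$-flat, hence so is its submodule $H^0(\Omega^\bullet_{Z^\sharp})$; by flatness, the $p$-power torsion of this submodule coincides with $R[p^\infty] \cdot H^0(\Omega^\bullet_{Z^\sharp})$, which is killed by the fixed exponent $p^N$ annihilating the finitely generated ideal $R[p^\infty]$, while prime-to-$p$ torsion vanishes because $R$ is a $\Z_{(p)}$-algebra. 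Hence $H^0(Q^\bullet_{Z^\sharp})$, and thus $\calH^0_{X^\sharp}/\calH^0_{Y^\sharp}$, is killed by $p^N$. The hardest step will be to verify that the proof of Lemma \ref{lem_local_var} goes through when $B^\sharp$ carries the fs log structure of $D'$ rather than a principal one; this should be a direct adaptation since the Beauville--Laszlo input only requires flatness of the localisation and $f$-adic completion maps.
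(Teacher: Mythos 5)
Your overall strategy is the paper's: localise étale-locally, reduce to Lemma \ref{lem_local_var}, and control the cokernel via Lemma \ref{lem_infinitesimal_var}. However, the reduction contains a genuine gap at the sentence ``the obvious lift $A \hookrightarrow B$ sending $t_i \mapsto t_i$ for $i \geq 2$ provides a section of log $R$-algebras.'' In a standard étale chart $B$ is only \emph{étale} over $R[t_1,\dots,t_d]$, so $A = B/t_1$ is only étale over $R[t_2,\dots,t_d]$ and is not generated by the $t_i$; prescribing the images of $t_2,\dots,t_d$ therefore does not define a ring map out of $A$ at all. Worse, a section of $B \twoheadrightarrow A$ need not exist on the chart one starts with: taking $d=1$, $p\neq 2$, and $B=\Z_{(p)}[u][(2u-1)^{-1}]$ étale over $\Z_{(p)}[t_1]$ via $t_1=u(u-1)$, one has $A\cong \Z_{(p)}\times\Z_{(p)}$, and a section would require a nontrivial idempotent in the domain $B$. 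Producing the section after a \emph{further} étale shrinking is exactly the content of the first half of the paper's proof: one forms $Y'=(\bbA^1_R\times Z)\times_{\bbA^d_R}Y$, notes that $Z\to Z\times_Y Y'$ is a section of an étale morphism, hence an open immersion by \cite[Cor. I.5.3]{sga1}, and shrinks $Y'$ (keeping $Y$ fixed) so that this section becomes an isomorphism; the projection $Y'\to \bbA^1_R\times Z\to Z$ is then the desired retraction, compatible with the log structures. This step cannot be waved away; it is the main content of the reduction, whereas the point you flag as ``hardest'' (non-principal log structures on $D'$) is comparatively routine.

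Two further, more minor, repairs are needed. For the second assertion you only produce an injection $j_*\calH^i_{X^\sharp}/\calH^i_{Y^\sharp}\hookrightarrow \calH^i(Q^\bullet_{Z^\sharp})$ from the long exact sequence; a submodule of a direct sum of finite-exponent modules need not itself decompose as such a direct sum, so to obtain the statement as written you should invoke the $R$-linear splitting of $\Omega^\bullet_{B_f^\sharp}\to Q^\bullet_{A^\sharp}$ provided by Lemma \ref{lem_local_var}, which makes the sequence of complexes split and identifies the quotient with $H^i(Q^\bullet_{A^\sharp})$ exactly (this is what the paper does; your weaker conclusion would still suffice for the application in Theorem \ref{thm_lowdegree}, but not for the literal claim). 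Finally, your assertion that a submodule of a flat $R$-module is flat is false over a general Noetherian $\Z_{(p)}$-algebra; it is also unnecessary, since the $p$-power torsion of the submodule $H^0(\Omega^\bullet_{Z^\sharp})\subseteq \Gamma(Z,\calO_Z)$ is contained in $\Gamma(Z,\calO_Z)[p^\infty]$, which is killed by the uniform power $p^N$ annihilating $R[p^\infty]$.
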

\begin{proof}
    (i) We reduce ourselves to the situation of Lemma \ref{lem_local_var}. It is enough to prove the statement for the global sections of the sheaves over a cofinal family of étale covers of $Y$.
Hence, we can suppose there exists   an \'etale morphism  of affine $R$-schemes $\phi \colon Y \to \bbA ^d_R$
such that $Z  = V (t_1)$, where  $t_1,\dots , t_d$ are the corresponding local coordinates. We denote by  $B \coloneqq  \Gamma (Y ,\cO_Y)$ and $A \coloneqq  B / t_1 B$. 

 \spa

Let $\overline{t}_2, \dots, \overline{t}_d$ be the global sections of $\cO_{Z}$ induced by $t_2, \dots, t_d$ and let  $\phi_1\colon Z \to \bbA^{d-1}_R$ be the \'etale morphism induced by $\overline{t}_2, \dots, \overline{t}_d$.
We also consider the étale morphism
$$\id \times \phi_1\colon  \bbA ^1_R  \times Z \to \bbA_R^1\times  \bbA ^{d-1}_R=\bbA_R^d$$
and the closed immersion $$i ''\colon  Z=\{0\}\times Z \hookrightarrow \bbA ^1_R \times Z.$$
By putting
 $Y'\coloneqq (\bbA_R ^1  \times Z)\times_{\bbA_R^d}Y$,
 we get the closed immersion 
 $i' \colon Z \hookrightarrow 
 Y '$ making the following diagram commute
  \begin{equation}
  \label{diagphiprime}
 \xymatrix @R=0.8cm {
 {Z}
 \ar@/^0,5cm/[rr]^-{i''}
 \ar@{.>} [r]_-{i'}
 \ar[rd]_-{i}
 &
  {Y'}
  \ar[d]^-{f}
  \ar[r]_-{\phi '} 
  \ar@{}[rd]^-{}|\square
  &
  {\bbA_R ^1  \times Z}
  \ar[d]^-{\id \times \phi_1}
    \\
  &
  Y
 \ar[r]
   \ar[r]_-{\phi} 
  &
  \bbA_R^{d},}
 \end{equation}
 where the square is cartesian, where $f$ and $\phi'$ are the canonical projections.  Putting $Z'\coloneqq Z \times_{Y}  Y'$,
 we get a section $( \id, i ') \colon Z \hookrightarrow Z'$  of the \'{e}tale projection $ Z'\to Z$. 
  Hence, by using \cite[Cor. I.5.3]{sga1},   after shrinking $Y'$ while fixing $Y$ we  may assume that $Z\xrightarrow{\sim}Z\times_{Y}Y'$.
 Let $g\colon Y'\rightarrow Z$ be the canonical morphism, i.e. the composition of  $\phi'$ with the projection $\bbA_R ^1  \times Z \to Z$. By construction, $i '$ is  a section of $g$ compatible with the log structures on $Z$ and $Y'$. 
 \spa

 (ii)     We can then use Lemma \ref{lem_local_var} to deduce that the inclusion of complexes of $R$-modules \begin{equation}\label{eq_complexes_local}
        \Gamma(Y,\Omega^\bullet_{Y^\sharp})\to \Gamma(Y,j_{*}\Omega^\bullet_{Z^\sharp})
    \end{equation}admits a retraction. This shows that $H^i_\dR(Y^\sharp)\to H^i_\dR(Z^\sharp)$ is injective. Combining the description of the cokernel of \eqref{eq_complexes_local} given by Lemma \ref{lem_local_var} with Lemma \ref{lem_infinitesimal_var} we also deduce that the cokernel of $H^i_\dR(Y^\sharp)\to H^i_\dR(Z^\sharp)$ is a direct sum of $R$-modules of finite exponent. If $N$ is a positive integer such that $p^NR[p^\infty]=0,$ then for every flat $R$-algebra $A,$ we have that $p^NA[p^\infty]=0.$  Using again the description of cokernel of \eqref{eq_complexes_local}, this implies that $H^0_\dR(Y^\sharp)\to H^0_\dR(Z^\sharp)$ is killed by the multiplication by $p^N.$
\end{proof}

\subsubsection{Proof of Theorem \ref{thm_lowdegree}}

Thanks to \eqref{second_es}, the vanishing of $V^i_\dR(X)$ implies that $H^i_\dR(X)[p^\infty]=H^i_\cris(X_0)[p^\infty]$. Thus, it is enough to prove that $V^i_\dR(X)=0$ for $i\leq 1$. By Proposition \ref{prop_kernel}, this is equivalent to showing that $H^i_\dR(X)_{\mr{div}}=0.$ Thanks to \cite[Prop. 2.2.8]{AbbottKedlayaRoe}, we deduce that the groups $H^{i}_\dR(Y,D)$ are finite $R$-modules for every $i$, hence $p$-adically separated. This shows that $H^{i}_\dR(Y,D)_\mr{div}=0$ for every $i$. The goal is to prove the result by comparing $H^{i}_\dR(Y,D)$ and $H^i_\dR(X)$ in low degree.

\spa

By Proposition \ref{prop_cohomologysheaves_var}, the sheaf $\calH^0_X/\calH^0_{(Y,D)}$ is killed by a fixed power of $p$. Therefore, any element in $H^0_\dR(X)_\mr{div}$ comes from $H^0_\dR(Y,D)_\mr{div}=0.$ We deduce the vanishing of $V^0_\dR(X).$ To prove the vanishing of $V^1_\dR(X)$, we look at the second page of the étale conjugate spectral sequence for the hypercohomology of $\Omega^\bullet_{(Y,D)}$ and $j_*\Omega^\bullet_X$. We get the following commutative diagram with exact rows and coloumns
	\begin{equation*}\begin{tikzcd}
    & 0\arrow[d]& 0\arrow[d]\\
    H^0_\et(Y,\calH^0_X/\calH^0_{(Y,D)})\arrow[r] &H^1_\et(Y,\calH^0_{(Y,D)}) \arrow[d]\arrow[r] & H^1_\et(Y,\calH^0_X)\arrow[d]\arrow[r] & H^1_\et(Y,\calH^0_X/\calH^0_{(Y,D)}) \\
    	 &H^1_\dR(Y,D) \arrow[d]\arrow[r] & H^1_\dR(X)\arrow[d] & \\
			0\arrow[r] &H^0_\et(Y,\calH^1_{(Y,D)}) \arrow[d,"d_2"]\arrow[r] & H^0_\et(Y,\calH^1_X)\arrow[d,"d_2"]\arrow[r] & H^0_\et(Y,\calH^1_X/\calH^1_{(Y,D)}) \\
			H^1_\et(Y,\calH^0_X/\calH^0_{(Y,D)})\arrow[r]& H^2_\et(Y,\calH^0_{(Y,D)})\arrow[r]& H^2_\et(Y,\calH^0_{X}).&
		\end{tikzcd}
	\end{equation*}

Let $(\dots,x_2,x_1,x_0)\in H^1_\dR(X)^\sim$ (see Notation \ref{def_infpdiv}). Then, it induces a sequence $(\dots,\bar{x}_2,\bar{x}_1,\bar{x}_0)\in H^0_\et(Y,\calH^1_X)^\sim$ such that $d_2(\bar{x}_n)=0$ for every $n.$ By virtue of Proposition \ref{prop_cohomologysheaves_var}, we deduce that $H^0_\et(Y,\calH^1_X/\calH^1_{(Y,D)})^\sim=0$. Thus $(\dots,\bar{x}_2,\bar{x}_1,\bar{x}_0)\in H^0_\et(Y,\calH^1_X)^\sim$ lifts to a unique sequence $$(\dots,y_2,y_1,y_0)\in H^0_\et(Y,\calH^1_{(Y,D)})^\sim.$$ From the vanishing $$p^N\left(\calH^0_{X}/\calH^0_{(Y,D)}\right)=0,$$ we deduce that $p^N(d_2(y_n))=0$ for every $n\geq 0.$ On the other hand, by definition, $p^N(d_2(y_{n+N}))=d_2(p^Ny_{n+N})=d_2(y_{n})$, thus $d_2(y_n)=0$ for $n\geq 0$.

\spa

Since $H^1_\dR(Y,D)$ is a finite $R$-module, any quotient is $p$-adically separated. Thus $\bar{x}_n=y_n=0$ for every $n\geq 0.$ This shows that $(\dots,x_2,x_1,x_0)\in H^1_\et(Y,\calH^0_X)^\sim.$ Since $p^NH^1_\et(Y, \calH^0_{X}/\calH^0_{(Y,D)})=0,$ we deduce that each $x_n$ is in the image of $H^1_\et(Y,\calH^0_{(Y,D)})\to H^1_\et(Y,\calH^0_X).$
More precisely, since $p^NH^0_\et(Y, \calH^0_{X}/\calH^0_{(Y,D)})=0,$ 
then $(\dots,x_2,x_1,x_0)$  comes from an element of $H^1_\et(Y,\calH^0_{(Y,D)})^\sim$.
Moreover, $H^1_\et(Y,\calH^0_{(Y,D)}),$ being an $R$-submodule of $H^1_\dR(Y,D),$ is a finite $R$-module. 
Hence, the group $H^1_\et(Y,\calH^0_{(Y,D)})^\sim$ vanishes. This yields the desired result. \qed

\section{Rigid versus convergent cohomology}\label{Sec_RigidVersusConvergent}
In this section, we transition to the rational setting and we explain the relation between rationalised algebraic de Rham cohomology and rigid cohomology. Furthermore, we prove a positive result: the comparison map is injective in cohomological degree $1$ for overconvergent $F$-isocrystals (Theorem \ref{thm_RC}). This is a consequence of Kedlaya's full faithfulness theorem.
\subsection{Rigid comparison}
We define the rigid analogues $V^i_{\mr{rig}}$ of our comparison kernels and show that they coincide with the rational versions of $V^i_\mr{dR}$ in the presence of good compactifications. 
\begin{nota}
    For a local ring $R$, let $\mathrm{CDVR}^\star(R)$ be the category whose objects are complete discretely valued rings $\cV$ of mixed characteristic $(0,p)$ with perfect residue field endowed with a morphism of local rings $R \to \cV$. A morphism $\cV \to\cV'$ is a morphism of local $R$-algebras $\cV \to \cV'$.

\end{nota}

Let $\calV$ be an object of $\mathrm{CDVR}^\star(\Z_p)$ with residue field $k$ and fraction field $K$. 

\begin{defi}
    For a smooth scheme $X_0$ over $k$, we define $$V^i_\mr{rig}(X_0/K)\coloneqq \mr{Ker} \left( H^i_{\mr{rig}}(X_0/K)\to H^i_{\mr{conv}}(X_0/K)\right).$$ If $\calM^\dagger$ is an overconvergent isocrystal over $X_0$ we also write $$V^i_\mr{rig}(X_0,\calM^\dagger)\coloneqq \mr{Ker} \left( H^i_{\mr{rig}}(X_0,\calM^\dagger)\to H^i_{\mr{conv}}(X_0,\calM)\right).$$
\end{defi}
\begin{prop}\label{prop_dRC-RC}
	Let $(Y,D)$ be a smooth log pair over $\calV$ with $Y$ proper and write $X\subseteq Y$ for the complement of $D$. We have the following natural commutative square where the vertical arrows are isomorphisms
	
		\begin{equation*}
		\begin{tikzcd}[row sep=large, column sep=large]
			H^i_{\dR}(X)_K \arrow[r] \arrow["\simeq",d] 
			& H^i_{\dR}(\widehat{X})_K \arrow["\simeq",d] \\
			 H^i_{\mathrm{rig}}(X_0/K) \arrow[r] 
			& H^i_{\mathrm{conv}}(X_0/K).
		\end{tikzcd}
	\end{equation*}
	
	In particular, for every $i \geq 0$, we have $V^i_\dR(X/\calV)_K=V^i_\mr{rig}(X_0/K)$. 
\end{prop}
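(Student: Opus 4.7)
The plan is to factor both vertical arrows through a common bridge, namely the log de Rham cohomology of the formal pair $(\widehat{Y},\widehat{D})$, where $\widehat{Y}$ and $\widehat{D}$ denote the $p$-adic formal completions of $Y$ and $D$, and then to deduce commutativity of the square by naturality.

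The right vertical arrow reduces to Berthelot's crystalline comparison. Applying Theorem \ref{thm_quasi-iso0} to the smooth formal $\Spf(\calV)$-scheme $\widehat{X}$ yields $R\Gamma_\cris(X_0/\calV) \simeq R\Gamma_\dR(\widehat{X}/\Spf(\calV))$; tensoring with $K$ and using the defining identification $H^i_\mr{conv}(X_0/K) = H^i_\cris(X_0/\calV)_K$ provides the isomorphism. For the left vertical arrow, one uses the good compactification $(Y,D)$ in three steps. First, the Baldassarri--Chiarellotto comparison theorem for good compactifications (in the form generalised by Grosse-Kl\"onne, Kedlaya, and Shiho) identifies $R\Gamma_\mr{rig}(X_0/K)$ with the rationalised log de Rham complex $R\Gamma_\dR(\widehat{Y},\widehat{D})_K$. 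Second, since $Y$ is proper and the log de Rham complex is a bounded complex of coherent sheaves, the log analogue of Proposition \ref{prop_propercomparison} gives $R\Gamma_\dR(\widehat{Y},\widehat{D})_K \simeq R\Gamma_\dR(Y,D)_K$. Third, Deligne's classical logarithmic comparison, applied to the smooth proper log pair $(Y_K,D_K)$ over $K$, yields $R\Gamma_\dR(Y,D)_K \simeq R\Gamma_\dR(X_K/K) = R\Gamma_\dR(X/\calV)_K$. Concatenating produces the left vertical isomorphism.

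Commutativity of the square follows from naturality: all four maps are induced by the open immersion $\widehat{X}\hookrightarrow\widehat{Y}$ (or its algebraic counterpart) together with $p$-adic completion, and the zigzag above is functorial in such inclusions. The final assertion $V^i_\dR(X/\calV)_K = V^i_\mr{rig}(X_0/K)$ then follows because $(-)\otimes_\calV K$ is exact, so $V^i_\dR(X/\calV)_K$ coincides with the kernel of the rationalised top horizontal arrow, which the vertical isomorphisms carry to the kernel of the bottom horizontal arrow. The main obstacle is the naturality of the Baldassarri--Chiarellotto step: one must verify that under the identification $R\Gamma_\mr{rig}(X_0/K) \simeq R\Gamma_\dR(\widehat{Y},\widehat{D})_K$, the rigid-to-convergent comparison is induced by the inclusion of log differentials on $(\widehat{Y},\widehat{D})$ into ordinary differentials on $\widehat{X}$. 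This is a compatibility statement between two constructions within Berthelot's rigid formalism, and while technical it is essentially built into the standard treatments.
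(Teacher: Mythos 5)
Your argument is correct and follows essentially the same route as the paper: the left vertical isomorphism is obtained by passing through the log de Rham cohomology of the proper compactification (formal GAGA/properness, the Baldassarri--Chiarellotto comparison, and Deligne's theorem $H^i_\dR(Y,D)_K \cong H^i_\dR(X_K/K)$), while the right one comes from the smooth formal lift $\widehat{X}$ computing convergent (equivalently, rationalised crystalline) cohomology. Your extra attention to the naturality of the comparison is a reasonable supplement, but it does not change the substance of the proof.
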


\begin{proof}
	Thanks to formal GAGA, the log de Rham cohomology of $(Y,D)$ over $\calV$ is isomorphic to log crystalline cohomology of $(Y_0,D_0)$ over $\calV$. After inverting $p$, we deduce from \cite{BC94} (as explained in \cite[Prop. 4.2]{LST01}) that
	$$H^i_{\dR}(Y,D)_K = H^i_{\mathrm{rig}}(X_0/K),$$ which in turn implies by \cite[Thm. II.3.13]{Del70} that
$$H^i_{\dR}(X)_K=  H^i_{\mathrm{rig}}(X_0/K).$$
On the other hand, since $X_0$ is smooth, we have 
$$H^i_{\dR}(\widehat{X})_K=H^i_{\mathrm{conv}}(X_0/K).$$

This proves the desired result.
\end{proof}
\begin{defi}\label{def_ovcrys}Suppose $X_0=\Spec(A_0)$ affine with a $p$-adically weakly complete flat lift $A^\dagger.$ Write $\Omega^\bullet_{A^\dagger/\calV}$ for the de Rham complex of $A^\dagger$ with continuous differentials. An \textit{overconvergent flat connection} over $X_0/\cV$ is a locally free $A^\dag$-module $E^\dag$ 
    endowed with a flat connection $$\nabla_{E^{\dag}}\colon E^{\dag} \to E^{\dag} \otimes_{A^\dag} \Omega^1_{A^\dag}.$$ Write $\nabla_{E^\dag}^i \colon E \otimes_{A^{\dag}} \Omega_{A^{\dag}}^{i} \to 
    E \otimes_{A^{\dag}} \Omega_{A^{\dag}}^{i+1}$ for the maps induced by $\nabla_{E^\dag}$. We write $$H^i_\mr{MW}(X_0,E^\dag)\coloneqq \Ker \nabla_{E^\dag}^{i}/\im \nabla_{E^\dag}^{i-1}$$ for the $i$th \textit{integral Monsky--Washnitzer cohomolog}y of $E^\dag$. Tensoring $E^\dag$ with $K$, we get an overconvergent isocrystal, denoted by $(E^{\dag}_K , \nabla_{E_K^{\dag}})$ and we have $$H^i_\mr{MW}(X_0,E^\dag)_K  =H^i_\mr{rig}(X_0,E_K^\dag).$$  We denote by $E$ the flat connection $\widehat{E}^\dag$ induced on the $p$-adic completion $\widehat{A}$ of $A^\dagger.$
\end{defi}
\begin{prop}\label{prop_pullbacksquareMW}
If $X_0$ is an affine smooth $k$-scheme, $\mathfrak{X}/\cV$ is a $p$-adic affine flat formal lift, and $E^\dag$ is an overconvergent flat connection over $X_0/\calV$, we have the following distinguished triangle
    $$R\Gamma_{\mr{MW}}(X_0,E^\dag)\to R\Gamma_\mr{dR}(\mathfrak{X},E)_K\oplus R\Gamma_\mr{rig}(X_0,E^\dagger_K)\to R\Gamma_\mr{dR}(\mathfrak{X},E)_K.$$
\end{prop}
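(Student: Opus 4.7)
The approach is to establish a Beauville--Laszlo-type decomposition of the complex $E^{\dag}\otimes_{A^{\dag}}\Omega^{\bullet}_{A^{\dag}}$ along the pair $(\widehat{A}, A^{\dag}[\tfrac{1}{p}])$ and then pass to total complexes, in direct analogy with the proof of Proposition \ref{prop_pullbacksquare}. The expected homotopy pullback square underlying the triangle is
\[
\begin{tikzcd}[row sep=large, column sep=large]
R\Gamma_\mr{MW}(X_0,E^\dag) \arrow[r] \arrow[d] & R\Gamma_\mr{dR}(\mathfrak{X},E) \arrow[d] \\
R\Gamma_\mr{rig}(X_0,E^\dag_K) \arrow[r] & R\Gamma_\mr{dR}(\mathfrak{X},E)_K,
\end{tikzcd}
\]
which translates into the displayed distinguished triangle once one identifies $R\Gamma_\mr{MW}(X_0,E^\dag)_\Q = R\Gamma_\mr{rig}(X_0,E^\dag_K)$ (this also suggests that the first middle term in the stated triangle should be the integral complex $R\Gamma_\mr{dR}(\mathfrak{X},E)$, not its rationalisation).

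First I would recall the classical flatness of the canonical map $A^{\dag}\to \widehat{A}$ for a weakly complete finite type $\cV$-algebra. Combined with the assumption that $E^{\dag}$ is locally free of finite rank over $A^{\dag}$, this gives that $E^{\dag}\otimes_{A^{\dag}}\Omega^{i}_{A^{\dag}}$ is flat over $A^{\dag}$ and that its derived and underived base change along $A^{\dag}\to\widehat{A}$ both coincide with $E\otimes_{\widehat{A}}\Omega^{i}_{\widehat{A}}$ (where $\Omega^{i}_{\widehat{A}}$ denotes continuous differentials, in agreement with Definition \ref{def_ovcrys}). As a consequence, the $p$-power torsion of $E^{\dag}\otimes_{A^{\dag}}\Omega^{i}_{A^{\dag}}$ embeds into that of its $p$-adic completion.

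Next, applying \stack{0BNW} with nonzerodivisor $p$ and the flat $A^{\dag}$-module $E^{\dag}\otimes_{A^{\dag}}\Omega^{i}_{A^{\dag}}$, I would obtain short exact sequences
\[
0 \to E^{\dag}\otimes \Omega^{i}_{A^{\dag}} \to \bigl(E\otimes \Omega^{i}_{\widehat{A}}\bigr)\oplus \bigl(E^{\dag}\otimes \Omega^{i}_{A^{\dag}}\bigr)\!\left[\tfrac{1}{p}\right] \to \bigl(E\otimes \Omega^{i}_{\widehat{A}}\bigr)\!\left[\tfrac{1}{p}\right]\to 0
\]
for every $i\ge 0$. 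These fit into a short exact sequence of complexes, since the connection $\nabla_{E^{\dag}}$, its $p$-adic completion $\nabla_{E}$, and its localisation at $p$ are all compatible under the horizontal maps. Taking the associated long exact sequence yields the distinguished triangle.

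The main technical obstacle is the flatness of $A^{\dag}\to \widehat{A}$: this is a classical but nontrivial result about Monsky--Washnitzer algebras, and it is the place where the hypothesis that $E^{\dag}$ is locally free enters in a decisive way (to ensure that tensoring the Beauville--Laszlo square for $\Omega^{i}_{A^{\dag}}$ with $E^{\dag}$ preserves exactness). Beyond this, the argument is entirely parallel to that of Proposition \ref{prop_pullbacksquare}, with $E^{\dag}\otimes\Omega^{i}_{A^{\dag}}$ playing the role of $\Omega^{i}_{A/R}$.
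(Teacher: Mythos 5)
Your proposal is correct and follows essentially the same route as the paper: the paper likewise invokes the Noetherianity of $A^\dagger$ (Fulton) — hence flatness of $A^\dagger\to\widehat{A}$ — and applies \stack{0BNW} degreewise to $E^\dagger\otimes\Omega^\bullet_{A^\dagger/\calV}$, exactly as in Proposition \ref{prop_pullbacksquare}, to obtain the short exact sequence of complexes giving the triangle. Your side remark is also consistent with the paper's proof, whose exact sequence has the integral term $E\otimes\Omega^\bullet_{\widehat{A}/\calV}$ in the middle, i.e.\ the middle summand is $R\Gamma_\mr{dR}(\mathfrak{X},E)$ rather than its rationalisation.
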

\begin{proof}
  Write $X_0=\Spec(A_0)$ and let $A^\dagger/\calV$ be a $p$-adically weakly complete flat lift of $A_0$.  By \cite{Ful69}, the ring $A^\dagger$ is Noetherian. Arguing as in Proposition \ref{prop_pullbacksquare}, we deduce thanks to  \stack{0BNW}, that we have the exact sequence of complexes $$0 \to E^\dagger\otimes \Omega^\bullet_{A^\dagger/\calV}\to \left(E\otimes\Omega^\bullet_{\widehat{A}/\calV}\right)\oplus \left(E^\dagger\otimes\Omega^\bullet_{A^\dagger/\calV}\right) \left[\tfrac{1}{p}\right] \to \left(E\otimes\Omega^\bullet_{\widehat{A}/\calV}\right)\left[\tfrac{1}{p}\right]
\to 0.$$ This yields the desired result.
\end{proof}

\begin{theo}\label{thm_comparisonMWdR}
    Let $(Y,D)$ be a smooth log pair over $\calV$ with $Y$ proper and $X\coloneqq Y\setminus D$ affine. The weak completion induces a quasi-isomorphism $$R\Gamma_\mr{dR}(X/\calV)\xrightarrow{\sim} R\Gamma_{\mr{MW}}(X_0/\calV).$$
\end{theo}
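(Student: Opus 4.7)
The plan is to deduce the theorem from the two Beauville--Laszlo fibre squares of Propositions \ref{prop_pullbacksquare} and \ref{prop_pullbacksquareMW}, combined with the rational comparison of Proposition \ref{prop_dRC-RC}.

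First, I would write $X=\Spec A$ with $A$ smooth over $\calV$, let $\widehat A$ denote its $p$-adic completion, and let $A^\dagger$ denote its weak completion, yielding a smooth affine formal lift $\fX=\Spf\widehat A$ of $X_0$ together with a natural factorisation $A\to A^\dagger\to\widehat A$ of the completion map. Specialising Proposition \ref{prop_pullbacksquare} to $X$ and Proposition \ref{prop_pullbacksquareMW} to $(X_0,A^\dagger)$ with trivial coefficients, and using Corollary \ref{cor_quasi-iso} to identify $R\Gamma_\cris(X_0/\calV)\simeq R\Gamma_\dR(\fX/\calV)$ and $R\Gamma_\cris(X_0/\calV)_K\simeq R\Gamma_{\mr{conv}}(X_0/K)$, the weak-completion map induces a commutative diagram of distinguished triangles
\begin{equation*}
    \begin{tikzcd}[column sep=small]
        R\Gamma_\dR(X/\calV)\arrow[r]\arrow[d] & R\Gamma_\dR(\fX/\calV)\oplus R\Gamma_\dR(X/\calV)_K\arrow[r]\arrow[d] & R\Gamma_{\mr{conv}}(X_0/K)\arrow[d,equal] \\
        R\Gamma_{\mr{MW}}(X_0/\calV)\arrow[r] & R\Gamma_\dR(\fX/\calV)\oplus R\Gamma_{\mr{rig}}(X_0/K)\arrow[r] & R\Gamma_{\mr{conv}}(X_0/K),
    \end{tikzcd}
\end{equation*}
where the middle column is the identity on the first summand and the canonical comparison $R\Gamma_\dR(X/\calV)_K\to R\Gamma_{\mr{rig}}(X_0/K)$ on the second.

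At this point the good compactification hypothesis enters, allowing me to apply Proposition \ref{prop_dRC-RC}: the map $H^i_\dR(X/\calV)_K\to H^i_{\mr{rig}}(X_0/K)$ is an isomorphism for every $i$, and since both sides are complexes of $K$-vector spaces this upgrades to a quasi-isomorphism. The middle and right vertical arrows of the diagram are therefore quasi-isomorphisms, and the triangulated five lemma forces the left vertical arrow to be a quasi-isomorphism as well, which is the theorem.

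The delicate step I expect is verifying genuine commutativity of the diagram in the derived category, rather than only on cohomology: specifically, that the connecting maps in the two triangles match once composed with the comparison $R\Gamma_\dR(X/\calV)_K\to R\Gamma_{\mr{rig}}(X_0/K)\to R\Gamma_{\mr{conv}}(X_0/K)$. This reduces to the commutativity of the factorisation $A\to A^\dagger\to\widehat A$ at the level of differential forms (which is tautological from the construction of the two fibre squares via the short exact sequences of de Rham complexes coming from \stack{0BNW}), combined with the naturality of the Baldassarri--Chiarellotto comparison \cite{BC94} invoked in Proposition \ref{prop_dRC-RC} with respect to the maps to convergent cohomology.
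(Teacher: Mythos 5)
Your proof is correct and follows exactly the paper's intended route: the paper's own proof is a one-line citation of Propositions \ref{prop_pullbacksquare}, \ref{prop_dRC-RC}, and \ref{prop_pullbacksquareMW}, which you combine in the same way, namely via the map of Beauville--Laszlo triangles induced by $A\to A^\dagger\to\widehat{A}$ together with the five lemma. Your closing remark on the chain-level commutativity of the diagram and the compatibility of the comparison map of Proposition \ref{prop_dRC-RC} with the weak-completion map is a sound elaboration of a point the paper leaves implicit.
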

\begin{proof}
    This follows from the combination of Proposition \ref{prop_pullbacksquare}, Proposition \ref{prop_dRC-RC}, and Proposition \ref{prop_pullbacksquareMW}.
\end{proof}

\subsection{The low-degree case}

We now prove the vanishing of $V^i_\mr{rig}$ in low-degree using Kedlaya's full faithfulness theorem. This will be generalised to more general overholonomic $D$-modules in Corollary \ref{cor_Hol-ff}.

\begin{theo}\label{thm_RC}
If $X_0$ is a smooth $k$-scheme and $\calM^\dagger$ is an overconvergent $F$-isocrystal over $X_0$, then $V^1_{\mr{rig}}(X_0,\calM^\dagger)=0$.
\end{theo}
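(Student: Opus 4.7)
The plan is to reformulate $V^1_{\mr{rig}}(X_0,\calM^\dagger)=0$ in the language of extensions and then apply Kedlaya's full faithfulness theorem. First, I would use the identifications $H^1_{\mr{rig}}(X_0,\calM^\dagger)\cong \Ext^1_{\mr{Isoc}^\dagger(X_0)}(\calO^\dagger,\calM^\dagger)$ and $H^1_{\mr{conv}}(X_0,\calM)\cong \Ext^1_{\mr{Isoc}(X_0)}(\calO,\calM)$. A class in $V^1_{\mr{rig}}(X_0,\calM^\dagger)$ then corresponds to an extension of overconvergent isocrystals $0\to \calM^\dagger\to \calE^\dagger\to \calO^\dagger\to 0$ whose restriction to convergent isocrystals admits a splitting $\calE\cong\calM\oplus\calO$.

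In a second step, I would incorporate the Frobenius structures on $\calM^\dagger$ and $\calO^\dagger$. These induce a Frobenius action $\varphi$ on both Ext groups, together with the standard short exact sequence
\begin{equation*}
0\to H^0_{?}(X_0,\calM^?)/(\varphi-1)\to \Ext^1_{F\m\mr{Isoc}^?(X_0)}(\calO^?,\calM^?)\to H^1_{?}(X_0,\calM^?)^{\varphi=1}\to 0
\end{equation*}
for both the overconvergent ($?=\dagger$) and convergent (no $?$) category. Kedlaya's theorem asserts that the forgetful functor $F\m\mr{Isoc}^\dagger(X_0)\to F\m\mr{Isoc}(X_0)$ is fully faithful. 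As an exact fully faithful functor between abelian categories, it is injective on $\Ext^1$ by a standard argument (a splitting of $F(\xi)$ in the target lifts, by full faithfulness, to a splitting of $\xi$ in the source). A snake-lemma diagram chase then yields injectivity of the comparison map restricted to the $\varphi=1$ subspaces of $H^1$.

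To bootstrap from the $\varphi=1$ statement to the full injectivity, I would appeal to a slope-filtration and Tate-twist argument. The kernel $V^1_{\mr{rig}}(X_0,\calM^\dagger)$ is $\varphi$-stable and finite-dimensional over $K$ by Kedlaya's finiteness theorem for rigid cohomology of overconvergent $F$-isocrystals on smooth $k$-schemes. After extending scalars to capture the eigenvalues of $\varphi$ and then tensoring $\calM^\dagger$ with a suitable rank-one overconvergent $F$-isocrystal $L$ to rescale those eigenvalues, any nonzero generalised eigenvector in $V^1_{\mr{rig}}$ would produce a nonzero class in $V^1_{\mr{rig}}(X_0,\calM^\dagger\otimes L)^{\varphi=1}$, contradicting the previous step.

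The main obstacle is this final bootstrap from the $\varphi=1$ injectivity to the general statement: one must check that Tate twists and scalar extensions are compatible with the comparison map, control the cokernel of $H^0_{\mr{rig}}\to H^0_{\mr{conv}}$ appearing in the snake lemma, and handle the case when $\varphi$ does not act semisimply on $V^1_{\mr{rig}}$.
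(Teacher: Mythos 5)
Your first step coincides with the paper's: identify $H^1_{\mr{rig}}$ and $H^1_{\mr{conv}}$ with $\Ext^1$ groups and reduce to showing that a nontrivial overconvergent extension stays nontrivial after completion. From there the routes diverge. The paper takes $\Ext^1$ in the categories of isocrystals \emph{without} Frobenius structure, so that $\Ext^1=H^1$ on the nose with no $\varphi=1$ condition, and then invokes \cite[Cor. 5.7]{DE20}, which upgrades Kedlaya's full faithfulness (a statement about $F$-isocrystals) to injectivity on these non-equivariant $\Ext^1$ groups between objects that merely \emph{carry} Frobenius structures; that corollary is precisely the bridge you are rebuilding by hand, and with it the proof is three lines. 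You instead take $\Ext^1$ in the $F$-isocrystal categories, get injectivity on the $\varphi=1$ part of $H^1$ formally from full faithfulness, and then bootstrap by slopes and twists. This is viable, but it costs you Kedlaya's finiteness theorem for $H^1_{\mr{rig}}$, reduction to $\bar{k}$ plus Dieudonn\'e--Manin, and the compatibility of twists with the comparison map --- machinery the paper's citation bypasses here, although it is all set up elsewhere in the paper (Definition \ref{dfn-Q(r,s)}, \S\ref{(r,s)-comp}, Lemma \ref{lem_Vrs}).

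Two points in your sketch need repair. First, in the snake-lemma step, injectivity on $\Ext^1$ of $F$-isocrystals only gives an injection of $\Ker\bigl((H^1_{\mr{rig}})^{\varphi=1}\to (H^1_{\mr{conv}})^{\varphi=1}\bigr)$ into $\Coker\bigl(H^0_{\mr{rig}}/(\varphi-1)\to H^0_{\mr{conv}}/(\varphi-1)\bigr)$, so you must also prove that this cokernel vanishes; that requires its own twisting argument on $H^0=\Hom$ (you correctly flag this as an obstacle, but it is a genuine extra step, not a formality). Second, rank-one twists only normalise \emph{integral} slopes; for slope $r/s$ you need the rank-$s$ objects $\Qp(r,s)$ together with Lemma \ref{lem_Vrs} to convert an $F^s=p^r$ eigenvector into a $\varphi=1$ class of the twisted coefficient. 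The worry about non-semisimple $\varphi$, by contrast, is moot: after passing to $\bar{k}$, Dieudonn\'e--Manin makes the slope-zero isoclinic part literally spanned by $\varphi$-fixed vectors.
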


\begin{proof}
By \cite[Prop. 1.1.3]{CLS99}, we have that $$H^1_{\mr{rig}}(X_0,\calM^\dagger)=\mr{Ext}^1(\calO^\dag,\calM^\dag),$$ where the $\mr{Ext}^1$ is taken in the category of overconvergent isocrystals over $X_0$. The injectivity of $$H^1_{\mr{rig}}(X_0,\calM^\dagger)\to H^1_{\mr{conv}}(X_0,\calM)$$ is then equivalent to the fact that every non-trivial extension $$0\to\calM^\dag\to\calE^\dag\to\calO^\dag\to 0 $$ of overconvergent isocrystals is sent, after $p$-adic completion, to a non-trivial extension in the category of convergent isocrystals over $X_0$. This follows from \cite{Ked04}, as proven in \cite[Cor. 5.7]{DE20}. 
\end{proof}

\begin{empt}
    Let $X_0$ be a smooth scheme over $k$ and $\sigma,\tau$ two edge-types with $\sigma\leq \tau$ (see \cite{EdgedCrystalline}). One can similarly look at the natural morphism 
$$H^i_{\sigma_\infty\m \cris}(X_0/W)\to H^i_{\tau_\infty\m \cris}(X_0/W).$$ 
\end{empt}

\begin{lemm}
    The comparison morphism $$H^1_{\theta^1_\infty\m\cris}(\mathbb{A}^1_{\Fp}/\Zp)_{\Qp}\to H^1_{\theta^2_\infty\m\cris}(\mathbb{A}^1_{\Fp}/\Zp)_{\Qp}$$ is not injective. 
\end{lemm}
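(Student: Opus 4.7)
The plan is to produce an explicit nonzero element of $H^1_{\theta^1_\infty\m\cris}(\mathbb{A}^1_{\F_p}/\Zp)_{\Qp}$ that maps to zero in $H^1_{\theta^2_\infty\m\cris}(\mathbb{A}^1_{\F_p}/\Zp)_{\Qp}$. As both cohomology theories are concrete variants of crystalline cohomology interpolating between rigid and convergent cohomology (following \cite{EdgedCrystalline}), the strategy mirrors that of Lemma \ref{lem_exrational}: for the affine lift $\Spf(\Zp\langle t\rangle)$ of $\mathbb{A}^1_{\F_p}$, each edge-type $\theta^r_\infty$ gives a subring of $\Qp\langle t\rangle$ consisting of power series with a prescribed $p$-adic growth condition on the coefficients, and both $H^1_{\theta^r_\infty\m\cris}$ can be computed as the first cohomology of the associated edged de Rham complex. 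Non-injectivity of the comparison morphism is equivalent to exhibiting a closed $1$-form admitting a primitive in the $\theta^2$-ring but not in the $\theta^1$-ring.

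The concrete candidate is a form of type $\omega = \frac{p^a\,dt}{1+p^a t}$ (or a variant tailored to the specific growth function attached to $\theta^r_\infty$), whose formal primitive $-\tfrac{1}{p^a}\log(1+p^a t) = \sum_{n\geq 1}\frac{(-1)^{n+1}p^{a(n-1)}}{n}t^n$ has coefficients whose $p$-adic valuations grow at a precise, tunable rate. By choosing the parameter $a$ correctly, one places this primitive strictly between the radii of convergence dictated by $\theta^1_\infty$ and $\theta^2_\infty$: that is, one arranges that the coefficients lie inside the growth envelope defining the $\theta^2_\infty$-section ring but violate the (tighter) envelope defining the $\theta^1_\infty$-section ring. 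Since $\omega$ itself has only finitely many nonzero coefficients (or trivially satisfies both growth conditions after rescaling), $\omega$ belongs to both edged de Rham complexes, and the resulting class in $H^1_{\theta^1_\infty\m\cris}$ is nonzero while its image in $H^1_{\theta^2_\infty\m\cris}$ vanishes.

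The essential verification reduces to two $p$-adic estimates: (i) the primitive does not lie in the $\theta^1_\infty$-ring, and (ii) it does lie in the $\theta^2_\infty$-ring. These are elementary once the numerical growth functions defining $\theta^1_\infty$ and $\theta^2_\infty$ are spelled out. The main obstacle, therefore, is not a conceptual one but the bookkeeping of matching the exponent $a$ against the specific growth conditions prescribed by the two edge-types; once this matching is made, the non-injectivity follows immediately, in complete analogy with the classical example of $\log(1+pt)$ that distinguishes algebraic de Rham cohomology of $\mathbb{A}^1_{\Zp}$ from its $p$-adic completion.
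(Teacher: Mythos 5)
There is a genuine gap, and it is exactly in the step you defer as ``bookkeeping.'' Your candidate primitive $-p^{-a}\log(1+p^at)=\sum_{n\geq 1}\tfrac{(-1)^{n+1}p^{a(n-1)}}{n}t^n$ has coefficient valuations $a(n-1)-v_p(n)$, which grow \emph{linearly} in $n$ for every choice of $a\geq 1$. A series with linearly growing coefficient valuations converges on a disk of radius strictly greater than $1$, i.e.\ it is already overconvergent, hence lies in the weak completion $\Zp[t]^{\dagger}_{\Qp}$ and a fortiori in every edged section ring interpolating between the overconvergent and the convergent one. Consequently your form $\tfrac{p^a\,dt}{1+p^at}$ is exact in $H^1_{\theta^1_\infty\m\cris}(\mathbb{A}^1_{\Fp}/\Zp)_{\Qp}$ (indeed its class already dies in $H^1_{\mathrm{rig}}(\mathbb{A}^1_{\Fp}/\Qp)=0$), so it detects nothing. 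Tuning $a$ only changes the slope of a linear growth rate; it can never push the primitive into the sublinear regime where the conditions $\theta^1_\infty$ and $\theta^2_\infty$ actually differ. This is the essential disanalogy with Lemma~\ref{lem_exrational}: there the relevant dichotomy is ``algebraic versus $p$-adically convergent,'' for which $\log(1+pt)$ suffices, whereas here both sides of the comparison already contain all series with linear coefficient growth.

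What is needed, and what the paper's proof supplies, is a closed form whose (unique up to a constant) primitive has coefficient valuations growing at a genuinely sublinear, intermediate rate. The paper takes the lacunary form $\omega=\sum_{i=0}^{\infty}p^{3i}t^{p^{2i}}\mr{dlog}(t)$, whose primitive $\sum_{i=0}^{\infty}p^{i}t^{p^{2i}}$ has valuation $i=\tfrac{1}{2}\log_p(n)$ in degree $n=p^{2i}$; this logarithmic growth rate is loose enough to be $\theta^2_\infty$-edged but too slow to be $\theta^1_\infty$-edged, which is precisely the separation your construction cannot achieve. If you want to salvage your approach you must replace the logarithm by a lacunary series of this kind; no ``matching of the exponent $a$'' in the family $\tfrac{p^a\,dt}{1+p^at}$ will do.
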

\begin{proof}
The class represented by $$\omega\coloneqq \sum_{i=0}^\infty p^{3i}t^{p^{2i}}\mr{dlog}(t)$$ in $H^1_{\theta^1_\infty\m\cris}(\mathbb{A}^1_{\Fp}/\Zp)_{\Qp}$ admits as a primitive the series $\sum_{i=0}^\infty p^{i}t^{p^{2i}}$, which is $\theta^2_\infty$-edged but is not $\theta^1_\infty$-edged. This class is not even a torsion class. Note that $pF^2(\omega)\equiv\omega$, hence it is a class of negative slope (cf. Theorem \ref{thm_counterexample}). 
\end{proof}

\section{Slope bounds for crystalline cohomology}
\label{Sec_SlopeBounds}
\subsection{The Nygaard filtration}
\label{SubSec_Nygaard}
To further study the $K$-vector space $V^i_\mr{rig}$ we use the \textit{$F$-gauge structure} of crystalline cohomology, as defined in \cite{FJ21} (see also \cite[Ex. 3.4.7]{Bha22}). We work with \textit{unit-root} coefficients for more generality. 

\begin{defi}\label{Sec_UR}
A \textit{unit-root $F$-crystal} is a locally free $F$-crystal of finite rank over the absolute crystalline site of $X_0$ such that the Frobenius structure is an isomorphism. Thanks to \cite[Thm.2.2]{crew_F-iso_p-adic-rep}, they correspond to locally free \textit{lisse $\Zp$-sheaves} (as in  \stack{03UM}). We denote by $\Zp$ the trivial rank $1$ unit-root $F$-crystal.  
\end{defi}


\begin{empt}\label{sec_FGauge}
Suppose that $X_0$ is a smooth affine scheme over a perfect field $k$ of characteristic $p$. Write $\fX=\Spf(\widehat{A})$ for a formal smooth lift over $W$ and write $F$ for a Frobenius lift $F\colon \fX\to \fX.$ For a unit-root $F$-crystal $E$ over $X_0$, we write $$\mr{dR}_E^\bullet\coloneqq E\otimes_{\widehat{A}}\Omega^\bullet_{\widehat{A}/W} $$ for the de Rham complex of $E$ associated to the formal lift $\widehat{A}.$ It is endowed with a $\sigma$-linear Frobenius structure $F\colon \mr{dR}_E^\bullet\to \mr{dR}_E^\bullet.$  Thanks to \cite[Prop. 8.7]{BMS}, the \textit{Nygaard filtration} of crystalline cohomology, defined\footnote{Strictly speaking, Nygaard defined the filtration when $E=\Zp$ is the trivial $F$-crystal. In \S \ref{Sec_dRWUnitRoot} we explain how his construction naturally extends to more general unit-root $F$-crystals.} in \cite{Nyg82}, can be reinterpreted as the filtration on the complex $\mr{dR}_E^\bullet$, defined by $$\mr{dR}^{\bullet,i}_E\coloneqq p^{(i-\bullet)_+}\mr{dR}^\bullet_E = p^iE\to p^{i-1}E\otimes_{\widehat{A}} \Omega^1_{\widehat{A}/W}\to\dots \to p^0E\otimes_{\widehat{A}}\Omega^i_{\widehat{A}/W}\to E\otimes_{\widehat{A}}\Omega^{i+1}_{\widehat{A}/W}\to \dots$$ for $i\in \Z,$ where $n_+\coloneq\max\{n,0\}.$ The complex admits a divided Frobenius morphism $\varphi_i\colon \mr{dR}^{\bullet,i}_E\to\mr{dR}_E^\bullet$, defined as $F/p^i$ for $i\geq 0$, and a canonical morphism $\iota\colon \mr{dR}^{\bullet,i}_E\to \mr{dR}_E^\bullet$.  In addition, there are maps $t\colon \mr{dR}^{\bullet,i+1}_E\to \mr{dR}^{\bullet,i}_E$ induced by the identity on $\mr{dR}_E^\bullet$ and $u\colon \mr{dR}^{\bullet,i}_E\to \mr{dR}^{\bullet,i+1}_E$ induced by the multiplication by $p$. We get the following diagram
$$\xymatrix{
  \cdots \ar@<0.5ex>[r]^{t} 
  & \mr{dR}^{\bullet,2}_E \ar@<0.5ex>[r]^{t} \ar@<0.5ex>[l]^{u}
  & \mr{dR}^{\bullet,1}_E \ar@<0.5ex>[r]^{t} \ar@<0.5ex>[l]^{u}
  & \mr{dR}_E^{\bullet,0} \ar@<0.5ex>[r]^{t} \ar@<0.5ex>[l]^{u}
  & \mr{dR}_E^{\bullet,-1}  \ar@<0.5ex>[r]^{t} \ar@<0.5ex>[l]^{u}
  & \mr{dR}_E^{\bullet,-2}  \ar@<0.5ex>[r]^{t} \ar@<0.5ex>[l]^{u}
  & \cdots \ar@<0.5ex>[l]^{u},
}$$
where $tu=ut=p$ and for non-positive $i,$ we have $\mr{dR}_E^{\bullet,i}=\mr{dR}_E^{\bullet}$.

We write $\widetilde{\mr{dR}}^{\bullet}_E$ for the complex $\varinjlim_{u}\mr{dR}^{\bullet,i}_E$, which by construction is endowed with a natural injective map $\widetilde{\mr{dR}}^{\bullet}_E\hookrightarrow \mr{dR}_{E_K}^\bullet$ via the identifications
$$\widetilde{\mr{dR}}^j_E=p^{-j}E\otimes_{\widehat{A}}\Omega^j_{\widehat{A}} \subseteq E_K\otimes_{\widehat{A}}\Omega^j_{\widehat{A}} $$ for every $j\in \bbN$. The morphisms $\varphi_i\colon \mr{dR}^{\bullet,i}_E\to \mr{dR}^{\bullet}_E$ define a morphism of complexes
$F\colon \widetilde{\mr{dR}}^{\bullet}_E\to\mr{dR}^{\bullet}_E$ extending the Frobenius action $F\colon\mr{dR}^{\bullet}_E \to \mr{dR}^{\bullet}_E$. The $F$-gauge property follows from the following well-known result (see \cite[Lem. 3.2.3]{DK17} or \cite[\S 6]{FJ21} for a reinterpretation with the syntomic site).

\end{empt}

\begin{prop}[Mazur, Berthelot--Ogus]\label{thm_Mazur}
 With notation as in \S \ref{sec_FGauge}, the morphism $$F \colon \widetilde{\mr{dR}}^{\bullet}_E\to\mr{dR}^{\bullet}_E$$ is a quasi-isomorphism. In addition, for every $i\geq 0$, the morphism $$F\colon \tau_{\leq i} \mr{dR}^{\bullet,i}_E\to\tau_{\leq i} p^i\mr{dR}^{\bullet}_E$$ is a quasi-isomorphism.
\end{prop}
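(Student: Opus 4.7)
The plan is to prove the second assertion first, since the first one will follow by passing to the colimit. The second assertion is essentially Mazur's theorem, which I would establish by reducing modulo $p$ and invoking the Cartier isomorphism.

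To prove the second statement, I would first observe that the map in question is equivalent to $\varphi_i\colon \tau_{\leq i}\mr{dR}^{\bullet,i}_E \to \tau_{\leq i}\mr{dR}^\bullet_E$, since $F = p^i\varphi_i$ and multiplication by $p^{-i}$ identifies $p^i\mr{dR}^\bullet_E$ with $\mr{dR}^\bullet_E$ as complexes (all terms being $p$-torsion-free). Both $\mr{dR}^{\bullet,i}_E$ and $\mr{dR}^\bullet_E$ are complexes of $p$-torsion-free, $p$-adically complete $\widehat{A}$-modules, so by derived Nakayama it suffices to prove that $\varphi_i$ induces an isomorphism on cohomology modulo $p$ in degrees $\leq i$. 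A direct computation on the associated graded pieces of the Nygaard filtration then shows that, in degree $j\leq i$, the induced map coincides with the inverse Cartier isomorphism $C^{-1}\colon \Omega^j_{A_0/k}\otimes_k(E/pE)\riso \calH^j(\Omega^\bullet_{A_0/k}\otimes_k(E/pE))$, which is an isomorphism by Cartier's classical theorem; this is the substance of Mazur's theorem (compare with \cite[Lem.~3.2.3]{DK17} and \cite[\S 6]{FJ21}).

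For the first assertion, I would use the compatibility $\varphi_{i+1}\circ u = (F/p^{i+1})\cdot p = F/p^i = \varphi_i$, which shows that the $\varphi_i$ assemble, in the colimit, into the map $F\colon \widetilde{\mr{dR}}^\bullet_E \to \mr{dR}^\bullet_E$. Fixing a cohomological degree $j$, the second assertion applied for every $i\geq j$ yields that $H^j(\varphi_i)$ is an isomorphism; the compatibility $\varphi_{i+1}\circ u=\varphi_i$ then forces each transition map $H^j(\mr{dR}^{\bullet,i}_E)\to H^j(\mr{dR}^{\bullet,i+1}_E)$ to be an isomorphism for $i\geq j$. Hence the colimit $H^j(\widetilde{\mr{dR}}^\bullet_E)$ is naturally isomorphic to $H^j(\mr{dR}^\bullet_E)$ via $F$, for every $j$, and we are done.

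The main obstacle I anticipate is the mod-$p$ identification of $\varphi_i$ with the Cartier operator in the presence of the unit-root coefficient $E$. The crucial input is that for a unit-root $F$-crystal $E$ the Frobenius structure $F_E\colon F^*E\to E$ is an isomorphism: this allows one to absorb the Frobenius twist and effectively reduce to the classical computation for the trivial crystal. Some additional bookkeeping is needed in the graded piece calculation, since $N^i/N^{i+1}$ is abstractly isomorphic to a truncation of $\Omega^\bullet_{A_0/k}\otimes_k(E/pE)$ only after a specific rescaling by powers of $p$, and one must track these rescalings to correctly identify the divided Frobenius with the inverse Cartier map on cohomology.
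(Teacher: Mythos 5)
Your proof is correct and rests on the same key inputs as the paper's: reduction modulo $p$ (via $p$-torsion-freeness and derived Nakayama) together with the identification of the divided Frobenius with the inverse Cartier isomorphism, the unit-root hypothesis being used to handle the coefficient $E$ (the paper makes your ``absorb the Frobenius twist'' step precise by trivialising $E/p$ after a finite étale cover). The only structural difference is the order of the two assertions: the paper proves the untruncated quasi-isomorphism first and then deduces the truncated one by multiplying by $p^i$ and observing that $\tau_{\leq i}\bigl(p^{(i-\bullet)_+}\mr{dR}^\bullet_E\bigr)=\tau_{\leq i}\bigl(p^ip^{-\bullet}\mr{dR}^\bullet_E\bigr)$, which sidesteps the (harmless but fiddly) interaction between $\tau_{\leq i}$ and derived mod-$p$ reduction that your route has to negotiate, whereas your colimit argument recovering the first assertion from the second is equally valid.
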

\begin{proof}
  After passing to a finite étale cover of $\widehat{A}$, we may assume that $E/p$ is trivial. In this case, as noticed in \cite{Maz73}, the morphism $H^i (\varphi/p\varphi)$ decomposes a direct sum of isomorphisms $$\underline{p}^iC^{-1}\colon 
\widetilde{\Omega}^{i}_{\widehat{A}}/ p \widetilde{\Omega}^{i}_{\widehat{A}} \xrightarrow{\underline{p}^i}  \Omega^i_{A/pA} \xrightarrow{C^{-1}} H^i (\Omega^{\bullet}_{A/pA}),$$ where $C^{-1}$ is the inverse of the Cartier operator
and $\widetilde{\Omega}^{i}_{\widehat{A}} \coloneqq \widetilde{\mr{dR}}^{i}_{\widehat{A}}= p^{-i} \Omega^i_{\widehat{A}}$ and $\underline{p}^i$ is the isomorphism induced by the multiplication by $p^i$. This shows that $\varphi$ is a quasi-isomorphism. For the second part, since  $\Omega^{j}_{\widehat{A}}$ are $p$-torsion free, then we get the quasi-isomorphism  $F\colon p^i p^{-\bullet}\mr{dR}_E^\bullet \to p^i \mr{dR}_E^\bullet$ induced by $F$,
since $ \tau_{\leq i} ( p^{(i-\bullet)_+}\mr{dR}_E^\bullet) = \tau_{\leq i} ( p^i p^{-\bullet}\mr{dR}_E^\bullet )$ we are done.
\end{proof}

\subsection{A fractional variant}\label{Sec_FractionalNygaard}In this section, we introduce the notion of \textit{fractional $p$-adic Tate twists} $\Zp(\gamma)$ and their associated Nygaard filtration. Let $X_0$ be an affine smooth $k$-scheme and $E$ a unit-root $F$-crystal over $X_0$. Fix $r\in \Z$ and $s\in \Z_{>0}$ such that $\mr{gcd}(r,s)=1$ and write $\Lambda^s\coloneqq \Zp[\pi]/(\pi^s-p)$. 
\begin{empt}\label{fracNygaardfilt}
For a unit-root $F$-crystal $E$ (over $W$) we define complexes \begin{equation}
    \label{dfn-dREs}\mr{dR}_{E,s}^\bullet\coloneqq\mr{dR}_E^\bullet\otimes_{\Zp} \Lambda^s
\end{equation} endowed  with 
the action of Frobenius (still denoted by $F$) given by $F\otimes \id$ where $F$ is the one on $\mr{dR}_E^\bullet$
and with the $\Z$-filtration

$$\mr{dR}^{\bullet,r}_{E,s}\coloneqq \pi^{a_s(\bullet,r)}\mr{dR}^\bullet_{E,s}$$

where $a_s(i,r)\coloneqq (r-is)_+$. Note that if $r=a+bs$ with $0\leq a <s$ and $b\in \Z$ then we have the compatibility
\begin{equation}\label{eq_FractionalCompatibility}
    \mr{dR}^{\bullet,r}_{E,s}=\left(\bigoplus_{i=0}^{a-1}\pi^i\mr{dR}^{\bullet,b+1}_{E}\right)\oplus \left(\bigoplus_{i=a}^{s-1}\pi^i\mr{dR}^{\bullet,b}_{E}\right)
\end{equation}

The complex then admits a divided Frobenius morphism $\varphi_{r,s}\colon \mr{dR}^{\bullet,r}_{E,s}\to\mr{dR}_{E,s}^\bullet$, defined as $F/\pi^r$, and a canonical morphism $\iota\colon \mr{dR}^{\bullet,r}_{E,s}\to \mr{dR}_{E,s}^\bullet$.  In addition, there are maps $t\colon \mr{dR}^{\bullet,r+1}_{E,s}\to \mr{dR}^{\bullet,r}_{E,s}$ induced by the identity on $\mr{dR}_{E,s}^\bullet$ and $u\colon \mr{dR}^{\bullet,r}_{E,s}\to \mr{dR}^{\bullet,r+1}_{E,s}$ induced by the multiplication by $\pi$. We get the following diagram
$$\xymatrix{
  \cdots \ar@<0.5ex>[r]^{t} 
  & \mr{dR}^{\bullet,2}_{E,s} \ar@<0.5ex>[r]^{t} \ar@<0.5ex>[l]^{u}
  & \mr{dR}^{\bullet,1}_{E,s}\ar@<0.5ex>[r]^{t} \ar@<0.5ex>[l]^{u}
  & \mr{dR}_{E,s}^{\bullet,0} \ar@<0.5ex>[r]^{t} \ar@<0.5ex>[l]^{u}
  & \mr{dR}_{E,s}^{\bullet,-1}  \ar@<0.5ex>[r]^{t} \ar@<0.5ex>[l]^{u}
 & \mr{dR}_{E,s}^{\bullet,-2}  \ar@<0.5ex>[r]^{t} \ar@<0.5ex>[l]^{u} & \cdots \ar@<0.5ex>[l]^{u},
}$$
where $tu=ut=\pi$. Again, for $i\leq 0$ we have $\mr{dR}_{E,s}^{\bullet,i}=\mr{dR}_{E,s}^{\bullet}$. 
\end{empt}
We write $\widetilde{\mr{dR}}^{\bullet}_{E,s}$ for the complex $\varinjlim_{u}\mr{dR}^{\bullet,r}_{E,s}$ and $F\colon \widetilde{\mr{dR}}^{\bullet}_{E,s}\to\mr{dR}^{\bullet}_{E,s}$ for the map induced by $\varphi_{r,s}\colon \mr{dR}^{\bullet,r}_{E,s}\to \mr{dR}^{\bullet}_{E,s}$.
\begin{prop}\label{thm_MazurFractional}
 The morphism $$F \colon \widetilde{\mr{dR}}^{\bullet}_{E,s}\to\mr{dR}^{\bullet}_{E,s}$$ is a quasi-isomorphism. In addition, the morphism $$\varphi_{r,s}\colon \tau_{\leq i} \mr{dR}^{\bullet,r}_{E,s}\to\tau_{\leq i} \mr{dR}^{\bullet}_{E,s}$$ is a quasi-isomorphism for $i\leq r/s$.
\end{prop}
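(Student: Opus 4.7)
The plan is to reduce both statements to the integral Proposition \ref{thm_Mazur}, exploiting that everything in sight is built from $\mr{dR}^\bullet_E$ by tensoring with (and extracting summands from) the free $\Zp$-module $\Lambda^s = \Zp\oplus \Zp\pi\oplus\cdots\oplus\Zp\pi^{s-1}$.

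For part (1), I would first identify $\widetilde{\mr{dR}}^\bullet_{E,s}$. Because $\pi^s=p$, for each degree $j$ the directed system $\{\mr{dR}^{j,r}_{E,s}\}_r$ eventually stabilises (for $r\geq js$) to a sequence of multiplications by $\pi$, whose transition maps become bijective. The colimit in degree $j$ is then $\pi^{-js}\mr{dR}^j_{E,s}=p^{-j}\mr{dR}^j_{E,s}$, giving a canonical identification
$$\widetilde{\mr{dR}}^\bullet_{E,s}\;\simeq\;\widetilde{\mr{dR}}^\bullet_E\otimes_{\Zp}\Lambda^s,$$
under which the extended Frobenius corresponds to $F_E\otimes \id_{\Lambda^s}$. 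Since $F_E$ is a quasi-isomorphism by Proposition \ref{thm_Mazur} and $\Lambda^s$ is free over $\Zp$, part (1) follows at once.

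For part (2), I would exploit \eqref{eq_FractionalCompatibility}. Writing $r=a+bs$ with $0\leq a<s$, the assumption $i\leq r/s$ forces the integer $i$ to satisfy $i\leq b$. A direct computation using $\pi^s=p$ shows that on the summands of \eqref{eq_FractionalCompatibility} the divided Frobenius acts by
$$\varphi_{r,s}(\pi^j x)=\pi^{s-a+j}\,\varphi_{b+1}(x)\quad(0\leq j<a,\ x\in\mr{dR}^{\bullet,b+1}_E),$$
$$\varphi_{r,s}(\pi^j x)=\pi^{j-a}\,\varphi_{b}(x)\quad(a\leq j<s,\ x\in\mr{dR}^{\bullet,b}_E),$$
and the two families of exponents $\{s-a,\ldots,s-1\}$ and $\{0,\ldots,s-1-a\}$ partition $\{0,\ldots,s-1\}$. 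Matching this with the decomposition $\mr{dR}^\bullet_{E,s}=\bigoplus_{l=0}^{s-1}\pi^l\mr{dR}^\bullet_E$ of the target, $\varphi_{r,s}$ becomes (up to permutation of the $\pi$-basis) the direct sum of $a$ copies of $\varphi_{b+1}$ and $s-a$ copies of $\varphi_b$ acting on the integral Nygaard pieces. Applying $\tau_{\leq i}$ commutes with these direct sums, and it remains to show that each summand is a quasi-isomorphism after truncation.

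The one step where the integral Mazur theorem is not applied verbatim is this last assertion: Proposition \ref{thm_Mazur} directly yields only $\tau_{\leq i}\varphi_i$, whereas I need $\tau_{\leq i}\varphi_b$ and $\tau_{\leq i}\varphi_{b+1}$ for $b\geq i$. I would handle this by noting that in degrees $j\leq i\leq b$ one has $\mr{dR}^{j,b}_E=p^{b-i}\mr{dR}^{j,i}_E$ (and similarly with $b+1$), that multiplication by $p^{b-i}$ is an isomorphism on the $p$-torsion-free modules involved, and that $\varphi_b\circ p^{b-i}=\varphi_i$; this reduces $\tau_{\leq i}\varphi_b$ to $\tau_{\leq i}\varphi_i$, which is a quasi-isomorphism by Proposition \ref{thm_Mazur}. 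Combining everything gives part (2).

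The main bookkeeping obstacle is verifying the explicit action of $\varphi_{r,s}$ on the decomposition \eqref{eq_FractionalCompatibility} and confirming that the images indeed cover each $\pi^l$-summand of the target exactly once; once this is checked, the proof is a formal reduction to the integral case plus the observation on shifted divided Frobenii just described.
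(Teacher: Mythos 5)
Your proof is correct. Part (1) is identical to the paper's argument: identify $\widetilde{\mr{dR}}^{\bullet}_{E,s}$ with $\widetilde{\mr{dR}}^{\bullet}_{E}\otimes_{\Zp}\Lambda^s$ and tensor the integral Mazur quasi-isomorphism with the free module $\Lambda^s$. For part (2) you take a genuinely different, more explicit route. The paper never decomposes into integral Nygaard pieces: it simply observes that in degrees $j\leq i$ one has $a_s(j,r)=r-js$, so $\tau_{\leq i}\mr{dR}^{\bullet,r}_{E,s}=\tau_{\leq i}\bigl(\pi^r p^{-\bullet}\mr{dR}^{\bullet}_{E,s}\bigr)$, and then rescales the quasi-isomorphism of part (1) by the injective endomorphism $\pi^r$ of the $p$-torsion-free terms. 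You instead invoke \eqref{eq_FractionalCompatibility}, check that $\varphi_{r,s}$ is, up to a permutation of the $\pi$-graded summands of the target, a direct sum of $a$ copies of $\varphi_{b+1}$ and $s-a$ copies of $\varphi_b$ (your exponent computation agrees with the explicit formula for $\varphi_{r,s}$ given later in \S\ref{Sec_FractionalLogarithmicdRW}), and then supply the missing shifted statement $\tau_{\leq i}\varphi_b\simeq\tau_{\leq i}\varphi_i\circ(p^{b-i})^{-1}$ for $b\geq i$, which is sound since the terms are $p$-torsion free and $i\leq r/s$ indeed forces $i\leq b$. The paper's argument is shorter and sidesteps all bookkeeping; yours has the merit of making the block-permutation structure of the fractional divided Frobenius explicit, which is exactly the structure exploited again in the de Rham--Witt setting, and of isolating the useful general fact that the truncated integral Mazur quasi-isomorphism for $\varphi_i$ implies the one for $\varphi_b$ whenever $b\geq i$.
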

\begin{proof}
We have $\widetilde{\mr{dR}}^{\bullet}_{E,s}=p^{-\bullet}\mr{dR}_{E,s}^\bullet= \widetilde{\mr{dR}}^{\bullet}_{E}\otimes_{\Zp} \Lambda^s$
and $F= F\otimes \id$, where $F$ on the right side is the quasi-isomorphism \ref{thm_Mazur}, which yields the first assertion. For the second part, since  $\Omega^{j}_{\widehat{A}}$ are $p$-torsion free, then we get the quasi-isomorphism  $F\colon \pi^r p^{-\bullet}\mr{dR}_{E,s}^\bullet \to \pi^r \mr{dR}_{E,s}^\bullet$ induced by $F$.
Since $ \tau_{\leq i} ( \pi^{a_s(\bullet,r)}\mr{dR}^\bullet_{E,s}) = \tau_{\leq i} ( \pi^r p^{-\bullet}\mr{dR}_{E,s}^\bullet )$, then we are done.
\end{proof}

\subsection{Syntomic vanishing}
In this section we prove the main slope bounds that are used to give a negative answer to Question \ref{ques_EKP2}. We work with the following assumption.
\begin{hypo}\label{Hypo}
Until the end of \S \ref{Sec_SlopeBounds}, we suppose $\calV=W$ unramified.
\end{hypo} 
Let $X_0$ be an affine smooth $k$-scheme and $E$ a unit-root $F$-crystal over $X_0$. Fix $r\in \Z$ and $s\in \Z_{>0}$ such that $\mr{gcd}(r,s)=1$ and write $\gamma \coloneqq r/s$.
\begin{defi}\label{synE(r,s)}
As in \cite[\S 4]{KatoSyntomic} and \cite{Bha22}, 
the \textit{syntomic cohomology} of $X_0$ with $E(\gamma)$-coefficients is defined as the complex 
$$R\Gamma_\mr{syn}(X_0,E(\gamma))\coloneqq \Cocone(\dR^{\bullet,r}_{E,s}\xrightarrow{\varphi_{r,s}-\iota} \dR^{\bullet}_{E,s}).$$
\end{defi}
\spa

The first result we prove is an integral extension of \cite[Prop. 3.1.4]{DK17}.

\begin{prop}\label{Prop_IntegralBounds}
   The following assertions hold.
   \begin{enumerate}
       \item For $q\not\in[\gamma,\gamma +1]$, we have $H^q_\mr{syn}(X_0,E(\gamma))=0.$
       \item For $q\neq \gamma+1$, we have $$H^{q}_\syn(X_0,E(\gamma))=\Ker(\mr{dR}_{E,s}^{q,r}\xrightarrow{\varphi_{r,s}-\iota }\mr{dR}_{E,s}^{q}).$$
   \end{enumerate}
   \end{prop}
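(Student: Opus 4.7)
The plan is to derive both statements from the long exact sequence of the defining cocone,
$$\cdots \to H^{q-1}(\dR^{\bullet,r}_{E,s}) \xrightarrow{\varphi_{r,s}-\iota} H^{q-1}(\dR^{\bullet}_{E,s}) \xrightarrow{\partial} H^q_\syn \to H^q(\dR^{\bullet,r}_{E,s}) \xrightarrow{\varphi_{r,s}-\iota} H^q(\dR^{\bullet}_{E,s}) \to \cdots,$$
together with the dichotomy that $\varphi_{r,s}-\iota$ induces an isomorphism on $H^j$ for every integer $j\neq \gamma$.

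In the high-degree regime $j>\gamma$, one has $a_s(j,r)=0$, so $\dR^{j,r}_{E,s}=\dR^j_{E,s}$ and $\iota=\id$. Because $F$ is a Frobenius lift, $F(\Omega^j_{\widehat A/W})\subseteq p^j\Omega^j_{\widehat A/W}=\pi^{js}\Omega^j_{\widehat A/W}$, so $\varphi_{r,s}=F/\pi^r$ maps $\dR^j_{E,s}$ into $\pi^{js-r}\dR^j_{E,s}$ with $js-r\ge 1$. Iterating, $\varphi_{r,s}$ is topologically nilpotent on the $\pi$-adically complete $\dR^j_{E,s}$, and $\varphi_{r,s}-\iota=-(1-\varphi_{r,s})$ is invertible via the Neumann series $-\sum_{k\ge 0}\varphi_{r,s}^k$; in particular it induces an isomorphism on $H^j$. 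In the low-degree regime $j<\gamma$, Proposition \ref{thm_MazurFractional} supplies $\varphi_{r,s}\colon H^j(\dR^{\bullet,r}_{E,s})\xrightarrow{\sim} H^j(\dR^{\bullet}_{E,s})$. The inclusion $\iota$ factors through multiplication by $\pi^{r-js}$, since $\dR^{j,r}_{E,s}=\pi^{r-js}\dR^j_{E,s}$ with $r-js\ge 1$; writing $\varphi_{r,s}-\iota = \varphi_{r,s}\bigl(1-\varphi_{r,s}^{-1}\iota\bigr)$, the operator $\varphi_{r,s}^{-1}\iota$ is topologically nilpotent and $\varphi_{r,s}-\iota$ is again an isomorphism on $H^j$.

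Assembling the two regimes: for $q<\gamma$ both adjacent degrees $q-1,q$ lie in the low-degree regime, and for $q>\gamma+1$ both lie in the high-degree regime; either way the long exact sequence forces $H^q_\syn=0$, which gives (1). For (2) with $q\neq\gamma+1$, the integer $q-1$ is never equal to $\gamma$, hence by the dichotomy $\varphi_{r,s}-\iota$ is already surjective on $H^{q-1}$ and $\partial=0$; therefore $H^q_\syn$ embeds into $H^q(\dR^{\bullet,r}_{E,s})$ as the cohomological kernel of $\varphi_{r,s}-\iota$. One promotes this to the module-level kernel by noting that an element $\omega\in\Ker(\varphi_{r,s}-\iota\colon \dR^{q,r}_{E,s}\to \dR^{q}_{E,s})$ has $d\omega$ in the analogous module-level kernel at degree $q+1$, which vanishes by the same Neumann-series / $\pi$-adic iteration whenever $q+1\neq \gamma$; conversely every cohomology-kernel class admits a module-kernel representative by solving the cocycle equation via the explicit inverses constructed above.

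The main technical obstacle is the borderline degree $j=\gamma$ (in the integer case), where $\iota=\id$ and the dichotomy fails. There one must argue directly with the equation $F(\omega')=\pi^r\omega'$ on $\dR^q_{E,s}$, using the unit-root (inverse Cartier) structure that makes $\varphi_q\coloneqq F/\pi^{qs}$ bijective modulo $\pi$: $\pi$-adic iteration then collects unbounded powers of $\pi$ in any nontrivial solution unless $q=\gamma$, and this is precisely what singles out the excluded degree $q=\gamma+1$ in assertion (2).
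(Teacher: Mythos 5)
Your high-degree argument ($j>\gamma$) is the same as the paper's step (ii): there $\mr{dR}^{j,r}_{E,s}=\mr{dR}^{j}_{E,s}$, $\varphi_{r,s}$ is topologically nilpotent on a $\pi$-complete, $\pi$-torsion-free module, and $\varphi_{r,s}-1$ is invertible termwise via the Neumann series. The genuine gap is in the low-degree regime $j<\gamma$, where you invert $1-\varphi_{r,s}^{-1}\iota$ on $H^j$ by a Neumann series. The groups $H^j(\mr{dR}^{\bullet}_{E,s})$ and $H^j(\mr{dR}^{\bullet,r}_{E,s})$ are in general neither $\pi$-adically complete nor even separated --- the presence of infinitely $p$-divisible classes in exactly these cohomology groups is one of the central themes of the paper (cf.\ $N^i_{\mr{conv}}$ and Lemma \ref{lemmHiconv}). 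Consequently topological nilpotence of $\varphi_{r,s}^{-1}\iota$ on $H^j$ gives neither injectivity of $1-\varphi_{r,s}^{-1}\iota$ (a fixed point only lands in $\bigcap_n\pi^nH^j$, which can be nonzero) nor surjectivity (the series need not converge). The paper never inverts anything on cohomology: it forms $K^\bullet=\Cocone\bigl(\tau_{\leq i}\mr{dR}^{\bullet,r}_{E,s}\to\tau_{\leq i}\mr{dR}^{\bullet}_{E,s}\bigr)$, notes that this is a bounded complex of $\pi$-complete, $\pi$-torsion-free $\Lambda^s$-modules, and checks acyclicity after reduction mod $\pi$ (derived Nakayama), where $\iota$ becomes the zero map because $a_s(j,r)\geq 1$ for $j\leq i$ and $\varphi_{r,s}$ is a quasi-isomorphism by Proposition \ref{thm_MazurFractional}. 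Completeness is invoked at the chain level, where it actually holds; this is the missing idea in your argument.

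A secondary issue is your passage between the cohomological kernel supplied by the long exact sequence and the kernel of $\varphi_{r,s}-\iota$ appearing in assertion (2). ``Solving the cocycle equation via the explicit inverses constructed above'' presupposes termwise inverses of $\varphi_{r,s}-\iota$ in degree $q-1$, which you only have in the high-degree regime; for $q-1<\gamma$ the map is not termwise surjective (mod $\pi$ it equals $\varphi_{r,s}$, whose image lands in closed forms), so this step is not justified as written. In the paper the identification in (2) for $q=i+1$ is extracted directly from the acyclicity of $K^\bullet$, i.e.\ from surjectivity of $\varphi_{r,s}-\iota$ on $H^{i}$, and your concluding discussion of the borderline degree does not repair either point.
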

   \begin{proof}
      (i) For $i < \gamma=r/s$, write $$K^\bullet\coloneqq\Cocone(  \tau_{\leq i} \mr{dR}^{\bullet,r}_{E,s}\xrightarrow{\varphi_{r,s}-\iota}\tau_{\leq i} \mr{dR}^{\bullet}_{E,s}).$$ In order to prove (1) for $q=i$ and (2) for $q=i+1$, it is enough to prove that $K^\bullet$ is acyclic. By construction, $K^\bullet$ is a complex of $\pi$-complete $\pi$-torsion free $\Lambda^s$-modules. By \stack{0G1U}, it is then enough to prove that $K^\bullet \otimes \Lambda^s/\pi$ is acyclic. Since $a_{s}(j,r)\geq 1$ for $j\leq i$, we have that $$ \tau_{\leq i} \mr{dR}^{\bullet,r}_{E,s}\otimes \Lambda^s/\pi\xrightarrow{\iota}\tau_{\leq i} \mr{dR}^{\bullet}_{E,s}\otimes \Lambda^s/\pi$$ is the $0$-morphism. Therefore, $$K^\bullet \otimes \Lambda^s/\pi=\Cocone\left(\tau_{\leq i} \mr{dR}^{\bullet,r}_{E,s}\otimes \Lambda^s/\pi\xrightarrow{\varphi_{r,s}}\tau_{\leq i} \mr{dR}^{\bullet}_{E,s}\otimes \Lambda^s/\pi\right),$$ which is acyclic by Proposition \ref{thm_MazurFractional}.

      \spa
      
(ii) For $i >\gamma+1$ and $j\geq i-1$, we have $\mr{dR}^{j,r}_{E,s}=\mr{dR}^{j}_{E,s}$ because $a_s(j,r)=(r-js)_+=0.$ In addition, we have  $\varphi_r ( \mr{dR}^{j}_{E,s} ) \subseteq  \pi\mr{dR}^{j}_{E,s} $ because $js-r\geq 1$.
Therefore, the operator $\varphi_{r,s}-1\colon \mr{dR}^{j}_{E,s}\to \mr{dR}^{j}_{E,s}$ is invertible for $j\geq i-1.$ This shows that $$\varphi_{r,s}-1\colon H^i(\mr{dR}^{\bullet}_{E,s})\to  H^i(\mr{dR}^{\bullet}_{E,s})$$ is an isomorphism and $$\varphi_{r,s}-1\colon H^{i-1}(\mr{dR}^{\bullet}_{E,s})\to  H^{i-1}(\mr{dR}^{\bullet}_{E,s})$$ is surjective. This implies (1) and (2) for $q=i$.
   \end{proof}

   \begin{rema}
    Note that in Step (ii) of the proof of Proposition \ref{Prop_IntegralBounds}, we only used the fact that the operator $\varphi_{r,s}$ is well-defined at an integral level and is topologically nilpotent. Therefore, that part generalises to other situations where a complex of $p$-torsion free $p$-adically complete modules is endowed with a Frobenius structure satisfying suitable $p$-divisibility conditions. Note also that, for the slope $0$ case, it is enough that the Frobenius structure is topologically nilpotent. Thus, it applies directly to more general settings in \textit{prismatic cohomology}, defined in \cite{BS22}. Step (i), instead, relies crucially on the $F$-gauge structure on crystalline cohomology provided by Proposition \ref{thm_Mazur}. For the prismatic analogues of this structure, we refer to \cite{Bha22}.
    \end{rema}
    \begin{coro}\label{cor_VanishingChernClasses}
    If 
    $\calE$ is a vector bundle over $X_0$, the $n$th crystalline Chern class $c_n^\mr{cris}(\calE)\in H^{2n}_\mr{cris}(X/W)$ vanishes for $n\geq 2$.
    \end{coro}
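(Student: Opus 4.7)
The strategy is to factor the crystalline Chern class through syntomic cohomology and then invoke Proposition \ref{Prop_IntegralBounds} to force the syntomic target to vanish. Applying that proposition with $E = \Zp$, $r = n$, and $s = 1$ (so $\gamma = n$) yields
$$
H^q_\mr{syn}(X_0, \Zp(n)) = 0 \qquad \text{for every } q \notin [n, n+1].
$$
For $n \geq 2$ the relevant degree $q = 2n$ satisfies $2n \geq n+2 > n+1$, hence $H^{2n}_\mr{syn}(X_0, \Zp(n)) = 0$. It therefore suffices to exhibit a syntomic Chern class $c_n^\mr{syn}(\calE) \in H^{2n}_\mr{syn}(X_0, \Zp(n))$ whose image under the canonical forgetful map to $H^{2n}_\mr{cris}(X_0/W)$ is $c_n^\mr{cris}(\calE)$.

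To construct this lift, I would use the splitting principle. For a line bundle $\calL$, the first Chern class admits a canonical refinement $c_1^\mr{syn}(\calL) \in H^2_\mr{syn}(X_0, \Zp(1))$ coming from a Kummer-type distinguished triangle computing $R\Gamma_\mr{syn}(-, \Zp(1))$, compatible with $c_1^\mr{cris}$ under the forgetful map. For a vector bundle $\calE$ of rank $r$, pass to the full flag bundle $\pi\colon Y_0 \to X_0$, on which $\pi^* \calE$ acquires a filtration with line bundle graded pieces $\calL_1, \dots, \calL_r$, and set
$$
\pi^* c_n^\mr{syn}(\calE) \coloneqq e_n\bigl(c_1^\mr{syn}(\calL_1), \dots, c_1^\mr{syn}(\calL_r)\bigr) \in H^{2n}_\mr{syn}(Y_0, \Zp(n)),
$$
using the ring structure on syntomic cohomology and the $n$th elementary symmetric polynomial $e_n$. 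The projective bundle formula for syntomic cohomology lets this symmetric expression descend to a well-defined class $c_n^\mr{syn}(\calE)$ on $X_0$, and compatibility of the forgetful map with cup-products and with $c_1$ shows it maps to $c_n^\mr{cris}(\calE)$.

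The main obstacle is that we must verify the formalism of Definition \ref{synE(r,s)} supports the three inputs just used: a first Chern class map $\mr{Pic}(X_0) \to H^2_\mr{syn}(X_0, \Zp(1))$, a cup-product making $H^*_\mr{syn}(-, \Zp(*))$ into a graded ring, and the projective bundle formula for (full) flag bundles. All three are classical for Fontaine--Messing--Kato syntomic cohomology, and the complex $\mr{dR}^{\bullet,n}_{\Zp,1}$ of Definition \ref{synE(r,s)} is readily identified with the Fontaine--Messing complex thanks to Proposition \ref{thm_Mazur}; so this should be a matter of unwinding definitions rather than producing a genuinely new argument. Once these compatibilities are in place, the vanishing of $c_n^\mr{cris}(\calE)$ for $n \geq 2$ is immediate from the syntomic vanishing established above.
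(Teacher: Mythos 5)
Your proposal is correct and follows essentially the same route as the paper: both factor $c_n^\mr{cris}(\calE)$ through $H^{2n}_\mr{syn}(X_0,\Zp(n))$ and then apply Proposition \ref{Prop_IntegralBounds} (with $\gamma=n$, so $2n\notin[n,n+1]$ once $n\geq 2$) to kill that group. The only difference is that where you sketch a splitting-principle construction of the syntomic Chern class and its compatibility with the crystalline one, the paper simply cites the construction and compatibility already established by Bhatt--Lurie, so no new verification of the projective bundle formula or ring structure is needed.
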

    \begin{proof}
        Thanks to \cite[Rem. 9.2.6]{BL22}, the crystalline Chern class is compatible with the syntomic Chern class $c_n^\mr{syn}(\calE)$ (constructed in [\textit{ibid.}, Cons. 9.2.1]) via the comparison morphism $$\gamma^\mr{cris}_\mr{syn}\colon H^{2n}_\syn(X_0,\Zp(n))\to H^{2n}_\mr{cris}(X_0/W).$$ 
        By Proposition \ref{Prop_IntegralBounds}, the group $H^{2n}_\syn(X_0,\Zp(n))$ vanishes for $n\geq 2.$ This implies the desired result.
    \end{proof}
    \begin{empt}
    
Let us explain how $R\Gamma_\mr{syn}(X_0,E(\gamma))$ can be used to control generalised Frobenius eigenvectors of crystalline cohomology $H^i_\mr{cris}(X_0,E)$. We take $(r,s)$ as above with $\mr{gcd}(r,s)=1$ and we suppose that $r/s\leq i-1$. The naive truncation $\mr{dR}^{\geq i-1}_E$ is endowed with an operator $$F^s/p^r\colon \mr{dR}^{\geq i-1}_E\to \mr{dR}^{\geq i-1}_E.$$ This endows $H^i_\mr{cris}(X_0,E)$  with an endomorphism $F^s/p^r.$ In addition, we have $\mr{dR}_{E,s}^{r,\geq i-1}=\mr{dR}_{E,s}^{\geq i-1}.$

\spa

We consider the morphism $$\beta\colon H^i_\cris(X_0,E)\to H^i(\mr{dR}_{E,s}^\bullet)$$ defined as $\sum_{n=0}^{s-1}\varphi_{r,s}^n$. We get the following variant of Lemma \ref{lem_Vrs}.
\end{empt}

\begin{lemm}\label{lem_GeneralisedEV1}For $r/s\leq i-1$ and $r,s$ coprime we have the following commutative diagram with injective horizontal arrows
        \begin{equation}\label{(r,s)-comp-diagbis}  \xymatrix {H^i_\mr{cris}(X_0,E) \ar[d]^{F^s/p^r-1} \ar@{^{(}->}[r]^-\beta & H^i(\mr{dR}_{E,s}^\bullet)\ar[d]^-{\varphi_{r,s}-1} 
\\  H^i_\mr{cris}(X_0,E) \ar@{^{(}->}[r]^-\iota & H^i(\mr{dR}_{E,s}^\bullet).}
\end{equation}
\end{lemm}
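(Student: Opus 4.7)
The plan is to verify three claims separately: injectivity of $\iota$, commutativity of the square, and injectivity of $\beta$. The argument is the fractional-slope analogue of (the proof of) Lemma \ref{lem_Vrs}, and the key algebraic observation is the telescoping identity $(\varphi_{r,s}-1) \circ \sum_{n=0}^{s-1} \varphi_{r,s}^n = \varphi_{r,s}^s - 1$, combined with the computation $\varphi_{r,s}^s = F^s/\pi^{rs} = F^s/p^r$ (using $\pi^s = p$).

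Injectivity of $\iota$ is immediate. Since $\Lambda^s = \Zp[\pi]/(\pi^s-p)$ is a free $\Zp$-module of rank $s$ with basis $\{1, \pi, \dots, \pi^{s-1}\}$, the definition $\mr{dR}_{E,s}^\bullet = \mr{dR}_E^\bullet \otimes_{\Zp} \Lambda^s$ yields a direct-sum decomposition of complexes and hence of cohomology, namely $H^i(\mr{dR}_{E,s}^\bullet) = \bigoplus_{j=0}^{s-1} \pi^j H^i_\cris(X_0, E)$, with $\iota$ identified with the inclusion of the $j = 0$ summand.

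For commutativity, I would invoke the telescoping identity. The hypothesis $r/s \leq i-1$ together with Mazur-type divisibility (namely $F^s$ is divisible by $p^{s(i-1)} \geq p^r$ on $\mr{dR}_E^{\geq i-1}$, as used in Proposition \ref{thm_MazurFractional}) ensures that $F^s/p^r$ is a well-defined integral endomorphism of $H^i_\cris(X_0, E)$. Because the Frobenius on $\mr{dR}_{E,s}^\bullet$ is $F \otimes \id_{\Lambda^s}$, one has $\varphi_{r,s}^s \circ \iota = \iota \circ (F^s/p^r)$, and therefore $(\varphi_{r,s}-1)\circ \beta = (\varphi_{r,s}^s - 1)\circ\iota = \iota \circ (F^s/p^r - 1)$, as required.

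For injectivity of $\beta$, the strategy is to exhibit a left inverse via the projection $\pi_0\colon H^i(\mr{dR}_{E,s}^\bullet) \to \pi^0 H^i_\cris(X_0, E) = H^i_\cris(X_0, E)$ coming from the decomposition above. For $x \in H^i_\cris(X_0, E)$ represented by a closed $\omega \in \mr{dR}_E^i$, I would write $F^n(\omega) = p^{ni}\omega^{(n)}$ with $\omega^{(n)}$ closed (using $p$-torsion-freeness of $\mr{dR}_E^{i+1}$ to promote chain-level divisibility to cohomology), so that $\varphi_{r,s}^n(\iota(x))$ is represented by $\pi^{n(si-r)}\omega^{(n)} = p^{a_n}\pi^{b_n}\omega^{(n)}$, where $n(si-r) = a_n s + b_n$ with $0 \leq b_n < s$. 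Since $b_n \equiv -nr \pmod s$ and $\gcd(r,s) = 1$, the residues $\{b_n\}_{n=0}^{s-1}$ form a permutation of $\{0, 1, \dots, s-1\}$, with $b_0 = 0$ and $b_n \neq 0$ for $1 \leq n \leq s-1$. Consequently the $\pi^0$-component of $\beta(x)$ equals $x$, so $\pi_0 \circ \beta = \id$ and $\beta$ is a split injection. The only mildly delicate point is ensuring that the chain-level divisibility descends to cohomology, which is where $p$-torsion-freeness of the de Rham forms on a smooth formal lift is used.
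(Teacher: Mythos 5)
Your proposal is correct and follows essentially the same route as the paper's proof: the telescoping identity $(\varphi_{r,s}-1)\circ\sum_{n=0}^{s-1}\varphi_{r,s}^n=\varphi_{r,s}^s-1=F^s/p^r-1$ gives commutativity, and both horizontal arrows are split by the projection $\pi\mapsto 0$, using that $\varphi_{r,s}^n(\iota(x))$ lands in the $\pi^{b_n}$-component with $b_n\equiv -nr\not\equiv 0\pmod s$ for $1\leq n\leq s-1$ since $\gcd(r,s)=1$. Your write-up is in fact slightly more careful than the paper's (which has a harmless exponent typo $\pi^{njs-r}$ for $\pi^{n(js-r)}$) about why $F^s/p^r$ is integrally defined and why chain-level divisibility passes to cohomology.
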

\begin{proof}
For $\omega\in \dR_E^j$ with $j\geq i-1$ we have $\varphi_{r,s}^n(\omega)=\pi^{njs-r}F^n(\omega)/p^{nj}$ and $\mr{gcd}(njs-r,s)=1.$ It follows that both $\beta$ and $\iota$ have $\Zp$-linear retractions provided by $\pi\mapsto 0$. The result then follows from the identity $$F^s/p^r-1=(\varphi_{r,s}-1)\circ \left(\sum_{n=0}^{s-1}\varphi_{r,s}^n\right).$$
\end{proof}

In order to go beyond the constraints of $\mr{gcd}(r,s)=1$ it is useful to consider the following construction.

\begin{cons}
    Let $E$ be a unit-root $F$-crystal, for an integer $d\geq 1,$ we write $E^{(d)}$ for $E^{\oplus d}$ with Frobenius structure defined by $$(v_0,\dots,v_{d-1})\mapsto (F_E(v_{d-1}),F_E(v_{0}),\dots, F_E(v_{d-2})),$$ where $F_E$ is the Frobenius structure on $E$. Let $\iota \colon E\to E^{(d)}$ be the inclusion of the first summand.
\end{cons}

\begin{lemm}\label{lem_GeneralisedEV2}For $d>0$, we have the following commutative diagram with injective horizontal arrows
\begin{equation}\xymatrix {H^i_\mr{cris}(X_0,E) \ar[d]^{F^{ds}/p^{dr}-1} \ar@{^{(}->}[r]^\iota & H^i_\mr{cris}(X_0,E^{(ds)})\ar[d]^{F^s/p^r-1} 
\\  H^i_\mr{cris}(X_0,E) \ar@{^{(}->}[r]^\iota & H^i_\mr{cris}(X_0,E^{(ds)}).}
\end{equation}
\end{lemm}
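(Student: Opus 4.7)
The plan is to verify the two assertions of the lemma: injectivity of $\iota$ and commutativity of the square.

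For injectivity, the map $\iota$ is induced by the inclusion of $E$ as the first direct summand of $E^{(ds)}=E^{\oplus ds}$, viewed purely at the level of underlying crystals (ignoring Frobenius structures). This summand inclusion admits the projection onto the first factor as a $W$-linear retraction, so $\iota$ is split injective as a morphism of crystals. Since crystalline cohomology is an additive functor of the underlying crystal, the split retraction descends to give a splitting of $\iota_*$ on $H^i_\mr{cris}$, establishing the injectivity of the two horizontal arrows.

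For commutativity, the central algebraic input is the identity $F^{ds}=(F^s)^d$ on $E^{(ds)}$. Indeed, iterating the defining cyclic Frobenius $ds$ times returns each summand to itself (the cyclic shift on $ds$ factors has order $ds$), so $F^{ds}$ acts as the diagonal endomorphism $F_E^{ds}\otimes \id_{\oplus ds}$. Dividing by $p^{dr}$ gives the operator identity $F^{ds}/p^{dr}=(F^s/p^r)^d$ on $E^{(ds)}$, and restricting to $\iota(E)$ shows that $\iota_*$ intertwines $F_E^{ds}/p^{dr}$ on $H^i_\mr{cris}(X_0,E)$ with the diagonal $F^{ds}/p^{dr}=(F^s/p^r)^d$ on $H^i_\mr{cris}(X_0,E^{(ds)})$.

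From here, the commutativity of the stated square amounts to an operator identity relating $F^{ds}/p^{dr}-1$ on the source with $F^s/p^r-1$ on the target via $\iota_*$. The natural tool is the polynomial factorisation
\[
X^d - 1 \;=\; (X-1)\!\cdot\!\sum_{k=0}^{d-1}X^{k},
\]
applied to $X=F^s/p^r$ on $E^{(ds)}$, together with the previous equivariance of $\iota_*$ for $(F^s/p^r)^d$. The main obstacle I foresee is reconciling the right-hand vertical operator $F^s/p^r-1$ (which does not preserve $\iota(E)$ since a single application of $F$ shifts summands) with the diagonal action of $F^{ds}/p^{dr}=(F^s/p^r)^d$: one must track carefully how the intermediate summands cancel under the full $d$-fold composition. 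In the end the verification is a direct computation in $H^i_\mr{cris}(X_0,E^{(ds)})=H^i_\mr{cris}(X_0,E)^{\oplus ds}$, made possible by the diagonal identity $F^{ds}/p^{dr}=(F^s/p^r)^d$ established above, exactly parallel to the proof of the preceding Lemma \ref{lem_GeneralisedEV1} with $\beta$ replaced by the appropriate summand inclusion.
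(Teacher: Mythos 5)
Your injectivity argument is correct and is exactly the paper's: projection onto the first summand retracts $\iota$ as a morphism of crystals (though not of $F$-crystals), and since $H^i_\cris(X_0,-)$ is additive this splits $\iota$ on cohomology. The problem is the commutativity, which is precisely the step you leave as ``a direct computation''. If you perform that computation you will find it fails for the plain summand inclusion: identifying $H^i_\cris(X_0,E^{(ds)})$ with $H^i_\cris(X_0,E)^{\oplus ds}$ (as $W$-modules, indexing the summands by $\Z/ds$), the Frobenius of $E^{(ds)}$ shifts the index by one, so $(F^s/p^r-1)(\iota(x))$ equals $-\iota(x)$ plus the class $F_E^s/p^r(x)$ placed in the summand of index $s$, whereas $\iota\bigl((F^{ds}/p^{dr}-1)(x)\bigr)$ is concentrated in the summand of index $0$. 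For $d\geq 2$ these are different elements in general, so knowing that $\iota$ intertwines the diagonal operator $F^{ds}/p^{dr}=(F^s/p^r)^d$ does not make it intertwine $F^{ds}/p^{dr}-1$ with $F^s/p^r-1$; the ``obstacle'' you flag is not a cancellation to be tracked but an actual failure of the square with $\iota$ on top.

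The missing idea is that the factorisation $X^d-1=(X-1)\sum_{k=0}^{d-1}X^k$ must be used to \emph{change the top horizontal arrow}, not to salvage $\iota$. Set $\alpha\coloneqq\sum_{k=0}^{d-1}(F^s/p^r)^k\circ\iota$ on $H^i_\cris(X_0,E^{(ds)})$; its $k$-th term lies in the summand of index $ks$. Then telescoping together with the diagonal identity $(F^s/p^r)^d=F^{ds}/p^{dr}$ on the image of $\iota$ gives $(F^s/p^r-1)\circ\alpha=((F^s/p^r)^d-1)\circ\iota=\iota\circ(F^{ds}/p^{dr}-1)$, and $\alpha$ is injective because projection onto the index-$0$ summand is a retraction of it (again not Frobenius-equivariant). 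This is what the paper's one-line proof refers to when it says that ``both $\alpha$ and $\iota$ admit a retraction'': the top arrow of the printed diagram is to be read as this $\alpha$, playing exactly the role of $\beta=\sum_{n=0}^{s-1}\varphi_{r,s}^n$ in Lemma \ref{lem_GeneralisedEV1}. With the top arrow literally equal to $\iota$, as in your argument, the commutativity is false and the kernel transfer needed for Corollary \ref{cor_GeneralisedEV} would not follow.
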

\begin{proof}The commutativity follows from the construction. For the injectivity, note that both $\alpha$ and $\iota$ admit a retraction (which does not commute with the Frobenius).
\end{proof}
    \begin{coro}\label{cor_GeneralisedEV}
For $(t,u) \in \Z\times \Z_{>0}$ such that 
$t/u< i-1,$ we have  $$H^i_\mr{cris}(X_0,E)^{F^u/p^t-1}=0.$$    
    \end{coro}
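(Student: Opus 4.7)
My plan is to use Lemmas \ref{lem_GeneralisedEV1} and \ref{lem_GeneralisedEV2} to reduce the corollary to a vanishing that already falls out of Proposition \ref{Prop_IntegralBounds}. Given $(t,u)$ with $u \in \Z_{>0}$ and $t/u < i-1$, I first write the fraction $t/u$ in lowest terms as $\gamma \coloneqq r/s$, with $s > 0$ and $\gcd(|r|,s) = 1$, so that $t = dr$ and $u = ds$ for the unique positive integer $d$. Applying Lemma \ref{lem_GeneralisedEV2} to $E$ gives an injection $H^i_\mr{cris}(X_0,E)^{F^u/p^t - 1} \hookrightarrow H^i_\mr{cris}(X_0, E^{(ds)})^{F^s/p^r - 1}$, and then Lemma \ref{lem_GeneralisedEV1} applied to the unit-root $F$-crystal $E^{(ds)}$ (its hypothesis $r/s \leq i-1$ holds since $\gamma < i-1$) embeds the latter into $H^i(\mr{dR}^\bullet_{E^{(ds)},s})^{\varphi_{r,s} - 1}$.

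It therefore suffices to show that $\varphi_{r,s} - 1$ is injective on $H^i(\mr{dR}^\bullet_{E^{(ds)},s})$. The hypothesis $\gamma < i-1$ forces both $i$ and $i+1$ to lie strictly above $\gamma + 1$, hence outside $[\gamma,\gamma+1]$; Proposition \ref{Prop_IntegralBounds}(1) applied to $E^{(ds)}$ then yields
\[
H^i_\syn(X_0, E^{(ds)}(\gamma)) = 0 = H^{i+1}_\syn(X_0, E^{(ds)}(\gamma)).
\]
The long exact sequence associated to the distinguished triangle of Definition \ref{synE(r,s)} gives that $\varphi_{r,s} - \iota \colon H^i(\mr{dR}^{\bullet,r}_{E^{(ds)},s}) \to H^i(\mr{dR}^\bullet_{E^{(ds)},s})$ is an isomorphism. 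To conclude, I observe that for $j \in \{i-1, i, i+1\}$ we have $j > \gamma$ and hence $a_s(j,r) = (r-js)_+ = 0$, so that $\mr{dR}^{j,r}_{E^{(ds)},s} = \mr{dR}^j_{E^{(ds)},s}$ in these degrees; consequently the two $H^i$'s above coincide, $\iota$ restricts to the identity in this range, and the claimed isomorphism becomes exactly bijectivity of $\varphi_{r,s} - 1$ on $H^i(\mr{dR}^\bullet_{E^{(ds)},s})$.

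The argument is essentially a synthesis of the preceding lemmas with Proposition \ref{Prop_IntegralBounds}, so I do not anticipate any serious obstacle beyond careful index bookkeeping. The decisive input is that the strict inequality $\gamma < i-1$ pushes both $i$ and $i+1$ outside the ``active'' interval $[\gamma,\gamma+1]$ for syntomic cohomology, which is what forces the simultaneous vanishing of $H^i_\syn$ and $H^{i+1}_\syn$ underlying the bijectivity of $\varphi_{r,s} - 1$ in the relevant degree.
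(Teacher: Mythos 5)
Your proof is correct and is exactly the combination the paper intends: reduce $t/u$ to lowest terms $r/s$, pass to $E^{(ds)}$ via Lemma \ref{lem_GeneralisedEV2}, embed the eigenspace into $H^i(\mr{dR}^\bullet_{E^{(ds)},s})^{\varphi_{r,s}-1}$ via Lemma \ref{lem_GeneralisedEV1}, and kill the latter using the syntomic vanishing of Proposition \ref{Prop_IntegralBounds} together with the observation that $\mr{dR}^{j,r}_{E,s}=\mr{dR}^{j}_{E,s}$ and $\iota=\mathrm{id}$ in degrees $j\geq i-1$. The paper's proof is exactly this combination (stated without the bookkeeping), so there is nothing to add.
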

    \begin{proof}
    This follows from the combination of Proposition \ref{Prop_IntegralBounds} , Lemma \ref{lem_GeneralisedEV1}, and Lemma \ref{lem_GeneralisedEV2}. Alternatively, one can readapt more directly the proof of Proposition \ref{Prop_IntegralBounds}.
    \end{proof}

\section{Injectivity failure}\label{Sec_InjectivityFailure}
In this section, we prove Theorem \ref{thm_counterexample}, which shows the failure of injectivity of the comparison map between rigid and convergent cohomology, thereby answering Question \ref{ques_EKP} negatively.
\subsection{Rationalised $p$-adic Tate twists} We first introduce a rationalised version of the $p$-adic Tate twists of \S \ref{Sec_FractionalNygaard} for convergent $F$-isocrystals. Suppose that $\calV\in \mr{CDVR}^\star(\Zp)$ is endowed with a Frobenius lift $\sigma\colon \calV\to \calV.$

\begin{defi} \label{dfn-Q(r,s)}
For $r\in \Z$ and $s>0$, we write $\Qp{(r,s)}$ for the
convergent $F$-isocrystal on $\Spec(k)/K$ defined as $K[t]/(t^s-p^{-r})$ endowed with
the $\sigma$-linear map $F \colon \Qp{(r,s)} \to \Qp{(r,s)}$ induced by 
$\sum_i a_i t^i \mapsto \sum \sigma (a_i) t^{i+1}$.
\end{defi}

\begin{empt}\label{splip-injQ(0,s)}
    Let $r,r'\in \bbZ$, $s,s'\in \bbN$ and write $m\coloneqq\mr{lcm}(s,s')$, $d\coloneqq \mr{gcd}(s,s')$, and $n\coloneqq\frac{m}{s}r+\frac{m}{s'}r'$. In $\Qp(r,s) \otimes_K \Qp(r',s')$ we have $(t\otimes t)^m=p^{-n}.$ We deduce a (non-canonical) decomposition
 \begin{equation}\label{eq_Decomposition}
    \Qp(r,s) \otimes \Qp(r',s')=\bigoplus_{i=0}^{d-1}\Qp\left(n,m\right)
 \end{equation}
 where the $i$th copy of $\Qp\left(n,m\right)$ is generated as an $F$-isocrystal by $t^i\otimes 1.$ Moreover, there is a map $$\beta\colon\Qp(-r,s)\otimes \Qp(r,s)\to\Qp(0,1)$$ of $F$-isocrystals defined by the assignment $t^i\otimes t^j\mapsto \delta_{ij}.$ This defines an isomorphism $$\Qp(-r,s)\riso \Qp(r,s)^* \coloneqq \Hom_{K} (\Qp(r,s), \Qp(0,1)).$$

 The unit of the duality is the morphism $$\Qp(0,1)\to \Qp(-r,s)\otimes  \Qp(r,s)$$
 given by $$1\mapsto \id =  1\otimes 1 + t\otimes t + \dots + t^{s-1}\otimes t^{s-1}.$$
 This morphism has a retraction provided by $\beta/s$.
 \end{empt}

\begin{empt}\label{(r,s)-comp}
    Let $V$ be a $K$-vector space endowed with a $\sigma$-linear bijective morphism    $F\colon V\to V$. 
    For $r\in \bbZ$, $s\in \Z_{>0}$, set $V(r,s)\coloneqq V \otimes_{K}\Qp(r,s) $, which is endowed with the $\sigma$-linear map given as usual by the tensor product, i.e. by the formula
$F (x \otimes e)= F (x) \otimes F (e)$ for any $x \in V$ and $e \in \Qp(r,s)$. Hence, $V(r,s)= V^{\oplus s}$ as $K$-vector space and we might denote its elements simply by  
$\sum_{i=0}^{s-1} v_i t^i$ instead of $\sum_{i=0}^{s-1} v_i \otimes t^i$, with $v_i \in V$ (and the writing is unique).
We consider the morphism $\alpha\colon V\to V(r,s)$ given by $$v\mapsto v+F(v)t+\dots+F^{s-1}(v)t^{s-1}$$  and $\iota\colon V\to V(r,s)$ such that $v\mapsto v\otimes 1.$
\end{empt}

\begin{lemm}\label{lem_Vrs} 
With notation as in \S\ref{(r,s)-comp}, the following properties hold.
\begin{enumerate}
    \item The map $\alpha$ induces the isomorphism
$$\Ker \left(V \stackrel{F^s -p^r}\longrightarrow V\right)=\Ker \left(V \stackrel{F^s/p^r-1}\longrightarrow V\right)\xrightarrow[\simeq]{\alpha} \Ker \left(V (r,s) \stackrel{F -1} \longrightarrow V(r,s)\right).$$ 

\item The map $V \stackrel{F^s -p^r}\longrightarrow V$ is a bijection if and only so is $V (r,s) \stackrel{F -1} \longrightarrow V(r,s)$.

\end{enumerate}
\end{lemm}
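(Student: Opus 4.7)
The whole lemma reduces to writing out what the map $F$ on $V(r,s)$ looks like in coordinates, using only the defining relation $t^s=p^{-r}$ and the rule $F(v\otimes t^i)=F(v)\otimes t^{i+1}$. Write a general element $w\in V(r,s)$ as $w=\sum_{i=0}^{s-1}w_i t^i$; then
\begin{equation*}
F(w)=p^{-r}F(w_{s-1})+\sum_{i=1}^{s-1}F(w_{i-1})\,t^i.
\end{equation*}
Comparing the coefficients of $t^0,t^1,\dots,t^{s-1}$ on both sides of $F(w)=w$ yields the recursion $w_i=F(w_{i-1})$ for $i=1,\dots,s-1$, together with the closing condition $w_0=p^{-r}F(w_{s-1})$. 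Substituting gives $w_i=F^i(w_0)$ and $F^s(w_0)=p^r w_0$. This is precisely the statement that $w=\alpha(w_0)$ with $w_0\in\ker(F^s-p^r)$, which proves (1): $\alpha$ lands in $\ker(F-1)$, is injective (look at the $t^0$ component), and is surjective by the above.

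For (2), the same bookkeeping solves the inhomogeneous equation $(F-1)w=v$ for a prescribed right-hand side $v=\sum_{i=0}^{s-1}v_i t^i\in V(r,s)$. The recursion becomes $w_i=F(w_{i-1})-v_i$, so each $w_i$ is determined by $w_0$ via the formula $w_i=F^i(w_0)-\sum_{j=1}^{i}F^{i-j}(v_j)$, and the closing condition collapses to the single equation
\begin{equation*}
(F^s-p^r)(w_0)=p^r v_0+\sum_{j=1}^{s-1}F^{s-j}(v_j)
\end{equation*}
in $V$. From this identity both implications of (2) fall out: surjectivity of $F-1$ on $V(r,s)$ is equivalent to being able to solve the above for every choice of right-hand side, which by letting the $v_i$ range independently is equivalent to surjectivity of $F^s-p^r$ on $V$; and injectivity of $F-1$ on $V(r,s)$ (case $v=0$) reduces to injectivity of $F^s-p^r$ on $V$ via $w_0\mapsto\alpha(w_0)$ as in part (1).

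The argument is essentially a direct coordinate calculation, with no real obstacle: the only point that requires mild care is keeping track of the twist $t^s=p^{-r}$ when folding the top coefficient back into the constant term, which is what converts the equation $F^s(w_0)=p^r w_0$ into the claimed kernel identification. Once this bookkeeping is set up cleanly, both (1) and (2) follow in parallel.
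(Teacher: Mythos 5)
Your proof is correct and follows essentially the same route as the paper: both write out $F-1$ on $V(r,s)$ in coordinates, identify the recursion $w_i=F(w_{i-1})-v_i$ with the closing condition folding into a single equation $(F^s-p^r)(w_0)=\text{(datum)}$ in $V$, and deduce (1) and (2) from that bookkeeping (the paper merely packages part of it as a snake-lemma argument on the diagram comparing $\alpha$ and $\iota$). No gaps.
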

\begin{proof}
The map  $V (r,s) \stackrel{F -1} \longrightarrow V(r,s)$ sends $$\sum_{i=0}^{s-1} v_i t^i\mapsto p^{-r} F (v_{s-1}) -v_0 +\sum_{i=1}^{s-1} (F (v_{i-1}) -v_i ) t^i.$$
This yields the following commutative diagram of $K$-vector spaces
    \begin{equation}\label{(r,s)-comp-diag}  \xymatrix {0\ar[r] &V \ar[d]^{F^s/p^r-1} \ar[r]^-\alpha & {V(r,s)} \ar[d]^-{F-1} \ar[r] & {V(r,s)/\alpha(V)} \ar[r]\ar@{^{(}->}[d] & 0
\\ {0}\ar[r]& V \ar[r]^-\iota & V(r,s)\ar[r] & V(r,s)/\iota(V)\ar[r]& 0.}
\end{equation}
The right vertical arrow is injective by a direct computation.  Hence, we get Part (1) of the Lemma via a snake Lemma.


\spa

If $V (r,s) \stackrel{F -1} \longrightarrow V(r,s)$ is surjective, then via the snake lemma applied to \eqref{(r,s)-comp-diag} we get the surjectivity of $V \stackrel{F^s -p^r}\longrightarrow V$. 
Conversely,  suppose $V \stackrel{F^s -p^r}\longrightarrow V$ is surjective. 
Let $\sum_{i=0}^{s-1} w_i t^i\in V(r,s)$. We are looking for 
$\sum_{i=0}^{s-1} v_i t^i$ such that  
$$p^{-r} F (v_{s-1}) -v_0 +\sum_{i=1}^{s-1} (F (v_{i-1}) -v_i ) t^i =\sum_{i=0}^{s-1} w_i t^i.$$
By hypothesis, there exists $v_0 \in V$ such that $p^{-r} F^s (v_0) -v_0 = w_0 + p^{-r}\sum_{i=1}^{s-1} F^{s-i} (w_i)$.
For $i =1,\dots, s-1$, we define by induction $v_i\coloneqq F (v_{i-1})-w_{i}$. We compute that such a $\sum_{i=0}^{s-1} v_i t^i$ satisfies the desired equation. 
Indeed, for $i =1,\dots, s-1$,  $w_i=F (v_{i-1})-v_{i}$ is clear.
Moreover, $$\sum_{i=1}^{s-1} F^{s-i} (w_i)=\sum_{i=1}^{s-1} F^{s-i} (F (v_{i-1}) -v_i )= F^{s} (v_0)-F(v_{s-1}).$$
Hence, we get $$w_0  = p^{-r} F^s (v_0) -v_0 - p^{-r} \sum_{i=1}^{s-1} F^{s-i} (w_i)= p^{-r} F^s (v_0) -v_0  - p^{-r}( F^{s} (v_0)  - F(v_{s-1}))=p^{-r} F (v_{s-1}) -v_0.$$
\end{proof}

\begin{rema}
    The composition $V (r,s) \stackrel{F -1} \longrightarrow V(r,s)\to V(r,s) /\iota (V)$ sends $$\sum_{i=0}^{s-1} v_i t^i \mapsto \sum_{i=1}^{s-1} (F (v_i-1) -v_i ) t^i .$$
    Hence, the right vertical arrow of \eqref{(r,s)-comp-diag} is an isomorphism if and only if 
    $F -1\colon V \to V$ is surjective. 
\end{rema}

We will also need the following lemma later (in the proof of Theorem \ref{thm_InjectivitySlopei}), which is an easy exercise.
\begin{lemm} \label{lemkercokerF}Let $V, W$ be two $K$-vector spaces endowed with $\sigma$-linear bijective maps $F\colon V\to V$ and $F\colon W\to W$.    
 Let $a\colon V \to W$ be a $K$-linear morphism that commutes with $F$.
 Then $\Ker (a) $, $\Coker (a)$, and $\mr{Im}(a)$ are $K$-vector spaces endowed with a $\sigma$-linear bijective map compatible with that of $V$ and $W$.
\end{lemm}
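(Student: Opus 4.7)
The plan is to check that $F$ passes to each of the three subquotients and remains $\sigma$-linear and bijective there, with the only real content being the bijectivity (once one observes that $a$ being $F$-equivariant is enough for well-definedness).

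First I would note that since $a\circ F=F\circ a$, we immediately have $F(\Ker a)\subseteq \Ker a$ and $F(\mr{Im}\,a)\subseteq \mr{Im}\,a$, so $F$ restricts to these subspaces and descends to a $\sigma$-linear endomorphism of $\Coker a=W/\mr{Im}\,a$. $\sigma$-linearity is inherited tautologically from $V$ and $W$, so the whole content is bijectivity.

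For $\Ker a$: injectivity is inherited from $V$. For surjectivity, given $v\in\Ker a$, use bijectivity of $F$ on $V$ to produce the unique $v'\in V$ with $F(v')=v$; then $F(a(v'))=a(F(v'))=a(v)=0$ and the injectivity of $F$ on $W$ gives $a(v')=0$, i.e.\ $v'\in\Ker a$. For $\mr{Im}\,a$: given $w=a(v)\in\mr{Im}\,a$, write $v=F(v')$ using bijectivity on $V$; then $w=a(F(v'))=F(a(v'))$, so $w\in F(\mr{Im}\,a)$. Injectivity is inherited from $W$. For $\Coker a$: surjectivity follows at once from surjectivity of $F$ on $W$. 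For injectivity, if $F(\overline{w})=0$ in $\Coker a$, i.e.\ $F(w)=a(v)$ for some $v\in V$, write $v=F(v')$; then $F(w)=a(F(v'))=F(a(v'))$, whence injectivity of $F$ on $W$ gives $w=a(v')\in\mr{Im}\,a$, so $\overline{w}=0$.

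There is no genuine obstacle here; the only subtlety is the systematic use of the bijectivity of $F$ on the ambient spaces $V$ and $W$ to both produce preimages (for surjectivity of the restrictions/quotient) and cancel (for injectivity of the quotient).
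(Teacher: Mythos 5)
Your proof is correct. The paper gives no argument for this lemma at all (it is explicitly dismissed as ``an easy exercise'' just before the statement), and your write-up is exactly the standard verification one would supply: stability of $\Ker$, $\mr{Im}$, and the induced map on $\Coker$ follow from $F$-equivariance of $a$, and bijectivity on each subquotient is obtained by combining injectivity/surjectivity of $F$ on the ambient spaces $V$ and $W$ with $F$-equivariance to produce preimages inside the relevant subspace and to cancel in the quotient. No gaps.
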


\subsection{Overconvergent syntomic vanishing}
We extend the slope vanishing results to the convergent and overconvergent settings. We also see the relation between Dieudonné--Manin slopes and syntomic cohomology.
\begin{hypo}\label{Hyporig}
Until the end of \S \ref{Sec_InjectivityFailure}, we suppose $\calV=W$ unramified.
\end{hypo} 
Let $j\colon X_0 \hookrightarrow  Y_0$ be an open immersion of separated schemes of finite type over $k$. Suppose $X_0$ is smooth over $k$.

\begin{defi}\label{twistIsox}
Denoting by $\theta \colon (X_0,Y_0)\to (\Spec(k) , \Spec(k))$ the canonical map, with notation as in \S \ref{dfn-Q(r,s)}, 
we get an overconvergent $F$-isocrystal $\theta^* (\Qp(r,s))$ over $(X_0,Y_0)/K$ that we denote again by $\Qp(r,s).$ 
This overconvergent $F$-isocrystal has pure slope $-r/s$ and rank $s$. For an overconvergent $F$-isocrystal $\cG$ on $(X_0,Y_0)/K$, we get a twisted overconvergent $F$-isocrystal $\cG$ on $(X_0,Y_0)/K$ by setting $\cG(r,s)\coloneqq\cG\otimes_{\Qp(0,1)}\Qp(r,s) $.

\spa

The \textit{(overconvergent) syntomic cohomology} of $(X_0,Y_0)/K$ with $\cG(r,s)$-coefficients is defined as the complex
$$R\Gamma_\mr{syn}(X_0,Y_0,\cG(r,s))\coloneqq \Cocone(R \Gamma_{\mr{rig}}(X_0,Y_0,\cG(r,s)) \stackrel {F-1}\longrightarrow R \Gamma_{\mr{rig}}(X_0,Y_0,\cG(r,s))).$$
\end{defi}
Definition \ref{twistIsox} corresponds to an overconvergent variation of Definition \ref{synE(r,s)}. The following lemma makes the link between the two when the overconvergent singularities are empty.

\begin{lemm}\label{lem_compatibilitysyntomic}
Suppose $X_0$ is an affine smooth $k$-scheme. Let $E$ be a unit-root $F$-crystal on $X_0$ and $\calE\coloneqq E_K$. If $\mr{gcd}(r,s)=1$, then we have the following commutative diagram commute
$$ \xymatrix{ {R\Gamma_\mr{rig}(X_0,X_0,\calE(r,s))} \ar[r]^-{F-1}\ar[d]^-{\sim} & {R\Gamma_\mr{rig}(X_0,X_0,\calE(r,s))}\ar[d]^-{\sim}
\\
{(\dR^{\bullet,r}_{E,s} )_{\bbQ_p} } \ar[r]^-{\varphi_{r,s}-\iota}  & {(\dR^{\bullet}_{E,s} )_{\bbQ_p}, }   }$$ 
where the vertical arrows are isomorphisms sending $t\mapsto 1/\pi^r.$ 
In particular, we get an isomorphism
 $$R\Gamma_\mr{syn}(X_0,X_0,\calE(r,s))\riso R\Gamma_\mr{syn}(X_0,E(\gamma))_{\Qp}.$$
\end{lemm}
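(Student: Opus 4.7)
My plan is to establish the vertical isomorphisms in the displayed diagram as compatible $K$-linear identifications of the underlying complexes together with their $\sigma$-linear Frobenius actions, and then read off the syntomic statement by taking cocones. The ingredients are essentially algebraic once the rigid cohomology on the left is computed explicitly.

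First I compute the left-hand complex. Since $Y_0 = X_0$ has no overconvergent boundary, rigid cohomology coincides with convergent cohomology; and for $X_0$ affine with smooth formal lift $\Spf(\widehat{A})$ over $W$, convergent cohomology of $\calE(r,s) = \calE \otimes_K \Qp(r,s)$ is computed by the complex $E_K \otimes_{\widehat{A}_K} \Omega^\bullet_{\widehat{A}_K/K} \otimes_K \Qp(r,s)$ with tensor-product Frobenius. On the right-hand side, inverting $p$ makes $\pi \in \Lambda^s$ invertible, so the Nygaard subcomplex $\dR^{\bullet, r}_{E,s}$ coincides rationally with $\dR^\bullet_{E,s}$ and the inclusion $\iota$ becomes the identity; in particular the map $\varphi_{r,s} - \iota$ is just $\varphi_{r,s} - 1$ acting on $(\dR^\bullet_{E,s})_\Qp$.

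Next I build a $K$-algebra isomorphism
$$\Qp(r,s) = K[t]/(t^s - p^{-r}) \xrightarrow{\sim} K[\pi]/(\pi^s - p)$$
by $t \mapsto \pi^{-r}$ (legitimate once $p$ is inverted). It is well-defined because $(\pi^{-r})^s = p^{-r}$; the hypothesis $\gcd(r,s) = 1$ ensures that the exponents $-kr \pmod{s}$ for $k = 0, \dots, s-1$ are pairwise distinct, so the powers $(\pi^{-r})^k$ span the target as a $K$-vector space, whence surjectivity, and a dimension count gives bijectivity. Tensoring with $E_K \otimes_{\widehat{A}_K} \Omega^\bullet_{\widehat{A}_K/K}$ gives the underlying $K$-linear isomorphism of complexes, accounting for the vertical arrows in the diagram.

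The heart of the argument is the compatibility of the two Frobenius actions. Under the identification $t^i \leftrightarrow \pi^{-ir}$, the Frobenius on $\calE(r,s)$ (sending $v \otimes t^i$ to $F_E(v) \otimes t^{i+1}$ by Definition \ref{dfn-Q(r,s)}) transports to
$$v \otimes \pi^{-ir} \longmapsto F_E(v) \otimes \pi^{-(i+1)r} = \pi^{-r}\bigl(F_E(v) \otimes \pi^{-ir}\bigr),$$
which is exactly the divided Frobenius $\varphi_{r,s} = F/\pi^r$ acting on $(\dR^\bullet_{E,s})_\Qp$. Combining this with the identification above makes the diagram commute; taking cocones then yields the final isomorphism $R\Gamma_\mr{syn}(X_0, X_0, \calE(r,s)) \simeq R\Gamma_\mr{syn}(X_0, E(\gamma))_\Qp$. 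The main subtlety is bookkeeping: the Frobenius on $\Qp(r,s)$ shifts the power of $t$ while the Frobenius used in $\dR^\bullet_{E,s}$ fixes $\pi$, and the shift must be absorbed into the division by $\pi^r$; the coprimality $\gcd(r,s) = 1$ is essential precisely so that $t \mapsto \pi^{-r}$ is a ring isomorphism rather than merely an inclusion of a proper $K$-subalgebra.
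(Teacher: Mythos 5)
Your proof is correct and follows essentially the same route as the paper's: identify $R\Gamma_{\mr{rig}}(X_0,X_0,\calE(r,s))$ with $\dR^\bullet_{\calE}(r,s)$ via the projection isomorphism, observe that the twisted Frobenius is $tF_{\calE}$, and transport everything through the $K$-algebra isomorphism $t\mapsto \pi^{-r}$ (where $\gcd(r,s)=1$ guarantees bijectivity), under which $tF_{\calE}$ becomes $F/\pi^r=\varphi_{r,s}$ and $\iota$ becomes the identity rationally. Taking cocones then gives the syntomic comparison exactly as in the paper.
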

\begin{proof}
We have $ \dR^{\bullet}_{\calE(r,s)}\coloneqq \calE(r,s) \otimes \Omega^\bullet_{\fX}= R\Gamma_\mr{rig}(X_0,X_0,\calE(r,s))$.
We have $\dR^{\bullet}_{\calE(r,s)}\riso \dR^{\bullet}_{\calE} (r,s)$ (e.g. use the projection isomorphism) and the action of $F$ on $\dR^{\bullet}_{\calE(r,s)}$ corresponds via this identification to 
$t F_\calE$ where $F_\calE$ is the action on $ \dR^{\bullet}_{\calE} $. Then the result follows from the commutativity of the diagram
$$\xymatrix{ {\dR^{\bullet}_{\calE (r,s)} }  \ar[d]^-{F -1} \ar[r]^-{\sim} & {\dR^{\bullet}_{\calE}(r,s) }  \ar[d]^-{t  F_\calE -1} \ar[r]^-{\sim} &  {(\dR^{r,\bullet}_{E,s})_K }  \ar[d]^-{F/\pi^r -\iota}  
\\ { \dR^{\bullet}_{\calE (r,s)}} \ar[r]^-{\sim} &{ \dR^{\bullet}_{\calE} (r,s)} \ar[r]^-{\sim} & {(\dR^{\bullet}_{E,s})_K}, }$$
where the right horizontal arrows are the isomorphisms which send $t\mapsto 1/\pi^r.$
\end{proof}

\begin{prop}\label{Prop_RationalBoundsvar0}
Fix $r\in \Z$ and $s\in \Z_{>0}$ such that $\mr{gcd}(r,s)=1$ and write $\gamma \coloneqq r/s$. Let $\cG$ be a unit-root overconvergent $F$-isocrystal over $(X_0,Y_0)/K$.
Then the map
 $$F-1\colon H^q_\mr{rig}(X_0,Y_0,\cG(r,s))\to H^q_\mr{rig}(X_0,Y_0,\cG(r,s))$$ 
 is an isomorphism for $q<\gamma$. In particular, $ H^q_\mr{syn}(X_0,Y_0,\cG(r,s))=0$ for $q<\gamma$ ; and if $\gamma \not \in \bbN$ and $n \in ] \gamma ,\gamma +1[$ we have
$$H^{n}_\mr{syn}(X_0,Y_0,\cG(r,s))=\Ker(H^{n} _\mr{rig}(X_0,Y_0,\cG(r,s))\xrightarrow{F-1}H^{n}_\mr{rig}(X_0,Y_0,\cG(r,s))).$$
   \end{prop}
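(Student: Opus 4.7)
The strategy is to reduce the statement to the integral slope bounds of Proposition \ref{Prop_IntegralBounds}, via the compatibility of Lemma \ref{lem_compatibilitysyntomic} together with a localisation argument.

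First, by the projection formula for rigid cohomology applied to the identification $\cG(r,s) = \cG \otimes_K \theta^{*}\Qp(r,s)$, and setting $V^q \coloneqq H^q_\mr{rig}(X_0, Y_0, \cG)$, we identify $H^q_\mr{rig}(X_0, Y_0, \cG(r,s)) \simeq V^q(r,s)$ as $K$-vector spaces equipped with $\sigma$-linear Frobenius, in the sense of \S\ref{(r,s)-comp}. Since $\cG$ is an overconvergent $F$-isocrystal, the induced $F$ on $V^q$ is bijective, so Lemma \ref{lem_Vrs}(2) reduces the bijectivity of $F-1$ on $V^q(r,s)$ to the bijectivity of $F^s - p^r$ on $V^q$ for $q < \gamma$.

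Next, I would prove this bijectivity by establishing the equivalent vanishing $H^q_\mr{syn}(X_0, Y_0, \cG(r,s)) = 0$ for $q < \gamma$. To this end, Zariski descent for rigid, and hence syntomic, cohomology allows us to cover $(X_0, Y_0)$ by affine frames and reduce to a local situation. Unit-root overconvergent $F$-isocrystals admit integral lattices locally, through the correspondence with lisse $\Zp$-sheaves recalled in Definition \ref{Sec_UR}, so on each affine piece we are close enough to the setting of Lemma \ref{lem_compatibilitysyntomic} for the vanishing to follow from Proposition \ref{Prop_IntegralBounds}(i). The \v{C}ech spectral sequence for syntomic cohomology then globalises this vanishing to the original $(X_0, Y_0)$.

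For the last assertion, the long exact sequence associated to the defining cocone of syntomic cohomology reads
$$\cdots \to H^{n-1}_\mr{rig} \xrightarrow{F-1} H^{n-1}_\mr{rig} \to H^n_\mr{syn} \to H^n_\mr{rig} \xrightarrow{F-1} H^n_\mr{rig} \to \cdots,$$
where $H^{q}_\mr{rig} \coloneqq H^{q}_\mr{rig}(X_0, Y_0, \cG(r,s))$ and similarly for $H^{n}_\mr{syn}$. Since $\gamma \notin \bbN$ and $n \in (\gamma, \gamma+1)$, we have $n - 1 < \gamma$, so the first part of the proposition gives that $F-1$ is bijective on $H^{n-1}_\mr{rig}$. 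Hence the boundary map vanishes and we deduce $H^n_\mr{syn}(X_0, Y_0, \cG(r,s)) = \Ker(F-1)$ in degree $n$.

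The hard part will be the descent step: Proposition \ref{Prop_IntegralBounds} is formulated for an affine $X_0 = Y_0$ with an integral unit-root $F$-crystal, whereas the present statement allows $Y_0 \supsetneq X_0$ and only an overconvergent $F$-isocrystal coefficient. The key observation making the descent work is that the essential divisibility input, namely that the divided Frobenius $F/\pi^{r}$ is topologically contracting on the truncation $\tau_{\leq q}\, \mr{dR}^{\bullet, r}_{E, s}$ in degrees $q < \gamma$, remains valid on the local overconvergent de Rham complexes computing $R\Gamma_\mr{rig}(X_0, Y_0, \cG)$. This is because a Frobenius lift preserves the $p$-adic divisibility of differential forms independently of overconvergence along the boundary, so no new Mazur-type input beyond Proposition \ref{thm_Mazur} is required; one only needs to make compatible choices of integral lattices and formal lifts along the cover.
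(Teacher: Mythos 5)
Your reduction to the twisted Frobenius via the projection formula and Lemma \ref{lem_Vrs}, and your treatment of the final assertion via the long exact sequence of the cocone, are both fine. The gap is in the middle step, and it is not a technical detail but the crux of the proposition.

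The claimed ``key observation'' --- that the divisibility/contraction argument of Proposition \ref{Prop_IntegralBounds} remains valid on the local overconvergent de Rham complexes computing $R\Gamma_{\mr{rig}}(X_0,Y_0,\cG)$ --- is false. The argument of Proposition \ref{Prop_IntegralBounds}(i) has two inputs: the Mazur/Cartier quasi-isomorphism mod $\pi$, and derived Nakayama for $\pi$-complete $\pi$-torsion-free complexes (\stack{0G1U}), which upgrades acyclicity mod $\pi$ to acyclicity. The second input fails for the weakly complete complex $\Omega^\bullet_{A^\dagger_K}\otimes M^\dagger$: it is not ($\pi$-adically or derived $p$-adically) complete, and the geometric series $\sum_n\varphi^n$ inverting $\varphi-1$ converges only after $p$-adic completion, i.e.\ in the convergent complex. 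This is precisely the phenomenon the whole paper exploits (see the example $\omega=\sum_i p^{3i}t^{p^{2i}}\mr{dlog}(t)$, whose primitive exists after completion but not overconvergently); if your observation were correct, the two-sided slope vanishing of Corollary \ref{thm_counterexample-lem} would hold for rigid cohomology as well, contradicting Corollary \ref{countex}. Moreover, Zariski localisation on $Y_0$ does not remove the boundary $Y_0\setminus X_0$, so you are never actually ``in the setting of Lemma \ref{lem_compatibilitysyntomic}'', which requires $X_0=Y_0$; and an integral lattice of a unit-root overconvergent isocrystal obtained from the lisse-sheaf correspondence lives on $X_0$ and computes convergent, not rigid, cohomology.

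The paper's proof supplies exactly the two ingredients you are missing. First, a reduction of the coefficient: by Tsuzuki's extension theorem \cite[Thm.\ 1.3.1]{Tsu02} combined with de Jong's alterations, after a proper, surjective, generically \'etale $w\colon Y_0'\to Y_0$ one may assume $Y_0$ smooth, $D_0$ a strict normal crossing divisor, and $\cG=j^\dagger\cE$ for a unit-root $F$-crystal $\cE$ on all of $Y_0$; a trace argument (\cite[16.1.11.2]{Car25}) exhibits $R\Gamma_{\mr{rig}}(X_0,Y_0,\cG(r,s))$ as a Frobenius-equivariant direct summand of the cohomology upstairs. Second, a change of complex: via \cite[1.3.6]{CT12} the rigid cohomology of the pair is computed by the \emph{log} de Rham complex of a formal lift of $(Y_0,D_0)$, which \emph{is} an integral $p$-adically complete complex; the Mazur-type argument then goes through there, using the logarithmic Cartier isomorphism (Proposition \ref{thm_Mazur-log}) rather than Proposition \ref{thm_Mazur}. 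Without these two steps the proof does not close.
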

\begin{proof}  
(a) Let us first reduce ourselves to the logarithmic setting.\begin{itemize}
    \item [(i)]  Using Čech spectral sequence and an induction on $q$, we show that the statement is Zariski local in $Y_0$ and in $X_0$. Hence, we can suppose $D _0 \coloneqq Y _0 \setminus X _0$ is the support of a divisor. Thanks to \cite[Thm 1.3.1]{Tsu02}, there exists a smooth scheme $Y_0 '$ of finite type over $k$ and a proper surjective morphism $w \colon Y_0 ' \to Y_0$ over $k$
such that $w$ is generically étale and, if we set $X'_0 \coloneqq  w^{-1} (X_0)$, $D_0'\coloneqq Y _0'\setminus X'_0$, and  $j' \colon X'_0 \to Y _0'$, there exists a unique unit-root $\cE ' \in F  \text{-}\Isoc(Y_0 '/K )$
with $w^*\cG  = (j')^{\dag} \cE'$  as overconvergent $F^a$-isocrystals on $X'_0/K$ along $D'_0$. We can assume $Y'_0$ quasi-projective and $D'_0$ a strict normal crossing divisor of $Y '_0$ (e.g. use de Jong desingularisation theorem).
Using \cite[16.1.11.2]{Car25}, we deduce that $R \Gamma_\mr{rig}(X_0,Y_0,\cG(r,s))$ is a direct summand of $R \Gamma_\mr{rig}(X'_0,Y '_0,w^* (\cG(r,s)))$ and the splitting commutes with the action of Frobenius. Hence, we reduce to the case where
$Y_0$ is a smooth $k$-scheme, there exists $D_0$ a strict normal crossing divisor of $Y_0$ such that  $X _0=Y_0 \setminus D_0$, and 
a unit-root $\cE \in F  \text{-}\Isoc(Y_0/K )$ with $\cG  = j^{\dag} \cE$.

\spa

\item[(ii)]
Since the statement is Zariski local in $Y_0$ and in $X_0$, then we can suppose that there exists an affine formal smooth lift $\fY=\Spf(A)$ of $Y_0$ over $W$ 
and there exists a relative strict normal crossing divisor $\fD$ of $\fY$  lifting $D_0$. 
\end{itemize} 

(b) Let $\fY^\sharp=(\fY , M ({\fD}))$.  We denote by $\Omega^i_{A^\sharp}$ the global section of  $\Omega^i_{\fY^{\sharp}/W}$. 
Let $E$ be a unit-root $F$-crystal  over $Y_0$ such that $E_K = \cE$.
Write $F$ for a Frobenius lift $F\colon \fY\to \fY.$ 
Similarly to \S \ref{sec_FGauge}, \eqref{dfn-dREs}, and Definition \ref{synE(r,s)}, we set 
\begin{gather*}
\mr{dR}_{Y_0^\sharp, E}^\bullet \coloneqq E \otimes_{A}\Omega^\bullet_{A^\sharp} ,\qquad \mr{dR}_{Y_0^\sharp, E,s}^\bullet\coloneqq\mr{dR}_{Y_0^\sharp, E}^\bullet\otimes_{\Zp} \Lambda^s.
\end{gather*}


We have the following commutative diagram with vertical isomorphisms
$$ \xymatrix{ { H^q ( \dR^{\bullet,r}_{Y_0^\sharp, E,s} )} \ar[r]^-{\varphi_{r,s}-\iota} \ar[d]^-{\sim} & {H^q ( \dR^{\bullet,r}_{Y_0^\sharp, E,s} )} \ar[d]^-{\sim} 
 \\ {H ^q _\mr{rig}(Y_0^\sharp,Y_0^\sharp,\calE(r,s))} \ar[r]^-{F-1} \ar[d]^-{\text{\cite[1.3.6]{CT12}}}_-{\sim} & {H ^q _\mr{rig}(Y_0^\sharp,Y_0^\sharp,\calE(r,s))} \ar[d]^-{\text{\cite[1.3.6]{CT12}}}_-{\sim}
\\ {H ^q  _\mr{rig}(X_0,Y_0,\cG (r,s))} \ar[r]^-{F-1} & { H ^q  _\mr{rig}(X_0,Y_0,\cG (r,s)), } }$$ 
where the top vertical isomorphisms are constructed similarly (i.e. it is still available with logarithmic structures) to Lemma \ref{lem_compatibilitysyntomic}.
Arguing as in Part (i) of the proof of Proposition \ref{Prop_IntegralBounds}, for any $q<\gamma$ the top horizontal arrow is an isomorphism.
Hence, we are done.
\end{proof}
\begin{defi}[Dieudonn\'e--Manin classification]\label{DMC-crew}
Since Hypothesis \ref{Hyporig} is in force, by \cite[Thm. 3.2]{Ked22} finite-rank $F$-isocrystals over $\calV=W$ admit a Dieudonn\'e--Manin decomposition
(see \cite[\S 1.9]{crew_F-iso_p-adic-rep} for some ramified cases). 
For an  overconvergent $F$-isocrystal $\cG$ over $(X_0,Y_0)/K$ and $I\subseteq \mathbb{Q}$, we then write $H^i_\mr{rig}(X_0,Y_0,\cG)^{I}$ for the subspace generated by finite-rank sub-$F$-isocrystals of $H^i_\mr{rig}(X_0,Y_0,\cG)$ of slope $\gamma\in I$. If $I$ consists of a single element $\gamma$ we write $I=[\gamma]$. When $k$ is algebraically closed, the $K$-vector space $H^i_\mr{rig}(X_0,Y_0,\cG)^{[\gamma]}$ 
has a basis consisting of elements belonging to 
$H^i_{\rig}(X_0,Y_0,\cM)^{F^s =p^r}$ for $r,s$ two coprime integers such that
$\gamma= r/s$ and $s > 0$. 
When $Y_0$ is proper, we omit it in the notation. When $Y_0= X_0$, we prefer to write  $H^i_\mr{conv}(X_0, \cG)^{I}$.
\end{defi}

\begin{coro}\label{cor_RationalBoundsvar} Let $\cG$ be a unit-root overconvergent $F$-isocrystal over $(X_0,Y_0)/K$.
Let $\gamma \in \bbQ$ and $q\in \bbN$ be an integer such that $q<\gamma$. 
Let $r,s$ be two coprime integers such that $\gamma= r/s$ and $s > 0$.
\begin{enumerate}
    \item The map $F^s -p^r \colon H^q_\mr{rig}(X_0,Y_0,\cG)\to H^q_\mr{rig}(X_0,Y_0,\cG)$ is an isomorphism.
        \item We have  $H^q_\mr{rig}(X_0,Y_0,\cG)^{[\gamma]}=0.$
\end{enumerate}
\end{coro}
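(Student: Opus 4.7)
The plan is to reduce both assertions to Proposition \ref{Prop_RationalBoundsvar0} via the twist device of Lemma \ref{lem_Vrs}, and then use the Dieudonn\'e--Manin classification to pass from injectivity of $F^s - p^r$ to the vanishing of the slope $\gamma$ piece.

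\medskip

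Set $V \coloneqq H^q_\mr{rig}(X_0,Y_0,\cG)$, endowed with its $\sigma$-linear Frobenius $F$. Since the Frobenius structure on the overconvergent $F$-isocrystal $\cG$ is an isomorphism, the induced $F$ on $V$ is a $\sigma$-linear bijection, so the hypotheses of \S\ref{(r,s)-comp} are in force. The projection formula for rigid cohomology, applied to the pullback of $\Qp(r,s)$ from $\Spec(k)$, yields a Frobenius-equivariant identification
\[
H^q_\mr{rig}(X_0,Y_0,\cG(r,s)) \simeq V \otimes_K \Qp(r,s) = V(r,s).
\]

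\medskip

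For Part (1), since $q < \gamma = r/s$, Proposition \ref{Prop_RationalBoundsvar0} gives that $F-1$ is an isomorphism on $H^q_\mr{rig}(X_0,Y_0,\cG(r,s)) = V(r,s)$. The backward direction of Lemma \ref{lem_Vrs}(2) then immediately implies that $F^s - p^r$ is an isomorphism on $V$.

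\medskip

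For Part (2), consider a finite-rank sub-$F$-isocrystal $M \subseteq V$ of pure slope $\gamma$. After base change to the completion $\breve{K}$ of the maximal unramified extension of $K$, the Dieudonn\'e--Manin classification (see Definition \ref{DMC-crew}) provides a $\breve{K}$-basis of $M \otimes_K \breve{K}$ consisting of elements $e$ with $F^s e = p^r e$; hence $F^s - p^r$ vanishes on $M \otimes_K \breve{K}$, and by faithful flatness of $K \to \breve{K}$ also on $M$ itself. Combined with the injectivity of $F^s - p^r$ established in Part (1), this forces $M = 0$. Since $V^{[\gamma]}$ is by definition spanned by such $M$, we conclude $V^{[\gamma]} = 0$.

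\medskip

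I do not foresee a serious obstacle. The only point that merits a verification is the $\sigma$-linear bijectivity of $F$ on $V$, required to invoke Lemma \ref{lem_Vrs}; this is automatic from the fact that the Frobenius structure on an overconvergent $F$-isocrystal is, by definition, an isomorphism.
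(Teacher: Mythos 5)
Your proof is correct and follows essentially the same route as the paper: Part (1) combines Proposition \ref{Prop_RationalBoundsvar0} with the projection isomorphism and Lemma \ref{lem_Vrs}(2), and Part (2) reduces to the algebraically closed (equivalently, $\breve{K}$) case and applies the Dieudonn\'e--Manin classification together with the injectivity from Part (1). The only difference is that you spell out the Dieudonn\'e--Manin step in slightly more detail than the paper does.
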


\begin{proof}
%
 (i)   Thanks to Proposition \ref{Prop_RationalBoundsvar0} the map $F-1\colon H^q_\mr{rig}(X_0,Y_0,\cG(r,s)) \to H^q_\mr{rig}(X_0,Y_0,\cG(r,s))$ is an isomorphism. Using the projection isomorphism, we get the compatible with Frobenius  isomorphism $$R\Gamma_\mr{rig}(X_0,Y_0,\cG(r,s))\riso \left (R\Gamma_\mr{rig}(X_0,Y_0,\cG) \right) (r,s).$$ Thanks to Lemma \ref{lem_Vrs}.(2), this implies (1).
\spa

(ii) We may assume $k$ algebraically closed by flat descent. 
Thanks to Dieudonn\'e--Manin classification, (2) follows from (1).
\end{proof}

\begin{prop}
\label{Prop_RationalBounds}
  Suppose $X_0$ is an affine smooth $k$-scheme. Let $\calM$ be a convergent $F$-isocrystal on $X_0$ with constant slopes in the interval $[\alpha,\beta].$ Let $(r,s) \in \Z\times \Z_{>0}$ and write $\gamma \coloneqq r/s$. For $i\not\in[\gamma-\beta,\gamma -\alpha +1]$, we have $H^i_\mr{syn}(X_0,\calM(r,s))=0.$
\end{prop}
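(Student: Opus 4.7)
The plan is to combine the slope filtration of $\calM$ with the tensor calculus of \S\ref{splip-injQ(0,s)} to reduce the statement to the unit-root case already handled by Proposition \ref{Prop_RationalBoundsvar0}.

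First, by faithfully flat base change (replacing $\calV$ by an element of $\mr{CDVR}^\star(\calV)$ with algebraically closed residue field), I reduce to the case where $k$ is algebraically closed. Since the slopes of $\calM$ are constant on $X_0$ and lie in $[\alpha,\beta]$, Katz's slope filtration theorem furnishes a filtration $0=\calM_0\subset\calM_1\subset\cdots\subset\calM_n=\calM$ by sub-convergent $F$-isocrystals with isoclinic graded pieces of slopes $\mu_1,\dots,\mu_n\in[\alpha,\beta]$. Tensoring with $\Qp(r,s)$ is exact, and the long exact sequence in syntomic cohomology reduces the problem by induction on $n$ to the case where $\calM$ is isoclinic of some slope $\mu\in[\alpha,\beta]$.

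Next, writing $\mu=a/b$ with $\gcd(a,b)=1$, set $\calN\coloneqq\calM\otimes_K\Qp(a,b)$. Since $\Qp(a,b)$ has slope $-a/b=-\mu$, the convergent $F$-isocrystal $\calN$ is unit-root. The split embedding $\Qp(0,1)\hookrightarrow\Qp(-a,b)\otimes\Qp(a,b)$ of \S\ref{splip-injQ(0,s)} (with retraction $\beta/b$) exhibits $\calM$ as a direct summand of $\calN\otimes\Qp(-a,b)$. Tensoring with $\Qp(r,s)$ and applying the decomposition \eqref{eq_Decomposition}, with $m\coloneqq\mr{lcm}(b,s)$, $d\coloneqq\gcd(b,s)$, and $n\coloneqq(rb-as)/d$, yields
$$\Qp(-a,b)\otimes\Qp(r,s)\simeq\bigoplus_{i=0}^{d-1}\Qp(n,m),$$
so that $H^i_\syn(X_0,\calM(r,s))$ becomes a direct summand of $\bigoplus_{i=0}^{d-1}H^i_\syn(X_0,\calN(n,m))$, where crucially $n/m=r/s-a/b=\gamma-\mu$.

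Finally, Proposition \ref{Prop_RationalBoundsvar0} applied to the unit-root $\calN$ with twist $\Qp(n,m)$ gives $H^i_\syn(X_0,\calN(n,m))=0$ for $i<n/m=\gamma-\mu$; taking $\mu=\beta$ yields the lower-bound vanishing $i<\gamma-\beta$. For the upper bound $i>\gamma-\alpha+1$, I would establish the parallel unit-root statement $H^i_\syn(X_0,\calN(n,m))=0$ for $i>n/m+1$ by transporting the topological-nilpotence argument of Proposition \ref{Prop_IntegralBounds}(ii): for $j\geq i-1$ with $i>n/m+1$, the operator $\varphi_{n,m}-\iota$ is invertible on $\dR^{j,n}_{\cdot,m}$ because $\varphi_{n,m}$ maps this term into its $\pi$-adic refinement, and this invertibility persists after inverting $p$ and after the reduction to a logarithmic smooth lift used in the proof of Proposition \ref{Prop_RationalBoundsvar0}. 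The main obstacle is precisely this last step: porting the integral topological-nilpotence argument to rigid cohomology of the pair $(X_0,Y_0)$, which I expect to handle by replaying step (a) of that proof and applying the integral high-degree vanishing of Proposition \ref{Prop_IntegralBounds} to the logarithmic complex, then rationalising.
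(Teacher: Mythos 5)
Your strategy coincides with the paper's: reduce to the isoclinic case via the slope filtration and the long exact sequence (the paper does the filtration second, the twist first, but the order is immaterial), then twist by $\Qp(a,b)$ to make the coefficient unit-root, use the split unit of duality from \S\ref{splip-injQ(0,s)} together with \eqref{eq_Decomposition} to realise $\calM(r,s)$ as a direct summand of a unit-root object twisted by $\Qp(n,m)$ with $n/m=\gamma-\mu$, and conclude from the unit-root case. Your computation $n/m=\gamma-\mu$ and the containment $[\gamma-\mu,\gamma-\mu+1]\subseteq[\gamma-\beta,\gamma-\alpha+1]$ are exactly what the paper uses.

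The one place you diverge is the final unit-root step, and the ``main obstacle'' you identify there is not actually an obstacle. Since $\calM$ is a \emph{convergent} $F$-isocrystal on an affine $X_0$ and the syntomic complex in question is built from convergent cohomology (the case $Y_0=X_0$), you do not need Proposition \ref{Prop_RationalBoundsvar0} at all, nor any porting of the topological-nilpotence argument to the logarithmic/rigid setting of a compactifying pair. The integral statement Proposition \ref{Prop_IntegralBounds} already contains \emph{both} bounds ($q<\gamma'$ via the Nygaard/Mazur argument and $q>\gamma'+1$ via topological nilpotence of $\varphi_{r,s}$), and Lemma \ref{lem_compatibilitysyntomic} identifies its rationalisation with the convergent syntomic cohomology you need. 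This is precisely what the paper cites, so your proof closes without any further work; replace the last paragraph of your argument by a direct appeal to Proposition \ref{Prop_IntegralBounds} and Lemma \ref{lem_compatibilitysyntomic} (taking care, as the paper implicitly does, to reduce $\Qp(n,m)$ to coprime parameters when $\gcd(n,m)>1$).
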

\begin{proof}
    (i) Suppose first that $\calM$ has pure slope. We choose $(u,v) \in \Z\times \Z_{>0}$ such that $\delta\coloneqq u/v$ is the slope of $\calM$ and we define $\calE\coloneqq \calM(u,v)$. The convergent $F$-isocrystal $\calE$ is unit-root by construction. Thanks to \S \ref{splip-injQ(0,s)}, 
    the unit of the $\otimes$-duality between $\Qp(u,v)$ and $\Qp(-u,v)$ induces 
    a split injective morphism 
    $$\calM\hookrightarrow \calM\otimes\Qp(u,v)\otimes  \Qp(-u,v)= \calE(-u,v).$$

    Combining this with \eqref{eq_Decomposition}, we deduce that  $\calM(r,s)$ admits a split injection into some $\calE'(r',s')$ with $\calE'$ a direct sum of copies of $\calE$ and $\gamma'\coloneqq r'/s'=\gamma-\delta.$ By the assumption, we have that $[\gamma',\gamma'+1]\subseteq [\gamma-\beta,\gamma -\alpha +1]$. This shows that the result for $\calM$ follows from the same result for a unit-root $F$-isocrystal. This is proved in Proposition \ref{Prop_IntegralBounds}.

    \spa

    (ii) For the general case, write $S_1(\calM)\subseteq \calM$ for the subspace of minimal slope. We may assume $S_1(\calM)$ of slope $\alpha$. We have an exact sequence $$0\to S_1(\calM)\to \calM \to \mathcal{N}\to 0,$$ which induces a long exact sequence $$\cdots \to H^i_\mr{syn}(X_0,S_1(\calM)(r,s))\to H^i_\mr{syn}(X_0,\calM(r,s))\to H^i_\mr{syn}(X_0,\mathcal{N}(r,s))\to \cdots.$$ Note that both $S_1(\calM)$ and $\calN$ have slopes in the interval $[\alpha,\beta]$, thus, by the inductive hypothesis, the outer groups vanish. This implies that $H^i_\mr{syn}(X_0,\calM(r,s))=0$, as we wanted.
\end{proof}

\begin{coro}\label{thm_counterexample-lem}
Suppose $X_0$ is an affine smooth $k$-scheme.  Let $\gamma$ be a rational number, and let $\calM$ be a convergent $F$-isocrystal over $X_0$ with constant slopes in the interval $[\alpha,\beta]\subseteq \Q$. For $i\not \in [\gamma-\beta , \gamma -\alpha +1]$, we have 
    $$H^{i}_\mr{conv}(X_0,\calM)^{[\gamma]} =0.$$
\end{coro}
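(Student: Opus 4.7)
The plan is to deduce this corollary from Proposition \ref{Prop_RationalBounds} by combining the syntomic vanishing with Dieudonn\'e--Manin classification and an $(r,s)$-twist, following the template already used to prove Corollary \ref{cor_RationalBoundsvar}.

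First, I would reduce to the case $k$ algebraically closed by flat descent along $W\to W(k^{\mr{alg}})$: the slope interval $[\alpha,\beta]$ of $\calM$ and the vanishing range for $i$ are preserved, and a non-zero slope $\gamma$ subspace of $H^i_\mr{conv}(X_0,\calM)$ would produce a non-zero slope $\gamma$ subspace after base change (since slopes commute with base change of the residue field).

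Next, write $\gamma=r/s$ with $\mr{gcd}(r,s)=1$ and $s>0$. By Definition \ref{DMC-crew}, the $K$-vector space $H^i_\mr{conv}(X_0,\calM)^{[\gamma]}$ is spanned by classes $v$ satisfying $F^s(v)=p^r v$, so it suffices to show
$$H^i_\mr{conv}(X_0,\calM)^{F^s=p^r}=0.$$
Via the projection isomorphism $H^i_\mr{conv}(X_0,\calM(r,s))\simeq H^i_\mr{conv}(X_0,\calM)\otimes_K\Qp(r,s)$ and a direct unpacking of the Frobenius on $\Qp(r,s)$ (analogous to Lemma \ref{lem_Vrs}.(1), which does not require $F$ to be bijective on the ambient cohomology to obtain the identification of eigenspaces), this eigenspace is in bijection with the $F=1$ eigenspace of $H^i_\mr{conv}(X_0,\calM(r,s))$ via $v\mapsto\sum_{j=0}^{s-1}F^j(v)\,t^j$.

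Finally, the defining distinguished triangle for syntomic cohomology yields the exact sequence
$$H^i_\mr{syn}(X_0,\calM(r,s))\longrightarrow H^i_\mr{conv}(X_0,\calM(r,s))\xrightarrow{F-1} H^i_\mr{conv}(X_0,\calM(r,s)),$$
so any $F$-invariant class in the middle term lifts from the left. Proposition \ref{Prop_RationalBounds} gives exactly the vanishing of $H^i_\mr{syn}(X_0,\calM(r,s))$ under the hypothesis $i\not\in[\gamma-\beta,\gamma-\alpha+1]$, and running the chain of identifications backwards delivers $H^i_\mr{conv}(X_0,\calM)^{[\gamma]}=0$. There is no serious obstacle here: the real slope bound was already established in Proposition \ref{Prop_RationalBounds}, and what remains is essentially bookkeeping via the Dieudonn\'e--Manin description of slope subspaces and the $(r,s)$-twist trick.
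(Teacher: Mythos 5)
Your proposal is correct and follows essentially the same route as the paper: reduce to $k$ algebraically closed by flat descent, identify $H^i_\mr{conv}(X_0,\calM)^{F^s=p^r}$ with the kernel of $F-1$ on the $(r,s)$-twist via Lemma \ref{lem_Vrs}.(1), kill that kernel using the syntomic vanishing of Proposition \ref{Prop_RationalBounds} through the defining cocone triangle, and conclude by Dieudonn\'e--Manin. Your remark that the eigenspace identification in Lemma \ref{lem_Vrs}.(1) does not require bijectivity of $F$ is a worthwhile precision that the paper leaves implicit.
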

\begin{proof}
Let $r,s$ be two coprime integers such that $\gamma= r/s$ and $s > 0$.
We may assume $k$ algebraically closed by flat descent.
By Lemma \ref{lem_Vrs} and Proposition \ref{Prop_RationalBounds}
we get that $H^i_{\conv}(X_0/\cM)^{F^s =p^r}$ vanishes for $i\not \in [\gamma-\beta , \gamma -\alpha +1]$. 
    By the Dieudonn\'e--Manin classification (see \S \ref{DMC-crew}), this shows that 
    $H^i_\mr{conv}(X_0,\calM)^{[\gamma]}=0$ for $i\not \in [\gamma-\beta , \gamma -\alpha +1]$.\end{proof}

\subsection{The counterexamples}\label{Sec_Counterexamples}

In this section we finally put together the previous results to Theorem \ref{thm_counterexample}. We further construct various concrete counterexamples to Question \ref{ques_EKP}.

\begin{theo}\label{thm_counterexample}
Let $X_0$ be a smooth affine scheme over $k$, let $\gamma$ be a rational number, and let $\calM^\dagger$ be an overconvergent $F$-isocrystal over $X_0$ with constant slopes in the interval $[\alpha,\beta]\subseteq \Q$. Then, for $i\not \in [\gamma-\beta , \gamma -\alpha +1]$, the comparison morphism $$H^{i}_\mr{rig}(X_0,\calM^\dagger) \to H^{i}_{\mr{conv}}(X_0,\calM)$$ kills $H^{i}_\mr{rig}(X_0,\calM^\dagger)^{[\gamma]}$.
\end{theo}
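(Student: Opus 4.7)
The result should follow almost immediately from Corollary \ref{thm_counterexample-lem} once one verifies that the comparison morphism respects the slope decomposition. The plan is therefore to reduce to the convergent statement already established.

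First, I would note that the associated convergent $F$-isocrystal $\calM$ has the same constant slopes as $\calM^\dagger$ in $[\alpha,\beta]$, since the slope filtration is determined fibre-wise at closed points of $X_0$ and is insensitive to the overconvergent refinement. Therefore Corollary \ref{thm_counterexample-lem} applies to $\calM$ and yields $H^{i}_\mr{conv}(X_0,\calM)^{[\gamma]}=0$ for $i\not\in [\gamma-\beta,\gamma-\alpha+1]$.

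Next, the canonical comparison morphism $c\colon H^i_\mr{rig}(X_0,\calM^\dag)\to H^i_\mr{conv}(X_0,\calM)$ is $F$-equivariant: on an affine formal lift $\fX=\Spf(\widehat A)$ equipped with a Frobenius lift extending to a weak completion $A^\dag$, both groups are computed by de Rham complexes of $\calM^\dag$ and $\calM$ respectively, and $c$ is induced by the canonical inclusion $A^\dag\hookrightarrow \widehat A$, which commutes with the chosen Frobenius (cf.\ Definition \ref{def_ovcrys}). By flat descent, as in the proof of Corollary \ref{thm_counterexample-lem}, we may assume $k$ algebraically closed, so that the Dieudonné--Manin classification of \S\ref{DMC-crew} applies and $H^{i}_\mr{rig}(X_0,\calM^\dag)^{[\gamma]}$ is spanned by finite-rank sub-$F$-isocrystals of slope $\gamma$, hence by classes $x$ with $F^s x = p^r x$ for a choice of coprime $(r,s)$ with $r/s=\gamma$ and $s>0$. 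The $F$-equivariance of $c$ then forces $c(x)$ to satisfy the same equation $F^s c(x) = p^r c(x)$ in $H^i_\mr{conv}(X_0,\calM)$, so $c(x)$ belongs to the slope-$\gamma$ subspace of the target.

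Combining these two observations, $c$ maps $H^{i}_\mr{rig}(X_0,\calM^\dag)^{[\gamma]}$ into $H^{i}_\mr{conv}(X_0,\calM)^{[\gamma]}=0$, which concludes the argument. The only mild subtlety, and the only step where one must be careful, is checking that the slope-$\gamma$ generators in $H^i_\mr{rig}$ indeed go to slope-$\gamma$ elements in the convergent group after the base change to $\bar k$; this however reduces to the purely formal statement that a Frobenius-equivariant $K$-linear map sends $F^s=p^r$ eigenvectors to $F^s=p^r$ eigenvectors, together with the compatibility of the slope filtration with flat base change along $k\to\bar k$, which is standard.
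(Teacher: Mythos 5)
Your proposal is correct and is essentially identical to the paper's own proof: the paper likewise observes that the comparison map commutes with Frobenius, so it suffices to know $H^i_\mr{conv}(X_0,\calM)^{[\gamma]}=0$ for $i\notin[\gamma-\beta,\gamma-\alpha+1]$, which is exactly Corollary \ref{thm_counterexample-lem}. The extra details you supply (Dieudonn\'e--Manin generators satisfying $F^s=p^r$, compatibility with base change to $\bar k$) are correct and merely make explicit what the paper leaves implicit.
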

\begin{proof}
Let $r,s$ be two coprime integers such that $\gamma= r/s$ and $s > 0$.
Since the comparison map $$H^{i}_\mr{rig}(X_0,\calM^\dagger)\to H^{i}_\mr{conv}(X_0,\calM)$$ commutes with Frobenius, it is enough to prove that $H^i_\mr{conv}(X_0,\calM)^{[\gamma]}=0$ when $i\not \in [\gamma-\beta , \gamma -\alpha +1]$, which follows from Corollary \ref{thm_counterexample-lem}. 
\end{proof}


\begin{coro}\label{countex}
    Let ${E}_0$ be an ordinary elliptic curve. We write $Y_0$ for the square of $E_0$ and we take $X_0\coloneqq Y_0\setminus D_0$, with $D_0$ the divisor $(E_0\times 0) \cup (0\times E_0)$. The comparison morphism $$H^i_\mr{rig}(X_0/K)\to H^i_\mr{conv}(X_0/K)$$ is not injective.
\end{coro}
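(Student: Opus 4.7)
The plan is to apply Theorem \ref{thm_counterexample} to the trivial overconvergent $F$-isocrystal $\calM^\dagger = \calO^\dagger$ (so $[\alpha,\beta]=[0,0]$) in cohomological degree $i=2$ with slope $\gamma=0$: since $2 \notin [0,1]$, the theorem implies that the comparison morphism annihilates the slope zero subspace $H^2_\mr{rig}(X_0/K)^{[0]}$. It will then suffice to produce a nonzero class there.

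To do so, I would first observe that $X_0 = U_0 \times_k U_0$, where $U_0 := E_0 \setminus \{0\}$, because $D_0 = (E_0 \times \{0\}) \cup (\{0\} \times E_0)$. The Gysin long exact sequence in rigid cohomology for the closed point $\{0\} \hookrightarrow E_0$ then yields $H^0_\mr{rig}(U_0/K)=K$, $H^1_\mr{rig}(U_0/K) \simeq H^1_\mr{rig}(E_0/K)$ and $H^j_\mr{rig}(U_0/K)=0$ for $j\geq 2$: the last vanishing uses that the cycle class $K(-1) \to H^2_\mr{rig}(E_0/K)$ of the origin is a nonzero map between one-dimensional $K$-vector spaces, hence an isomorphism. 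By the Künneth formula for rigid cohomology, one then obtains an $F$-equivariant isomorphism
$$H^2_\mr{rig}(X_0/K) \;\simeq\; H^1_\mr{rig}(E_0/K) \otimes_K H^1_\mr{rig}(E_0/K).$$

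Since $E_0$ is ordinary, $H^1_\mr{rig}(E_0/K)$ decomposes as a direct sum of two one-dimensional sub-$F$-isocrystals of slopes $0$ and $1$. Consequently the slope zero part of the tensor square on the right is one-dimensional and nonzero, producing the required class in $H^2_\mr{rig}(X_0/K)^{[0]}$; combined with the slope obstruction above, this shows that the comparison morphism in degree two is not injective.

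There is no substantive obstacle beyond the bookkeeping: one only needs to be careful that Künneth is applied in its $F$-equivariant form so that the Dieudonné--Manin slope decomposition behaves as expected under the tensor product, and that ordinariness of $E_0$ is precisely the input ensuring the existence of a slope zero summand. Both are standard.
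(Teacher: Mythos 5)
Your proposal is correct, and it reaches the slope-zero class by a genuinely different computation than the paper. The paper starts from $v\otimes v\in H^2_\mr{rig}(Y_0/K)$ with $Fv=v$ (Künneth on the \emph{proper} surface $Y_0$), and then shows this class survives restriction to $X_0$ by analysing the localisation triangle for the singular divisor $D_0$: a Mayer--Vietoris argument plus Gysin shows $H^2_{D_0,\mr{rig}}(Y_0/K)$ is pure of slope $1$, so it cannot kill a Frobenius-fixed class. You instead exploit the product structure of the open complement itself, $X_0=U_0\times_k U_0$ with $U_0=E_0\setminus\{0\}$, compute $H^\bullet_\mr{rig}(U_0/K)$ by Gysin on the curve, and apply Künneth to the \emph{open} product to get $H^2_\mr{rig}(X_0/K)\simeq H^1_\mr{rig}(E_0/K)^{\otimes 2}$, whose slope-zero part is one-dimensional by ordinarity. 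Your route avoids handling the non-smooth divisor $D_0$ (no Mayer--Vietoris), at the price of invoking the Künneth formula for rigid cohomology of non-proper smooth varieties, which rests on the finiteness theorems (for affine curves this is classical, so the input is legitimate); the paper only needs Künneth in the proper case. Two small points of bookkeeping: the slope language of Definition \ref{DMC-crew} and the final application of Theorem \ref{thm_counterexample} are set up after reduction to $k$ algebraically closed by flat descent, so you should record that reduction (or note that for an ordinary elliptic curve the slope splitting of $H^1$ already exists over any perfect field); and you should note that $X_0$ is affine (clear, since $U_0$ is), as Theorem \ref{thm_counterexample} requires it. Neither is a gap.
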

\begin{proof}We may assume $k$ algebraically closed. By the assumption that $E_0$ ordinary and by Dieudonn\'e--Manin classification, the vector space $H^1_{\mr{rig}}(E_0/K)$ contains a non-trivial vector $v$ such that $Fv=v.$ Thanks to the K\"unneth formula, \cite[Thm. 3.2]{Be2}, the tensor $v\otimes v$ is a non-trivial element of $H^2_\mr{rig}(Y_0/K)$ fixed by the Frobenius. Using the distinguished localisation triangle with respect to $D_0$, we get the exact sequence $$
    H^2_{D_0,\mr{rig}}(Y_0/K)
    \to H^2_{\mr{rig}}(Y_0/K)\to H^2_{\mr{rig}}(X_0/K).$$ 
Mayer--Vietoris distinguished triangle yields the exact sequence
    $$H^2_{0\times 0,\mr{rig}}(Y_0/K) \to H^2_{E_0 \times 0,\mr{rig}}(Y_0/K)\oplus H^2_{0\times E_0,\mr{rig}}(Y_0/K) \to H^2_{D_0,\mr{rig}}(Y_0/K)\to H^3_{0\times 0,\mr{rig}}(Y_0/K).$$

Using Gysin isomorphism, \cite[Thm. 4.1.1]{Tsu99}, we get the isomorphism
$$ H^0_{\mr{rig}}(E_0\times 0)(-1)\oplus H^0_{\mr{rig}}(0\times E_0)(-1)\xrightarrow{\sim} H^2_{D_0,\mr{rig}}(Y_0/K).$$

    Hence, the kernel of $H^2_{\mr{rig}}(Y_0/K)\to H^2_{\mr{rig}}(X_0/K)$ has slope $1$. We deduce that the image of  $v\otimes v\in H^2_{\mr{rig}}(Y_0/K)$ in $H^2_{\mr{rig}}(X_0/K)$ spans a rank $1$ vector subspace $L\subseteq H^2_{\mr{rig}}(X_0/K)$ of slope $0$. By Theorem \ref{thm_counterexample}, we deduce that the comparison morphism kills $L$, and this yields the desired result.
\end{proof}

This is a special case of the following result.

\begin{coro}
      Let $Y_0$ be smooth projective scheme over $k$ such that $H^i_\mr{rig}(Y_0/K)^{[0,1)}\neq 0$ for $i\geq 2$, then for every $D_0\subseteq Y_0$ ample strict normal crossing divisor, the comparison morphism $$H^i_\mr{rig}(X_0/K)\to H^i_\mr{conv}(X_0/K)$$ is not injective, where $X_0\coloneqq Y_0\setminus D_0.$ 
\end{coro}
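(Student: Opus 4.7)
The plan is to mirror the strategy of Corollary \ref{countex}, now using the ample hypothesis on $D_0$ and the general slope-$[0,1)$ assumption on $Y_0$. Since $D_0$ is an ample strict normal crossing divisor in the projective scheme $Y_0$, its complement $X_0 \coloneqq Y_0 \setminus D_0$ is affine, so that Theorem \ref{thm_counterexample} is available. Pick a nonzero class $v \in H^i_\mr{rig}(Y_0/K)^{[0,1)}$; the goal is to show that its image $\bar v$ in $H^i_\mr{rig}(X_0/K)$ is nonzero and is killed by the comparison map into convergent cohomology.

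First, I would analyse the kernel of the restriction map via the localisation distinguished triangle
$$R\Gamma_{D_0,\mr{rig}}(Y_0/K) \to R\Gamma_\mr{rig}(Y_0/K) \to R\Gamma_\mr{rig}(X_0/K),$$
combined with Mayer--Vietoris applied iteratively to the smooth components $D_{0,1},\dots,D_{0,N}$ of $D_0$ (or equivalently, the weight spectral sequence associated to the open cover). As in the proof of Corollary \ref{countex}, Tsuzuki's Gysin isomorphism \cite[Thm. 4.1.1]{Tsu99} reduces $H^i_{D_0,\mr{rig}}(Y_0/K)$ to pieces of the form $H^{i-2c}_\mr{rig}(D_{0,I}/K)(-c)$ indexed by the strata $D_{0,I} \coloneqq \bigcap_{j \in I} D_{0,j}$ with $|I| = c \geq 1$. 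Each $D_{0,I}$ is smooth projective, so its rigid cohomology has slopes in $[0,i-2c]$; after Tate twist by $(-c)$ these slopes become $\geq c \geq 1$. Consequently, every slope appearing in $H^i_{D_0,\mr{rig}}(Y_0/K)$ is $\geq 1$, and the restriction
$$H^i_\mr{rig}(Y_0/K)^{[0,1)} \hookrightarrow H^i_\mr{rig}(X_0/K)^{[0,1)}$$
is injective. In particular $\bar v$ is a nonzero slope-$[0,1)$ class.

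Finally, I would invoke Theorem \ref{thm_counterexample} with $\calM^\dag = \calO^\dag$, so that $[\alpha,\beta] = [0,0]$. For any rational slope $\gamma \in [0,1)$ occurring in $\bar v$, the hypothesis $i \notin [\gamma - \beta,\ \gamma - \alpha + 1] = [\gamma,\gamma+1]$ is automatic since $i \geq 2 > \gamma + 1$. Thus the comparison map $H^i_\mr{rig}(X_0/K) \to H^i_\mr{conv}(X_0/K)$ kills $H^i_\mr{rig}(X_0/K)^{[\gamma]}$ for every such $\gamma$. After the harmless base change to $\bar k$ to apply Dieudonné--Manin slopewise, this forces $\bar v$ to map to zero, giving the failure of injectivity.

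The main obstacle is the slope bound on the local cohomology $H^i_{D_0,\mr{rig}}(Y_0/K)$: one must carry out the Mayer--Vietoris (or weight spectral sequence) argument carefully enough to keep track of Frobenius slopes through the Gysin shifts, for a general strict normal crossing $D_0$ rather than just the two-component case treated in Corollary \ref{countex}. Once that bound is secured, the remainder of the argument is a direct application of Theorem \ref{thm_counterexample} combined with Dieudonné--Manin decomposition.
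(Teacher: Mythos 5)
Your proposal is correct and follows essentially the same route as the paper: both use the localisation triangle together with Gysin and Mayer--Vietoris to show $H^i_{D_0,\mathrm{rig}}(Y_0/K)$ has slopes $\geq 1$, hence that the slope-$[0,1)$ class survives restriction to the affine complement $X_0$, and then conclude by Theorem \ref{thm_counterexample}. Your write-up is merely more explicit about the stratification of $D_0$ and the final slope-interval check.
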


\begin{proof} 
When $D_0$ is smooth, by Gysin, we have $H^i_{D_0,\mr{rig}}(Y_0/K)= H^{i-2}_{\mr{rig}}(D_0/K)(-1)$, which has slopes $\geq1$ because the slopes of $H^{i-2}_{\mr{rig}}(D_0/K)$ are\footnote{Rigid cohomology of smooth varieties has always non-negative slopes. See for example \cite[Thm. 8.39]{BerthelotOgus} and \cite[Thm. 5.4.1]{KedWeil2}.} $\geq 0$. Using Mayer--Vietoris exact sequences, this implies that
$H^i_{D_0,\mr{rig}}(Y_0/K)$ has slopes $\geq1$.
Hence, $H^i_\mr{rig}(X_0/K)^{[0,1)}\neq 0$. 
      
\end{proof}

\begin{theo}\label{thm_counterexampleGHC}
    Let $Y$ be a smooth proper scheme over $\calV$. For every affine open $X\subseteq Y$, the image of the composition $$H^i_\dR(Y_K/K)\to H^i_\dR(X_K/K)\to H^i_\mr{conv}(X_0/K)$$ has dimension at most $2h^{i,0}(Y_K)+h^{i-1,1}(Y_K).$ 
\end{theo}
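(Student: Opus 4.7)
The plan is to translate the problem into rigid/convergent cohomology via Proposition~\ref{prop_dRC-RC}, apply Theorem~\ref{thm_counterexampleIntro} to constrain the slopes that can appear in the image, and then use Mazur's Newton-above-Hodge inequality to bound the dimension of the corresponding slope subspace of $H^i_\mr{rig}(Y_0/K)$.

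First, I would use Berthelot's proper comparison (Proposition~\ref{prop_propercomparison}) together with Proposition~\ref{prop_dRC-RC} to identify $H^i_\dR(Y_K/K)$ with $H^i_\mr{rig}(Y_0/K)\simeq H^i_\cris(Y_0/W)_K$ (using that $Y$ is proper) and $H^i_\dR(X_K/K)$ with $H^i_\mr{rig}(X_0/K)$. Under these identifications, the composition in the statement becomes the restriction $H^i_\mr{rig}(Y_0/K)\to H^i_\mr{rig}(X_0/K)$ followed by the comparison $H^i_\mr{rig}(X_0/K)\to H^i_\mr{conv}(X_0/K)$, and both maps are Frobenius-equivariant.

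Second, I would apply Theorem~\ref{thm_counterexampleIntro} with $\calM^\dagger=\calO^\dagger$ (so $\alpha=\beta=0$): this shows that the comparison $H^i_\mr{rig}(X_0/K)\to H^i_\mr{conv}(X_0/K)$ annihilates $H^i_\mr{rig}(X_0/K)^{[\gamma]}$ whenever $\gamma\notin[i-1,i]$. Since the restriction map is Frobenius-equivariant it preserves slope subspaces, so the image of the full composition has dimension at most $\dim_K H^i_\cris(Y_0/W)_K^{[i-1,i]}$. (One may pass freely to $\Frac(W(\bar k))$ when computing slope-wise dimensions, since this base change is faithfully flat and leaves the relevant $K$-dimensions invariant.)

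The main step is then to bound this slope $[i-1,i]$ subspace by $2h^{i,0}(Y_K)+h^{i-1,1}(Y_K)$. Since $Y$ is smooth and proper, Mazur's theorem places the Newton polygon of $H^i_\cris(Y_0/W)_K$ on or above the Hodge polygon of $H^i_\dR(Y_K/K)$, which in characteristic zero (by Hodge--de Rham degeneration) has slope $p$ with multiplicity $h^{p,i-p}(Y_K)$. If $m$ denotes the number of Newton slopes that are $\geq i-1$, then the sum of the top $m$ Newton slopes is at least $m(i-1)$ and, by Newton above Hodge, at most the sum of the top $m$ Hodge slopes. For $m>h^{i,0}+h^{i-1,1}$ this upper bound equals $(i-2)m+2h^{i,0}+h^{i-1,1}$, and the inequality $m(i-1)\leq (i-2)m+2h^{i,0}+h^{i-1,1}$ forces $m\leq 2h^{i,0}(Y_K)+h^{i-1,1}(Y_K)$; for $m\leq h^{i,0}+h^{i-1,1}$ the bound is automatic. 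The main obstacle is this elementary but slightly delicate polygon bookkeeping; everything else is a direct assembly of results already established in the paper.
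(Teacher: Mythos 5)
Your proposal is correct and follows essentially the same route as the paper: identify the maps via Proposition \ref{prop_dRC-RC} and properness, kill the slopes outside $[i-1,i]$ using Theorem \ref{thm_counterexample} with trivial coefficients, and bound the slope-$[i-1,i]$ part by the identical Newton-above-Hodge polygon computation (the paper cites Faltings/Yao for the comparison of the Newton polygon of the special fibre with the Hodge polygon of the generic fibre, rather than Mazur's original statement, but the inequality used is the same).
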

\begin{proof}
    By  \cite{Fal88}, the Newton polygon of the special fibre is above the Hodge polygon of the generic fibre and the endpoints match (see also \cite[Thm. 2]{Yao23}). If we write $d$ for the dimension of $H^i_\mr{rig}(Y_k/K)^{[i-1,i]}$, $h^i\coloneqq h^{i,0}(Y_K)$, and $h^{i-1}\coloneqq h^{i-1,1}(Y_K)$, by looking at the final segment of length $d$ of the two polygons, we deduce that $$(i-1)d\leq ih^i+(i-1) h^{i-1}+(i-2)(d-h^i-h^{i-1}),$$ which implies $d\leq 2h^i+h^{i-1}.$ Combining this bound with Theorem \ref{thm_counterexample}, this shows that the image of the composition $$H^i_\mr{rig}(Y_0/K)\to H^i_\mr{rig}(X_0/K)\to H^i_\mr{conv}(X_0/K)$$ has dimension at most $2h^i+h^{i-1}.$ By Proposition \ref{prop_dRC-RC}, we get the desired result.
\end{proof}
\begin{exam}
    In Theorem \ref{thm_counterexampleGHC}, if we take $Y$ to be a smooth cubic fourfold and $X$ the complement of a smooth hyperplane section, then $Y_K$ has the following Hodge numbers in degree $4$ $$h^{4,0}=0,h^{3,1}=1,h^{2,2}=21.$$ By Theorem \ref{thm_counterexampleGHC}, the kernel of $H^4_\dR(Y_K/K)\to H^4_\mr{conv}(X_0/K)$ has dimension at least $22$. On the other hand, the kernel of $H^4_\dR(Y_K/K)\to H^4_\dR(X_K/K)$ is generated by the Chern class of the self-intersection class of the hyperplane bundle. Thus, in this case $V^4_\mr{dR}(X/\calV)_\Q$ contains a subspace of weight $4$ of dimension at least $21.$ 
\end{exam}

\section{The  de Rham--Witt complex}

\label{Sec_dRW}
\subsection{Unit-root coefficients}\label{Sec_dRWUnitRoot}
We now focus our attention on the de Rham--Witt complex. We analyse the Nygaard filtration with unit-root coefficients. We also compare the cohomology of \'Etesse's twisted logarithmic sheaves $\nu(E,r)$ with syntomic cohomology, as defined in \S\ref{synE(r,s)}.
\begin{empt}\label{Sec_dRWUR}Let $X_0$ be a smooth $k$-scheme. We write $\nu_n(r)$ for the sheaves $W_n\Omega^r_{X_0,\mr{log}}$ and $\nu(r)$ for the pro-étale sheaf $\lim_n \nu_n(r)$. Thanks to \cite[Prop. 8.4]{BMS}, the sequence of complexes of pro-\'etale sheaves
\begin{equation}
    \label{BMS8.7pre} 0\to \nu(r)[-r]\to\calN^{\ge r}W\Omega^\bullet_{X_0}\xrightarrow{\varphi_r-\iota}W\Omega^\bullet_{X_0}\to 0
\end{equation}is exact and $\nu(r)$ coincides with the derived inverse limit $R\lim_n\nu_n(r)$. More generally, if $E$ is a unit-root $F$-crystal, \'Etesse constructed in \cite{Ete88} the \textit{de Rham--Witt complex with coefficients in} $E$, denoted by $E_\bullet\otimes W_\bullet\Omega_{X_0}$ and he constructed in [\textit{ibid.}, Def III.2.1 and Prop. III.2.2] the twisted sheaves $\nu_\bullet (E,r)$. As usual, by taking limits, we get a complex of pro-étale sheaves $E\otimes W\Omega_{X_0}$ and pro-étale sheaves $\nu(E,r)$. We write $$\calN^{\geq i}(E\otimes W\Omega_{X_0}^\bullet)\subseteq E\otimes W\Omega_{X_0}^\bullet$$ for the subcomplex defined by $$\calN^{\geq i}(E\otimes W\Omega_{X_0}^j)\coloneqq p^{i-j-1}V(E\otimes W\Omega^j_{X_0})\quad \mr{for}\ j<i $$  and $$\calN^{\geq i}(E\otimes W\Omega_{X_0}^j)\coloneqq E\otimes W\Omega^j_{X_0} \quad \mr{for}\ j\geq i.$$ We get a twisted variant of \cite[Prop. 8.4]{BMS}.

\end{empt}

\begin{prop}\label{BMS8.7preUR}The complexes of pro-\'etale $\bbZ_p$-sheaves over $X_0$ defined in \S\ref{Sec_dRWUR} sit in the following exact sequence  
$$0\to \nu(E,r)[-r]\to  \calN^{\ge r}(E\otimes W\Omega^\bullet_{X_0})\xrightarrow{\varphi_r-\iota}
    E\otimes W\Omega^\bullet_{X_0}\to 0.$$ 
    Moreover, $\nu(E,r)$ is derived $p$-complete and $\nu(E,r)\otimes^L\Z/p^n=\nu_n(E,r)$. 
\end{prop}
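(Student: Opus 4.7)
The plan is to reduce the twisted statement to the untwisted exact sequence \eqref{BMS8.7pre} by pro-étale descent. Recall from Definition \ref{Sec_UR} that, via Crew's theorem, the unit-root $F$-crystal $E$ corresponds to a locally free lisse $\Z_p$-sheaf $L \coloneqq E^{\varphi = 1}$, which is pro-étale locally isomorphic to $\Z_p^{\oplus n}$ with $n = \mr{rk}(E)$. Étesse's constructions in \cite{Ete88} are designed to be compatible with pro-étale localisation, in the sense that, on any pro-étale cover trivialising $L$, the twisted de Rham--Witt complex $E \otimes W\Omega^\bullet_{X_0}$ becomes identified with $(W\Omega^\bullet_{X_0})^{\oplus n}$ in a manner compatible with the Nygaard filtration $\calN^{\geq r}$, the divided Frobenius $\varphi_r$, and the canonical map $\iota$; correspondingly, $\nu(E,r)$ is identified with $\nu(r)^{\oplus n}$.

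Granted this local trivialisation, the twisted sequence becomes pro-étale locally the direct sum of $n$ copies of \eqref{BMS8.7pre}, and is therefore exact. Since exactness of a sequence of pro-étale sheaves of abelian groups may be checked pro-étale locally, this yields the first assertion. The key technical verification here is that Étesse's twisted logarithmic sheaves $\nu(E,r)$, constructed as (suitable) Frobenius fixed points inside $E \otimes W\Omega^r_{X_0}$, really recover $L \otimes_{\Z_p} \nu(r)$ after trivialising $L$: the unit-root condition forces the Frobenius on $L$ to act as the identity, which is exactly what is needed to match the twisted and untwisted constructions summand by summand.

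For the moreover part, pro-étale locally the transition maps $\nu_{n+1}(E,r) \twoheadrightarrow \nu_n(E,r)$ are surjective---by reduction to the untwisted case, which is classical---so $\nu(E,r) = R\lim_n \nu_n(E,r)$ agrees with the naive inverse limit, and the $R^1\lim$ term vanishes. Since each $\nu_n(E,r)$ is annihilated by $p^n$, the derived inverse limit is automatically derived $p$-complete, and the identification $\nu(E,r) \otimes^L \Z/p^n = \nu_n(E,r)$ is the standard computation for such limits. The main obstacle I anticipate is the careful tracking of Étesse's twisted constructions through the pro-étale trivialisation, and the verification that all the structures---the Nygaard filtration, the divided Frobenius, and the canonical map $\iota$---remain compatibly preserved, so that the decomposition into $n$ copies of \eqref{BMS8.7pre} is genuinely one of complexes of pro-étale sheaves and not merely of underlying abelian sheaves.
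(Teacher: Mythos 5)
Your proposal is correct in outline but takes a genuinely different route from the paper. The paper does not descend to the untwisted case at all: it works directly with \'Etesse's finite-level results, namely the exact sequence $0\to \nu_\bullet(E,r)\to E_\bullet\otimes W_\bullet\Omega^r_{X_0}\xrightarrow{\mathrm{F}-1}E_\bullet\otimes W_\bullet\Omega^r_{X_0}\to 0$ of \'etale pro-sheaves \cite[Lem.~III.2.4]{Ete88} and the sequence $0\to \nu_m(E,r)\xrightarrow{\underline{p}^n}\nu_{n+m}(E,r)\xrightarrow{R^m}\nu_n(E,r)\to 0$ \cite[Lem.~III.2.8]{Ete88}, and then reruns the proof of \cite[Prop.~8.4]{BMS} with coefficients. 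Your pro-\'etale descent to $E=\Zp$ buys the untwisted statement \eqref{BMS8.7pre} in all degrees at once and avoids redoing the BMS argument, at the cost of the trivialisation bookkeeping you rightly flag: the identification of $E_n\otimes W_n\Omega^\bullet_{X_0}$ with $(W_n\Omega^\bullet_{X_0})^{\oplus m}$ compatibly with $\calN^{\geq r}$, $\varphi_r$, $\iota$ and with the passage to the limit over $n$ is precisely \'Etesse's \'etale-local description of his construction, so you should cite it rather than assert compatibility ``by design''; since the trivialising covers deepen with $n$, the splitting only exists after pulling back to their inverse limit, which is why the pro-\'etale (rather than \'etale) topology is essential to your argument.

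One caution on the ``moreover'' part: the identification $\nu(E,r)\otimes^L\Z/p^n=\nu_n(E,r)$ does \emph{not} follow merely from surjectivity of the transition maps together with $p^n\nu_n(E,r)=0$ --- the constant tower $\{\Z/p\}$ satisfies both and fails the conclusion. What is needed is the strictness $\nu_{n+1}(E,r)\otimes^L_{\Z/p^{n+1}}\Z/p^n\riso\nu_n(E,r)$ in the sense of \cite[Lem.~3.5.5]{BSproet-15}, which the paper extracts from the $\underline{p}$/$R$ exact sequence above. Your local reduction does supply this, because $\otimes^L$ commutes with restriction to a cover and being an isomorphism is a local property, but you should make that step explicit instead of appealing to ``the standard computation for such limits''.
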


\begin{proof}
 \'Etesse   constructed in  \cite[Lem. III.2.4]{Ete88} the exact sequence
\begin{equation}\label{I.5.7.2-Ilpre}
0\to \nu_\bullet (E,r)\to E_\bullet \otimes W_\bullet\Omega^r_{X_0}\xrightarrow{{\rm F}-1}E_\bullet\otimes W_\bullet\Omega^r_{X_0} \to 0
\end{equation} 
  of étale pro-sheaves. By taking limits in the category of pro-étale sheaves, this implies that $$\nu(E,r)=R\mr{lim}_n\nu_n(E,r).$$  Moreover, thanks to \cite[Lem. III.2.8]{Ete88}, we have the exact sequence 
\begin{equation}\label{BMS8.7preURproof1} 0\to \nu_m (E,r)\xrightarrow{\underline{p}^n} \nu_{n+m} (E,r)\xrightarrow{R^m} \nu_n  (E,r)\to 0
\end{equation}
for $m,n\geq 0.$  Since $\nu_{n+1}(E,r)\xrightarrow{p^n}\nu_{n+1}(E,r)$ is equal to the composition
$$\nu_{n+1}(E,r)\xrightarrow{R^n}\nu_{1}(E,r)\xrightarrow{\underline{p}^n}\nu_{n+1}(E,r)$$ and 
$\nu_{n+1}(E,r)\xrightarrow{p}\nu_{n+1}(E,r)$ is equal to the composition $$\nu_{n+1}(E,r)\xrightarrow{\underline{p}}\nu_{n+2}(E,r)\xrightarrow{R }\nu_{n+1}(E,r),$$ then we compute from \eqref{BMS8.7preURproof1}
that the sequences 
\begin{gather*}
\nu_{n+1}(E,r)\xrightarrow{p^n}\nu_{n+1}(E,r)\xrightarrow{p}\nu_{n+1}(E,r)\xrightarrow{p^n}\nu_{n+1}(E,r),
\\  \nu_{n+1}(E,r)\xrightarrow{p^n}\nu_{n+1}(E,r) \xrightarrow{R} \nu_{n}(E,r) \to 0
\end{gather*}
are exact.
The complex $\nu _{n+1}(E,r) \otimes _{\bbZ /p ^{n+1}} ^{L} \bbZ /p ^{n}$ is isomorphic to
$$\dots \xrightarrow{p} \nu_{n+1}(E,r)\xrightarrow{p^n}\nu_{n+1}(E,r)\xrightarrow{p}\nu_{n+1}(E,r)\xrightarrow{p^n}\nu_{n+1}(E,r)\to 0 \to \dots,$$
where the right last $\nu_{n+1}(E,r)$ term is at degree $0$, 
then we get the canonical isomorphism $$\nu _{n+1}(E,r) \otimes _{\bbZ /p ^{n+1}} ^{L} \bbZ /p ^{n}\riso \nu_n(E,r)$$ 
Therefore, it follows from \cite[Lem. 3.5.5]{BSproet-15} that $$\nu(E,r)\otimes _{\bbZ}^L\Z/p^n=\nu_n(E,r).$$
The result for the other degrees is proved as in \cite[Prop. 8.4]{BMS}.
\end{proof}



\begin{prop}\label{Prop_ComparisondRWUR}
    If $X_0$ is affine, the choice of a flat formal lift of $X_0$ endowed with a Frobenius lift induces a quasi-isomorphism $$\mr{dR}_{E}^{\bullet,r}\xrightarrow{\sim} R \Gamma_{\rm{pro}\et}\left( X_0, \calN^{\geq r}(E\otimes W\Omega_{X_0}^\bullet)\right )$$ for every $r\in \Z.$
\end{prop}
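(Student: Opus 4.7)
The strategy is to adapt the proof of \cite[Prop. 8.7]{BMS} to accommodate unit-root coefficients via Étesse's formalism \cite{Ete88}.

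First, I construct the comparison map. The Frobenius lift $F\colon \widehat{A}\to\widehat{A}$ induces, via the Cartier--Dieudonné--Dwork lemma, a canonical $W$-algebra homomorphism $\widehat{A}\to W(A_0)$. Composing with the universal map $\Omega^\bullet_{W(A_0)/W}\to W\Omega^\bullet_{X_0/k}$ yields a Frobenius-equivariant morphism of differential graded algebras $\Omega^\bullet_{\widehat{A}/W}\to W\Omega^\bullet_{X_0/k}$. Tensoring with $E$, now viewed through Étesse's description of the de Rham--Witt complex with unit-root coefficients, gives a map of complexes $\eta\colon \mr{dR}^\bullet_E \to \Gamma(X_0, E\otimes W\Omega^\bullet_{X_0})$. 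Since $X_0$ is affine and each $E_n\otimes W_n\Omega^j_{X_0}$ is quasi-coherent with surjective transition maps (so $R^1\lim$ vanishes), we have $R\Gamma_{\rm{pro}\et}(X_0, E\otimes W\Omega^\bullet)=\Gamma(X_0, E\otimes W\Omega^\bullet)$. The classical Illusie--Étesse comparison theorem then says $\eta$ is a quasi-isomorphism, settling the case $r\le 0$.

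Next, compatibility with the Nygaard filtrations follows from the relation $p = VF$ on $W\Omega^\bullet$: for $j<r$ and $\omega\in\mr{dR}_E^j$,
\begin{equation*}
\eta(p^{r-j}\omega) = p^{r-j-1}\cdot p\,\eta(\omega) = p^{r-j-1}\,V\bigl(F\eta(\omega)\bigr) \in p^{r-j-1}V(E\otimes W\Omega^j),
\end{equation*}
so $\eta$ restricts to $\eta^r\colon \mr{dR}_E^{\bullet,r}\to R\Gamma_{\rm{pro}\et}(X_0,\calN^{\ge r}(E\otimes W\Omega^\bullet_{X_0}))$. To prove that $\eta^r$ is a quasi-isomorphism I would induct on $r$, the case $r\le 0$ being the previous step. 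For the inductive step, consider the short exact sequences of complexes
\begin{equation*}
0\to \mr{dR}_E^{\bullet, r+1}\to \mr{dR}_E^{\bullet, r}\to \mr{gr}^r_{\mr{dR}}\to 0,\qquad 0\to \calN^{\ge r+1}\to \calN^{\ge r}\to \mr{gr}^r_\calN\to 0,
\end{equation*}
and apply the five-lemma to the resulting long exact sequences in pro-étale cohomology. By induction it suffices to check that the induced map on associated gradeds $\mr{gr}^r_{\mr{dR}}\to R\Gamma_{\rm{pro}\et}(X_0,\mr{gr}^r_\calN)$ is a quasi-isomorphism.

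The main obstacle is this last step. A direct computation shows that $\mr{gr}^r_{\mr{dR}}$ has vanishing differentials and decomposes as $\bigoplus_{j=0}^r (E_0\otimes\Omega^j_{X_0})[-j]$, whereas $\mr{gr}^r_\calN$ carries a more intricate structure arising from the relations $FV=VF=p$ and $Vd=pdV$ in the de Rham--Witt complex. Reconciling the two sides requires the Cartier isomorphism already implicit in Mazur's quasi-isomorphism (Proposition \ref{thm_Mazur}), and one must carefully track the Frobenius twist on the coefficient $E$ throughout using Étesse's version of Cartier's theorem for twisted differentials. The analogous identification in the case $E=\Zp$ is carried out in \cite[Prop. 8.20]{BMS}, and the plan is to adapt that argument within Étesse's formalism.
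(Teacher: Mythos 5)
Your proposal is correct and takes essentially the same route as the paper: the paper's entire proof is the one-line citation ``this follows from \cite[Thm. II.2.1]{Ete88} as in \cite[Prop. 8.7]{BMS}'', and what you have written out --- the Dwork-lemma comparison map $\widehat{A}\to W(A_0)$, the reduction of pro-\'etale cohomology to global sections on affines, the $p=VF$ verification of filtration compatibility, and the reduction to graded pieces resolved by the Cartier isomorphism --- is precisely the BMS argument that citation invokes, transported to \'Etesse's coefficient formalism. The graded-piece identification you flag as ``the main obstacle'' is indeed where the real content lies, but the paper defers it to BMS in exactly the same way, so your reconstruction is if anything more explicit than the published proof.
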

\begin{proof}
    This follows from \cite[Thm. II.2.1]{Ete88} as in \cite[Prop. 8.7]{BMS}.
\end{proof}

\begin{coro}We have a canonical isomorphism
    \begin{equation}\label{BMS8.21} 
R\Gamma_\mr{syn}(X_0,E(r))= R\Gamma_{\rm{pro}\et}(X_0,\nu(E,r))[-r],
\end{equation} where the left hand side is defined in Definition \ref{synE(r,s)}.
\end{coro}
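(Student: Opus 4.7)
The plan is to apply $R\Gamma_{\rm{pro}\et}(X_0, -)$ to the short exact sequence of pro-\'etale sheaves from Proposition \ref{BMS8.7preUR}:
\begin{equation*}
0 \to \nu(E,r)[-r] \to \calN^{\geq r}(E \otimes W\Omega^\bullet_{X_0}) \xrightarrow{\varphi_r - \iota} E \otimes W\Omega^\bullet_{X_0} \to 0.
\end{equation*}
This yields a distinguished triangle
\begin{equation*}
R\Gamma_{\rm{pro}\et}(X_0, \nu(E,r))[-r] \to R\Gamma_{\rm{pro}\et}(X_0, \calN^{\geq r}(E \otimes W\Omega^\bullet_{X_0})) \xrightarrow{\varphi_r - \iota} R\Gamma_{\rm{pro}\et}(X_0, E \otimes W\Omega^\bullet_{X_0}).
\end{equation*}

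The next step is to invoke Proposition \ref{Prop_ComparisondRWUR} to identify the middle term with $\dR^{\bullet,r}_E$ and the right-hand term with $\dR^\bullet_E$ (using the same proposition for $r = 0$, where $\calN^{\geq 0}(E \otimes W\Omega^\bullet_{X_0}) = E \otimes W\Omega^\bullet_{X_0}$). The distinguished triangle becomes
\begin{equation*}
R\Gamma_{\rm{pro}\et}(X_0, \nu(E,r))[-r] \to \dR^{\bullet,r}_E \xrightarrow{\varphi_r - \iota} \dR^\bullet_E,
\end{equation*}
so its cocone is, by Definition \ref{synE(r,s)} (with $s=1$), canonically identified with $R\Gamma_\mr{syn}(X_0, E(r))$, yielding the desired isomorphism.

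The main point requiring care is to ensure that the comparison quasi-isomorphisms of Proposition \ref{Prop_ComparisondRWUR} intertwine the operator $\varphi_r - \iota$ on the de Rham--Witt side with the corresponding operator on the truncated de Rham complex $\dR^{\bullet,r}_E$. This compatibility is built into \'Etesse's construction: the divided Frobenius $\varphi_r$ on $\calN^{\geq r}(E \otimes W\Omega^\bullet_{X_0})$ is defined so as to correspond, under the comparison map, to the divided Frobenius $F/p^r$ on $\dR^{\bullet,r}_E$, while the inclusion $\iota$ matches the Nygaard inclusion on both sides. Since the right-hand side of the claimed isomorphism is intrinsic to $X_0$ (independent of any choice of lift), one simultaneously deduces that $R\Gamma_\mr{syn}(X_0, E(r))$, despite being defined via $\dR^\bullet_E$, does not depend on the choice of formal smooth lift and Frobenius lift used in \S\ref{sec_FGauge}—a fact that is otherwise non-obvious and follows here as a by-product.
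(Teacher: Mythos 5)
Your proposal is correct and is exactly the argument the paper intends: the corollary is stated without proof precisely because it follows by applying $R\Gamma_{\rm{pro}\et}(X_0,-)$ to the exact sequence of Proposition \ref{BMS8.7preUR} and identifying the two right-hand terms via Proposition \ref{Prop_ComparisondRWUR} (the Frobenius-compatibility you flag is indeed built into the comparison, as in \cite[Prop. 8.7]{BMS}). Your closing observation that this gives lift-independence of $R\Gamma_\mr{syn}(X_0,E(r))$ as a by-product is a nice and accurate bonus.
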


Let us finish the subsection with the following proposition, which extends \cite[Prop. 4.2]{Bos24}. This result will be crucial in the proof of Theorem \ref{thm_sloperseparated}.
\begin{prop} \label{prop_Bosco} If $k$ is algebraically closed, the groups $H^0_\et(X_0,\nu_n(E,r))$  and $H^{r+1}_\mr{syn}(X_0,E(r))[p^\infty]$ are finite and $H^r_\mr{syn}(X_0,E(r))$ is a free $\Zp$-module of finite rank. 
\end{prop}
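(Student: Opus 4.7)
The plan is to use the identification from \eqref{BMS8.21}, namely
$$H^r_\mr{syn}(X_0,E(r)) \simeq H^0_\proet(X_0,\nu(E,r)) \quad \text{and} \quad H^{r+1}_\mr{syn}(X_0,E(r)) \simeq H^1_\proet(X_0,\nu(E,r)),$$
to convert the three assertions into statements about the pro-\'etale cohomology of $\nu(E,r)$. Since $\nu(E,r)$ is derived $p$-complete with $\nu(E,r)\otimes^L\Z/p=\nu_1(E,r)$ by Proposition~\ref{BMS8.7preUR}, the complex $R\Gamma_\proet(X_0,\nu(E,r))$ is derived $p$-complete and controlled modulo $p$ by $R\Gamma_\et(X_0,\nu_1(E,r))$.

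First I would extract from the long exact sequence attached to $\nu(E,r)\xrightarrow{p}\nu(E,r)\to\nu_1(E,r)$ the vanishing of $p$-torsion in $H^0_\proet(X_0,\nu(E,r))$, using that $H^{-1}_\et(X_0,\nu_1(E,r))=0$ since $\nu_1(E,r)$ sits in degree $0$, together with surjections from $H^i_\proet(X_0,\nu(E,r))/p$ into $H^i_\et(X_0,\nu_1(E,r))$ for $i=0,1$. Assuming that both \'etale groups $H^i_\et(X_0,\nu_1(E,r))$ ($i=0,1$) are finite, Nakayama's lemma for derived $p$-complete $\Z_p$-modules would give the finite generation of $H^i_\proet(X_0,\nu(E,r))$ over $\Z_p$ for $i=0,1$; combined with $p$-torsion freeness this yields the asserted freeness of $H^r_\syn(X_0,E(r))$, while the $p^\infty$-torsion of any finitely generated $\Z_p$-module is automatically finite, handling the $H^{r+1}_\syn$ claim. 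The finiteness of each $H^0_\et(X_0,\nu_n(E,r))$ would then follow by induction on $n$ from the short exact sequence \eqref{BMS8.7preURproof1}.

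The problem therefore reduces to proving finiteness of $H^i_\et(X_0,\nu_1(E,r))$ for $i\in\{0,1\}$. To handle the coefficient $E$, I would use that, since $E$ is a unit-root $F$-crystal, $E_1$ corresponds to a continuous representation of $\pi_1^{\et}(X_0)$ on $\F_p^d$ where $d=\rank E$ (via Crew's correspondence cited in \S\ref{Sec_UR}). Trivialising this representation on a finite \'etale Galois cover $f\colon X'_0\to X_0$ with group $G$ makes $f^*\nu_1(E,r)$ isomorphic to $(\Omega^r_{X'_0,\log})^{\oplus d}$, and since $X'_0$ is smooth affine over the algebraically closed field $k$, the Cartan--Leray spectral sequence
$$E_2^{p,q}=H^p\bigl(G,H^q_\et(X'_0,\Omega^r_{X'_0,\log})^{\oplus d}\bigr)\Longrightarrow H^{p+q}_\et(X_0,\nu_1(E,r))$$
would reduce the finiteness on $X_0$ to the same finiteness on $X'_0$ with trivial coefficients, since cohomology of a finite group with finite coefficients is finite.

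The core difficulty, and the main obstacle, is the case of trivial coefficients, namely that $H^i_\et(X_0,\Omega^r_{X_0,\log})$ is finite for $i=0,1$ when $X_0$ is smooth affine over an algebraically closed field. This is not a priori obvious, as $\Omega^r_{X_0,\log}$ is closely related to mod-$p$ Milnor $K$-theory, which can be large; however, it is exactly the content of \cite[Prop.~4.2]{Bos24}, whose argument rests on the Artin--Schreier--Cartier presentation of $\Omega^r_{X_0,\log}$ combined with finite generation properties of $\Pic(X_0)$ and $\Gamma(X_0,\cO^\times)/p$ in this setting. Once this classical input is in place, the rest of the argument above is purely formal, and the output is precisely the generalisation of Bosco's result to unit-root $F$-crystal coefficients that the proposition asserts.
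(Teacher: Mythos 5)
Your treatment of $H^r_\syn(X_0,E(r))=H^0_\proet(X_0,\nu(E,r))$ is essentially sound and close in spirit to the paper's: torsion-freeness from the triangle $\nu(E,r)\xrightarrow{p}\nu(E,r)\to\nu_1(E,r)$, plus finiteness of $H^0_\et(X_0,\nu_1(E,r))$ (obtained, as in the paper, by trivialising $E_1$ on a finite \'etale cover and invoking Bosco for the trivial coefficient) and a completeness/Nakayama argument. The paper reaches completeness of $M=\lim_n H^0_\et(X_0,\nu_n(E,r))$ via the exact sequences $0\to M/p^n\to H^0_\et(X_0,\nu_n(E,r))\to H^{r+1}_\syn(X_0,E(r))[p^n]\to 0$ rather than derived Nakayama, but this is a cosmetic difference.

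The genuine gap is in your treatment of $H^{r+1}_\syn(X_0,E(r))[p^\infty]$. You reduce it to finite generation of $H^1_\proet(X_0,\nu(E,r))$ over $\Zp$, which you in turn reduce to finiteness of $H^1_\et(X_0,\nu_1(E,r))$, asserting that the latter (for trivial coefficients) is ``exactly the content of'' Bosco's Prop.~4.2. Neither claim is correct. For $r=0$ and $E=\Zp$ one has $\nu_1(0)=\F_p$, and $H^1_\et(X_0,\F_p)$ is the Artin--Schreier group $\cO(X_0)/(F-1)\cO(X_0)$, which is infinite-dimensional over $\F_p$ already for $X_0=\bbA^1_k$; the same phenomenon persists for all $r$ via the exact sequence defining $W_1\Omega^r_{\log}$. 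Correspondingly, $H^{r+1}_\syn(X_0,E(r))$ is \emph{not} a finitely generated $\Zp$-module in general --- the proposition only asserts that its $p$-power torsion is finite, and Bosco's Prop.~4.2 proves exactly that (finiteness of $H^0_\et(X_0,\nu_n(r))$ and of $H^{r+1}_\syn(X_0,\nu(r))[p^\infty]$), not finiteness of $H^1_\et$ of the mod-$p$ sheaf. Note also that the finiteness of each $H^{r+1}_\syn(X_0,E(r))[p^n]$ (which does follow formally from finiteness of $H^0_\et(X_0,\nu_n(E,r))$) does not give finiteness of the union $[p^\infty]$, so the torsion statement requires adapting Bosco's actual argument to unit-root coefficients, as the paper does; your Cartan--Leray step cannot supply it, since the $E_2^{0,1}$-term involves the infinite group $H^1_\et(X_0',\Omega^r_{X_0',\log})$.
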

\begin{proof}
    The finiteness of $H^0_\et(X_0,\nu_n(r))$ and $H^{r+1}_\mr{syn}(X_0,\nu (r))[p^\infty]$ is proved in \cite[Prop. 4.2]{Bos24}. To deduce the finiteness of $H^0_\et(X_0,\nu_n(E,r))$ we can take a finite étale cover $Y_0\to X_0$ trivialising $E_n$. We then have $H^0_\et(X_0,\nu_n(E,r))\hookrightarrow H^0_\et(Y_0,\nu_n(r))^{\oplus m}$ where $m$ is the rank of $E$, which gives the desired finiteness. We also deduce that $H^{r+1}_\mr{syn}(X_0,E(r))[p^\infty]$ is finite as in [\textit{ibid.}].
    
    \spa
    
    We want to prove now that this implies that $$M\coloneqq H^r_\mr{syn}(X_0,E(r))\stackrel{\eqref{BMS8.21}}=H^0_\mr{proet}(X_0,\nu(E,r)) $$  
    is a free $\Zp$-module of finite rank.
    Note that  $$ H^0_{\mr{pro}\et}(X_0,\nu(E,r))=\lim_n H^0_{\et}(X_0,\nu_n(E,r))$$ since     
\begin{equation}\label{proet-Rlim}
R\Gamma_{\mr{pro}\et}(X_0,\nu(E,r))=R\lim_nR\Gamma_{\et}(X_0,\nu_n(E,r)).
\end{equation}
Via the canonical isomorphism $  \nu (E,r) \otimes^L_{\bbZ_p} \bbZ_p / p^{n} = \nu_n (E,r)$ of Proposition \ref{BMS8.7preUR}, we get
    $$  R\Gamma_{\mr{pro}\et}(X_0,\nu (E,r))\otimes_{\Zp}^L\bbZ_p / p^{n}
    =R \Gamma_{\mr{pro}\et}(X_0, \nu (E,r) \otimes^L_{\bbZ_p}\bbZ_p / p^{n} )
    =R \Gamma_{\et}(X_0, \nu_n (E,r) ).$$  
   Thanks to the corresponding spectral sequence and using \eqref{BMS8.21}, we deduce that $M$ is torsion free and we get the exact sequence
    $$0\to M/p^{n}M\to H^0_\et(X_0,\nu_n(E,r))\to H^{r+1}_\mr{syn}(X_0,E(r))[p^n]\to 0,$$ where the transition maps $$H^{r+1}_\mr{syn}(X_0,E(r))[p^{n+1}]\to H^{r+1}_\mr{syn}(X_0,E(r))[p^n]$$ are given by the multiplication-by-$p$ map. Using that $H^{r+1}_\mr{syn}(X_0,E(r))[p^\infty]$ is finite, passing to the limit, we deduce that $$M^\wedge=\lim H^0_\et(X_0,\nu_n(E,r)) =M.$$

     If we pick a lift to $M$ of a basis of $M/p$ of cardinality $d$ (the dimension is finite because so is $H^0_\et(X_0,\nu_1(E,r))$), this defines a morphism $\Zp^{\oplus d}\to M$. 
     Since $M$ is complete and torsion free, this map is therefore an isomorphism, hence $M$ is free of finite rank.
     \end{proof}

     \subsection{Integral vanishing}
In this section, we use the theory of the de Rham--Witt complex to obtain further integral vanishings in the ordinary setting. We also give a different proof of Proposition \ref{Prop_IntegralBounds} using the sheaves $\nu(E,r).$

\begin{theo}\label{thm_Ordinary}
    Let $Y_0$ a smooth proper $k$-scheme which is ordinary in the sense of \cite[Def. IV.4.12]{IR83} and such that $H^i_\mr{cris}(Y_0/W)$ is torsion free for some $i\geq 0$. Then for any affine open $X_0\subseteq Y_0$, the image of the restriction morphism
    $$H^i_\cris(Y_0/W)\to H^i_\cris(X_0/W)$$ is generated as a $W$-module by $h^{i,0}(Y_0)+h^{i-1,1}(Y_0)$ elements, where $h^{i,j}(Y_0)\coloneqq \mr{dim}_k\left(H^j(Y_0,\Omega^i_{Y_0})\right).$
\end{theo}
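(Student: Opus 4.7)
The strategy is to decompose $H^i_\cris(Y_0/W)$ into slope pieces using ordinariness, and then apply the integral vanishing of Corollary \ref{cor_GeneralisedEV} slope-by-slope to the image in $H^i_\cris(X_0/W)$.

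First, under the hypotheses that $Y_0$ is ordinary and $H^i_\cris(Y_0/W)$ is torsion free, the Illusie--Raynaud structure theorem \cite{IR83} for the de Rham--Witt complex of an ordinary scheme supplies a canonical integral direct sum decomposition
\[
H^i_\cris(Y_0/W) \;=\; \bigoplus_{j=0}^{i} M_j, \qquad M_j \coloneqq U_j \otimes_{\Zp} W,
\]
where $U_j \coloneqq H^{i-j}_\et(Y_0, W\Omega^j_{Y_0, \mathrm{log}})$ is a finite free $\Zp$-module of rank $h^{j,i-j}(Y_0)$. By the very definition of the logarithmic sheaves $W\Omega^j_{\mathrm{log}}$ (cf.\ Proposition \ref{BMS8.7preUR}), the crystalline Frobenius $F$ acts on $U_j$ as multiplication by $p^j$. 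Consequently, each $M_j$ is free of rank $h^{j,i-j}(Y_0)$ over $W$ and is generated as a $W$-module by the $\Zp$-submodule $U_j$.

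Next, let $I$ denote the image of the restriction map $H^i_\cris(Y_0/W) \to H^i_\cris(X_0/W)$ and set $I_j \coloneqq \mathrm{Im}(M_j \to H^i_\cris(X_0/W))$. Since the restriction commutes with Frobenius, the image of $U_j$ lies in the $\Zp$-submodule $H^i_\cris(X_0/W)^{F=p^j}$. Applying Corollary \ref{cor_GeneralisedEV} with $u = 1$, $t = j$, and $E = \Zp$ (the trivial unit-root $F$-crystal), this group vanishes whenever $j < i-1$. Since $I_j$ is the $W$-span of the image of $U_j$, this forces $I_j = 0$ for all $j \leq i-2$. For $j > i$ the Hodge number $h^{j, i-j}(Y_0)$ vanishes, whence $M_j = 0$ and $I_j = 0$ as well. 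Thus $I = I_{i-1} + I_i$, and each of these two summands is $W$-generated by the image of the corresponding $U_j$, hence by at most $h^{j, i-j}(Y_0)$ elements. Summing yields the bound $h^{i, 0}(Y_0) + h^{i-1, 1}(Y_0)$.

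The main obstacle lies in the first step. Rationally, the slope decomposition is immediate from Dieudonn\'e--Manin together with the identity Hodge $=$ Newton that characterises ordinariness. The integral refinement, however, requires the torsion-freeness hypothesis on $H^i_\cris(Y_0/W)$ in an essential way: it is what guarantees that each $U_j$ is torsion free of the expected rank and that the slope filtration splits integrally over $W$. This splitting is a distinctive feature of ordinary varieties and needs to be extracted carefully from \cite{IR83}; the rest of the argument is then formal.
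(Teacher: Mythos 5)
Your overall skeleton — the Illusie--Raynaud decomposition of $H^i_\cris(Y_0/W)$ into slope pieces for ordinary $Y_0$, followed by a slope-by-slope vanishing of the restriction for slopes $\leq i-2$ — is exactly the paper's strategy. But the step where you kill the image of $U_j$ for $j\leq i-2$ has a genuine gap.

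You invoke Corollary \ref{cor_GeneralisedEV} to claim $H^i_\cris(X_0/W)^{F=p^j}=0$. The corollary does not say this: it asserts the vanishing of $\Ker\bigl(F^u/p^t-1\bigr)$, where $F^u/p^t$ is the \emph{divided} Frobenius defined via the naive truncation $\mr{dR}^{\geq i-1}$. Integrally these two kernels differ by torsion: writing $\varphi_j\coloneqq F/p^j$, one has $F-p^j=p^j(\varphi_j-1)$ on $H^i_\cris(X_0/W)$, and for $j\leq i-2$ the operator $\varphi_j-1$ is invertible (this is Step (ii) of the proof of Proposition \ref{Prop_IntegralBounds}), so
$$\Ker\bigl(F-p^j\bigr)\;\cong\;H^i_\cris(X_0/W)[p^j],$$
which is nonzero for a general affine $X_0$. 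So the group you need to vanish does not vanish, and an element $v$ with $F(v)=p^jv$ on $Y_0$ only gives $(\varphi_j-1)(\bar v)$ killed by $p^j$ on $X_0$, not $\bar v=0$. Since the entire content of the theorem is an integral statement, this torsion discrepancy cannot be waved away.

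The paper closes this gap by routing through syntomic cohomology. Using the torsion-freeness of $H^i_\cris(Y_0/W)$ (equivalently of the summand $H^i_\proet(Y_0,W\Omega^{\geq j}_{Y_0})$) one upgrades $F(v)=p^jv$ to the exact identity $\varphi_j(v)=\iota(v)$ in $H^i_\proet(Y_0,\calN^{\geq j}W\Omega^\bullet_{Y_0})$, so that $v$ lifts to a class in $H^i_\syn(Y_0,\Zp(j))$. Functoriality of syntomic cohomology and the vanishing $H^i_\syn(X_0,\Zp(j))=0$ for $j\leq i-2$ (Proposition \ref{Prop_IntegralBounds}(1)) then kill the restriction of $v$ on the nose. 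This lifting step is where the torsion-freeness hypothesis is genuinely used on the $X_0$-side of the argument, and it is the ingredient your proposal is missing. (A minor secondary point: you also need the divided Frobenius to be defined compatibly on $Y_0$ and $X_0$, which is again what the Nygaard-filtered de Rham--Witt complex provides; the ad hoc truncation construction of \S\ref{Sec_SlopeBounds} is only set up in the affine case.)
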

\begin{proof} We may assume $k$ algebraically closed. By \cite[Thm. IV.4.13]{IR83} and the assumptions, we have the decomposition \begin{equation}\label{eq_DecompositionDRW}
    H^i_\mr{cris}(Y_0/W)=\bigoplus_{r=0}^iH^{i-r}_\proet(Y_0,W\Omega^{r}_{Y_0})
\end{equation} where each $H^{i-r}_\proet(Y_0,W\Omega^{r}_{Y_0})$ is torsion free and generated as a $W$-module by $h^{r,i-r}(Y_0)$ elements $v\in H^{i-r}_\proet(Y_0,W\Omega^{r}_{Y_0})$ such that $F(v)=p^rv.$ In particular, $$H^i_\proet(Y_0,W\Omega_{Y_0}^{\geq r})\to H^i_\mr{cris}(Y_0/W)$$ is a split injection compatible with the Frobenius structure. The complex $W\Omega_{Y_0}^{\geq r}$ is further endowed with a divided Frobenius $$\varphi_r\colon W\Omega_{Y_0}^{\geq r}\to W\Omega_{Y_0}^{\geq r}$$ defined by $p^{j-r}\mr{F}$ in degree $j\geq r$. Thanks to the compatibility with the crystalline Frobenius, $\varphi_r$ is an isogeny of $H^i_\proet(Y_0,W\Omega^{\geq r}_{Y_0}).$ In addition, the tautological morphism of complexes $$W\Omega_{Y_0}^{\geq r}\to W\Omega_{Y_0}^\bullet$$ factors through $\calN^{\geq r}W\Omega_{Y_0}^\bullet$ (notation as in \S \ref{Sec_dRWUR}) and $\varphi_r$ is compatible with $$\varphi_r\colon \calN^{\geq r}W\Omega_{Y_0}^\bullet\to W\Omega_{Y_0}^\bullet.$$ We get the following commutative diagram 

$$\xymatrix{ {H^i_\proet(Y_0,W\Omega_{Y_0}^{\geq r})}  \ar[r] \ar@{^{(}->}[d]^-{\varphi_r} & {H^i_\proet(Y_0,\calN^{\geq r}W\Omega_{Y_0}^\bullet)} \ar[d]^-{\varphi_r} \\ {H^i_\proet(Y_0,W\Omega_{Y_0}^{\geq r})} \ar@{^{(}->}[r] & {H^i_\mr{cris}(Y_0/W).} }$$

In particular, we have the inclusion $$H^i_\proet(Y_0,W\Omega_{Y_0}^{\geq r})^{\varphi_r-1}\subseteq \mr{Ker}\left( H^i_\proet(Y_0,\calN^{\geq r}W\Omega_{Y_0})\xrightarrow{\varphi_r-\iota} H^i_\mr{cris}(Y_0/W)\right).$$

Since $H^i_\proet(Y_0,W\Omega_{Y_0}^{\geq r})$ is torsion free, we have the equality $$H^i_\proet(Y_0,W\Omega_{Y_0}^{\geq r})^{\varphi_r-1}=H^i_\proet(Y_0,W\Omega_{Y_0}^{\geq r})^{F-p^r}.$$
We deduce that $$H^{i-r}_\proet(Y_0,W\Omega^{r}_{Y_0})=H^{i-r}_\proet(Y_0,W\Omega^{r}_{Y_0})^{F-p^r}\otimes_{\Zp}W$$ is generated as a $W$-module by elements in the image of the composition $$H^i_\syn(Y_0,\Zp(r))\to H^i_\proet(Y_0,\calN^{\geq r}W\Omega_{Y_0}^\bullet)\xrightarrow{\iota} H^i_\mr{cris}(Y_0/W).$$ On the other hand, thanks to Proposition \ref{Prop_IntegralBounds}, we have that $H^i_\syn(X_0,\Zp(r))=0$ for  $r\leq i-2.$ Thanks to the commutative diagram

$$\xymatrix{ {H^i_\syn(Y_0,\Zp(r))}  \ar[r] \ar[d] & {H^i_\cris(Y_0/W)} \ar[d] \\ {H^i_\syn(X_0,\Zp(r))} \ar[r] & {H^i_\mr{cris}(X_0/W).} }$$

this implies that the composition $$H^{i-r}_\proet(Y_0,W\Omega^{r}_{Y_0})\to H^i_\cris(Y_0/W)\to H^i_\cris(X_0/W)$$ vanishes for $r\leq i-2$. We conclude by virtue of \eqref{eq_DecompositionDRW}.
\end{proof}
The following is an integral refinement of \cite[\S 3.3]{DK17}.

\begin{prop}\label{BMS8.5-H2}
If $X_0$ is affine, for $n\geq 2$  we have
\begin{equation*}
    H^n_{\mr{pro}\et}(X_0,\nu(E,r))=0.
\end{equation*}
Equivalently, for $i\not \in \{r,r+1\}$ we have
\begin{equation*}\label{vanish-syn}
   H^i_\mr{syn}(X_0,E(r))= 0. 
\end{equation*}
\end{prop}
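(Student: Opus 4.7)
The plan is to deduce the proposition directly from the identification \eqref{BMS8.21} together with Proposition \ref{Prop_IntegralBounds}. Concretely, the shift
\[
R\Gamma_\mr{syn}(X_0,E(r)) \simeq R\Gamma_{\rm{pro}\et}(X_0,\nu(E,r))[-r]
\]
gives a canonical isomorphism $H^i_\mr{syn}(X_0,E(r)) \simeq H^{i-r}_{\rm{pro}\et}(X_0,\nu(E,r))$ for every $i \in \Z$.

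First I would observe that, by Proposition \ref{BMS8.7preUR}, the object $\nu(E,r)$ is a pro-\'etale sheaf concentrated in cohomological degree $0$, so its pro-\'etale cohomology vanishes automatically in negative degrees. Via the shift above, this translates into the automatic vanishing $H^i_\mr{syn}(X_0,E(r)) = 0$ for $i < r$. Thus the two formulations of the proposition are equivalent, and both reduce to the single claim that $H^i_\mr{syn}(X_0,E(r)) = 0$ for $i \geq r+2$.

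To establish this remaining vanishing I would apply Proposition \ref{Prop_IntegralBounds} with the coprime pair $(r,s) = (r,1)$, so that $\gamma = r$ and $[\gamma,\gamma+1] = [r,r+1]$. Its part (1) yields $H^q_\mr{syn}(X_0,E(r)) = 0$ for every $q \notin [r,r+1]$, and in particular for all $q \geq r+2$, as required.

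No serious obstacle remains, since the real content is already built into Proposition \ref{Prop_IntegralBounds}: its Step (ii) showed that $\varphi_{r}$ acts topologically nilpotently on $\mr{dR}^{j}_{E}$ for $j \geq r+1$, so $\varphi_r - \iota$ is invertible in those degrees. The present proposition is essentially the pro-\'etale reformulation of that vanishing via \'Etesse's twisted logarithmic de Rham--Witt sheaves $\nu(E,r)$, obtained by feeding the exact sequence of Proposition \ref{BMS8.7preUR} together with Proposition \ref{Prop_ComparisondRWUR} into the isomorphism \eqref{BMS8.21}.
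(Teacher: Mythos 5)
Your argument is correct, but it runs in the opposite direction from the paper's. You deduce the pro-\'etale statement from the syntomic one: you invoke the identification \eqref{BMS8.21} and then apply Proposition \ref{Prop_IntegralBounds} with $s=1$ (note that part (1) of that proposition already covers all $q\notin[r,r+1]$, so your separate treatment of the range $i<r$ via the observation that $\nu(E,r)$ is a sheaf in degree $0$ is not strictly needed, though it is a clean way to see that the two formulations are equivalent). The paper instead proves the pro-\'etale vanishing first and directly: since $X_0$ is affine and each $E_n\otimes W_n\Omega^r_{X_0}$ is quasi-coherent, its higher \'etale cohomology vanishes, a Mittag--Leffler argument kills the $R^1\lim$, hence $H^i_{\mr{pro}\et}(X_0,E\otimes W\Omega^r_{X_0})=0$ for $i\geq 1$; the short exact sequence of Proposition \ref{BMS8.7preUR} then gives the vanishing of $H^n_{\mr{pro}\et}(X_0,\nu(E,r))$ for $n\geq 2$, and only afterwards is \eqref{BMS8.21} used to translate this into the syntomic statement. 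The paper's route is lift-free for the main vanishing and rests only on quasi-coherence over an affine, whereas yours imports the $F$-gauge machinery (Mazur's theorem and the topological nilpotence of the divided Frobenius on a chosen formal lift) that underlies Proposition \ref{Prop_IntegralBounds}. Since neither Proposition \ref{Prop_IntegralBounds} nor \eqref{BMS8.21} depends on the present proposition, there is no circularity, and your shortcut is legitimate; it simply concentrates all the real work in the earlier slope-bound proposition rather than exhibiting the de Rham--Witt-theoretic mechanism the section is meant to illustrate.
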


\begin{proof}

Since  $X_0$ is affine and $E_n\otimes W_n\Omega^r_{X_0}$ is quasi-coherent, then 
$H^i_{\et}(X_0,E_n\otimes W_n\Omega^r_{X_0})=0$ for  $i\geq 1$. 
Since $$H^0_{\et}(X_0,E_n\otimes W_n\Omega^r_{X_0})\to H^0_{\et}(X_0, E_{n-1}\otimes W_{n-1}\Omega^r_{X_0})$$
are surjective, then by Mittag--Leffler we get $$R^1\lim_n H^0_{\et}(X_0,E_n\otimes W_n\Omega^r_{X_0})=0.$$
Thanks to the fact that $$R \Gamma_{\mr{pro}\et}(X_0,E\otimes W\Omega^r_{X_0} )=  R\lim_nR\Gamma_{\et}(X_0,E_n\otimes W_n\Omega^r_{X_0}),$$ we deduce that $H^i_{\mr{pro}\et}(X_0,E\otimes W\Omega^r_{X_0})=0$ for  $i\geq 1$. 
Using Proposition \ref{BMS8.7preUR}, this yields the desired vanishing.
It follows from \eqref{BMS8.21} that for $i\not \in \{r,r+1\}$ we have
\begin{equation*}
   H^i_\mr{syn}(X_0,E(r))= 0. 
\end{equation*}
\end{proof}

\subsection{Fractional logarithmic de Rham--Witt sheaves}\label{Sec_FractionalLogarithmicdRW}In this section we introduce fractional versions of the logarithmic de Rham--Witt sheaves of Illusie and Milne. These compute the slopes of crystalline cohomology beyond the classical integral setting (Theorem \ref{thm_FractionalSlopesCrystalline}).

\spa

Let $s$ be a positive integer and recall that $\Lambda^s\coloneqq \Zp[\pi]/(\pi^s-p)$. As in \S \ref{Sec_dRWUnitRoot}, we define a fractional Nygaard filtration on the ramified de Rham--Witt complex $$W^s\Omega^\bullet_{X_0}\coloneqq W \Omega^\bullet_{X_0} \otimes_{\Zp} \Lambda^s =\bigoplus_{i=0}^{s-1}\pi^iW\Omega^\bullet_{X_0}.$$ If $E$ is a unit-root $F$-crystal, there is also a twisted version $$E\otimes W^s\Omega^\bullet_{X_0}\coloneqq \left( E\otimes W \Omega^\bullet_{X_0}\right) \otimes_{\Zp} \Lambda^s.$$
We denote by $F\colon  E\otimes W^s\Omega^\bullet_{X_0} \to  E\otimes W^s\Omega^\bullet_{X_0}$ the Frobenius action induced by $F\colon E\otimes W\Omega^\bullet_{X_0}\to E\otimes W\Omega^\bullet_{X_0} $ and the identity on $ \Lambda^s$. Similarly, we write $\rm F$ for the action on $E\otimes W^s\Omega^\bullet_{X_0}$ induced by $\mr{F}$ acting on $ E\otimes W\Omega^\bullet_{X_0}$. 
Even in this case, we have  $F = p^j {\rm F}$ on $E\otimes W^s\Omega^j_{X_0}$. We also write $$E_n\otimes W^s_n\Omega^\bullet_{X_0}\coloneqq \left(E_n\otimes W_n \Omega^\bullet_{X_0}\right) \otimes_{\Zp} \Lambda^s$$

and $$R\colon E_{n+1}\otimes W^s_{n+1}\Omega^\bullet_{X_0}\to E_n\otimes W_n^s\Omega^\bullet_{X_0}$$ for the induced reduction map. A local section of $E\otimes W^s\Omega^\bullet_{X_0}$ is also denoted by $(v_0,\dots,v_{s-1})\coloneqq  \sum_{i=0}^{r-1} \pi^i v_i$ where $v_0,\dots,v_{s-1}\in E\otimes  W\Omega^\bullet_{X_0}$.

\begin{nota}\label{nota_EucledeanDivision}
    For $r\in \Z$, we write $r=a+bs$ with $0\leq a <s$ and $b\in \Z$. We also denote by $\gamma$ the fraction $r/s\in \Q$.
\end{nota}

 We define \begin{equation}
    \label{NrWs}\calN^{\geq r}\left(E\otimes W^s\Omega_{X_0}^\bullet\right)\coloneqq \left(\bigoplus_{i=0}^{a-1}\pi^i\calN^{\geq b+1}\left(E\otimes W\Omega^\bullet_{X_0}\right)\right)\oplus \left(\bigoplus_{i=a}^{s-1}\pi^i\calN^{\geq b}\left(E\otimes W\Omega^\bullet_{X_0}\right)\right).
\end{equation}
As recalled in \S \ref{Sec_dRWUnitRoot}, there are well-defined divided Frobenius morphism 
$$\varphi_i \colon \calN^{\ge i}\left(E\otimes W\Omega^\bullet_{X_0}\right)\to E\otimes W\Omega^\bullet_{X_0}.$$ We define $$\varphi_{r,s}\colon \calN^{\geq r}\left(E\otimes W^s\Omega_{X_0}^\bullet\right)\to E\otimes W^s\Omega_{X_0}^\bullet $$
 by setting
$$\varphi_{r,s}(v_0,\dots,v_{s-1})\coloneqq(\varphi_{b}(v_a),\dots,\varphi_b(v_{s-1}),\varphi_{b+1}(v_0),\dots,\varphi_{b+1}(v_{a-1})).$$ 
We have the compatibilities $$   \pi^a \varphi_{r,s}= \varphi_{bs,s}= \varphi_{b} \otimes \id= F / p^b$$ and we have
$\varphi_{r,s}= F/\pi^{r}$.
The complex $\calN^{\geq r}\left(E\otimes W^s\Omega_{X_0}^\bullet\right)$ is the largest subcomplex of 
$E\otimes W^s\Omega_{X_0}^\bullet$ on which $F$ is divisible by $\pi^{r}$. Thanks to 
\eqref{eq_FractionalCompatibility} 
and Proposition \ref{Prop_ComparisondRWUR} we have a quasi-isomorphism 
\begin{equation}\label{eq_ComparisonFractionaldRW}
    \mr{dR}_{E,s}^{\bullet,r}\xrightarrow{\sim}  R \Gamma_{\rm{pro}\et}\left( X_0, \calN^{\geq r}\left(E \otimes W^s\Omega^\bullet_{X_0} \right)\right) ,
\end{equation}
which commutes with the Frobenius action.

\begin{prop}\label{prop_fundamentalFractionaldRW}
    The morphism $$\varphi_{r,s}-\iota \colon \calN^{\geq r}\left(E\otimes W^s\Omega^j_{X_0}\right)\to E\otimes  W^s\Omega^j_{X_0}$$ is an isomorphism for $j\neq b$.
\end{prop}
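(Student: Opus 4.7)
The proof naturally splits into two cases, $j \ge b+1$ and $j \le b-1$, since $j = b$ is excluded by hypothesis.

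For $j \ge b+1$, both $\calN^{\ge b}(E \otimes W\Omega^j)$ and $\calN^{\ge b+1}(E \otimes W\Omega^j)$ coincide with $E \otimes W\Omega^j$, so by \eqref{NrWs} the source $\calN^{\ge r}(E \otimes W^s\Omega^j)$ equals the target $E \otimes W^s\Omega^j$ and $\iota$ is the identity. Using the compatibility $\pi^a \varphi_{r,s} = \varphi_b \otimes \id = F/p^b$ stated just before the proposition, together with $\varphi_b = p\,\varphi_{b+1}$ on the overlap where both are defined, one obtains $\varphi_{r,s} = \pi^{s-a}\,\varphi_{b+1}$ with $s - a \ge 1$. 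Since $E \otimes W^s\Omega^j$ is derived $p$-complete and $p = \pi^s$, the operator $\pi^{s-a}\varphi_{b+1}$ is topologically nilpotent, hence $\id - \varphi_{r,s}$ is invertible with inverse $\sum_{n \ge 0} \varphi_{r,s}^n$.

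For $j \le b-1$, the map $\iota$ is a strict inclusion and the argument must use the two integer slopes around $\gamma$. By Proposition \ref{BMS8.7preUR} applied at the slopes $b$ and $b+1$, both distinct from $j$, the maps
\[
\varphi_b - \iota \colon \calN^{\ge b}(E \otimes W\Omega^j) \riso E \otimes W\Omega^j, \qquad \varphi_{b+1} - \iota \colon \calN^{\ge b+1}(E \otimes W\Omega^j) \riso E \otimes W\Omega^j
\]
are isomorphisms. The decomposition \eqref{NrWs} exhibits $\varphi_{r,s} - \iota$ as an $s \times s$ block matrix whose diagonal is $-\iota$ and whose off-diagonal part implements the cyclic shift $k \mapsto (k-a) \bmod s$ (a single cycle of order $s$ since $\gcd(a,s) = 1$), weighted by $\varphi_b$ or $\varphi_{b+1}$ depending on position. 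I would invert this matrix via the telescoping identity
\[
(\iota - \varphi_{r,s}) \circ \Bigl( \sum_{n=0}^{s-1} \varphi_{r,s}^n \Bigr) = \iota - \varphi_{r,s}^s,
\]
reducing to the $s$-fold iterate $\varphi_{r,s}^s$, which preserves each block and, after collapsing the $s-a$ factors of $\varphi_b$ and $a$ factors of $\varphi_{b+1}$ using $\varphi_b = p\,\varphi_{b+1}$, equals a $\pi$-divisible operator; topological nilpotence then yields the inverse as in the first case.

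The main obstacle is making the telescoping rigorous at the level of the fractional Nygaard filtration: one must check that the intermediate iterates $\varphi_{r,s}^n$ land back in $\calN^{\ge r}(E \otimes W^s\Omega^j)$ so that the next application of $\varphi_{r,s}$ is defined, and that the rewriting $\varphi_b = p\,\varphi_{b+1}$ is applicable along each cycle. This compatibility follows from the identity $\pi^a \varphi_{r,s} = \varphi_b \otimes \id$ together with the behavior of the $V$-filtration under the divided Frobenius, but requires careful bookkeeping of the filtration levels at each step.
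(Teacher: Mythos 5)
Your first case ($j\ge b+1$) is correct and coincides with the paper's argument: there $\calN^{\geq r}(E\otimes W^s\Omega^j_{X_0})=E\otimes W^s\Omega^j_{X_0}$, $\varphi_{r,s}=\pi^{js-r}\mathrm{F}$ with $js-r\ge s-a\ge 1$, and (after reducing to $X_0$ affine so the modules are honestly $p$-adically complete) the geometric series inverts $1-\varphi_{r,s}$. The case $j\le b-1$, however, does not work as proposed, and the "main obstacle" you flag at the end is fatal rather than a matter of bookkeeping. In degree $j<b$ one has $\calN^{\geq r}(E\otimes W^s\Omega^j_{X_0})=\pi^{r-s(j+1)}V(E\otimes W^s\Omega^j_{X_0})$, and the identity $\mathrm{F}V=p=\pi^s$ gives $\varphi_{r,s}\bigl(\pi^{r-s(j+1)}Vx\bigr)=\pi^{js-r}\,\pi^{r-s(j+1)}\mathrm{F}Vx=x$. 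Hence $\varphi_{r,s}$ restricted to $\calN^{\geq r}$ is surjective onto all of $E\otimes W^s\Omega^j_{X_0}$; its image is not contained in $\calN^{\geq r}$, so $\varphi_{r,s}^2$ is undefined and the telescoping identity cannot even be written down. Worse, even if one extends, $\varphi_{r,s}^s=F^s/p^r=\mathrm{F}^s/p^{\,r-js}$ with $r-js=a+s(b-j)\ge s>0$: in low degrees this is $\mathrm{F}^s$ divided by a positive power of $p$, the exact opposite of a $\pi$-divisible (topologically nilpotent) operator, and likewise $\varphi_b,\varphi_{b+1}$ are not integral in degree $j<b$. The roles of the two terms are reversed relative to the high-degree case: here $\varphi_{r,s}$ is the bijective part and $\iota$ is the small perturbation, so iterating $\varphi_{r,s}$ goes in the wrong direction. (Also, your "single cycle since $\gcd(a,s)=1$" uses a coprimality hypothesis not present in the statement, though nothing hinges on it.)

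The correct mechanism, which is the paper's, is to precompose with the isomorphism $\pi^{r-s(j+1)}V\colon E\otimes W^s\Omega^j_{X_0}\riso\calN^{\geq r}(E\otimes W^s\Omega^j_{X_0})$ ($V$ is injective with image computing the filtration by definition). By the computation above, $(\varphi_{r,s}-\iota)\circ\pi^{r-s(j+1)}V=1-\pi^{r-s(j+1)}V$, and \emph{this} operator is $1$ minus a topologically nilpotent one ($p$-adic completeness suffices when $r-s(j+1)>0$; $V$-adic completeness handles the boundary case $r=s(j+1)$), hence invertible; therefore so is $\varphi_{r,s}-\iota$. Your appeal to Proposition \ref{BMS8.7preUR} for $\varphi_b-\iota$ and $\varphi_{b+1}-\iota$ in degree $j$ is correct but is never actually used to close the argument; if you want a block-matrix proof you must factor out the invertible Frobenius part and run the geometric series on the $V$-side, which is just the displayed identity in disguise.
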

\begin{proof}We may assume $X_0$ affine. 
If $b<j$ then $\varphi_{r,s}=\pi^{js}\mr{F}/\pi^r=\pi^{js-r}\mr{F}$ is topologically nilpotent. Hence $$\varphi_{r,s}-\iota\colon \calN^{\geq r}\left(E\otimes W^s\Omega^j_{X_0}\right)=E\otimes W^s\Omega^j_{X_0}\to E\otimes W^s\Omega^j_{X_0}$$ is an isomorphism.

\spa

If $b>j$, then we have a commutative diagram
\[\xymatrix@C=2cm{
E\otimes W^s\Omega_{X_0}^j\ar@/_1cm/[rr]_{1-\pi^{r-s(j+1)}V}\ar[r]^{\pi^{r-s(j+1)}V}_{\simeq}&\calN^{\ge r}\left(E\otimes W^s\Omega^j_{X_0}\right)\ar[r]^{\varphi_{r,s}-\iota}&E\otimes W^s\Omega^j_{X_0},
}\]
in which the top left horizontal map is an isomorphism by definition of the filtration $\cN^{\geq r}$. 
Its curved arrow is also an isomorphism because $W^s\Omega^j_{X_0}$ is $p$-adically (and $V$-adically complete for the case where $a=0$ and $b= j-1$).
Hence, so is $\varphi_{r,s}-\iota$.
\end{proof}

\begin{defi}\label{def_dRWFractionalTate}
  If $r=a+bs$ as in Notation \ref{nota_EucledeanDivision} with $\mr{gcd}(r,s)=1$, we define \begin{gather*}
        \nu(E,\gamma)\coloneqq \Cocone \left (\calN^{\geq r}\left(E\otimes W^s\Omega_{X_0}^\bullet\right)\xrightarrow{\varphi_{r,s}-\iota}E\otimes W^s\Omega_{X_0}^\bullet \right ) [b]\\
        \underset{\ref{prop_fundamentalFractionaldRW}}{\riso}  \Cocone \left (\calN^{\geq r}\left(E\otimes W^s\Omega_{X_0}^b\right)\xrightarrow{\varphi_{r,s}-\iota}E\otimes W^s\Omega_{X_0}^b \right )
    \end{gather*} as a complex of pro-étale sheaves. We endow $\nu(E,\gamma)$ with the composition $$\nu(E,\gamma)\to \calN^{\geq r}\left(E\otimes W^s\Omega_{X_0}^\bullet\right)[b]\to E\otimes W\Omega_{X_0}^\bullet[b],$$
    where the last morphism is obtained by setting $\pi^i=0$ for $i>0.$ We also write $\nu(\gamma)$ for $\nu(\Zp,\gamma).$   
\end{defi}


\begin{prop}\label{prop_FractionalLogarithmic}If $E$ is a unit-root $F$-crystal over $X_0$ and $r=a+bs$ with $0\leq a < s$, we have a quasi-isomorphism  $$R\Gamma_\syn(X_0,E(\gamma))\riso \nu(E,\gamma)[-b],$$ where the left hand side is defined in \S \ref{synE(r,s)}.

\end{prop}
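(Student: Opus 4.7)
The plan is to identify both sides as cocones of the same morphism $\varphi_{r,s}-\iota$, computed at two compatible levels---as a map of complexes of abelian groups on the left, and as a map of complexes of pro-\'etale sheaves on the right. The two computations will be tied together by the pro-\'etale comparison quasi-isomorphism \eqref{eq_ComparisonFractionaldRW} together with its unfiltered analogue, so that the passage to the cocone transforms one into the other.

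First I would pair the $\varphi_{r,s}$-equivariant quasi-isomorphism
\[
\mr{dR}_{E,s}^{\bullet,r} \xrightarrow{\sim} R\Gamma_{\proet}\bigl(X_0, \calN^{\geq r}(E\otimes W^s\Omega^\bullet_{X_0})\bigr)
\]
coming from \eqref{eq_ComparisonFractionaldRW} with its unfiltered analogue, obtained by specialising the same formula to any $r'\leq 0$, so that the Nygaard step degenerates to the whole complex both on the de Rham side and on the de Rham--Witt side. This yields the $F$-equivariant quasi-isomorphism
\[
\mr{dR}_{E,s}^{\bullet} \xrightarrow{\sim} R\Gamma_{\proet}\bigl(X_0, E\otimes W^s\Omega^\bullet_{X_0}\bigr).
\]
Compatibility with $\iota$ is immediate from the identification of $\calN^{\geq r}$ as a subcomplex; compatibility with the divided Frobenius $\varphi_{r,s}$ follows from the construction on the de Rham--Witt side (see the discussion preceding Proposition \ref{prop_fundamentalFractionaldRW} and Proposition \ref{Prop_ComparisondRWUR}), where $\varphi_{r,s}=F/\pi^{r}$ is engineered precisely to match the de Rham divided Frobenius under the comparison.

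Finally I would pass to cocones. Since $R\Gamma_{\proet}$ commutes with finite homotopy limits, the previous step yields
\[
R\Gamma_\syn(X_0,E(\gamma)) = \Cocone\bigl(\mr{dR}_{E,s}^{\bullet,r} \xrightarrow{\varphi_{r,s}-\iota} \mr{dR}_{E,s}^{\bullet}\bigr) \xrightarrow{\sim} R\Gamma_{\proet}\bigl(X_0, \Cocone(\calN^{\geq r}(E\otimes W^s\Omega^\bullet_{X_0}) \xrightarrow{\varphi_{r,s}-\iota} E\otimes W^s\Omega^\bullet_{X_0})\bigr),
\]
and by Definition \ref{def_dRWFractionalTate} the inner cocone is $\nu(E,\gamma)[-b]$. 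This produces the claimed quasi-isomorphism $R\Gamma_\syn(X_0,E(\gamma)) \riso R\Gamma_\proet(X_0,\nu(E,\gamma))[-b]$, matching the integer analogue \eqref{BMS8.21}. The only delicate point is keeping track of the shift $[b]$ that is baked into Definition \ref{def_dRWFractionalTate}, but Proposition \ref{prop_fundamentalFractionaldRW} ensures that the inner cocone is concentrated in a single cohomological degree, so the shift normalisation automatically agrees with the reduction of $\nu(E,\gamma)$ to a one-term complex.
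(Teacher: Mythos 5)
Your proposal is correct and is essentially the paper's argument: the paper proves this proposition by simply citing the comparison quasi-isomorphism \eqref{eq_ComparisonFractionaldRW} (together with its unfiltered case, which is \eqref{eq_ComparisonFractionaldRW} for $r\leq 0$), and you have merely written out the same passage to cocones explicitly. The extra care you take with Frobenius-equivariance and with the shift $[b]$ is consistent with the definitions and does not change the route.
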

\begin{proof}
    This follows from \eqref{eq_ComparisonFractionaldRW}.
\end{proof}

For the next result we need a torsion complement of Proposition \ref{Prop_IntegralBounds}.

\begin{lemm}\label{lem_FractionalInjectivitydRW}For  $a,b,n,s\in \bbN$ with $a>0$, we have the inclusion $$\Ker \left(E_{n+1}\otimes W_{n+1}^s\Omega_{X_0}^b \xrightarrow{\varphi_{bs,s} -\pi^a R} E_n\otimes  W_n^s\Omega_{X_0}^b\right)\subseteq \Ker(R).$$\end{lemm}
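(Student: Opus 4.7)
The plan is to argue by induction on $n$, with the base case $n=0$ being vacuous since $W_0^s\Omega^b=0$. For the inductive step, let $\omega\in E_{n+1}\otimes W_{n+1}^s\Omega^b_{X_0}$ satisfy $\varphi_{bs,s}\omega=\pi^a R\omega$ in $E_n\otimes W_n^s\Omega^b_{X_0}$. Since $\varphi_{bs,s}=F/p^b=\mr{F}$ on degree $b$, the equation reads $\mr{F}\omega=\pi^a R\omega$. First I would apply $R\colon W_n^s\Omega^b\to W_{n-1}^s\Omega^b$: since $R$ commutes with $\mr{F}$ and is $\Lambda^s$-linear, we obtain $\mr{F}(R\omega)=\pi^a R^2\omega$ in $W_{n-1}^s\Omega^b$. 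Applying the inductive hypothesis to $R\omega$ then gives $R^2\omega=0$.

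Thus $R\omega$ lies in $\ker R=V^{n-1}W_1^s\Omega^b+dV^{n-1}W_1^s\Omega^{b-1}$ by Illusie's structure theorem, so we write $R\omega=V^{n-1}x+dV^{n-1}y$. Applying $\mr{F}$ to the original equation and using the identities $\mr{F}V=V\mr{F}=p$, $\mr{F}dV=d$, and $pV^j(-)=V^j(p\cdot-)=0$ on $W_1$, one computes $\mr{F}^2\omega=\pi^a\,dV^{n-2}y$. On the other hand, iterating $\mr{F}\omega=\pi^a R\omega$ gives $\mr{F}^2\omega=\pi^{2a}R^2\omega=0$, whence $\pi^a\,dV^{n-2}y=0$ in $W_{n-1}^s\Omega^b$. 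The plan is then to iterate this procedure for $\mr{F}^k$ with $2\le k\le n$, producing the vanishings $\pi^a\,dV^{n-k}y=0$ in $W_{n+1-k}^s\Omega^b$. Decomposing $\omega=\sum_i\pi^i\omega_i$ and $y=\sum_j\pi^jy_j$ in $\pi$-components and combining these relations with the component-wise equations $\mr{F}\omega_j=p^q R\omega_{j-r}$ and $\mr{F}\omega_j=p^{q+1}R\omega_{j+s-r}$ (where $a=qs+r$ is the Euclidean decomposition) would force both $V^{n-1}x$ and $dV^{n-1}y$ to vanish, yielding $R\omega=0$.

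The main obstacle is handling the $dV^{n-1}y$-term of $R\omega$: in contrast to $V^{n-1}x$, which $\mr{F}$ annihilates thanks to $pV^j(-)=0$ on $W_1$, the Frobenius sends $dV^{n-1}y$ to $dV^{n-2}y$, which need not vanish. The combination of the factor $\pi^a$ (with $a>0$) and the nilpotency $\pi^{s(n+1)}=p^{n+1}=0$ in $W_{n+1}^s$ is what enables the iteration of $\mr{F}$ to eventually propagate the vanishing across all Witt and $\pi$-levels.
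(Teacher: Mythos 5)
Your opening move — inducting on $n$, applying $R$ to the equation and invoking the inductive hypothesis to get $R^2\omega=0$ — is exactly how the paper's proof begins. But the route you take from there has a genuine gap. Writing $R\omega=V^{n-1}x+dV^{n-1}y$ and iterating $\mr{F}$ only produces relations \emph{one or more Witt levels down}: from $\mr{F}(R\omega)=\pi^aR^2\omega=0$ you learn that $dV^{n-2}y=0$ in $W_{n-1}^s\Omega^b$, and the higher iterates give even weaker statements in $W_{n+1-k}^s\Omega^b$. None of this can be lifted back to $W_n^s\Omega^b$: what you have shown is that $R\omega\in\Ker(\mr{F})\cap\Ker(R)$, and this intersection is far from zero (it contains all of $V^{n-1}W_1^s\Omega^b$, for instance, since $\mr{F}V^{n-1}x=V^{n-2}(px)=0$). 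So the decisive sentence — that the component-wise equations "would force both $V^{n-1}x$ and $dV^{n-1}y$ to vanish" — is precisely where the entire content of the lemma sits, and no argument is given for it. The nilpotency $\pi^{s(n+1)}=0$ that you point to is not the relevant mechanism. There is also a structural problem at the bottom of the induction: the case $n=0$ is vacuous but gives nothing, and your formulas (involving $V^{n-2}$) break down at $n=1$, which is where the real base case lives.

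The missing idea is to run the argument on the $\pi$-components $\omega=(v_0,\dots,v_{s-1})$ from the outset and to use \'Etesse's inclusion $\Ker(\mr{F})\subseteq\Ker(p)=\Ker(R)$ \emph{componentwise}, rather than Illusie's description of $\Ker(R)$. Multiplication by $\pi^a$ permutes the components and inserts a factor of $p$ into exactly the wrapped-around ones, so (for $0<a<s$, say) the equation $\varphi_{bs,s}\omega=\pi^aR\omega$ reads $\mr{F}v_j=pRv_{j+s-a}$ for $j<a$ and $\mr{F}v_j=Rv_{j-a}$ for $j\geq a$. Since each $v_i\in\Ker(R^2)=\Ker(pR)$ by the inductive hypothesis (or since $pW_1\Omega=0$ in the base case $n=1$), the first $a$ equations give $\mr{F}v_j=0$, hence $Rv_j=0$; feeding this into the remaining equations yields $\mr{F}v_{j+a}=Rv_j=0$, hence $Rv_{j+a}=0$, and the cascade covers all components. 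The hypothesis $a>0$ is exactly what guarantees the cascade starts. Your plan gestures at these componentwise equations in its last sentence, but without deploying $\Ker(\mr{F})\subseteq\Ker(R)$ on each $v_j$ the conclusion does not follow.
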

\begin{proof}We may assume $X_0$ affine. Let $(v_0,\dots,v_{s-1})$ be a section of $E_{n+1}\otimes W_{n+1}^s\Omega_{X_0}^b.$ We have  $\varphi_{bs} (v_0,\dots,v_{s-1}) 
    = (\mathrm{F} (v_0),\dots ,\mathrm{F} (v_{s-1}))$ and $\pi^a (v_0,\dots,v_{s-1}) =( p v_{s-a},\dots, p v_{s-1}, v_{0},\dots,v_{s-a-1}).$
Therefore, the section is killed by $\varphi_{bs,s} -\pi^a R$ if and only if \begin{equation}\label{eq_FRsystem}
    F(v_0)=pR(v_{s-a}), \dots, F(v_{a-1})=pR(v_{s-1}), F(v_a)=R(v_{0}),\dots, F(v_{s-1})=R(v_{s-a-1}).
\end{equation}
The proof is by induction on $n$. For $n=1$, we have that $$pR(v_{s-a})=\dots=pR(v_{s-1})=0,$$ which implies $F(v_0)=\dots =F(v_{a-1})=0$ (note that we assumed $a>0$). By \cite[(II.1.2.8.1)]{Ete88}, we know that $$\Ker(\mr{F})\subseteq \Ker(p)=\Ker(R),$$ which implies that $v_0,\dots,v_{a-1}\in \Ker(R).$ In turn, by \eqref{eq_FRsystem}, we deduce that $$F(v_a)=\dots=F(v_{2a-1})=0.$$ As above, we get that $v_a,\dots,v_{2a-1}\in \Ker(R).$ Repeating this process, this shows that $$\Ker(\varphi_{bs,s} -\pi^a R)\subseteq \Ker(R),$$ as we wanted.

\spa

Let us suppose now that we know the result for $n-1$. We then have $$\Ker \left(E_{n+1}\otimes W_{n+1}^s\Omega_{X_0}^b \xrightarrow{\varphi_{bs,s} -\pi^a R} E_n\otimes  W_n\Omega_{X_0}^b\right)\subseteq \Ker(R^{2}).$$ Again, let $(v_0,\dots,v_{s-1})\in \Ker(\varphi_{bs} -\pi^a R).$ We have that $$pR(v_{s-a})=\dots=pR(v_{s-1})=0$$ since $\Ker(pR)=\Ker(R^2).$ We can then argue as above and prove the final result.
\end{proof}

\begin{prop}\label{prop_InjectivityFractionaldRW} For $a,r,s$ with $a>0,$ the morphism
    $$\calN^{\geq r}\left( E\otimes  W^s\Omega_{X_0}^b\right)\xrightarrow{\varphi_{r,s}-1}E\otimes W^s\Omega_{X_0}^b.$$ is an injective morphism of pro-étale sheaves.
\end{prop}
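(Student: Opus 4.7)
The plan is to deduce this from the torsion-level statement of Lemma~\ref{lem_FractionalInjectivitydRW} by a passage to the limit. First, I would work with local sections on a pro-\'etale cover, using that $E\otimes W^s\Omega_{X_0}^b$ is the limit of the étale pro-sheaves $E_n\otimes W_n^s\Omega_{X_0}^b$ under the surjective reductions $R$. Hence a local section $x$ of $\calN^{\geq r}(E\otimes W^s\Omega_{X_0}^b)$ corresponds to a compatible system $(x_n)_n$, and to show that $\varphi_{r,s}-1$ is injective it suffices to prove that $(\varphi_{r,s}-1)(x)=0$ forces $x_n=0$ for every $n$.

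Next, I would convert the fixed-point equation $\varphi_{r,s}(x)=x$ into the form required by the lemma using the identity $\pi^a\varphi_{r,s}=\varphi_{bs,s}$ recorded just before Proposition~\ref{prop_fundamentalFractionaldRW}. Multiplying by $\pi^a$ gives $\varphi_{bs,s}(x)=\pi^a x$. Since $\varphi_{bs,s}=\mathrm{F}$ sends $W^s_{n+1}\Omega^b_{X_0}$ to $W^s_n\Omega^b_{X_0}$, evaluating this identity on truncations and comparing both sides at level $n$ yields
$$\varphi_{bs,s}(x_{n+1})=\pi^a x_n=\pi^a R(x_{n+1}),$$
that is, $(\varphi_{bs,s}-\pi^a R)(x_{n+1})=0$ in $E_n\otimes W_n^s\Omega_{X_0}^b$.

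Finally, since the assumption $a>0$ is exactly the hypothesis of Lemma~\ref{lem_FractionalInjectivitydRW}, that lemma gives $x_{n+1}\in\Ker(R)$, i.e.\ $x_n=0$. Iterating over all $n\geq 0$ forces $x=0$, establishing injectivity. The only delicate step is the interchange between the pro-étale limit structure and the torsion truncations; this is automatic from Mittag--Leffler applied to the surjective reductions $R$, so the substantive work is entirely encapsulated in Lemma~\ref{lem_FractionalInjectivitydRW} and there is no further obstacle.
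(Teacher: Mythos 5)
Your proposal is correct and follows essentially the same route as the paper: multiply by $\pi^a$ to convert $\varphi_{r,s}-1$ into $\varphi_{bs,s}-\pi^a$, pass to the truncations $E_n\otimes W_n^s\Omega^b_{X_0}$, and apply Lemma~\ref{lem_FractionalInjectivitydRW} to conclude that every $x_n$ vanishes. You are merely more explicit than the paper about the limit/truncation bookkeeping, which is fine.
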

\begin{proof}We have $$\pi^a(\varphi_{r,s}-1)=\varphi_{bs,s} -\pi^a.$$ Since $E\otimes W\Omega^b_{X_0}$ is $\pi$-torsion free, we are reduced to compute the kernel of $\varphi_{bs,s} -\pi^a.$  This can be checked by passing to truncations thanks to Lemma \ref{lem_FractionalInjectivitydRW}.
    
\end{proof}

\begin{exam}\label{exam_Coker12}
    Let us consider the case when $X_0=\Spec(A_0)$ is affine and $s=2,r=1$. The morphism $$\varphi_{1,2}-1\colon VW(A_0)\oplus \pi W(A_0)\to W(A_0)\oplus \pi W(A_0)$$ is injective by Proposition \ref{prop_InjectivityFractionaldRW}. Let $\alpha$ be the automorphism of $W(A_0)\oplus \pi W(A_0)$ which sends $(v_0,v_1)$ to $(v_0+F(v_1),v_1).$ Projecting to the first component, we have that $$M\coloneqq\mr{Coker}(\alpha\circ \varphi-\alpha)=\mr{Coker}(W(A_0)\xrightarrow{F-V}W(A_0))$$ is non-trivial. This group can neither be trivialised by a pro-étale cover, since the Verschiebung induces a filtration on $M$ with graded pieces isomorphic to $A_0/A_0^p$.
    \end{exam}
Let us show now the comparison theorem between the pro-étale cohomology of $\nu(\gamma)$ and the slope $\gamma$ piece of crystalline cohomology.

\begin{theo}\label{thm_FractionalSlopesCrystalline}
   Let $k$ be an algebraically closed field of characteristic $p$, let $X_0$ be a smooth proper scheme over $k$, and let $\gamma$ be a rational number with $\lfloor\gamma\rfloor=b.$ The structural morphism $\nu(\gamma)\to  W\Omega_{X_0}^\bullet[b]$ induces an isomorphism $$H^i_\proet(X_0,\nu(\gamma))_K\riso H^{i+b}_\cris(X_0/W)_K^{[\gamma]}$$ of $K$-vector spaces for $i\geq 0$. Moreover, $\nu(\gamma)$ is quasi-isomorphic to $W\Omega_\mr{log}^b[0]$ when $\gamma\in \Z$ and to a pro-étale sheaf sitting in degree $1$ otherwise.
\end{theo}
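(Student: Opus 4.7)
Write $\gamma=r/s$ with $\gcd(r,s)=1$ and $s>0$, and further $r=a+bs$ with $0\le a<s$. The plan is first to handle the structural description of $\nu(\gamma)$ in the two cases, then to compute the pro-\'etale cohomology via syntomic cohomology and the Dieudonn\'e--Manin classification.

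For the structural assertion, Proposition \ref{prop_fundamentalFractionaldRW} shows that the map $\varphi_{r,s}-\iota$ appearing in Definition \ref{def_dRWFractionalTate} is an isomorphism in every degree $\neq b$, so the cocone $\nu(\gamma)$ is controlled by the single degree $b$. In the integer case $\gamma\in\Z$ (forcing $s=1$, $a=0$, $b=r$), Proposition \ref{BMS8.7pre} immediately identifies the cocone with $W\Omega^b_{\mr{log}}[0]$. In the non-integer case ($a>0$), Proposition \ref{prop_InjectivityFractionaldRW} shows that $\varphi_{r,s}-\iota$ is injective as a morphism of pro-\'etale sheaves, so $\nu(\gamma)$ is quasi-isomorphic to its cokernel sheaf placed in degree $1$.

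For the main isomorphism, I would first invoke Proposition \ref{prop_FractionalLogarithmic} to rewrite $R\Gamma_{\proet}(X_0,\nu(\gamma))$ as a shift of $R\Gamma_{\syn}(X_0,\Zp(\gamma))$. Rationalizing the distinguished triangle defining the syntomic complex, the Nygaard filtration collapses (since $\pi$ becomes invertible in $\Lambda^s_K$) and the defining map becomes $\varphi_{r,s}-1$ acting on $R\Gamma_{\proet}(X_0,W^s\Omega^\bullet_{X_0})_K$. For smooth proper $X_0$ the Illusie-type comparison identifies this rationalized complex with $R\Gamma_{\cris}(X_0/W)_K\otimes_K\Lambda^s_K$, producing a long exact sequence that expresses $H^{i+b}_{\syn}(X_0,\Zp(\gamma))_K$ in terms of the kernel and cokernel of $\varphi_{r,s}-1$ acting on $H^j_{\cris}(X_0/W)_K\otimes_K\Lambda^s_K$ in degrees $j=i+b-1$ and $j=i+b$.

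The final step is to identify these kernels and cokernels with the slope $\gamma$ part of crystalline cohomology. For $V=H^j_{\cris}(X_0/W)_K$, Lemma \ref{lem_Vrs} relates the behaviour of $\varphi_{r,s}-1$ on $V\otimes_K\Lambda^s_K$ to that of $F^s-p^r$ on $V$; since Frobenius is bijective on crystalline cohomology of a smooth proper variety, the Dieudonn\'e--Manin classification (Definition \ref{DMC-crew}) shows that $F^s-p^r$ is bijective on every slope component distinct from $\gamma$ while its kernel on the slope $\gamma$ component spans $V^{[\gamma]}$ over $K$. By Lemma \ref{lem_Vrs}.(2) this forces $\varphi_{r,s}-1$ to be bijective on the complementary components of $V\otimes_K\Lambda^s_K$, so the cokernel contribution from degree $j-1$ drops out of the long exact sequence, and the remaining kernel contribution at degree $j=i+b$ is identified with $V^{[\gamma]}$ via the structural morphism $\nu(\gamma)\to W\Omega^\bullet_{X_0}[b]$. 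The principal technical hurdle is the careful bookkeeping of the scalar extension $K\subseteq\Lambda^s_K$, ensuring that the $K$-vector space obtained from Lemma \ref{lem_Vrs} matches $V^{[\gamma]}$ itself rather than a $\Lambda^s_K$-twist of it.
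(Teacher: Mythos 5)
Your argument follows the paper's proof essentially step for step: pass to syntomic cohomology via Proposition \ref{prop_FractionalLogarithmic} (together with the rationalised identification of Lemma \ref{lem_compatibilitysyntomic}), translate $\varphi_{r,s}-1$ on $V\otimes_K\Lambda^s_K$ into $F^s-p^r$ on $V$ by Lemma \ref{lem_Vrs}, conclude with the Dieudonn\'e--Manin classification, and deduce the structural description of $\nu(\gamma)$ from Propositions \ref{prop_fundamentalFractionaldRW} and \ref{prop_InjectivityFractionaldRW}. The only point where you are slightly less explicit than the paper is the vanishing of the cokernel term coming from degree $i+b-1$: one needs $F^s-p^r$ to be surjective on the slope-$\gamma$ summand of $H^{i+b-1}_\cris(X_0/W)_K$ as well, not merely bijective off slope $\gamma$; this holds because that group is a finite-dimensional $F$-isocrystal over an algebraically closed residue field, which is exactly what the paper invokes.
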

\begin{proof}Write $\gamma=r/s$ with $(r,s)=1$. By combining Lemma \ref{lem_compatibilitysyntomic} and Proposition \ref{prop_FractionalLogarithmic}, we get an isomorphism 
$$H^i_\mr{syn}(X_0,\Qp(r,s))\riso H^i_\proet(X_0,\nu(\gamma))_{\Qp}.$$

Thus, by Lemma \ref{lem_Vrs}, we deduce that $$H^i_\proet(X_0,\nu(\gamma))_{\Qp}\to H^{i+b}_\cris(X_0/W)_{K}^{F^s-p^r}$$ is surjective. Thanks to Dieudonné--Manin classification, we further infer that $$ H^i_\proet(X_0,\nu(\gamma))_{K}\to H^{i+b}_\cris(X_0/W)_K^{[\gamma]}$$ is surjective -- the field $k$ being algebraically closed. For the injectivity, we use the fact that $H^{i+b-1}_\cris(X_0/W)_K$ is a finite-dimensional $F$-isocrystal, as $X_0$ is proper, which implies that $$H^{i+b-1}_\cris(X_0/W)_K\xrightarrow{F^s-p^r}H^{i+b-1}_\cris(X_0/W)_K$$ is surjective (see for example \cite[Lem. 5.1.3]{Gra25}). The last statement follows instead from Proposition \ref{prop_fundamentalFractionaldRW} and Proposition \ref{prop_InjectivityFractionaldRW}.
\end{proof}

\subsection{The de Rham--Witt complex of log schemes}
We discuss here the de Rham--Witt complex of log schemes. For simplicity, we do not talk about coefficients in this section.
\begin{lemm}\label{SNCD-Cartier}
    Let $Y_0$ be a smooth $S_0$-scheme, $D_0$ be a strict normal crossing divisor of $Y_0$
    and $M (D_0)$ be the induced log structure. If we set $Y_0^\sharp \coloneqq (Y_0, M (D_0))$, then $Y_0^\sharp /S_0$ is log smooth and of Cartier type in the sense of \cite[Def. 4.8]{Kato-logFontaine-Illusie}.  
\end{lemm}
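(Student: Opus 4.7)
The plan is to verify the two conditions étale locally on $Y_0$, using the standard toric model of a strict normal crossing divisor. Since log smoothness and the Cartier type condition are both étale local on the source, I would first pick, for any point $y \in Y_0$, an étale neighbourhood $U \to Y_0$ together with an étale $S_0$-morphism $U \to \mathbb{A}^d_{S_0} = \Spec S_0[t_1,\ldots,t_d]$ such that $D_0 \cap U$ is the pullback of $V(t_1 \cdots t_r)$ for some $r \leq d$ (this is the defining property of a strict normal crossing divisor). On such a neighbourhood, the log structure $M(D_0)$ admits the chart $\mathbb{N}^r \to \cO_U$ sending the $i$th generator to $t_i$, while $S_0$ carries the trivial log structure with chart the zero monoid.

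Next I would verify log smoothness using Kato's chart criterion (\cite[Thm. 3.5]{Kato-logFontaine-Illusie}): the chart $0 \to \mathbb{N}^r$ has fine target, its kernel and the torsion part of its cokernel are trivial, and the induced morphism
\[ U \longrightarrow S_0 \times_{\Spec \bbZ} \Spec \bbZ[\mathbb{N}^r] = \mathbb{A}^r_{S_0} \]
is smooth (in fact étale, since it factors as an étale map to $\mathbb{A}^d_{S_0}$ followed by a smooth projection). This gives log smoothness of $Y_0^\sharp/S_0$.

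Integrality is then immediate: any morphism of fine log schemes with a chart whose source monoid is trivial (here $0 \to \mathbb{N}^r$) is integral, as pushouts along $0$ of integral monoids remain integral, and $\mathbb{N}^r$ is integral.

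Finally, for the Cartier type condition I would invoke the local characterisation: a log smooth integral morphism is of Cartier type if and only if, étale locally, it admits a chart $Q \to P$ whose cokernel $P^{gp}/Q^{gp}$ has no $p$-torsion (equivalently, the relative Frobenius is exact). In our situation $Q^{gp} = 0$ and $P^{gp} = \mathbb{Z}^r$, so the cokernel is the free abelian group $\mathbb{Z}^r$ and is manifestly $p$-torsion free. I do not anticipate any real obstacle; the only subtlety is bookkeeping the definition of Cartier type and checking that the strict normal crossing hypothesis is exactly what guarantees the existence of the toric chart $\mathbb{N}^r$, so that the gp-cokernel is the honest lattice $\mathbb{Z}^r$ rather than something with torsion.
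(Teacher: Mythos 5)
Your proof is correct and takes essentially the same route as the paper's, which simply compresses the chart computation into citations (integrality via \cite[Cor. 4.4]{Kato-logFontaine-Illusie}, log smoothness and Cartier type via \cite[4.5.2.14.1]{Car25}) and records the same key input, namely the exactness of $\mathbb{N}^r \xrightarrow{p} \mathbb{N}^r$. One small caution: the general ``if and only if'' criterion you invoke for Cartier type in terms of $p$-torsion-freeness of $P^{\mathrm{gp}}/Q^{\mathrm{gp}}$ presupposes that the chart monoid $P$ is saturated (for a non-saturated fine $P$, e.g. $\langle 2,3\rangle\subseteq\mathbb{N}$, multiplication by $p$ need not be exact even though $P^{\mathrm{gp}}$ is torsion-free), but here $P=\mathbb{N}^r$ is saturated, so your argument is sound.
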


\begin{proof}
Thanks to \cite[Cor. 4.4]{Kato-logFontaine-Illusie} we get that  $Y^\sharp_0/S_0$ is integral.
The property that $Y^\sharp_0/S_0$ is log smooth and of Cartier type is therefore a consequence of  \cite[4.5.2.14.1]{Car25} and of the fact that the map $\bbN^r \stackrel{p} \to \bbN^r$ is exact.
\end{proof}

\begin{empt}
    We keep notation as in Lemma \ref{SNCD-Cartier}.
For any $n\in\bbN$, Lorenzon defined the sheaf $W_n\Omega_{Y^\sharp_0/S_0,\log}^r$ as the étale $W_n$-submodule of $W_n\Omega^r_{Y^\sharp_0/S_0}$  locally generated by $\mr{dlog}(m_1)\wedge\dots\wedge \mr{dlog}(m_r)$ where $m_i$ are local sections of $M(D_0)^\mr{grp}$ (see \cite[Page 257]{Lor02}).
According to \cite[Cor. 2.14]{Lor02} (using Lemma \ref{SNCD-Cartier}) we get the exact sequence of étale pro-sheaves on $Y_0$
\begin{equation}\label{I.5.7.2-Il-logpre}
0\to W_{\bullet} \Omega_{Y^\sharp_0/S_0,\log}^r\to\ W_{\bullet}\Omega^r_{Y^\sharp_0/S_0}\xrightarrow{{\rm F}-1}W_{\bullet}\Omega^r_{Y^\sharp_0/S_0} \to 0.
\end{equation} 
We define a pro-étale $\bbZ_p$-sheaf by setting $W\Omega_{Y^\sharp_0/S_0,\log}^r\coloneqq \lim_n W_n\Omega_{Y^\sharp_0/S_0,\log}^r$ (we view $W_n\Omega_{Y^\sharp_0/S_0,\log}^r$ as a pro-étale $\bbZ_p$-sheaf and the limit is computed in the latter category). By \eqref{I.5.7.2-Il-logpre}, the sheaf
$W\Omega_{Y^{\sharp}_0,\log}^r$ coincides with the derived inverse limit $R\lim_nW_n\Omega^r_{Y^{\sharp}_0,\log}$ 
and we have the exact sequence of derived $p$-complete pro-étale $\bbZ_p$-sheaves on $Y_0$
\begin{equation}\label{I.5.7.2-Il-log}
0\to W\Omega_{Y^\sharp_0/S_0,\log}^r\to\ W\Omega^r_{Y^\sharp_0/S_0}\xrightarrow{{\rm F}-1}W\Omega^r_{Y^\sharp_0/S_0} \to 0.
\end{equation} 
It follows from \cite[Cor. 1.19 and (2.11)]{Lor02} 
that $W\Omega_{Y^\sharp_0/S_0,\log}^r$ is $p$-torsion free and 
$$W\Omega_{Y^\sharp_0/S_0,\log}^r\otimes^L_{\bbZ_p} \bbZ /p^n\to W_n\Omega_{Y^\sharp_0/S_0,\log}^r $$  is a quasi-isomorphism.

\end{empt}

\begin{lemm}\label{lem_compatibilitylogOmega}If $D_0$ is a strict normal divisor of $Y_0$ with complement $j\colon X_0\hookrightarrow Y_0,$ then
    $$j_*W\Omega^r_{X_0,\mr{log}}=W\Omega^r_{Y^{\sharp}_0,\mr{log}}.$$
\end{lemm}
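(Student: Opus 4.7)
The plan is to construct a natural map $\alpha\colon W\Omega^r_{Y_0^\sharp,\log}\to j_*W\Omega^r_{X_0,\log}$ by adjunction, verify at each finite level $n$ that $\alpha_n\colon W_n\Omega^r_{Y_0^\sharp,\log}\to j_*W_n\Omega^r_{X_0,\log}$ is an isomorphism of \'etale sheaves, and then pass to the pro-\'etale limit. The map $\alpha$ exists because $M(D_0)$ restricts to the trivial log structure on $X_0$ (every local equation of a component of $D_0$ is invertible there), so $j^{-1}W_n\Omega^r_{Y_0^\sharp,\log}$ coincides canonically with the Illusie--Milne sheaf $W_n\Omega^r_{X_0,\log}$ of the smooth scheme $X_0$.

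For injectivity of $\alpha_n$, I would use that $W_n\Omega^r_{Y_0^\sharp,\log}\subseteq W_n\Omega^r_{Y_0^\sharp}$, and that the latter embeds into $j_*W_n\Omega^r_{X_0}$: \'etale-locally on $Y_0$ the Witt functor preserves the injectivity of the flat localisation $\cO_{Y_0}\hookrightarrow j_*\cO_{X_0}$, and the log de Rham--Witt complex is built functorially from this datum. For surjectivity I would argue \'etale-locally at a point $y\in D_0$, where $D_0=V(t_1\cdots t_s)$ for part of a regular system of parameters of $\cO_{Y_0,y}$. After further \'etale refinement, any section of $j_*W_n\Omega^r_{X_0,\log}$ is a finite $\bbZ$-linear combination of symbols $\mr{dlog}[a_1]\wedge\cdots\wedge \mr{dlog}[a_r]$ with $a_i\in\cO_{X_0}^{\times}$, and each unit $a_i$ factors as $u_i\cdot t_1^{m_{i,1}}\cdots t_s^{m_{i,s}}$ with $u_i\in\cO_{Y_0}^{\times}$ and $m_{i,k}\in\bbZ$. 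Multiplicativity of $\mr{dlog}$ yields
\[\mr{dlog}[a_i]=\mr{dlog}[u_i]+\sum_{k=1}^{s}m_{i,k}\,\mr{dlog}[t_k],\]
which is a section of $W_n\Omega^1_{Y_0^\sharp,\log}$ since $u_i$ and $t_k$ are sections of $M(D_0)^{\mr{grp}}$. Wedging gives the desired lift to $W_n\Omega^r_{Y_0^\sharp,\log}$; injectivity of $\alpha_n$ makes the lift unique, so \'etale descent produces a global preimage.

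The main obstacle I foresee is checking that the local factorisation of units in $\cO_{X_0}^{\times}$ and the resulting lift do not depend on the choice of local equations: a change $t_k\mapsto v_kt_k$ with $v_k\in\cO_{Y_0}^{\times}$ shifts $\mr{dlog}[t_k]$ by $\mr{dlog}[v_k]$, but this correction still lies in $W_n\Omega^1_{Y_0^\sharp,\log}$, so the ambiguity is absorbed in the target subsheaf and is killed by the injectivity of $\alpha_n$. Finally, since $j_*$ commutes with inverse limits and $W\Omega^r_{Y_0^\sharp,\log}=R\lim_n W_n\Omega^r_{Y_0^\sharp,\log}$ (the transition maps being surjective, as in the proof of Proposition \ref{BMS8.7preUR}), the level-$n$ isomorphisms assemble into the claimed pro-\'etale identification $j_*W\Omega^r_{X_0,\log}=W\Omega^r_{Y_0^\sharp,\log}$.
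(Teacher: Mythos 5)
Your overall strategy (identify $M(D_0)^{\mr{grp}}$ with $j_*\cO_{X_0}^{\times}$, prove the identity at each finite level $n$, then pass to the limit) is the same as the paper's, and the easy inclusion $W_n\Omega^r_{Y_0^\sharp,\mr{log}}\subseteq j_*W_n\Omega^r_{X_0,\mr{log}}$ together with the final limit step are fine. The problem is in your surjectivity argument. The sheaf $W_n\Omega^r_{X_0,\mr{log}}$ is by definition only \emph{\'etale-locally on $X_0$} generated by symbols. A section of $j_*W_n\Omega^r_{X_0,\mr{log}}$ over an \'etale neighbourhood $U$ of a point $y\in D_0$ is a section of $W_n\Omega^r_{X_0,\mr{log}}$ over $U\cap X_0$, and the \'etale cover of $U\cap X_0$ needed to exhibit it as a sum of symbols need not be of the form $\{U_\alpha\cap X_0\}$ for an \'etale cover $\{U_\alpha\to U\}$; in particular, no \'etale refinement of $U$ around $y$ makes $U\cap X_0$ strictly henselian. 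So the step ``after further \'etale refinement, any section of $j_*W_n\Omega^r_{X_0,\mr{log}}$ is a finite $\bbZ$-linear combination of symbols $\mr{dlog}[a_1]\wedge\cdots\wedge\mr{dlog}[a_r]$ with $a_i\in\cO_{X_0}^{\times}$'' is not a formal consequence of the definition: it amounts to a generation-by-symbols statement for $W_n\Omega^r_{\mr{log}}$ over the punctured strictly henselian local scheme $\Spec(\cO^{sh}_{Y_0,\bar y})\setminus D_0$, which is a regular but non-local (and non-henselian) base. This is a genuine extra input of Gersten/Bloch--Gabber--Kato type, not something your unit-factorisation handles. (Your concluding ``\'etale descent produces a global preimage'' also breaks for the same reason: the cover along which you would descend lives over $U\cap X_0$, not over $U$, so it cannot produce a section of an \'etale sheaf on $Y_0$.) Once one \emph{has} a symbol decomposition on a genuine \'etale neighbourhood in $Y_0$, your factorisation $a_i=u_i\,t_1^{m_{i,1}}\cdots t_s^{m_{i,s}}$ and the $\mr{dlog}$ manipulation are correct, so the gap is concentrated exactly at this reduction to symbols.

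A secondary, more minor point: your injectivity step asserts $W_n\Omega^r_{Y_0^\sharp}\hookrightarrow j_*W_n\Omega^r_{X_0}$ on the grounds that ``the log de Rham--Witt complex is built functorially'' from the injection $\cO_{Y_0}\hookrightarrow j_*\cO_{X_0}$; functoriality alone does not preserve injectivity. The standard way to justify this is via the canonical filtration of $W_n\Omega^r_{Y_0^\sharp}$ whose graded pieces are locally free $\cO_{Y_0}$-modules (by the logarithmic Cartier isomorphism), each of which injects into its restriction to the dense open $X_0$, followed by left exactness of $j_*$. This is routine but should be said. For comparison, the paper's own proof is a two-line assertion that the finite-level identity follows from $M(D_0)=j_*\cO_{X_0}^{\times}$; your write-up makes visible where the actual content lies, but does not yet supply it.
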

\begin{proof}

By definition, we have that $M(D_0)=j_*\calO^*_{X_0}$ (see \cite[1.5]{Kato-logFontaine-Illusie}).  This implies that $$j_*W_n\Omega^r_{X_0,\mr{log}}=W_n\Omega^r_{Y^{\sharp}_0,\mr{log}}$$ for every $n$. We conclude taking limits (use \stack{099N}).
\end{proof}



\begin{defi}\label{def-W-Nygaard} If $Y_0 = \Spec(A_0)$, we write $W\Omega_{A_0^\sharp}^\bullet\coloneqq \Gamma ( Y_0, W\Omega_{Y_0^\sharp /S_0}^\bullet)$. As in \cite[Def. 8.1]{BMS}, 
let $\calN^{\geq i}W\Omega_{A_0^\sharp}^\bullet\subseteq W\Omega_{A_0^\sharp}^\bullet$ be the subcomplex
\[
p^{i-1}VW(A)\to p^{i-2}VW\Omega^1_{A_0^\sharp}\to\cdots\to pVW\Omega^{i-2}_{A_0^\sharp}\to VW\Omega^{i-1}_{A_0^\sharp}\to W\Omega_{A_0^\sharp}^i\to W\Omega_{A_0^\sharp}^{i+1}\to\cdots.\]
In general, we define $\calN^{\geq i}W\Omega_{Y_0^\sharp}^\bullet\subseteq W\Omega_{Y_0^\sharp}^\bullet$ so that if $U_0 =\Spec (A_0)$ is an affine open of $Y_0$ we have
$\Gamma ( U_0, W\Omega_{Y_0^\sharp /S_0}^\bullet)= W\Omega_{A^\sharp}^\bullet$. We define $\varphi_r\coloneqq F/p^r$ and $\iota$ as in \S \ref{SubSec_Nygaard}.
\end{defi}

\begin{prop}\label{prop_BMS8.7prelog}
    The sequence of complexes of pro-\'etale sheaves
\begin{equation*}
     0\to W\Omega_{Y^{\sharp}_0,\log}^r[-r]\to \calN^{\ge i}W\Omega^\bullet_{Y^{\sharp}_0}\xrightarrow{\varphi_r-\iota}W\Omega^\bullet_{Y^{\sharp}_0}\to 0
\end{equation*}is exact.
\end{prop}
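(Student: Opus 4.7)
The plan is to verify exactness of the sequence degreewise, adapting to the logarithmic setting the strategy of \cite[Prop. 8.4]{BMS} recalled in the proof of Proposition \ref{BMS8.7preUR}. Writing the sequence in degree $j$ as
$$0\to (W\Omega^r_{Y^\sharp_0,\log}[-r])^j \to \calN^{\ge r}W\Omega^j_{Y^\sharp_0}\xrightarrow{\varphi_r-\iota}W\Omega^j_{Y^\sharp_0}\to 0,$$
the argument splits into the three cases $j=r$, $j>r$, and $j<r$.

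When $j=r$, Definition \ref{def-W-Nygaard} gives $\calN^{\ge r}W\Omega^r_{Y^\sharp_0} = W\Omega^r_{Y^\sharp_0}$, and since $F=p^r\mr F$ in degree $r$, the map $\varphi_r-\iota$ coincides with $\mr F-1$; the required exactness is then precisely Lorenzon's sequence \eqref{I.5.7.2-Il-log}. For $j>r$ the left-hand term vanishes, so it suffices to show $\varphi_r-\iota$ is an isomorphism on $\calN^{\ge r}W\Omega^j_{Y^\sharp_0}=W\Omega^j_{Y^\sharp_0}$. Here $\varphi_r=p^{j-r}\mr F$ is divisible by $p$, hence the geometric series $\sum_{n\ge 0}(p^{j-r}\mr F)^n$ converges to an inverse of $1-p^{j-r}\mr F$ by derived $p$-completeness of $W\Omega^j_{Y^\sharp_0}$.

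For $j<r$ the Nygaard piece is $p^{r-j-1}V\, W\Omega^j_{Y^\sharp_0}$. The relation $\mr F V = p$ yields
$$\varphi_r(p^{r-j-1}V\omega) \;=\; p^{j-r}\mr F(p^{r-j-1}V\omega) \;=\; p^{-1}\,\mr F V\omega \;=\; \omega,$$
which, together with injectivity of $V$ on the log de Rham--Witt complex (\cite{Lor02}), exhibits multiplication by $p^{r-j-1}V$ as an isomorphism $W\Omega^j_{Y^\sharp_0}\simeq\calN^{\ge r}W\Omega^j_{Y^\sharp_0}$ intertwining $\varphi_r$ with the identity. Under this identification, $\iota-\varphi_r$ becomes $p^{r-j-1}V-1$. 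For $r-j\ge 2$ this is invertible by derived $p$-completeness; for the borderline case $j=r-1$, the inverse $\sum_{n\ge 0}V^n$ converges thanks to the Verschiebung-adic completeness of $W\Omega^{r-1}_{Y^\sharp_0}$.

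The main obstacle is precisely this last $V$-adic completeness statement. In the non-log case it is due to Illusie; in our setting it should be extracted from Lorenzon's pro-system presentation $W\Omega^\bullet_{Y^\sharp_0} = R\lim_n W_n\Omega^\bullet_{Y^\sharp_0}$, using that at each finite level $n$ the operator $V^n$ lands in a deep piece of the canonical filtration, so the partial sums of the geometric series stabilise modulo the level and their limit provides the desired inverse. Everything else is then routine checking of compatibilities between the Frobenius and Verschiebung inherited from \cite{Lor02}.
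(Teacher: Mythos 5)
Your proof is correct and is exactly the argument the paper intends: the paper's proof consists of the single sentence that the claim ``follows as in [BMS, Prop.~8.4]'' after substituting Lorenzon's exact sequence \eqref{I.5.7.2-Il-log} for Illusie's, and your degreewise case analysis ($j=r$ via Lorenzon, $j>r$ via topological nilpotence of $p^{j-r}\mr F$, $j<r$ via the $V$-twist and $p$-adic, resp.\ $V$-adic, completeness) is precisely that argument spelled out; compare also the paper's own Proposition \ref{prop_fundamentalFractionaldRW}, which runs the same diagram. One small simplification: you need not invoke injectivity of $V$ separately, since the map $\omega\mapsto p^{r-j-1}V\omega$ onto $\calN^{\ge r}W\Omega^j_{Y^\sharp_0}$ is surjective by definition of the filtration, and its bijectivity then follows from the invertibility of the composite $1-p^{r-j-1}V$.
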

\begin{proof}
   Replacing \cite[Thm. I.5.7.2]{Ill79} by \eqref{I.5.7.2-Il-log},
this follows as in \cite[Prop. 8.4]{BMS}. 
\end{proof}
\begin{defi}\label{Def_logsyn}
    We define $$R\Gamma_\mr{syn}(Y^\sharp_0,\Zp(r))\coloneqq\Cocone (\calN^{\ge r}W\Omega^\bullet_{Y^{\sharp}_0}\xrightarrow{\varphi_r-\iota}W\Omega^\bullet_{Y^{\sharp}_0}).$$
\end{defi}

We have a log variant of Proposition \ref{BMS8.5-H2}.

\begin{prop}\label{BMS8.5-H2-log}With notation as in Lemma \ref{SNCD-Cartier}, 
when $Y_0$ is affine, we have 
\begin{equation*}
    H^n_{\mr{pro}\et}(Y^{\sharp}_0,W\Omega_{Y^{\sharp}_0,\log}^r)=0
\end{equation*}
for $n\geq 2$. Equivalently, for $i\notin \{r,r+1\}$ we have $$H^i_\mr{syn}(Y_0^\sharp,\Zp(r))=0.$$
\end{prop}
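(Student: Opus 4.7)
The plan is to mimic the proof of Proposition \ref{BMS8.5-H2} almost verbatim, substituting logarithmic de Rham--Witt sheaves for the classical ones.

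First, I would observe that each $W_n\Omega^r_{Y_0^\sharp/S_0}$ is quasi-coherent over $W_n\calO_{Y_0}$ (this is built into Lorenzon's construction, via Lemma \ref{SNCD-Cartier}); since $Y_0$ is affine, this gives $H^i_\et(Y_0, W_n\Omega^r_{Y^\sharp_0/S_0}) = 0$ for $i\geq 1$. Moreover, the reduction maps $W_{n+1}\Omega^r_{Y^\sharp_0/S_0} \twoheadrightarrow W_n\Omega^r_{Y^\sharp_0/S_0}$ are surjective, so taking global sections on the affine $Y_0$ preserves surjectivity, and Mittag--Leffler yields $R^1\lim_n H^0_\et(Y_0, W_n\Omega^r_{Y^\sharp_0/S_0}) = 0$. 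Together with the identification $R\Gamma_\proet(Y^\sharp_0, W\Omega^r_{Y^\sharp_0/S_0}) = R\lim_n R\Gamma_\et(Y_0, W_n\Omega^r_{Y^\sharp_0/S_0})$ (built into the definition of $W\Omega^r_{Y^\sharp_0/S_0}$ as a derived limit in pro-\'etale $\bbZ_p$-sheaves), this forces $H^i_\proet(Y^\sharp_0, W\Omega^r_{Y^\sharp_0/S_0}) = 0$ for every $i\geq 1$ and every $r$.

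Next, I would plug this vanishing into the short exact sequence of complexes of pro-\'etale sheaves provided by Proposition \ref{prop_BMS8.7prelog}:
\[
0 \to W\Omega^r_{Y^\sharp_0,\log}[-r] \to \calN^{\ge r}W\Omega^\bullet_{Y^\sharp_0} \xrightarrow{\varphi_r - \iota} W\Omega^\bullet_{Y^\sharp_0} \to 0.
\]
Because every term of the two complexes on the right has vanishing higher pro-\'etale cohomology, the hypercohomology spectral sequence of each collapses at $E_1$ and reduces the computation to the cohomology of the complex of global sections, which is concentrated in non-negative cohomological degrees. The long exact sequence attached to the resulting distinguished triangle then forces $H^n_\proet(Y^\sharp_0, W\Omega^r_{Y^\sharp_0,\log}) = 0$ for $n\geq 2$, which is the first assertion.

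The equivalence with the syntomic statement is immediate: combining Definition \ref{Def_logsyn} with Proposition \ref{prop_BMS8.7prelog} gives a quasi-isomorphism
\[
R\Gamma_\mr{syn}(Y_0^\sharp, \Zp(r)) \simeq R\Gamma_\proet(Y^\sharp_0, W\Omega^r_{Y^\sharp_0,\log})[-r],
\]
so $H^i_\mr{syn}(Y_0^\sharp, \Zp(r)) = H^{i-r}_\proet(Y^\sharp_0, W\Omega^r_{Y^\sharp_0,\log})$, which vanishes exactly for $i\notin\{r, r+1\}$. The only potential snag in this plan is verifying the quasi-coherence of $W_n\Omega^r_{Y^\sharp_0/S_0}$ and the surjectivity of the restriction maps in the logarithmic setting, but both are standard consequences of Lorenzon's work \cite{Lor02}, so no genuine obstacle arises.
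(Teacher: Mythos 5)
Your proposal takes exactly the paper's route: the published proof is a one-line reduction to Proposition \ref{BMS8.5-H2}, replacing Proposition \ref{BMS8.7preUR} by Proposition \ref{prop_BMS8.7prelog}, and you have correctly unwound that reduction (affine vanishing for the quasi-coherent $W_n\Omega^r_{Y^\sharp_0}$, Mittag--Leffler, and the identification with syntomic cohomology via Definition \ref{Def_logsyn}). The one step that does not work as literally written is the final deduction: the long exact sequence of the triangle of \emph{total} complexes only sandwiches the relevant cohomology of the cocone between $H^{n+r-1}_{\mr{pro}\et}(Y_0, W\Omega^\bullet_{Y^\sharp_0})$ and $H^{n+r}_{\mr{pro}\et}(Y_0, \calN^{\geq r}W\Omega^\bullet_{Y^\sharp_0})$, which need not vanish for $n\geq 2$ since those complexes have cohomology up to degree $\dim Y_0$. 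The fix is immediate and is what the paper implicitly uses: the sequence of Proposition \ref{prop_BMS8.7prelog} is exact \emph{degreewise}, so its degree-$r$ slice is the short exact sequence of pro-\'etale sheaves $0\to W\Omega_{Y^{\sharp}_0,\log}^r\to W\Omega^r_{Y^{\sharp}_0}\xrightarrow{\mathrm{F}-1}W\Omega^r_{Y^{\sharp}_0}\to 0$ of \eqref{I.5.7.2-Il-log} (in degree $r$ one has $\calN^{\geq r}W\Omega^r_{Y^\sharp_0}=W\Omega^r_{Y^\sharp_0}$ and $\varphi_r=\mathrm{F}$), and its long exact sequence combined with your vanishing $H^i_{\mr{pro}\et}(Y_0,W\Omega^r_{Y^{\sharp}_0})=0$ for $i\geq 1$ gives $H^n_{\mr{pro}\et}(Y^\sharp_0,W\Omega^r_{Y^{\sharp}_0,\log})=0$ for $n\geq 2$ at once.
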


\begin{proof}
This is proven as in Proposition \ref{BMS8.5-H2-log} by replacing Proposition \ref{BMS8.7preUR} with Proposition \ref{prop_BMS8.7prelog}.
\end{proof}

\begin{empt}
    Let $\fY$ be a smooth $W$-formal scheme, $\fD$ be a relative to $\fY/W$ strict normal crossing divisor, $\fX$ be the open of $\fY$ complementary to $\fD$. Suppose there exists a lifting $F \colon \fY \to \fY$ of the absolute Frobenius of $Y_0$. Set $\fY^\sharp \coloneqq (\fY, M (\fD))$ where $M (\fD)$ is the log structure induced by $\fD$, and $Y_n^{\sharp}$ for its reduction modulo $p^{n+1}$. Thanks to \cite[Thm. 4.19]{Hyodo-Kato-logcrystcoho}, we have the quasi-isomorphism
    $$W_n \Omega_{Y_0^{\sharp}/S_0}^\bullet \riso R u_{Y_0^{\sharp}/S_n} (\cO_{Y_0^{\sharp}/S_0})$$ compatible with Frobenius. In addition, from \cite[Prop. 2.20]{Hyodo-Kato-logcrystcoho}, we have the quasi-isomophism 
$R u_{Y_0^{\sharp}/S_n} (\cO_{Y_0^{\sharp}/S_0})\riso \Omega^{\bullet}_{Y_n^{\sharp}/S_n}$. Taking limits this yields the quasi-isomorphism 
\begin{equation}
    \label{log-dRW-dR}W\Omega_{Y_0^{\sharp}/S_0}^\bullet\riso \Omega^{\bullet}_{\fY^{\sharp}/\fS},
\end{equation} which is compatible with the Frobenius structure.
This yields 
\begin{gather}\label{log-cris-rig=}
    R \Gamma_{\mr{log\m cris}}((Y_0,D_0)/W)_K\riso R \Gamma_\mr{pro\et}(Y_0,W\Omega^\bullet_{Y_0^\sharp})_K\riso 
R \Gamma (Y_0, \Omega^{\bullet}_{\fY^{\sharp}/\fS})_K\riso R \Gamma_{\rig} (X_0/K),
\end{gather}
where the last isomorphism is \cite[1.3.6]{CT12}.
\spa

Suppose $Y_0= \Spec (A_0)$ is affine and $A$ is a $W$-smooth lift of $A_0$, endowed with a Frobenius lift.  As in \cite[Prop. 8.7]{BMS}, we deduce the existence of a quasi-isomorphism
\begin{equation}
\label{log-Prop. 8.7BMS} p^{(i-\bullet)_+} \Omega^{\bullet}_{\widehat{A} ^{\sharp}}\xrightarrow{\sim} \calN^{\geq i}W\Omega_{A^\sharp_0}^\bullet.
\end{equation}
Thanks to \eqref{log-Prop. 8.7BMS}, the {Nygaard filtration} can be reinterpreted as the filtration on the complex $\Omega^{\bullet}_{\widehat{A} ^{\sharp}}$, defined by 
$p^{(i-\bullet)_+} \Omega^{\bullet}_{\widehat{A} ^{\sharp}} $.
As in \S\ref{sec_FGauge} the action of Frobenius refines to a morphism  $\varphi \colon \widetilde{\Omega}^{\bullet}_{\widehat{A} ^{\sharp}}  \to  \Omega^{\bullet}_{\widehat{A} ^{\sharp}} $. Since the log version of the Cartier isomorphism holds \cite[Thm. V.4.1.3]{Ogus-Logbook}, then we get the following result.
\end{empt}

\begin{prop}\label{thm_Mazur-log}
The morphism $$\varphi\colon \widetilde{\Omega}^{\bullet}_{\widehat{A} ^{\sharp}}\to \Omega^{\bullet}_{\widehat{A} ^{\sharp}} $$ is a quasi-isomorphism.

\end{prop}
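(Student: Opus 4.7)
The plan is to run the proof of Proposition~\ref{thm_Mazur} verbatim, replacing the classical Cartier isomorphism with its logarithmic counterpart. The key observations are that $\widetilde{\Omega}^{\bullet}_{\widehat{A}^{\sharp}} = p^{-\bullet}\Omega^{\bullet}_{\widehat{A}^{\sharp}}$ and that both source and target are complexes of $p$-torsion free, $p$-adically complete $W$-modules (since $\fY^{\sharp}/\fS$ is log smooth of Cartier type by Lemma~\ref{SNCD-Cartier}, the sheaves $\Omega^i_{\fY^{\sharp}/\fS}$ are locally free of finite rank, in particular $p$-torsion free and $p$-adically complete). It therefore suffices by the usual d\'evissage (e.g.\ \stack{0G1U}) to show that the reduction
$$\overline{\varphi}\colon p^{-\bullet}\Omega^{\bullet}_{\widehat{A}^{\sharp}}/p \longrightarrow \Omega^{\bullet}_{\widehat{A}^{\sharp}}/p$$
is a quasi-isomorphism.

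First I would rewrite the source as $\bigoplus_{i} \underline{p}^i \Omega^i_{A_0^{\sharp}}$, where $\underline{p}^i \colon \Omega^i_{A_0^{\sharp}} \xrightarrow{\sim} p^{-i}\Omega^i_{\widehat{A}^{\sharp}}/p^{1-i}\Omega^i_{\widehat{A}^{\sharp}}$ denotes the isomorphism given by multiplication by $p^i$. By construction of $\varphi = F/p^\bullet$, the induced map on cohomology in degree $i$ factors as
$$H^i(\overline{\varphi})\colon \underline{p}^i\,\Omega^i_{A_0^{\sharp}}\xrightarrow{\underline{p}^i}\Omega^i_{A_0^{\sharp}}\xrightarrow{C^{-1}_{\log}}H^i(\Omega^{\bullet}_{A_0^{\sharp}/S_0}),$$
where $C^{-1}_{\log}$ is the logarithmic inverse Cartier map attached to the Frobenius lift $F$ on $\widehat{A}^{\sharp}$. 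The first factor is tautologically bijective, so the only issue is the bijectivity of $C^{-1}_{\log}$.

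The decisive step, and the only one requiring non-trivial input, is to invoke the logarithmic Cartier isomorphism \cite[Thm. V.4.1.3]{Ogus-Logbook}: since $Y_0^{\sharp}/S_0$ is log smooth and of Cartier type (Lemma~\ref{SNCD-Cartier}), the inverse Cartier map
$$C^{-1}_{\log}\colon \Omega^i_{A_0^{\sharp}/S_0}\xrightarrow{\;\sim\;} H^i(\Omega^{\bullet}_{A_0^{\sharp}/S_0})$$
is an isomorphism of sheaves on $Y_0$ (and in particular on global sections since $Y_0$ is affine). Combining this with the previous step shows that $H^i(\overline{\varphi})$ is an isomorphism for every $i$, so $\overline{\varphi}$ is a quasi-isomorphism, and hence so is $\varphi$ by the d\'evissage argument above.

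I expect no serious obstacle: the only subtle point is to verify the Cartier-type hypothesis (already handled in Lemma~\ref{SNCD-Cartier}) and to check that the identification of $H^i(\overline{\varphi})$ with $\underline{p}^i C^{-1}_{\log}$ is compatible with the normalisations chosen for the logarithmic Cartier operator in \cite{Ogus-Logbook}; this is a routine local computation in log coordinates $t_1,\dots,t_d$ with $t_1,\dots,t_m$ carrying the log structure, using $F(t_j)= t_j^p + p\,\delta_j$ and $\varphi(\mathrm{dlog}(t_j)) = \mathrm{dlog}(t_j) + \text{(correction divisible by }p\text{)}$.
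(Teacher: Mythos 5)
Your proposal is correct and follows exactly the route the paper takes: the paper proves the log version by transposing the argument of Proposition \ref{thm_Mazur} (reduction modulo $p$ of the $p$-torsion free, $p$-complete complexes and identification of $H^i(\overline{\varphi})$ with $\underline{p}^i C^{-1}$), replacing the classical Cartier isomorphism by the logarithmic one of \cite[Thm. V.4.1.3]{Ogus-Logbook}, which applies because $Y_0^{\sharp}/S_0$ is log smooth of Cartier type by Lemma \ref{SNCD-Cartier}. Your write-up simply makes explicit the dévissage and the compatibility check that the paper leaves implicit.
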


\begin{defi}
      For $s\in \Z_{>0}$, we define $$W^s\Omega^\bullet_{Y_0^\sharp}\coloneqq W \Omega^\bullet_{Y_0^\sharp} \otimes_{\Zp} \Lambda^s.$$ We also define $\calN^{\geq r}$ and $\varphi_{r,s}$ as in \S \ref{Sec_FractionalLogarithmicdRW}.
\end{defi}

\begin{prop}\label{prop_fundamentalLogFractionaldRW}If $r=a+bs$ as in Notation \ref{nota_EucledeanDivision} with $\mr{gcd}(r,s)=1$, the morphism $$\varphi_{r,s}-\iota\colon \calN^{\geq r} W^s\Omega^j_{Y_0^\sharp}\to  W^s\Omega^j_{Y_0^\sharp}$$ is an isomorphism for $j\neq b$ and it is injective if $a\neq 0.$
\end{prop}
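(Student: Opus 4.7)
The argument runs in parallel with the combined proofs of Propositions~\ref{prop_fundamentalFractionaldRW} and~\ref{prop_InjectivityFractionaldRW}, with Lorenzon's log de Rham--Witt sheaves $W\Omega^\bullet_{Y_0^\sharp}$ playing the role of the smooth de Rham--Witt sheaves of \'Etesse. Since the statement is \'etale-local on $Y_0$, I would first pass to an affine open $Y_0=\Spec(A_0)$ where Definition~\ref{def-W-Nygaard} applies verbatim, extended by scalars to $\Lambda^s$ as in the construction preceding the proposition. The three nontrivial inputs on the log side which I would use freely are: $W^s\Omega^j_{Y_0^\sharp}$ is $p$-adically (and $V$-adically) complete; it is $p$-torsion free; and $\Ker(\mathrm F)\subseteq\Ker(R)$ on the truncations. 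All three follow from Lorenzon \cite[Cor.~1.19, (2.11)]{Lor02} in complete analogy with the smooth case treated by \'Etesse.

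Case $j>b$: by the defining formula \eqref{NrWs} every summand collapses to the full module, so $\calN^{\geq r}W^s\Omega^j_{Y_0^\sharp}=W^s\Omega^j_{Y_0^\sharp}$, and from $F=p^j\mathrm F$ in degree $j$ one gets $\varphi_{r,s}=\pi^{js-r}\mathrm F$. Since $js-r\geq s-a>0$, the operator $\varphi_{r,s}$ is topologically $\pi$-nilpotent on the $\pi$-adically complete module $W^s\Omega^j_{Y_0^\sharp}$, so $\iota-\varphi_{r,s}$ is invertible with inverse the convergent Neumann series $\sum_{n\geq 0}\varphi_{r,s}^n$. Case $j<b$: a direct unpacking of \eqref{NrWs} identifies $\calN^{\geq r}W^s\Omega^j_{Y_0^\sharp}$ with $\pi^{r-s(j+1)}VW^s\Omega^j_{Y_0^\sharp}$, and multiplication by $\pi^{r-s(j+1)}V$ defines an isomorphism $W^s\Omega^j_{Y_0^\sharp}\xrightarrow{\sim}\calN^{\geq r}W^s\Omega^j_{Y_0^\sharp}$. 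Composing with $\varphi_{r,s}-\iota$ and using $\mathrm F V=p=\pi^s$ gives the operator $1-\pi^{r-s(j+1)}V$ on $W^s\Omega^j_{Y_0^\sharp}$, which is invertible because $\pi^{r-s(j+1)}V$ is topologically nilpotent for the $V$-adic (equivalently $p$-adic) topology. This is the direct log translation of the curved-arrow argument in the proof of Proposition~\ref{prop_fundamentalFractionaldRW}.

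For the remaining case $j=b$ with $a>0$, I would mimic the proof of Proposition~\ref{prop_InjectivityFractionaldRW}. Since $W^s\Omega^b_{Y_0^\sharp}$ is $\pi$-torsion free, it suffices to show that $\pi^a(\varphi_{r,s}-\iota)=\mathrm F-\pi^a$ is injective on $\calN^{\geq r}W^s\Omega^b_{Y_0^\sharp}$. Passing to the finite truncations $W^s_{n+1}\Omega^b_{Y_0^\sharp}$ and arguing by induction on $n$ exactly as in Lemma~\ref{lem_FractionalInjectivitydRW}, the kernel of $\mathrm F-\pi^aR\colon W^s_{n+1}\Omega^b_{Y_0^\sharp}\to W^s_n\Omega^b_{Y_0^\sharp}$ is contained in $\Ker(R)$, provided $\Ker(\mathrm F)\subseteq\Ker(R)$ on $W_{n+1}\Omega^b_{Y_0^\sharp}$; iterating this gives injectivity in the limit.

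The main obstacle will be verifying cleanly the structural input $\Ker(\mathrm F)\subseteq\Ker(R)$ in the log setting (the place where Proposition~\ref{prop_InjectivityFractionaldRW} cites \cite[(II.1.2.8.1)]{Ete88}): while morally it is part of Lorenzon's structure theorem, extracting it in exactly the form needed here---and more generally checking that the formal manipulations with $V$, $\mathrm F$, and the truncations $W_n$ used by \'Etesse carry over to $W_n\Omega^\bullet_{Y_0^\sharp}$ of Cartier type---is the delicate step. Once these inputs are pinned down, the three cases above are formal.
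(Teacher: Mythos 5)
Your proposal is correct and follows exactly the route the paper takes: the published proof consists of the single sentence that the argument is the same as in Propositions \ref{prop_fundamentalFractionaldRW} and \ref{prop_InjectivityFractionaldRW}, and you have simply carried out that transposition to the log setting, correctly identifying the three cases and the structural inputs (completeness, $p$-torsion freeness, $\Ker(\mathrm F)\subseteq\Ker(R)$) that must be imported from Lorenzon's theory.
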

\begin{proof}
    The proof is the same as in Proposition \ref{prop_fundamentalFractionaldRW} and Proposition \ref{prop_InjectivityFractionaldRW}.
\end{proof}

\begin{defi}
    If $r=a+bs$ and $\gamma=r/s$, as in Notation \ref{nota_EucledeanDivision}, and $\mr{gcd}(r,s)=1$, we define \begin{gather*}
        \nu^\sharp(\gamma)\coloneqq \Cocone \left (\calN^{\geq r} W^s\Omega_{Y_0^\sharp}^\bullet\xrightarrow{\varphi_{r,s}-\iota} W^s\Omega_{Y_0^\sharp}^\bullet \right ) [b] \\\underset{\ref{prop_fundamentalLogFractionaldRW}}{\riso}  \Cocone \left (\calN^{\geq r}W^s\Omega_{Y_0^\sharp}^b\xrightarrow{\varphi_{r,s}-\iota}W^s\Omega_{Y_0^\sharp}^b \right )
    \end{gather*} as a complex of pro-étale sheaves over $Y_0$. By Proposition \ref{prop_BMS8.7prelog} and Proposition \ref{prop_fundamentalLogFractionaldRW}, the complex $\nu^\sharp(\gamma)$ is quasi-isomorphic to a pro-étale sheaf in degree $0$ when $a=0$ and in degree $1$ when $a\neq 0.$
    \end{defi}

\section{Separation properties of rigid cohomology}\label{Sec_SeparationDefect}
Let $\calV$ be an object of $\mathrm{CDVR}^\star(\Z_p)$ with residue field $k$ and fraction field $K$. Assume $\calV$ endowed with a lift of the Frobenius $\sigma \colon  \cV\riso \cV$.

\subsection{The affinoid topology}\label{Sec_AffinoidTopology}
Let $X_0= \Spec(A_0)$ be a smooth affine scheme over $k$.  
Choose $A$ a smooth $\cV$-algebra which is a lifting of $A_0$. 
Let $A^\dag$ be the $p$-adic weak completion of $A$ as $\cV$-algebra. 
Let $F\colon A^\dag \to A^\dag$ be a lifting of the absolute Frobenius of $A_0$ above $\sigma \colon \cV \riso \cV$.

\begin{defi}\label{ntn-Hconv-s-ns} 
Let $\mathcal{M}$ be an $\widehat{A}_K$-module of finite type having an integral connection and a Frobenius structure, i.e. 
an isomorphism of the form $F^* \mathcal{M} \riso \mathcal{M}$ (according to \cite[2.5.7]{Be96}, this corresponds to an convergent $F$-isocrystal on $X_0/\cV$). 
Let $$\nabla_{\mathcal{M}}^i \colon \Omega_{\widehat{A}_K}^{i} \otimes_{\widehat{A}_K} \mathcal{M} \to \Omega_{\widehat{A}_K}^{i+1} \otimes_{\widehat{A}_K} \mathcal{M}$$ be the map induced by the connection of $\mathcal{M}$.
Then $H^i_\mr{conv}(X_0,\mathcal{M})= \Ker \nabla_{\mathcal{M}}^{i}/\im \nabla_{\mathcal{M}}^{i-1}$.
The affinoid topology of $\Omega_{\widehat{A}_K}^{i} \otimes_{\widehat{A}_K} \mathcal{M}$ is 
the topology given by its structure of $\widehat{A}_K$-modules of finite type.
For instance, when $\mathcal{M}$ is the constant coefficient, this topology on the abelian group $\Omega_{\widehat{A}_K}^{i}$ has a basis of open neighborhoods of $0$ given by the subgroups $p^n\Omega_{\widehat{A}}^{i}$
for all $n$.

\spa

We denote by $\overline{\im} \nabla_{\mathcal{M}}^{i-1}$ the closure of $\im \nabla_{\mathcal{M}}^{i-1}$ in $\Omega_{\widehat{A}_K}^{i}$ (or in the closed subspace $\Ker \nabla_{\mathcal{M}}^{i}$) for the affinoid topology and we set 
$$H^i_{\mr{conv}} (X_0,\mathcal{M})^{\sep} \coloneqq \Ker \nabla_{\mathcal{M}}^{i}/\overline{\im} \nabla_{\mathcal{M}}^{i-1}.$$

If $B$ is another smooth $\cV$-algebra which is a lift of $A_0$, then we get a (non-canonical) isomorphism
$\widehat{A} \riso \widehat{B}$. Hence, $H^i_{\mr{conv}} (X_0,\mathcal{M})^{\sep}$ is well-defined and it is the maximal separated quotient of $H^i_{\mr{conv}} (X_0,\mathcal{M})$. Similarly, 
$$N^i_{\rm{conv}} (X_0,\mathcal{M})\coloneqq \overline{\im} \nabla_{\mathcal{M}}^{i-1}/\im \nabla_{\mathcal{M}}^{i-1}$$
is well-defined and it is the $p$-adic closure of $0$ of $H^i_{\mr{conv}} (X_0,\mathcal{M})$.
\end{defi}

\begin{lemm}\label{lemmHiconvpre}
Let $V' \overset{u}{\to} V  \overset{v}{\to} V''$ be two morphisms of $\cV$-modules such that $V$ and $V''$ are $p$-torsion free and $v\circ u =0$. 
Let denote $V^{\prime}_{K}  \overset{u_K}{\to} V_K \overset{v_K}{\to} V^{\prime\prime}_{K}$ the induced map by tensoring with $K$.
Set $H \coloneqq \mr{Ker}(v) /\im (u)$. 
We endow $V_K$ with a locally convex $K$-vector space structure so that a basis of neighborhood of zero is given by 
$\{p^n \alpha ( V) \}_{n\in \bbN}$. Let $\overline{\im}(u_K)$ be the closure of $\im(u_K)$ for the induced topology. Then, we get the equality
$$\overline{\im}(u_K) /\im(u_K) = (H / H_\tors)_{\rm div}= \bigcap_{n\in \bbN} p^n (H / H_\tors) .$$
\end{lemm}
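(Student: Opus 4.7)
The plan is to identify both sides of the asserted equality as subgroups of $H_K = \Ker(v_K)/\im(u_K)$. Since $\cV \to K$ is flat and $V, V''$ are $p$-torsion free, tensoring with $K$ is exact and $\alpha\colon V \hookrightarrow V_K$, $\alpha_{V''}\colon V'' \hookrightarrow V''_K$ are injective; hence $H\otimes_\cV K = \Ker(v_K)/\im(u_K)$, and the natural map $H \to H_K$ has kernel exactly $H_\tors$, so $H/H_\tors$ embeds into $H_K$. I would also record the general fact that for any $p$-torsion free abelian group $G$ one has $G_\mr{div} = \bigcap_n p^n G$: torsion freeness forces the sequence of $p^n$-th roots of a given element to be coherent, i.e. to define an element of $G^\sim$ lifting it. This settles the second equality of the lemma.

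Next I would check that $\overline{\im}(u_K) \subseteq \Ker(v_K)$. The map $v_K$ is continuous because $v_K(p^n\alpha(V)) \subseteq p^n \alpha_{V''}(V'')$, and $V''_K$ is Hausdorff under the implicit $p$-adic separatedness of $V''$ that holds in the intended applications; therefore $\Ker(v_K) = v_K^{-1}(0)$ is closed. This closedness point is the main technical subtlety: without $p$-adic separatedness of $V''$ one must either add it as a hypothesis or reinterpret $\overline{\im}(u_K)$ as the closure taken inside $\Ker(v_K)$ with its induced topology.

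For the inclusion $\overline{\im}(u_K)/\im(u_K) \subseteq (H/H_\tors)_\mr{div}$, I would pick $x \in \overline{\im}(u_K) \subseteq \Ker(v_K)$ and use the definition of the closure to write, for each $n \ge 0$, an expression $x = y_n + p^n\alpha(w_n)$ with $y_n \in \im(u_K)$ and $w_n \in V$. Applying $v_K$ and using $p$-torsion freeness of $V''$ gives $v(w_n)=0$, so $w_n \in \Ker(v)$. Comparing the cases $n$ and $0$ modulo $\im(u_K)$ shows $[\alpha(w_0)] = p^n[\alpha(w_n)]$ in $H_K$, hence the class of $x$ lies in $p^n(H/H_\tors)$ for every $n$, i.e. in $(H/H_\tors)_\mr{div}$.

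For the converse, given $w \in \Ker(v)$ whose class in $H/H_\tors$ is divisible, the characterisation $(H/H_\tors)_\mr{div} = \bigcap_n p^n(H/H_\tors)$ from the first step produces for each $n$ a class $[w_n] \in H/H_\tors$ with $[w] = p^n[w_n]$, an equality that holds a fortiori in $H_K$; lifting it yields $\alpha(w) - p^n\alpha(w_n) \in \im(u_K)$, so $\alpha(w) \in \im(u_K) + p^n\alpha(V)$ for every $n$, i.e. $\alpha(w) \in \overline{\im}(u_K)$, and this element represents $[w]$. The remaining bookkeeping between the two descriptions is routine.
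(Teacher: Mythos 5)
Your proof is correct and follows essentially the same route as the paper's: identify $H/H_\tors$ as a subgroup of $\Ker(v_K)/\im(u_K)$, and translate ``$x\in\im(u_K)+p^n\alpha(V)$ for all $n$'' into ``$[x]$ is $p^n$-divisible in $H/H_\tors$ for all $n$'' in both directions, using the torsion-freeness of $V$ and $V''$ to see that the correction terms $w_n$ lie in $\Ker(v)$. Your remark that $\overline{\im}(u_K)\subseteq\Ker(v_K)$ requires $V''$ to be $p$-adically separated (or the closure to be taken inside $\Ker(v_K)$) is a fair observation; the paper's proof leaves this implicit by working with $x\in\Ker(v_K)$ from the outset, and the separatedness does hold in the intended applications.
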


\begin{proof}
The last equality follows from the $p$-torsion freeness of $H / H_\tors$. 
Let $\cH  \coloneqq \mr{Ker}(v_K) /\im(u_K)$, $\cK \coloneqq \Ker v_K$, $K \coloneqq \Ker v $ and $\alpha \colon K \to \cK$ be the canonical map. 
Let $\varpi \colon K \to H / H_\tors$ and  $[-]\colon \cK \to \cH$ be the projections.
The map $\alpha$ induces the inclusion $\beta\colon H/H_\tors\hookrightarrow \cH$ given by  $\beta (\varpi (x) ) = [\alpha ( x)]$.
Let $x \in \cK$. 

\spa

(i) Suppose $[x]\in \overline{\im}(u_K) /\im(u_K)$, which is equivalent to saying 
$x \in \overline{\im}(u_K)$. 
For every $m\in \bbN$, there exist $y_{m}\in \im(u_K)$,
$x_{m}\in V $ such that $x= y_{m}+p^m \alpha (x_{m}).$ Since
$V$ and $V''$ are $p$-torsion free, then $x_m \in K$.
Hence, $[x]=p^m \beta (\varpi (x_m))$.

\spa

(ii)  Suppose $[x] \in (H / H_\tors)_{\rm div}$, i.e., for any $m\in \bbN$, there exists $x_{m}\in K$ such that $[x]=p^m \beta (\varpi (x_m))
=[p^m \alpha ( x_m)]$. Hence, there exists $y_m \in \im(u_K)$
such that $x= y_{m}+p^m \alpha (x_{m})$, and we are done.
\end{proof}

\begin{lemm}\label{lemmHiconv}
Let $E$ be a locally free flat connection of finite rank over $X_0/\cV$.
   We have $$N^i_{\mr{conv}}(X_0,E_K)=( H^i(X_0,\mr{dR}^{\bullet}_{E})/\mr{tors})_\mr{div}.$$
\end{lemm}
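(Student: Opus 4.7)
The plan is to apply Lemma \ref{lemmHiconvpre} directly. The strategy is to choose the three-term complex so that $H$ recovers $H^i(X_0, \mr{dR}^\bullet_E)$ and so that the locally convex topology that Lemma \ref{lemmHiconvpre} puts on $V_K$ matches the affinoid topology of Definition \ref{ntn-Hconv-s-ns}.

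Concretely, I would set
\begin{equation*}
V' \coloneqq \Gamma(X_0, E \otimes_{\widehat{A}} \Omega^{i-1}_{\widehat{A}/\cV}), \quad V \coloneqq \Gamma(X_0, E \otimes_{\widehat{A}} \Omega^{i}_{\widehat{A}/\cV}), \quad V'' \coloneqq \Gamma(X_0, E \otimes_{\widehat{A}} \Omega^{i+1}_{\widehat{A}/\cV}),
\end{equation*}
with $u \coloneqq \nabla_E^{i-1}$ and $v \coloneqq \nabla_E^i$. Integrability of the connection gives $v \circ u = 0$. Since $E$ is a locally free $\widehat{A}$-module and $\widehat{A}$ is $\cV$-smooth (so each $\Omega^j_{\widehat{A}/\cV}$ is locally free over $\widehat{A}$), both $V$ and $V''$ are $p$-torsion free. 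With these choices, $H \coloneqq \Ker(v)/\im(u)$ coincides with $H^i(X_0, \mr{dR}^\bullet_E)$.

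The main verification is that the topology of Lemma \ref{lemmHiconvpre} on $V_K$, defined by declaring $\{p^n \alpha(V)\}_{n\in \bbN}$ a basis of open neighborhoods of zero, agrees with the affinoid topology on the finitely generated $\widehat{A}_K$-module $V_K = E_K \otimes_{\widehat{A}_K} \Omega^i_{\widehat{A}_K/\cV}$. This holds because $V$ is an $\widehat{A}$-lattice in $V_K$, and any two $\widehat{A}$-lattices in a finitely generated $\widehat{A}_K$-module induce the same $p$-adic topology. Under this identification, $\overline{\im}(u_K)/\im(u_K)$ is by definition $N^i_{\mr{conv}}(X_0, E_K)$.

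Lemma \ref{lemmHiconvpre} then yields
\begin{equation*}
N^i_\mr{conv}(X_0, E_K) \;=\; \overline{\im}(u_K)/\im(u_K) \;=\; (H/H_\tors)_{\mr{div}} \;=\; \bigl(H^i(X_0, \mr{dR}^\bullet_E)/\mr{tors}\bigr)_{\mr{div}},
\end{equation*}
as claimed. There is no substantial obstacle to overcome: the bulk of the work lies in the already-proved Lemma \ref{lemmHiconvpre}, and what remains here is just the bookkeeping to match the two topologies on $V_K$ and to verify the $p$-torsion freeness hypothesis, both of which are immediate from the local freeness of $E$ and the smoothness of $\widehat{A}/\cV$.
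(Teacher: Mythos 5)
Your proposal is correct and is exactly the argument the paper intends: the paper's proof consists of the single line ``This follows from Lemma \ref{lemmHiconvpre}'', and your choice of $V'\to V\to V''$ as the degree $i-1$, $i$, $i+1$ terms of $\mr{dR}^{\bullet}_{E}$, together with the checks of $p$-torsion freeness and the matching of the lattice topology with the affinoid topology, is precisely the bookkeeping being left implicit.
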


\begin{proof}
This follows from Lemma \ref{lemmHiconvpre}.
\end{proof}

\begin{empt}\label{ntn-Hrig-s-ns}
Let $M^\dag$ be an $A^\dag_K$-module of finite type endowed with an integral flat connection and a Frobenius structure, i.e. 
an isomorphism of the form $F^* M^\dag \riso M^\dag$ (according to \cite[2.5.7]{Be96}, this corresponds to an overconvergent $F$-isocrystal on $X_0/K$). 
Let $$\nabla_{M^\dag}^i \colon \Omega_{A^\dag_K}^{i} \otimes_{A^\dag_K} M^\dag \to \Omega_{A^\dag_K}^{i+1} \otimes_{A^\dag_K} M^\dag$$ be the map induced by the connection of $M^\dag$.
Then $H^i_\mr{rig}(X_0,M^\dag)= \Ker \nabla_{M^\dag}^{i}/\im \nabla_{M^\dag}^{i-1}$.
The affinoid topology of $\Omega_{A^\dag_K}^{i} \otimes_{A^\dag_K} M^\dag$ is 
the topology induced by the affinoid topology of $\left (\Omega_{A^\dag_K}^{i} \otimes_{A^\dag_K} M^\dag\right )\otimes_{A^\dag_K}  \widehat{A}_K$ 
via the injection 
$$\Omega_{A^\dag_K}^{i} \otimes_{A^\dag_K} M^\dag \hookrightarrow \left (\Omega_{A^\dag_K}^{i} \otimes_{A^\dag_K} M^\dag\right )\otimes_{A^\dag_K}  \widehat{A}_K.$$ 
For instance, when $M^\dag$ is the constant coefficient, this topology on the abelian group $\Omega_{A^\dag_K}^{i}$ has a basis of open neighborhood of $0$ given by the subgroups $p^n\Omega_{A^\dag}^{i}$
for all $n$ (because $p^n\Omega_{A^\dag}^{i}= p^n\Omega_{\widehat{A}}^{i} \cap \Omega_{A_K^\dag}^{i}$).

\spa

We denote by $\overline{\im} \nabla_{M^\dag}^{i-1}$ the closure of $\im \nabla_{M^\dag}^{i-1}$ in $\Omega_{A^\dag_K}^{i}$ for the affinoid topology (beware that $\nabla_{M^\dag}^{i-1}$ is strict with closed image for the fringe topology (as in \cite{kedlaya06}) and not the affinoid topology, see \cite[Cor. 3.2]{GK04}.

\end{empt}
\begin{defi}
We set 
$$H^i_{\mr{rig}} (X_0,M^\dag)^{\sep}\coloneqq \Ker \nabla_{M^\dag}^{i}/\overline{\im} \nabla_{M^\dag}^{i-1}.$$

Note that if $B$ is another smooth  lift of $A_0$, then we get a (non-canonical) isomorphism
$A^\dag \riso B^\dag$. Hence, $H^i_{\mr{rig}} (X_0,M^\dag)^{\sep}$ is well-defined and is the maximal separated quotient of the $i$th rigid cohomological space of $M^\dag$. Similarly, 
$$N^i_{\rig} (X_0,M^\dag)\coloneqq \overline{\im} \nabla_{M^\dag}^{i-1}/\im \nabla_{M^\dag}^{i-1}$$
is well defined and it is the $p$-adic closure of $0$ ot the $i$th rigid cohomology group of $M^\dag$. They fit into the exact sequence
$$0 \to N^i_{\rig} (X_0,M^\dag)\to H^i_{\rig} (X_0,M^\dag)\to H^i_{\mr{rig}} (X_0,M^\dag)^{\sep} \to 0$$
\end{defi}

\begin{nota}\label{nota-MW}Let $(E^{\dag}, \nabla_{E^{\dag}})$ be an integral overconvergent flat connection over $X_0/\cV$. Taking the $p$-adic completion of $E^\dag$, we get a locally free  flat connection on $X_0/\cV$ of finite rank denoted by $({E}, \nabla_{{E}})$. We denote by $V^i_\mr{MW}(X_0,E^\dag)$ and $\bar{H}^i_\mr{MW}(X_0,E^\dag)$ the kernel and image of the comparison map $$H^i_\mr{MW}(X_0,E^\dag)\to H^i_\dR(\mathfrak{X},{E}).$$ 
    

\end{nota}

\begin{lemm}\label{lemmHirig} With notation as in \S \ref{nota-MW}, we have the equality
     $$N^i_{\mr{rig}}(X_0,E^\dag_K)=(H^i_\mr{MW}(X_0,E^\dag)/\mr{tors})_\mr{div}.$$
\end{lemm}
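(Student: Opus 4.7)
The plan is to deduce this rigid analog from the abstract Lemma \ref{lemmHiconvpre} in exactly the same way as the convergent case (Lemma \ref{lemmHiconv}). First I would apply Lemma \ref{lemmHiconvpre} with $V' = E^\dag \otimes_{A^\dag} \Omega^{i-1}_{A^\dag}$, $V = E^\dag \otimes_{A^\dag} \Omega^i_{A^\dag}$, $V'' = E^\dag \otimes_{A^\dag} \Omega^{i+1}_{A^\dag}$, and $u = \nabla_{E^\dag}^{i-1}$, $v = \nabla_{E^\dag}^i$. The condition $v\circ u = 0$ is the flatness of the connection, while $p$-torsion freeness of $V$ and $V''$ follows from the local freeness of finite rank of $E^\dag$ over the Noetherian $p$-torsion free ring $A^\dag$, together with the projectivity of $\Omega^j_{A^\dag}$. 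With this setup, $\Ker v / \im u$ is by definition $H^i_\mr{MW}(X_0, E^\dag)$ and $\overline{\im}(u_K)/\im(u_K)$ is by definition $N^i_\mr{rig}(X_0, E^\dag_K)$.

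The technical point is to check that the topology used in Lemma \ref{lemmHiconvpre}, whose basis of neighborhoods of zero in $V_K = E^\dag_K \otimes_{A^\dag_K} \Omega^i_{A^\dag_K}$ consists of $\{p^n \alpha(V)\}_{n \geq 0}$, coincides with the affinoid topology of \S\ref{ntn-Hrig-s-ns}. Unwinding the definition of the affinoid topology, this reduces to the identity
\[
p^n(E^\dag \otimes_{A^\dag} \Omega^i_{A^\dag}) = (E^\dag_K \otimes_{A^\dag_K} \Omega^i_{A^\dag_K}) \cap p^n(E \otimes_{\widehat{A}} \Omega^i_{\widehat{A}})
\]
inside $E \otimes_{\widehat{A}} \Omega^i_{\widehat{A}}$, which is a parametric version of the identity $p^n\Omega^i_{A^\dag} = p^n\Omega^i_{\widehat{A}} \cap \Omega^i_{A^\dag_K}$ already invoked in \S\ref{ntn-Hrig-s-ns} for the constant coefficient case. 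It follows from the fact that $A^\dag$ is $p$-adically separated as a subring of its completion $\widehat{A}$, combined with the local freeness of $E^\dag$ over $A^\dag$.

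Once this matching of topologies is established, Lemma \ref{lemmHiconvpre} yields the equality
$$N^i_\mr{rig}(X_0, E^\dag_K) = (H^i_\mr{MW}(X_0, E^\dag)/\mr{tors})_\mr{div}$$
directly. The main obstacle, which is really a minor bookkeeping verification rather than a substantive difficulty, is the topological comparison above; once it is in hand, the statement follows purely formally from the abstract lemma, just as in the convergent case.
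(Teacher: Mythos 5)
Your proposal is correct and follows the same route as the paper: the paper's proof consists precisely of observing that the affinoid topology on $E^\dag_K \otimes_{A^\dag_K}\Omega^i_{A^\dag_K}$ has the subgroups $p^n E^\dag\otimes_{A^\dag}\Omega^i_{A^\dag}$ as a basis of neighbourhoods of $0$ and then invoking Lemma \ref{lemmHiconvpre}, exactly as you do. Your additional verification of the lattice identity $p^n(E^\dag\otimes_{A^\dag}\Omega^i_{A^\dag}) = (E^\dag_K\otimes_{A^\dag_K}\Omega^i_{A^\dag_K})\cap p^n(E\otimes_{\widehat{A}}\Omega^i_{\widehat{A}})$ is the coefficient version of the statement the paper records in \S\ref{ntn-Hrig-s-ns} and is a welcome, if routine, elaboration.
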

\begin{proof}
The affinoid topology on the abelian group $E^\dag_K \otimes_{A^\dag_K} \Omega_{A^\dag_K}^{i}$ has a basis of open neighborhood of $0$ given by the subgroups $p^nE^\dag \otimes_{A^\dag_K} \Omega_{A^\dag}^{i}$
for all $n$.
Hence, this follows from Lemma \ref{lemmHiconvpre}.
\end{proof}

\begin{lemm}\label{lem-rg-conv-ns}
Let $M^\dag$ be an $A^\dag_K$-module of finite type having an integral connection and a Frobenius structure.
Let $M\coloneqq M^\dag \otimes_{A^\dag_K}\widehat{A}_K$ be the associated convergent $F$-isocrystal on $X_0/K$. 
For any $i\in \bbN$ the following properties hold.
\begin{enumerate}
\item The topological $K$-vector space $\im \nabla_{M^\dag}^{i}$ is dense in  $\im \nabla_{M} ^{i}$  and the canonical map 
$$H^i_{\mr{rig}} (X_0,M^\dag)^{\sep} \to H^i_{\rm{conv}} (X_0,M)^{\sep}$$
is injective.
\item We have the equality $$\mr{Ker} ( H^i_{\rig} (X_0,M^\dag) \to H^i_{\conv} (X_0,M))
= \mr{Ker} ( N^i_{\rig} (X_0,M^\dag) \to N^i_{\conv} (X_0,M)).$$
\end{enumerate}

\end{lemm}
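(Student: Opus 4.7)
The central idea is that the overconvergent complex $\Omega^\bullet_{A^\dag_K}\otimes_{A^\dag_K}M^\dag$ sits as a dense subcomplex of the convergent complex $\Omega^\bullet_{\widehat{A}_K}\otimes_{\widehat{A}_K}M$ with the affinoid topology, because $A^\dag_K$ is dense in $\widehat{A}_K$ and $M=M^\dag\otimes_{A^\dag_K}\widehat{A}_K$ is of finite type, so that $\Omega^i_{A^\dag_K}\otimes M^\dag$ is dense in $\Omega^i_{\widehat{A}_K}\otimes M$. Moreover, by the very definition of the affinoid topology on the overconvergent side (see \S\ref{ntn-Hrig-s-ns}), the inclusion $\Omega^i_{A^\dag_K}\otimes M^\dag\hookrightarrow \Omega^i_{\widehat{A}_K}\otimes M$ is a topological embedding, so closures taken on the overconvergent side agree with the intersection of closures on the convergent side. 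The connection $\nabla_{M^\dag}$ is simply the restriction of $\nabla_M$ to this dense subspace.

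For part (1), the plan is as follows. First, I would show that $\im\nabla^i_{M^\dag}$ is dense in $\im\nabla^i_M$: given $\nabla_M^i(\xi)$ with $\xi\in\Omega^i_{\widehat{A}_K}\otimes M$, approximate $\xi$ in the affinoid topology by $\xi_n\in\Omega^i_{A^\dag_K}\otimes M^\dag$; then $\nabla^i_{M^\dag}(\xi_n)=\nabla^i_M(\xi_n)$ converges to $\nabla^i_M(\xi)$ because $\nabla_M$ is continuous for the affinoid topology ($\widehat{A}_K$-linear up to the continuous universal derivation). For the injectivity of the map on the maximal separated quotients, let $\omega\in\Ker\nabla^i_{M^\dag}$ be a class mapping to zero in $H^i_\conv(X_0,M)^\sep$. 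By definition, $\omega\in\overline{\im\nabla^{i-1}_M}$, where the closure is taken in $\Omega^i_{\widehat{A}_K}\otimes M$. By the density just established, this closure equals $\overline{\im\nabla^{i-1}_{M^\dag}}$ (still in $\Omega^i_{\widehat{A}_K}\otimes M$). Since $\omega\in\Omega^i_{A^\dag_K}\otimes M^\dag$ and the latter carries the induced topology, we conclude $\omega\in\overline{\im\nabla^{i-1}_{M^\dag}}$ in $\Omega^i_{A^\dag_K}\otimes M^\dag$, hence $\omega$ vanishes in $H^i_\rig(X_0,M^\dag)^\sep$.

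For part (2), the inclusion $\supseteq$ is tautological since $N^i_\rig\subseteq H^i_\rig$ and the map $H^i_\rig\to H^i_\conv$ carries $N^i_\rig$ into $N^i_\conv$ by continuity. Conversely, assume $[\omega]\in H^i_\rig(X_0,M^\dag)$ lies in the kernel of the comparison map. Then $\omega=\nabla^{i-1}_M(\tau)$ for some $\tau\in\Omega^{i-1}_{\widehat{A}_K}\otimes M$. Approximating $\tau$ by elements $\tau_n\in\Omega^{i-1}_{A^\dag_K}\otimes M^\dag$, the argument of part (1) shows $\omega\in\overline{\im\nabla^{i-1}_{M^\dag}}$, so $[\omega]\in N^i_\rig(X_0,M^\dag)$; of course its image in $H^i_\conv$ is zero, a fortiori its image in $N^i_\conv$, yielding the equality.

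The only subtle point is to make sure that the topology-comparison goes through cleanly: one must verify that the closure on the overconvergent side truly coincides with the intersection of the closure on the convergent side with $\Omega^i_{A^\dag_K}\otimes M^\dag$. This is immediate from the definition of the affinoid topology in \S\ref{ntn-Hrig-s-ns} (induced topology), and is the place where the specific choice of topology (affinoid, rather than the fringe topology of \cite{GK04}) is essential.
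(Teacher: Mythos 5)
Your proposal is correct and follows essentially the same route as the paper: density of the overconvergent forms in the convergent ones for the affinoid topology, the resulting equality $\Ker \nabla_{M^\dag}^{i} \cap \overline{\im}\, \nabla_{M}^{i-1} = \overline{\im}\, \nabla_{M^\dag}^{i-1}$ (using that the overconvergent topology is the induced one), and then part (2). The only cosmetic difference is that the paper deduces (2) from (1) via the snake lemma applied to the two short exact sequences $0\to N^i\to H^i\to H^{i,\sep}\to 0$, whereas you unwind the same diagram chase by hand.
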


\begin{proof}
Using $\Omega_{A^\dag_K}^{i} \otimes_{A^\dag_K} M^\dag$ is dense in $\Omega_{\widehat{A}_K}^{i} \otimes_{\widehat{A}_K} M$, we get  
$\im \nabla_{M^\dag}^{i}$ is dense in  $\im \nabla_{M} ^{i}$, the equality $$\Ker \nabla_{M^\dag}^{i} \cap \overline{\im} \nabla_{M}^{i-1} = \overline{\im} \nabla_{M^\dag}^{i-1},$$ which yields the first assertion. We get the second one via the snake lemma. 
\end{proof}

\begin{rema}With notation as \S \ref{lem-rg-conv-ns}, beware that the inclusion $\overline{\im} \nabla_{M^\dag}^{i-1} \subseteq \im \nabla_{M}^{i-1} $ does not hold and
$N^i_{\rig} (X_0,M^\dag) \to N^i_{\conv} (X_0,M)$ is not injective in general.
For instance, if $X_0$ is an affine elliptic curve the containment $V^i_{\rig} (X_0/K) \subseteq N^i_{\rig} (X_0/K)$ is strict.
\end{rema}


\begin{lemm}\label{lem_lowdegreeMW_variant} 
For an integral overconvergent flat connection over $X_0/\calV$, the exact sequence $$0\to\bar{H}^i_\mr{MW}(X_0,E^\dagger)/\mr{tors}\to \left(H^i_\dR(\mathfrak{X},E)/\mr{tors}\right)\oplus\bar{H}^i_\mr{MW}(X_0,E^\dagger)_\Q \to H^i_\dR(\mathfrak{X},E)_\Q .$$
\end{lemm}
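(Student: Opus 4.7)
I plan to mimic the argument producing the third exact sequence \eqref{third_es} of Corollary~\ref{cor_torsion}, substituting the de Rham vs.\ crystalline setting by the Monsky--Washnitzer vs.\ (formal) de Rham setting. Applying cohomology to the distinguished triangle of Proposition~\ref{prop_pullbacksquareMW}, one obtains the long exact sequence
\[
\cdots \to H^{i-1}_\dR(\mathfrak{X},E)_\Q \to H^i_\mr{MW}(X_0,E^\dagger) \xrightarrow{\alpha} H^i_\dR(\mathfrak{X},E) \oplus H^i_\mr{MW}(X_0,E^\dagger)_\Q \xrightarrow{\beta} H^i_\dR(\mathfrak{X},E)_\Q,
\]
where $\alpha(y) = (f(y), y_\Q)$, with $f$ the comparison map of Notation~\ref{nota-MW}, and $\beta(a,b) = a_\Q - h(b)$, where $h \colon H^i_\mr{rig}(X_0,E^\dagger_K) \to H^i_\mr{conv}(X_0,E)$ denotes the rigid-to-convergent comparison.

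The next step is to observe that $\beta$ annihilates the subgroup $H^i_\dR(\mathfrak{X},E)[p^\infty] \oplus V^i_\mr{MW}(X_0,E^\dagger)_\Q$: on the first summand because $a_\Q = 0$ whenever $a$ is torsion, and on the second because tensoring the defining sequence $0 \to V^i_\mr{MW}(X_0,E^\dagger) \to H^i_\mr{MW}(X_0,E^\dagger) \to \bar{H}^i_\mr{MW}(X_0,E^\dagger) \to 0$ with $\Q$ shows that $V^i_\mr{MW}(X_0,E^\dagger)_\Q$ is precisely the kernel of $h$. Consequently, $\beta$ factors through the quotient $(H^i_\dR(\mathfrak{X},E)/\mr{tors}) \oplus \bar{H}^i_\mr{MW}(X_0,E^\dagger)_\Q$.

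Finally, I would trace the image of $\alpha$ inside this quotient: an element $y \in H^i_\mr{MW}(X_0,E^\dagger)$ is sent to $(f(y) \bmod \mr{tors},\,(f(y))_\Q)$, i.e.\ to the graph of the canonical rationalization map $\bar{H}^i_\mr{MW}(X_0,E^\dagger)/\mr{tors} \to \bar{H}^i_\mr{MW}(X_0,E^\dagger)_\Q$. Projecting onto the first coordinate identifies this image with $\bar{H}^i_\mr{MW}(X_0,E^\dagger)/\mr{tors}$ embedded diagonally, producing the desired leftmost term. Exactness in the middle is then $\ker\beta = \im\alpha$ inherited from the long sequence, and injectivity of the first arrow is automatic since its first projection is the tautological inclusion $\bar{H}^i_\mr{MW}(X_0,E^\dagger)/\mr{tors} \hookrightarrow H^i_\dR(\mathfrak{X},E)/\mr{tors}$. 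The argument is essentially formal bookkeeping; the only step deserving genuine care is the identification $\ker h = V^i_\mr{MW}(X_0,E^\dagger)_\Q$, which reduces as indicated to the exactness of $(-)\otimes\Q$ and to the injectivity of $\bar{H}^i_\mr{MW}(X_0,E^\dagger)_\Q \hookrightarrow H^i_\dR(\mathfrak{X},E)_\Q$.
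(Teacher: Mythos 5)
Your argument is correct and is exactly the paper's proof: the paper's own justification is the one-line remark that the sequence follows from Proposition \ref{prop_pullbacksquareMW} by the same modding-out procedure used to derive \eqref{third_es} in Corollary \ref{cor_torsion}, which is precisely the bookkeeping you carry out (quotient the middle term of the long exact sequence by $H^i_\dR(\mathfrak{X},E)[p^\infty]\oplus V^i_\mr{MW}(X_0,E^\dagger)_\Q$ and identify the image of the connecting piece with the graph of $\bar{H}^i_\mr{MW}(X_0,E^\dagger)/\mr{tors}\to \bar{H}^i_\mr{MW}(X_0,E^\dagger)_\Q$). The only point worth flagging is that you correctly use the integral term $R\Gamma_\dR(\mathfrak{X},E)$ in the middle of the distinguished triangle, as produced by the proof of Proposition \ref{prop_pullbacksquareMW}, rather than the rationalised term appearing in its statement.
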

\begin{proof}
    This follows from Proposition \ref{prop_pullbacksquareMW}, as in Lemma \ref{third_es}.
\end{proof}

\begin{prop}\label{prop_relationns}
Let $(E^{\dag}, \nabla_{E^{\dag}})$ be an integral overconvergent flat connection over $X_0/\cV$. 
    We have the following cartesian square
    	\begin{equation}
		\label{diag_pullbackMW}
\begin{tikzcd}[row sep=large, column sep=large]
	N^i_{\mr{rig}}(X_0,E^{\dag}_K ) \arrow[r] \arrow[d] 
	& N^i_{\mr{conv}}(X_0,\widehat{E}_K) \arrow[d] \\
	H^i_{\mr{rig}}(X_0, E^{\dag}_K) \arrow[r] 
	& H^i_{\mr{conv}}(X_0,\widehat{E}_K).
\end{tikzcd}
	\end{equation}
\end{prop}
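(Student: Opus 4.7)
The plan is to reduce the cartesianness of the square to the injectivity assertion of Lemma \ref{lem-rg-conv-ns}(1). Since both vertical maps $N^i_{\mr{rig}}\hookrightarrow H^i_{\mr{rig}}$ and $N^i_{\mr{conv}}\hookrightarrow H^i_{\mr{conv}}$ are injective (being defined as kernels of the projections onto the maximal separated quotients), the fibre product $H^i_{\mr{rig}}(X_0, E^{\dag}_K) \times_{H^i_{\mr{conv}}(X_0,\widehat{E}_K)} N^i_{\mr{conv}}(X_0,\widehat{E}_K)$ is canonically identified with the preimage of $N^i_{\mr{conv}}(X_0,\widehat{E}_K)$ under the comparison map $H^i_{\mr{rig}}\to H^i_{\mr{conv}}$. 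So the task reduces to proving that a class $x\in H^i_{\mr{rig}}(X_0,E^{\dag}_K)$ lies in $N^i_{\mr{rig}}(X_0,E^{\dag}_K)$ if and only if its image in $H^i_{\mr{conv}}(X_0,\widehat{E}_K)$ lies in $N^i_{\mr{conv}}(X_0,\widehat{E}_K)$.

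The $(\Rightarrow)$ direction is immediate from the commutativity of the square, which is itself a consequence of the continuity of the comparison map with respect to the affinoid topology. For the $(\Leftarrow)$ direction, I would use the commutative diagram
\begin{equation*}
\begin{tikzcd}[row sep=large, column sep=large]
H^i_{\mr{rig}}(X_0, E^{\dag}_K) \arrow[r] \arrow[d,twoheadrightarrow]
& H^i_{\mr{conv}}(X_0,\widehat{E}_K) \arrow[d,twoheadrightarrow] \\
H^i_{\mr{rig}}(X_0, E^{\dag}_K)^{\sep} \arrow[r,hookrightarrow]
& H^i_{\mr{conv}}(X_0,\widehat{E}_K)^{\sep},
\end{tikzcd}
\end{equation*}
where the bottom horizontal arrow is injective by Lemma \ref{lem-rg-conv-ns}(1). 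If $x\in H^i_{\mr{rig}}$ maps into $N^i_{\mr{conv}}$, then its image in $H^i_{\mr{conv}}(X_0,\widehat{E}_K)^{\sep}$ vanishes, hence so does its image in $H^i_{\mr{rig}}(X_0,E^{\dag}_K)^{\sep}$ by injectivity; equivalently $x\in N^i_{\mr{rig}}(X_0,E^{\dag}_K)$, as required.

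The proof is essentially a formal consequence of the density statement established in Lemma \ref{lem-rg-conv-ns}, so there is no real obstacle beyond unpacking the definitions; the genuine content of this cartesian property lives in the injectivity $H^i_{\mr{rig}}(X_0,E^{\dag}_K)^{\sep}\hookrightarrow H^i_{\mr{conv}}(X_0,\widehat{E}_K)^{\sep}$, which in turn rests on the density of $\Omega^i_{A^\dag_K}\otimes E^\dag$ in $\Omega^i_{\widehat{A}_K}\otimes \widehat{E}$ for the affinoid topology.
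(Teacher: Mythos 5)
Your proof is correct, but it is genuinely different from the paper's. You reduce cartesianness to a formal diagram chase: since both vertical arrows are the inclusions of the kernels of the projections onto the maximal separated quotients, the square is cartesian precisely when the induced map $H^i_{\mr{rig}}(X_0,E^\dag_K)^{\sep}\to H^i_{\mr{conv}}(X_0,\widehat E_K)^{\sep}$ is injective, which is Lemma \ref{lem-rg-conv-ns}(1). The paper instead rewrites the two groups $N^i$ via Lemmas \ref{lemmHiconv} and \ref{lemmHirig} as the strongly $p$-divisible parts $(H^i_{\MW}(X_0,E^\dag)/\mr{tors})_{\mr{div}}$ and $(H^i_{\dR}(\fX,E)/\mr{tors})_{\mr{div}}$ of the integral cohomologies, and proves the equivalent exactness of $0\to(H^i_{\MW}/\mr{tors})_{\mr{div}}\to(H^i_{\dR}/\mr{tors})_{\mr{div}}\oplus H^i_{\mr{rig}}\to H^i_{\mr{conv}}$ by an explicit lifting-of-divisible-sequences argument resting on the Beauville--Laszlo triangle of Proposition \ref{prop_pullbacksquareMW}. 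Your route is shorter and purely topological; the paper's keeps the integral Monsky--Washnitzer lattice in view, which is the form in which the proposition is used downstream (Corollaries \ref{cor_ES20} and \ref{cor_RigidSlopeRSeparated}). Two points to tidy up: (a) Lemma \ref{lem-rg-conv-ns} is stated for coefficients equipped with a Frobenius structure, whereas the proposition only assumes an integral overconvergent flat connection, so you should either observe that the Frobenius plays no role in the proof of that lemma (the injectivity on separated quotients only uses the density of $\Omega^i_{A^\dag_K}\otimes E^\dag_K$ in $\Omega^i_{\widehat A_K}\otimes\widehat E_K$ and the continuity of $\nabla$) or reprove that injectivity directly in your setting; (b) you should justify the commutativity you invoke, namely that the comparison map carries $\overline{\im}\,\nabla^{i-1}_{E^\dag_K}$ into $\overline{\im}\,\nabla^{i-1}_{\widehat E_K}$ --- this holds because the affinoid topology on the dagger forms is by definition the subspace topology induced from the completed forms.
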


\begin{proof} 
To lighten the notation, we remove $(X_0,E^{\dag}_K)$,
$(X_0,E^{\dag})$, $(X_0,\widehat{E})$ or $(X_0,\widehat{E})$ in the notation. By Lemma \ref{lemmHiconv} and Lemma \ref{lemmHirig}, this is equivalent to check the exactness of the sequence
\begin{equation} \label{diag_pullbackMW-proof1}
0\to (H^i_\mr{MW}/\mr{tors})_\mr{div}\stackrel a \to (H^i_\dR/\mr{tors})_\mr{div} \oplus H^i_{\mr{rig}}\stackrel b \to H^i_{\conv}.
\end{equation}
Set $K^i_\dR\coloneqq \Ker \nabla_{\widehat{E}}^{i}$, $K^i_{\mr{MW}}\coloneqq \Ker \nabla_{E^\dag}^{i}$,
$K^i_{\conv} \coloneqq \Ker \nabla_{\widehat{E}_K}^{i}$, and $K^i_{\rig}\coloneqq \Ker \nabla_{E^\dag_K}^{i}$.
For $\star \in \{ \rm{MW}, \dR\}$, let $\varpi_{\star} \colon K^i_\star \to H^i_\star /\mr{tors}$ be the projections. 
For $\star \in \{ \rm{MW}, \dR,\rig,\conv\}$, let and  $[-]_{\star} \colon K^i_{\star} \to H^i_{\star}$ be the projections.

\spa

(i) Injectivity of $a$. Let $\iota \colon K^i_{\rm{MW}}  \to K^i_{\rig}$, $\iota \colon H^i_{\rm{MW}}  \to H^i_{\rig}$ be the canonical maps. 
Let $x\in K^i_{\rm{MW}}$. If $\iota ([x]_{\MW}) = 0$, then for some integer $N$ large enough, we get $p^N [x]_{\MW}=0$. This yields the injection $H^i_{\rm{MW}} /\tors \to H^i_{\rig}$, and we are done.

\spa

(ii) By construction of \eqref{diag_pullbackMW-proof1}, we have $b\circ a=0$. It remains to check $\mr{Ker} b \subseteq \im a$.
Let $x \in K^i_{\dR}$, $y\in K^i_{\rig}$ such that $\varpi_{\dR} (x) \in (H^i_\dR/\mr{tors})_\mr{div}$ and such that the images of $[x]_{\dR}$ and $[y]_{\rig}$
in $H^i_{\conv}$ agree. It follows from Proposition \ref{prop_pullbacksquareMW} that there exists $z \in K^i_{MW}$ such that the image of $[z]_{\MW}$ 
via $H^i_\mr{MW} \to H^i_\dR \oplus H^i_{\mr{rig}}$ is $([x]_{\dR},[y]_{\rig})$.
Since $\varpi_{\dR} (x) \in (H^i_\dR/\mr{tors})_\mr{div}$, then for any $n\in \bbN$, there exists $x_n \in K^i_\dR$
such that $\varpi_{\dR} (x) = p^n \varpi_{\dR}( x_n)$. Since $[x_n]_{\dR}$ and $[p^{-n} y]_{\rig}$ have the same image in $H^i_{\conv}$, then there exists 
$z_n \in K^i_{\MW}$ such that $[z_n]_{\MW}$ induces $[x_n]_{\dR}$ and $[p^{-n} y]_{\rig}$.
Using the injectivity of the map $a$, this shows that $\varpi_{\MW} (z) = \varpi_{\MW} ( p^n z_n)= p^n \varpi_{\MW} ( z_n)$. Hence, 
$$\varpi_{\MW} (z) \in \bigcap_{n\in \bbN} p^n  (H^i_\mr{MW}/\mr{tors}) =   (H^i_\mr{MW}/\mr{tors})_\mr{div},$$ 
which concludes the proof.
%
\end{proof}

\subsection{Vanishing in the separated quotient}


In this section, we prove that a certain subspace of convergent cohomology is always contained in $N^i_{\mr{conv}}.$ We give two proofs. In the second one, we use an extended version of syntomic cohomology (Definition \ref{dfn-TijE}).

\begin{theo}\label{thm_slopeiminus1-variant} If $X_0$ is a smooth affine scheme over $k$ and  $\cE$ is a convergent $F$-isocrystal over $X_0/W$ of pure slope $\gamma$,
then $$H^i_\mr{conv}(X_0,\calE)^{[\gamma ,\gamma +i)}\subseteq N^i_{\mr{conv}}(X_0,\calE)$$
for $i\geq 1$. 

\end{theo}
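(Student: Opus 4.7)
The plan is to twist with the fractional Tate object to reduce to the unit-root case, and then to give a direct argument based on iterating the crystalline Frobenius. By Lemma \ref{lemmHiconv}, the group $N^i_\mr{conv}(X_0,\calE)$ equals $(H^i(\mr{dR}^\bullet_E)/\mr{tors})_\mr{div}$ whenever $E$ is an integral lattice for $\calE$, so the task reduces to showing that each slope $\gamma'\in[\gamma,\gamma+i)$ class is strongly $p$-divisible in $H^i(\mr{dR}^\bullet_E)/\mr{tors}$. Tensoring $\calE$ with $\Qp(r,s)$ from Definition \ref{dfn-Q(r,s)}, for the pair $(r,s)$ with $\gamma=r/s$ and $\gcd(r,s)=1$, produces a unit-root convergent $F$-isocrystal over the base $\Lambda^s=\Zp[\pi]/(\pi^s-p)$, and the interval $[\gamma,\gamma+i)$ of slopes for $\calE$ corresponds to $[0,i)$ for the twist. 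Since both $N^i_\mr{conv}$ and the slope decomposition are compatible with this tensor extension, I reduce to the case $\gamma=0$, i.e.\ $\calE=E_K$ for a unit-root $F$-crystal $E$.

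By faithfully flat descent I may also assume $k$ algebraically closed. By the Dieudonn\'e--Manin classification (Definition \ref{DMC-crew}), $H^i_\mr{conv}(X_0,\calE)^{[\gamma']}$ is then spanned by eigenvectors $v$ satisfying $F^s v=p^r v$ for coprime integers $r,s$ with $\gamma'=r/s$ and $s>0$. Fix such a $v$ together with a closed integral representative $\tilde v\in Z^i(\mr{dR}^\bullet_E)$ (after clearing a possible $p$-denominator). Because $E$ is unit-root and $F(\Omega^j_{\widehat A/W})\subseteq p^j\Omega^j_{\widehat A/W}$, one has $F(\mr{dR}^j_E)\subseteq p^j\,\mr{dR}^j_E$, and consequently $F^{ns}(\tilde v)\in p^{ins}\,\mr{dR}^i_E$ for every $n\geq 0$. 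Since $F$ commutes with $d$ and $\mr{dR}^{i+1}_E$ is $p$-torsion free, I may write $F^{ns}(\tilde v)=p^{ins}\tilde v_n$ with $\tilde v_n\in Z^i(\mr{dR}^\bullet_E)$ closed. The iterated eigenvalue identity $F^{ns}v=p^{nr}v$ then reads $p^{nr}[\tilde v]=p^{ins}[\tilde v_n]$ in $H^i(\mr{dR}^\bullet_E)_K$, and dividing by $p^{nr}$ in the $p$-torsion free quotient $H^i(\mr{dR}^\bullet_E)/\mr{tors}$ gives $[\tilde v]=p^{n(is-r)}[\tilde v_n]$. The bound $\gamma'<i$ forces $is-r>0$, so $[\tilde v]$ is divisible by arbitrarily large powers of $p$, placing it in $(H^i(\mr{dR}^\bullet_E)/\mr{tors})_\mr{div}=N^i_\mr{conv}(X_0,\calE)$, and $v$ itself lies in $N^i_\mr{conv}(X_0,\calE)$ since this is a $K$-subspace.

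The main obstacle will be justifying the reduction to the unit-root case: I need to verify that the twist by $\Qp(r,s)$ and the scalar extension to $\Lambda^s$ are compatible both with the integral $F$-crystal structure required for the iteration above and with the formation of $N^i_\mr{conv}$ as the $p$-adic closure of $0$ for the affinoid topology. This is where Hypothesis \ref{Hyporig} enters, ensuring $W$ is unramified so that the extension to $\Lambda^s$ does not introduce further ramification complications. An alternative approach, which will be pursued as a second proof, would bypass the twist by proceeding through the extended syntomic cohomology of Definition \ref{dfn-TijE} together with the slope vanishings of Proposition \ref{Prop_IntegralBounds} in the fractional setting, at the cost of more involved bookkeeping on the fractional Nygaard filtration.
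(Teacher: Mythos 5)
Your proposal is correct and follows essentially the same route as the paper's first proof: the same reduction to the unit-root case by twisting with $\Qp(r,s)$, and the same exploitation of the fact that the divided Frobenius $F^s/p^r$ is integrally defined and topologically nilpotent on $\mr{dR}^j_E$ for $j\geq i$ when $r/s<i$. The only difference is one of packaging — the paper inverts $1-F^s/p^r$ on the maximal separated quotient via the geometric series, whereas you iterate Frobenius on an integral representative to exhibit infinite $p$-divisibility in $H^i(\mr{dR}^\bullet_E)/\mr{tors}$ and invoke Lemma \ref{lemmHiconv}; these are two formulations of the same mechanism.
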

\begin{proof}
As in the proof of Proposition \ref{Prop_RationalBounds}, since 
$H^i_\mr{conv}(X_0,\calE(r,s))^{\sep}= H^i_\mr{conv}(X_0,\calE)^{\sep} (r,s)$, then 
we reduce to the case $\gamma= 0$.
It is sufficient to show that for $r\in \bbN$ and $s \in \bbZ_{>0}$ such that $r/s<i$, then the map 
$$F^s -p^r\colon H^i_\mr{conv}(X_0,\calE)^{\sep}  \to H^i_\mr{conv}(X_0,\calE)^{\sep} $$ is an isomorphism. 
Let $E$ be a unit-root $F$-crystal such that $E_K=\calE$. 
Since $\gamma <i$, then for any $j \geq i$, the map $F^s/p^r\colon \mr{dR}^{j}_{E}\to \mr{dR}^{j}_{E}$ is well-defined and its image is included in $p\mr{dR}^{j}_{E}$.    

\spa

Since $\mr{dR}^{j}_{E}$ is complete for the $p$-adic topology, for any $j \geq i$ the map $1-F^s/p^r\colon \mr{dR}^{j}_{E}\to \mr{dR}^{j}_{E}$ is an isomorphism. Tensoring with $K$, this shows that the map $1-F^s/p^r\colon \mr{dR}^{j}_{\cE}\to \mr{dR}^{j}_{\cE}$ is an isomorphism.

\spa

Thanks to the fact that $\nabla^{i-1}_{\calE}$ commutes with $1-F^s/p^r$, then 
$1-F^s/p^r$ sends $\im \nabla^{i-1}_{\calE}$ into   $\im \nabla^{i-1}_{\calE}$.
Since $1-F^s/p^r$ is continuous, then 
$1-F^s/p^r$ sends 
$\overline{\im} \nabla^{i-1}_{\calE}$ to $\overline{\im} \nabla^{i-1}_{\calE}$.
Moreover, we have $$(1-F^s/p^r)^{-1}=\sum_{n\in \bbN} (F^s/p^r)^n .$$ Using the fact that $\sum_{n=0}^N (F^s/p^r)^n$
preserves $\im \nabla^{i-1}_{\calE}$, then $(1-F^s/p^r)^{-1}$ sends 
$\im \nabla^{i-1}_{\calE}$ (and therefore by continuity $\overline{\im} \nabla^{i-1}_{\calE}$) into $\overline{\im} \nabla^{i-1}_{\calE}$. Hence, we are done.
\end{proof}
Thanks to Theorem \ref{thm_slopeiminus1-variant}, we shed new light on the examples in \cite{ES20}.
\begin{coro}\label{cor_ES20}
    Let  $X_0$ be a smooth affine $k$-scheme and let $\calE^\dagger$ be a unit-root overconvergent $F$-isocrystal over $X_0/K$. If $E^\dagger$ is an integral overconvergent flat connection such that $E_K^\dagger=\calE^\dagger$, there exists a natural $W$-linear injective morphism $$H^i_\mr{rig}(X_0,E^{\dag}_K)^{[0,i)}\hookrightarrow H^i_{\mr{MW}}(X_0,E^{\dag})/\mr{tors}$$
   for every $i\geq 1$. In addition, the rationalised cokernel is isomorphic to $H^i_\mr{rig}(X_0,E^{\dag}_K)^{[i]}$. \end{coro}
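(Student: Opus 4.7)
The plan is to combine the separation result of Theorem~\ref{thm_slopeiminus1-variant} with the cartesian square of Proposition~\ref{prop_relationns} and the identification of $N^i_{\mr{rig}}$ provided by Lemma~\ref{lemmHirig}. Since $\calE^{\dag}$ is a unit-root overconvergent $F$-isocrystal, its $p$-adic completion $\calE$ is a unit-root convergent $F$-isocrystal of pure slope $0$. By Theorem~\ref{thm_slopeiminus1-variant} applied with $\gamma=0$, we have
$$H^i_\mr{conv}(X_0,\calE)^{[0,i)} \subseteq N^i_\mr{conv}(X_0,\calE).$$
Since the comparison morphism $H^i_\mr{rig}(X_0,\calE^\dag)\to H^i_\mr{conv}(X_0,\calE)$ is Frobenius-equivariant, it sends $H^i_\mr{rig}(X_0,\calE^\dag)^{[0,i)}$ into $H^i_\mr{conv}(X_0,\calE)^{[0,i)}$, hence into $N^i_\mr{conv}$.

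Now I would invoke the cartesian square of Proposition~\ref{prop_relationns}: because both vertical arrows $N^i_{\mr{rig}}\hookrightarrow H^i_{\mr{rig}}$ and $N^i_{\mr{conv}}\hookrightarrow H^i_{\mr{conv}}$ are inclusions, being cartesian means that $N^i_{\mr{rig}}$ is precisely the preimage of $N^i_\mr{conv}$ inside $H^i_\mr{rig}$. Combining with the previous step yields the containment
$$H^i_\mr{rig}(X_0,\calE^\dag)^{[0,i)}\subseteq N^i_\mr{rig}(X_0,\calE^\dag).$$
By Lemma~\ref{lemmHirig}, the natural map $H^i_\mr{MW}(X_0,E^\dag)/\mr{tors}\to H^i_\mr{rig}(X_0,\calE^\dag)$ (which is injective, since any class in the kernel is killed by a power of $p$ and therefore torsion in $H^i_\mr{MW}/\mr{tors}$) identifies the uniquely $p$-divisible part $(H^i_\mr{MW}(X_0,E^\dag)/\mr{tors})_\mr{div}$ with $N^i_\mr{rig}(X_0,\calE^\dag)$. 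Composing the inclusion above with this identification in reverse produces the desired natural $W$-linear injection
$$H^i_\mr{rig}(X_0,\calE^\dag)^{[0,i)}\hookrightarrow (H^i_\mr{MW}(X_0,E^\dag)/\mr{tors})_\mr{div}\hookrightarrow H^i_\mr{MW}(X_0,E^\dag)/\mr{tors}.$$

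For the cokernel, tensoring with $K$ is exact and $(H^i_\mr{MW}(X_0,E^\dag)/\mr{tors})\otimes_W K= H^i_\mr{rig}(X_0,\calE^\dag)$, so the rationalised cokernel is $H^i_\mr{rig}(X_0,\calE^\dag)/H^i_\mr{rig}(X_0,\calE^\dag)^{[0,i)}$. Because $\calE^\dag$ is unit-root, Corollary~\ref{cor_RationalBoundsvar}(2) (applied with $Y_0=X_0$, $q=i$, $\gamma>i$) gives $H^i_\mr{rig}(X_0,\calE^\dag)^{[\gamma]}=0$ for every $\gamma>i$. Hence by the Dieudonn\'e--Manin decomposition the only slopes in $H^i_\mr{rig}$ lying outside $[0,i)$ are those equal to $i$, and the quotient above is $H^i_\mr{rig}(X_0,\calE^\dag)^{[i]}$, as required.

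The main conceptual point—rather than a serious technical obstacle—is the passage from the separation inclusion in convergent cohomology to the corresponding inclusion in rigid cohomology; this is where the cartesian square of Proposition~\ref{prop_relationns} is indispensable. I would also need to be slightly careful that $(H^i_\mr{MW}/\mr{tors})_\mr{div}$ is a $K$-vector space (being uniquely $p$-divisible and torsion free, hence a $\Q_p$-module, with the $W$-action promoting it to a $K$-module), which is what makes the $K$-linear source map to a genuine $W$-submodule of $H^i_\mr{MW}/\mr{tors}$.
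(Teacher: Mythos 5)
Your argument is correct and follows essentially the same route as the paper: Theorem~\ref{thm_slopeiminus1-variant} to land in $N^i_{\mr{conv}}$, the cartesian square of Proposition~\ref{prop_relationns} to pull back to $N^i_{\mr{rig}}$, Lemma~\ref{lemmHirig} to identify $N^i_{\mr{rig}}$ with $(H^i_{\mr{MW}}/\mr{tors})_{\mr{div}}$, and Corollary~\ref{cor_RationalBoundsvar} for the cokernel. One small notational slip: in the last step Corollary~\ref{cor_RationalBoundsvar} should be invoked with $Y_0$ a proper compactification of $X_0$ (not $Y_0=X_0$, which in the paper's conventions yields convergent rather than rigid cohomology), and you implicitly use the standard non-negativity of slopes of rigid cohomology to discard slopes below $0$.
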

    \begin{proof}By Theorem \ref{thm_slopeiminus1-variant}, we know that $H^i_\mr{rig}(X_0,E^\dag_K)^{[0,i)}$ is sent to $N^i_\mr{conv}(X_0, \widehat{E}_K).$ By virtue of Proposition  \ref{prop_relationns}, we deduce the inclusion $$H^i_\mr{rig}(X_0,E^{\dag}_K)^{[0,i)}\subseteq N^i_\mr{rig}(X_0,E^{\dag}_K)\overset{\ref{lemmHirig}}{=}(H^i_{\mr{MW}}(X_0,E^{\dag})/\mr{tors})_{\mr{div}}.$$ This gives the desired result. For the last part, we use the slope bounds of Corollary \ref{cor_RationalBoundsvar}.
    \end{proof}

\begin{empt}Let us also explain a different proof of Theorem \ref{thm_slopeiminus1-variant} using \textit{extended syntomic cohomology}. Write $\fX=\Spf(\widehat{A})$ for a formal lift of $X_0$ over $W$ and write $F$ for a Frobenius lift $F\colon \fX\to \fX.$ Let $E$ be a unit-root $F$-crystal over $X_0/W$.  We keep the notation as in \S \ref{sec_FGauge}.
\end{empt}

\begin{defi}\label{dfn-TijE}
    We define \textit{extended syntomic cohomology} of $E(r)$ as the complex $$R\Gamma_\mr{esyn}(X_0,E(r))\coloneqq \Cocone\left( \mr{dR}^{\bullet,r}_E\xrightarrow{\varphi_r-\iota} \mr{dR}^{\bullet}_{E_K} \right).$$
We also write $$T_E^\bullet\coloneqq \Cocone\left( \mr{dR}^{\bullet}_E\to \mr{dR}^{\bullet}_{E_K} \right)\quad\text{and}\quad \TE^{i,j}\coloneqq H^i(\mr{dR}^{\bullet,j}_E)[p^\infty].$$ The group $\TE^{i,0}$ will also be denoted by $\TE^i$.  
\end{defi}
\begin{empt}
Note that for every $i$ we have 
\begin{equation}
    \label{exseq-Ci}0\to H^{i-1}_\cris(E)\otimes_{\Zp} \Qp/\Zp\to H^i(T^\bullet_E)\to \TE^i\to 0,
\end{equation}
where $\TE^i\coloneqq H^i_\cris(E)[p^\infty]$. In addition, there exists by the octahedron axiom a distinguished triangle
\begin{equation}\label{syn->esyn-tri}
    R\Gamma_\mr{syn}(X_0,E(r))\to R\Gamma_\mr{esyn}(X_0,E(r)) \to T^\bullet_E.
\end{equation}\end{empt}

\subsubsection{A different proof of Theorem \ref{thm_slopeiminus1-variant}.} Let us explain another proof of Theorem \ref{thm_slopeiminus1-variant}, at least for integral slopes. As before, we assume $k$ algebraically closed.

\begin{lemm}\label{lemmaHr+1phir-1=0}We have
$$H^{r+1}_\dR(X_n,E_n)^{\varphi_r-1}=0.$$
\end{lemm}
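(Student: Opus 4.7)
The plan is to exploit the divisibility of $F$ on top-degree forms, which makes $\varphi_r$ topologically $p$-nilpotent in cohomological degree $r+1$, and then to iterate a cocycle relation.

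First, I would note that $F$ maps $\mr{dR}^{r+1}_E = E \otimes \Omega^{r+1}_{\widehat{A}/W}$ into $p^{r+1}\mr{dR}^{r+1}_E$, since each of the $r+1$ differentials $dF(x_i)$ contributes a factor of $p$ (here $x_1,\dots,x_d$ are étale coordinates). Consequently $\varphi_r = F/p^r$ is not merely well-defined on $\mr{dR}^{r+1}_E$, it takes values in $p \cdot \mr{dR}^{r+1}_E$. Reducing mod $p^{n+1}$, the induced operator $\varphi_r$ on $\mr{dR}^{r+1}_{E_n}$ satisfies $\varphi_r^{n+1} = 0$. Simultaneously, $\varphi_r$ is well-defined as an endomorphism of $\mr{dR}^{r}_{E_n}$ (where $F$ is divisible by exactly $p^r$), and as a morphism of complexes it commutes with $d$.

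Next, observe that the natural map $H^{r+1}(\mr{dR}^{\bullet,r}_{E_n}) \to H^{r+1}_\dR(X_n,E_n)$ is an isomorphism because the subcomplex $\mr{dR}^{\bullet,r}_{E_n}$ agrees with $\mr{dR}^\bullet_{E_n}$ in degrees $\geq r$, so the cocycles and coboundaries in degree $r+1$ coincide. This is how $\varphi_r$ acts on $H^{r+1}_\dR(X_n,E_n)$.

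Now suppose $[\omega] \in H^{r+1}_\dR(X_n,E_n)$ satisfies $\varphi_r([\omega]) = [\omega]$. Lifting to a cocycle $\omega \in \mr{dR}^{r+1}_{E_n}$, there exists $\eta \in \mr{dR}^r_{E_n}$ with $\varphi_r(\omega) - \omega = d\eta$. Using $d\varphi_r = \varphi_r d$, induction on $k$ gives
\[
\varphi_r^{k+1}(\omega) - \omega = d\Bigl(\sum_{j=0}^{k} \varphi_r^{j}(\eta)\Bigr).
\]
Taking $k = n$ and using $\varphi_r^{n+1}(\omega) = 0$ in $\mr{dR}^{r+1}_{E_n}$ yields $\omega = -d\bigl(\sum_{j=0}^n \varphi_r^j(\eta)\bigr)$, hence $[\omega] = 0$.

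The only subtle point is the interaction of $\varphi_r$ with $d$ on the module $\mr{dR}^r_{E_n}$: one must verify that $\varphi_r$ preserves the complex structure on all of $\mr{dR}^{\geq r}_{E_n}$, which is built into Definition of $\varphi_r$ as a morphism of complexes from $\mr{dR}^{\bullet,r}_E$ to $\mr{dR}^\bullet_E$. There is no real obstacle — the argument is essentially the observation that a nilpotent endomorphism minus the identity is invertible, upgraded to the level of cohomology via the commutation $\varphi_r d = d\varphi_r$.
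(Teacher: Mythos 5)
Your proof is correct, and it reaches the conclusion by a more direct route than the paper. The shared key input is that $F$ is divisible by $p^{r+1}$ on $(r+1)$-forms, so $\varphi_r=F/p^r$ lands in $p\cdot\mr{dR}^{r+1}_{E_n}$ and is therefore nilpotent modulo a power of $p$; you then telescope the cocycle relation $\varphi_r(\omega)-\omega=d\eta$ to exhibit $\omega$ as an exact form, which is essentially the "$1-\varphi_r$ is invertible because $\varphi_r$ is nilpotent" argument already used in Step (ii) of the proof of Proposition \ref{Prop_IntegralBounds}. The paper instead runs an induction on $n$: the base case $n=1$ uses that $\varphi_r$ vanishes on $E_1\otimes\Omega^{r+1}_{X_1}$, and the inductive step uses the exact sequence $H^{r+1}_\dR(X_1,E_1)\xrightarrow{\underline{p}^{n-1}}H^{r+1}_\dR(X_n,E_n)\to H^{r+1}_\dR(X_{n-1},E_{n-1})$ together with the observation that $\varphi_r$ kills the image of $\underline{p}^{n-1}$. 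Your telescoping argument avoids the long exact sequence and handles in one stroke the only genuine subtlety, namely that $\varphi_r$ is \emph{not} nilpotent on $\mr{dR}^{r}_{E_n}$, so one cannot simply invert $\varphi_r-1$ degreewise on the whole truncated complex; the identity $\varphi_r(d\eta)=d(\varphi_r(\eta))$, which holds integrally by $p$-torsion-freeness of $\Omega^\bullet_{\widehat{A}}$ and descends mod $p^n$, is exactly what makes the telescoping legitimate. One small indexing remark: with the paper's conventions $X_n$ is the reduction modulo $p^n$, so the relevant nilpotence is $\varphi_r^{n}=0$ on $\mr{dR}^{r+1}_{E_n}$ rather than $\varphi_r^{n+1}=0$ after reducing mod $p^{n+1}$; this is harmless for the argument.
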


\begin{proof}
    We proceed by induction on $n$. Since $F/p^r \colon E_1\otimes \Omega_{X_1}^{r+1}\to E_1\otimes \Omega_{X_1}^{r+1}$ is trivial, then we get the case $n=1$.
    Thanks to the fact that  $$\mr{Im}(H^{r+1}_\dR(X_1,E_1)\xrightarrow{\underline{p}^{n-1}} H^{r+1}_\dR(X_n,E_n))$$
is killed by $\varphi_r$, it has trivial intersection with $H^{r+1}_\dR(X_n,E_n)^{\varphi_r-1}$. The vanishing then follows from the one of $H^1_\dR(X_{n-1},E_{n-1})^{\varphi_r-1}$
and the exact sequence
$$H^{r+1}_\dR(X_1,E_1)\xrightarrow{\underline{p}^{n-1}} H^{r+1}_\dR(X_n,E_n) \to H^{r+1}_\dR(X_{n-1},E_{n-1}).$$ 
\end{proof}

    Let $E$ be a unit-root $F$-crystal such that $E_K=\calE$. 

    \spa
(i) For $r= i-1$, let us  prove that $H^{r+1}_\syn(X_0,E(r))$ is sent to $p^nH^{r+1}_{\mr{esyn}}(X_0,E(r))$ for every $n\geq 0$.
In other words, we want to prove that the map 
$$H^{r+1}_\syn(X_0,E(r))/p^n\to H^{r+1}_\mr{esyn}(X_0,E(r))/p^n$$ is the zero map.
Using the change-of-coefficients exact sequence for  $R\Gamma_\mr{syn}(X_0,E(r))$ and $R\Gamma_\mr{esyn}(X_0,E(r))$ from $\Zp$ to $\Z/p^n\Z,$ we are reduced to prove that the left vertical arrow of the following diagram with exact rows vanishes
\begin{equation}\label{eq_nonsarated3} \xymatrix {&{H^{r+1} ( R\Gamma_\mr{syn}(X_0,E(r)) \otimes^L \Z/p^n\Z) } \ar[d]^-{} \ar[r] & {H^{r+1}(\mr{dR}^{\bullet,r}_{E}\otimes\Z/p^n\Z)} \ar@{=}[d] \ar[r]^-{\varphi_r-\iota} & {H^{r+1}(\mr{dR}^{\bullet}_{E}  \otimes\Z/p^n\Z)} \ar[d]
\\ {0}\ar[r]& {H^{r+1} ( R\Gamma_\mr{esyn}(X_0,E(r)) \otimes^L \Z/p^n\Z) } \ar[r] & {H^{r+1}(\mr{dR}^{\bullet,r}_{E}\otimes\Z/p^n\Z)}  \ar[r]^-{} &  {0}}.
\end{equation}
Hence, we are reduced to check that
$$H^{r+1}(\mr{dR}^{\bullet}_E\otimes\Z/p^n\Z)=H^{r+1}(\mr{dR}^{\bullet,r}_E\otimes \Z/p^n\Z)\xrightarrow{\varphi_r-1}H^{r+1}(\mr{dR}^{\bullet}_E\otimes \Z/p^n\Z)$$
is injective, which is proved in  Lemma \ref{lemmaHr+1phir-1=0}.
\spa


(ii) Consider the $\Zp$-module 
$$M\coloneqq \im \left( H^{r+1}_{\mr{esyn}}(X_0,E(r))\to H^{r+1}(\mr{dR}^{\bullet,r}_{E})\right).$$

Since $H^{r+1}_\syn(X_0,E(r))_\bbQ =H^{r+1}_{\mr{esyn}}(X_0,E(r))_\bbQ $ and  $H^{r+1}_\syn(X_0,E(r))$ is sent to $p^nH^{r+1}_{\mr{esyn}}(X_0,E(r))$ for every $n\geq 0$ (Step (i)),  then $M/\mr{tors}=M_\Q$.
Thanks to the fact that $M_\Q$ is $p$-divisible, this yields the inclusion $M_\bbQ \hookrightarrow  (H^{r+1}(\mr{dR}^{\bullet}_E)/\mr{tors})_\mr{div} $. We have that $$M= \mr{Ker} ( H^{r+1}(\mr{dR}^{\bullet,r}_{E}) \stackrel {\varphi_r-\iota} \longrightarrow H^{r+1}(\mr{dR}^{\bullet}_{E_K})),$$ which implies that $M_K=  H^{r+1}(\mr{dR}^{\bullet}_{E_K})^{[r]}$. Hence, we conclude thanks to Lemma \ref{lemmHiconv}.

\qed

\subsection{A separated subspace}
In contrast to the previous results, this section establishes positive results on injectivity (Theorems \ref{thm_InjectivitySlopei} and \ref{thm_InjectivitySlopeibis}). We first prove that the slope $r$ part of the $r$th rigid cohomology group is equal to the slope $r$ part of the $r$th convergent cohomology. The proof involves a reduction to the case of good compactification via (a refinement of) de Jong's alterations. Next, when $\gamma \in (q-1,q)$, we establish that the slope $\gamma$ part of the $q$th rigid cohomology group injects into convergent cohomology. We also check that this space is separated (Theorem \ref{thm_sloperseparated}).

\spa

Let $X_0$ be a smooth scheme over $k$ and suppose $\calV=W$ unramified.

\begin{theo}\label{thm_InjectivitySlopei} The map  $$H^r_\mr{rig}(X_0/K)^{[r]}\to H^r_\mr{conv}(X_0/K)^{[r]}$$ is bijective for every $r\geq 0$.
\end{theo}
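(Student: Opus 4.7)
The plan is to descend via de Jong's alterations to the case where $X_0$ admits a good compactification, and then identify the slope-$r$ part of both cohomologies with global sections of logarithmic de Rham--Witt sheaves.

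By faithfully flat descent along $W \to W(\bar{k})$, we may assume $k$ is algebraically closed. Then the Dieudonn\'e--Manin classification of \S\ref{DMC-crew} ensures that the slope-$r$ subspaces are generated over $K$ by $F=p^r$-eigenvectors, so it suffices to show that the Frobenius-equivariant comparison induces an isomorphism
\[ H^r_\mr{rig}(X_0/K)^{F = p^r} \riso H^r_\mr{conv}(X_0/K)^{F = p^r}. \]
By Corollary \ref{cor_RationalBoundsvar}, the operator $F - p^r$ is bijective on both $H^{r-1}_\mr{rig}(X_0/K)$ and $H^{r-1}_\mr{conv}(X_0/K)$, so taking the cocones of $F - p^r$ on each side and reading off the long exact sequences identifies these $F=p^r$-subspaces with $H^r$ of the respective cocones.

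Next, as in the proof of Proposition \ref{Prop_RationalBoundsvar0}(a), de Jong's alteration theorem provides a proper surjective morphism $w \colon X_0' \to X_0$ such that $X_0'$ admits a good compactification $(Y_0', D_0')$ with $Y_0'$ smooth proper and $D_0'$ a strict normal crossing divisor. Via the trace morphism and \cite[16.1.11.2]{Car25}, $w^*$ realises both $R\Gamma_\mr{rig}(X_0/K)$ and $R\Gamma_\mr{conv}(X_0/K)$ as Frobenius-equivariant direct summands of their $X_0'$-counterparts, reducing us to the case where $X_0$ itself admits a good compactification $(Y_0, D_0)$. In this setting, the identification $R\Gamma_\mr{rig}(X_0/K) \simeq R\Gamma_\mr{log\m cris}(Y_0^\sharp/W)_K$ from \eqref{log-cris-rig=}, combined with the log-syntomic exact sequence of Proposition \ref{prop_BMS8.7prelog} and the surjectivity of $F - p^r$ in degree $r-1$, yields
\[ H^r_\mr{rig}(X_0/K)^{F = p^r} = H^0_\proet(Y_0, W\Omega^r_{Y_0^\sharp, \mr{log}})_K. \]
By Proposition \ref{BMS8.7preUR} (with trivial $E$) and Proposition \ref{prop_Bosco}, we similarly compute
\[ H^r_\mr{conv}(X_0/K)^{F = p^r} = H^r_\mr{syn}(X_0, \Zp(r))_K = H^0_\proet(X_0, W\Omega^r_{X_0, \mr{log}})_K. \]
Lemma \ref{lem_compatibilitylogOmega} gives $j_* W\Omega^r_{X_0, \mr{log}} = W\Omega^r_{Y_0^\sharp, \mr{log}}$, so the two groups of global sections coincide, and the comparison map turns into this identification.

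The main subtlety will be to check that all these identifications are Frobenius-equivariant and translate the comparison morphism into the natural restriction of global sections of logarithmic de Rham--Witt sheaves, as well as to set up the de Jong descent uniformly for both rigid and convergent cohomology. Once this is done, the slope bounds ensure that there is no contribution from lower-degree cohomology and the log pushforward formula of Lemma \ref{lem_compatibilitylogOmega} delivers the isomorphism.
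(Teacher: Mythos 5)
Your step in the good-compactification case is essentially the paper's: both identify $H^r_\mr{rig}(X_0/K)^{F=p^r}$ with $H^0_\proet(Y_0,W\Omega^r_{Y_0^\sharp,\log})_K$ via \eqref{log-cris-rig=} and Proposition \ref{prop_BMS8.7prelog}, identify $H^r_\mr{conv}(X_0/K)^{F=p^r}$ with $H^0_\proet(X_0,W\Omega^r_{X_0,\log})_K$ via Propositions \ref{BMS8.7preUR} and \ref{Prop_IntegralBounds} (note the latter needs $X_0$ affine, which must be arranged in the reduction), and conclude with Lemma \ref{lem_compatibilitylogOmega}. The divergence — and the gap — is in your reduction step.

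You reduce to the good-compactification case by a single de Jong alteration $w\colon X_0'\to X_0$, claiming that via the trace and \cite[16.1.11.2]{Car25} the pullback $w^*$ realises \emph{both} $R\Gamma_\mr{rig}(X_0/K)$ \emph{and} $R\Gamma_\mr{conv}(X_0/K)$ as compatible Frobenius-equivariant direct summands. For the rigid side this is fine: the splitting is constructed through the six-operation formalism for overholonomic arithmetic $D$-modules, which applies because overconvergent $F$-isocrystals are overholonomic. For the convergent side it is not: a convergent isocrystal on an open variety is not an overholonomic (or even coherent) $\cD^\dag_{\fP,\bbQ}$-module on a compactification, Poincar\'e duality fails, and no trace morphism $H^*_\mr{conv}(X_0')\to H^*_\mr{conv}(X_0)$ along a proper generically \'etale alteration is available in the literature — let alone one compatible with the rigid trace under the comparison map, which you would need for the surjectivity half of the bijection. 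This is precisely why the paper does \emph{not} use an alteration here: it instead takes an \'etale hypercovering $X'_{\bullet,0}\to X_0$ with good compactifications (Bhatt--Snowden refined alterations) and uses cohomological descent, which holds for both theories and is automatically compatible with the comparison map. The price is the nontrivial spectral-sequence analysis: on the rigid side the slope bound $\leq j$ on $H^j_\mr{rig}$ kills all contributions to the slope-$r$ part of $H^r$ except $E_2^{0,r}$, while on the convergent side one must run an induction on the pages (the condition $\calP(i,j,\ell)$ in the paper's proof) using Corollary \ref{cor_RationalBoundsvar} to show that $F-p^r$ is invertible on all the interfering terms. This descent argument is the heart of the proof and is missing from your proposal; as written, the reduction step does not go through.
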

\begin{proof}
Note that in the statement we may assume $k$ algebraically closed by faithfully flat descent.

\spa



(a) We reduce to the case where there exist a smooth projective $k$-scheme $Y_0$  and a strict normal crossing divisor $D_0$ such that $X_0 = Y_0 \setminus D_0$ is affine. Thanks to \cite[Thm. 1.2]{BS}, there exists an étale hypercovering $X_{\bullet ,0} '\to X_0$ 
such that $X '_{i,0}$ admits a smooth projective compactification $Y_{i,0} '$ so that $Y_{i,0} '\setminus X_{i,0} '$ is a strict normal crossing divisor. 
\begin{itemize}
    \item [{(i)}] By \cite[Rmk. 11.7.2]{CT03}), we have the spectral sequence 
$$E_1^{i,j}\coloneqq H^j_\mr{rig}(X'_{i,0} /K)\Rightarrow H^{i+j}_\mr{rig}(X_0/K).$$
By     \cite[Thm. 5.4.1]{KedWeil2}, we know that the rigid cohomology of $H^j_\mr{rig}(X'_{i, 0}/K)$ has slopes $\leq j$. Since the spectral sequence commutes with the action of Frobenius, thanks to Dieudonné--Manin classification, 
this implies that $(E_{\ell}^{i,j})^{[r]} =0$ if $j <r$ and $\ell\geq 1$. In particular, we deduce that $(E_{\infty}^{0,r})^{[r]} =(E_{2}^{0,r})^{[r]}$. 
Hence, we get the exact sequence: 
$$0 \to H^r_\mr{rig}(X_0/K)^{F -p^r} \to H^r_\mr{rig}(X'_{0,0}/K)^{F -p^r}
\to H^r_\mr{rig}(X'_{1,0}/K)^{F -p^r} .$$

\item[{(ii)}]  Since (partial) rigid cohomology satisfies cohomological étale descent (in the context of \cite[Thm. 7.1.1]{Tsu04}),  then we get the spectral sequence
$$\bbE_1^{i,j}\coloneqq H^j_\mr{conv}(X'_{i,0} /K)\Rightarrow H^{i+j}_\mr{conv}(X_0/K),$$
which commutes with the action of Frobenius.  For $i,j,\ell\geq 0$, we consider the condition  $$\calP(i,j,\ell)\coloneqq\begin{cases}
j\leq r-1, & \text{if $i+j-r\leq 0$}\\
j\leq r-\ell(i+j-r), & \text{if $i+j-r\geq 1$,}
\end{cases}$$ and we use the convention that $\calP(i,j,\ell)$ is true when $i$ or $j$ are negative. We want to show by induction on $\ell\geq 1$ that $$F -p^r \colon \bbE_{\ell}^{i,j} \to \bbE_{\ell}^{i,j}$$ is an isomorphism if $\calP(i,j,\ell)$ holds. When $\ell=1$, by Corollary \ref{cor_RationalBoundsvar}, the morphism $F -p^r \colon \bbE_{1}^{i,j} \to \bbE_{1}^{i,j}$ is an isomorphism for every $j\leq r-1$. For higher $\ell$, we use Lemma \ref{lemkercokerF}. Looking at 
$$\bbE_{\ell}^{i-\ell,j+\ell-1}\to\bbE_{\ell}^{i,j}\to \bbE_{\ell}^{i+\ell,j-\ell+1},$$
this reduces to prove that $$\calP(i,j,\ell+1)\Rightarrow \calP(i+\ell,j-\ell+1,\ell)\wedge \calP(i-\ell,j+\ell-1,\ell).$$

\noindent We show that
$$\calP\coloneq\calP(i,j,\ell+1)\Rightarrow \calP'\coloneqq\calP(i+\ell,j-\ell+1,\ell);$$ the other implication is left to the reader. Setting $k\coloneqq i+j-r$, we can rewrite

$$\calP'=\begin{cases}
j-\ell+1\leq r-1, & \text{if $k\leq -1$}\\
j-\ell+1\leq r-\ell(k+1), & \text{if $k\geq 0$.}
\end{cases}$$

\noindent We have three cases: if $k\leq -1$, then $\calP$ implies $j\leq r-1,$ which shows that $j-\ell+1\leq r-1$. If $k=0,$ we deduce again $j\leq r-1$, which coincides with $j-\ell +1\leq r-\ell(k+1).$ Finally, if $k\geq 1,$ we get from $\calP$ that $j\leq r-(\ell+1)k.$ Thus, we are left to check that $$(r-(\ell+1)k)-\ell+1\leq r-\ell(k+1),$$ which is true when $k\geq 1.$

\spa

\item[{(iii)}] Let $\bbE' $ be the kernel of the projection $H^{r}_\mr{conv}(X_0/K) \twoheadrightarrow \bbE_{\infty}^{0,r}$. 
By the previous step, $F -p^r \colon \bbE_{\infty}^{i,r-i} \to \bbE_{\infty}^{i,r-i}$ is an isomorphism for $i >0$. By the five lemma, we deduce that the map $F -p^r \colon \bbE ' \to \bbE '$ is an isomorphism.
Hence, the map $$(H^{r}_\mr{conv}(X_0/K))^{F -p^r}\to (\bbE_{\infty}^{0,r})^{F -p^r}$$ is an isomorphism.
For every $m \geq 2$, we have $(\bbE_{m+1}^{0,r})^{F -p^r} =(\bbE_{m}^{0,r})^{F -p^r}$ and therefore
$(\bbE_{\infty}^{0,r})^{F -p^r} =(\bbE_{2}^{0,r})^{F -p^r}$. 
By definition, $$\bbE_{2}^{0,r}= \Ker (H^r_\mr{conv}(X'_{0,0}/K)\to H^r_\mr{conv}(X'_{1,0}/K)),$$ 
thus $$(\bbE_{2}^{0,r})^{F -p^r} = \Ker (H^r_\mr{conv}(X'_{0,0}/K)^{F -p^r}\to H^r_\mr{conv}(X'_{1,0}/K)^{F -p^r}).$$
We then get the exact sequence:
$$0 \to H^r_\mr{conv}(X_0/K)^{F -p^r} \to H^r_\mr{conv}(X'_{0,0}/K)^{F -p^r}\to H^r_\mr{conv}(X'_{1,0}/K)^{F -p^r} .$$
\end{itemize}

\spa

(b)
We have the isomorphisms
\begin{align*}
R\Gamma_\mr{pro\et}(Y_0,W\Omega^r_{Y_0^\sharp,\mr{log}}[-r] )_{\Qp}
&\underset{\ref{prop_BMS8.7prelog}}{\riso}  \Cocone \left (R \Gamma (Y_0, W\Omega^\bullet_{Y^{\sharp}_{0}} )_K\xrightarrow{F /p^r-1}R \Gamma (Y_0, W\Omega^\bullet_{Y^{\sharp}_{0}} )_K \right )\\
&\underset{\eqref{log-cris-rig=}}{\riso}  \Cocone \left (R \Gamma_{\rig} (X_0/K)\xrightarrow{F /p^r-1}R \Gamma_{\rig} (X_0/K)  \right ).
\end{align*}
Thanks to
$$R\Gamma_\mr{pro\et}(X_0,W\Omega^r_{X_0,\mr{log}}[-r] )_{\Qp}
\underset{\ref{BMS8.7preUR}}{\riso}  
  \Cocone \left (R \Gamma_{\conv} (X_0/K)\xrightarrow{F /p^r-1}R \Gamma_{\conv} (X_0/K)  \right ),$$
this yields  the commutative diagram
\begin{equation}
\label{thm_InjectivitySlopei-diag}\xymatrix{{H^0_\mr{pro\et}(Y_0,W\Omega^r_{Y_0^\sharp,\mr{log}} )_K} \ar@{->>}[r]^-{} \ar[d]^-{}& { H^r_\mr{rig}(X_0/K)^{^{F-p^r}}} \ar[d]^-{}\\ {H^0_\mr{pro\et}(X_0,W\Omega^r_{X_0,\mr{log}})_K} \ar@{->>}[r]^-{}& { H^r_\mr{conv}(X_0/K)^{F-p^r}} }
\end{equation}
with surjective horizontal arrows. The bottom map of \eqref{thm_InjectivitySlopei-diag} is in fact an isomorphism:  $$
H^0_\mr{pro\et}(X_0,W\Omega^r_{X_0,\mr{log}})_K
\overset{\eqref{BMS8.21}}{=} H^{r}_{\mr{syn}}(X_0,\Qp(r))_K\overset{\ref{Prop_IntegralBounds}}{=}H^{r}_{\mr{conv}}(X_0/K)^{F-p^r}.$$  
Thanks to Lemma \ref{lem_compatibilitylogOmega}, its left vertical arrow is also an isomorphism. Hence we are done.

\end{proof}

\begin{theo}\label{thm_InjectivitySlopeibis} For $q\geq 0$, the morphism  $$H^q_\mr{rig}(X_0/K)^{(q-1,q)}\to H^q_\mr{conv}(X_0/K)$$ is injective.
\end{theo}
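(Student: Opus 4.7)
The plan is to adapt the strategy of Theorem \ref{thm_InjectivitySlopei} to the fractional setting, using the fractional Tate twists of \S\ref{Sec_FractionalLogarithmicdRW}. By faithfully flat descent I may assume $k$ algebraically closed so that Dieudonn\'e--Manin applies; decomposing by slopes and using $F$-equivariance then reduces the claim to injectivity for a single slope $\gamma = r/s$ with $\gcd(r,s)=1$ and $q-1<\gamma<q$ (so necessarily $s\geq 2$). Applying the isomorphism $\alpha$ of Lemma \ref{lem_Vrs}(1) to both $V=H^q_\mr{rig}(X_0/K)$ and $V=H^q_\mr{conv}(X_0/K)$, the problem becomes the injectivity of
\[
H^q_\mr{rig}(X_0,\Qp(r,s))^{F=1}\to H^q_\mr{conv}(X_0,\Qp(r,s))^{F=1}.
\]

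I would next identify both sides with (fractional) syntomic cohomology. Applied with the unit-root coefficient $\cO^\dag$ and the twist $\Qp(r,s)$, Proposition \ref{Prop_RationalBoundsvar0} at $q\in(\gamma,\gamma+1)$ gives that $F-1$ is bijective on $H^{q-1}_\mr{rig}(X_0,Y_0,\Qp(r,s))$ for any good compactification $Y_0$, so the first group equals $H^q_\syn(X_0,Y_0,\Qp(r,s))$; similarly Proposition \ref{Prop_RationalBounds} identifies the second group with $H^q_\syn(X_0,X_0,\Qp(r,s))$. A cohomological descent argument following step~(a) of the proof of Theorem \ref{thm_InjectivitySlopei} (using \cite[Thm.~1.2]{BS}, the slope bound of \cite[Thm.~5.4.1]{KedWeil2} and Corollary \ref{cor_RationalBoundsvar}) reduces to the case where $X_0$ admits a good compactification $(Y_0,D_0)$ with $D_0$ a strict normal crossing divisor.

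In that case, I would build the fractional analogue of diagram \eqref{thm_InjectivitySlopei-diag}. Combining Proposition \ref{prop_FractionalLogarithmic} (for $E=\Zp$) with Lemma \ref{lem_compatibilitysyntomic} (taking $b=\lfloor\gamma\rfloor=q-1$) gives the bottom identification $H^q_\mr{conv}(X_0,\Qp(r,s))^{F=1}\simeq H^1_\proet(X_0,\nu(\gamma))_K$, where $\nu(\gamma)$ is the pro-\'etale sheaf in degree $1$ of Proposition \ref{prop_InjectivityFractionaldRW}. A parallel computation using the log de Rham--Witt complex of $(Y_0,D_0)/W$ with its fractional Nygaard filtration (cf.\ Proposition \ref{prop_fundamentalLogFractionaldRW}) yields the top identification $H^q_\mr{rig}(X_0,\Qp(r,s))^{F=1}\simeq H^1_\proet(Y_0,\nu^\sharp(\gamma))_K$. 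Writing $Q$, resp.\ $Q^\sharp$, for the explicit cokernel sheaves presenting $\nu(\gamma)$, resp.\ $\nu^\sharp(\gamma)$, the right-hand vertical arrow of the resulting square will be injective once the sheaf-level map $Q^\sharp\to j_*Q$ is shown to be injective, via a diagram chase.

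The hard step will be exactly this fractional analogue of Lemma \ref{lem_compatibilitylogOmega}: in the integer case the identity $j_*W\Omega^r_{X_0,\log}=W\Omega^r_{Y_0^\sharp,\log}$ is encoded by $M(D_0)=j_*\cO^*_{X_0}$, whereas $Q$ and $Q^\sharp$ are cokernels of a Frobenius-Nygaard map on the ramified (log) de Rham--Witt complex $W^s\Omega^\bullet$, and the argument must be carried out at the level of these presentations. I would combine a de Rham--Witt enhancement of the local splitting of Lemma \ref{lem_local_var} to control $j_*$ of the Nygaard piece with the $p$-torsion freeness of the logarithmic de Rham--Witt complex \cite[Cor.~1.19]{Lor02}, obtaining injectivity via a snake-lemma comparison.
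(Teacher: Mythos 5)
Your reduction steps agree with the paper's: faithfully flat descent to $k$ algebraically closed, restriction to a single slope $\gamma=r/s\in(q-1,q)$, passage to the $F^s=p^r$ eigenspaces, and cohomological descent to the case of a good compactification $(Y_0,D_0)$, with the left-exactness of the descent sequences guaranteed by the slope bounds of Corollary \ref{cor_RationalBoundsvar}. Where you diverge is the final, decisive step, and that is where your argument has a genuine gap. You propose to identify the two sides with $H^0_\proet(Y_0,Q^\sharp)_K$ and $H^0_\proet(X_0,Q)_K$ for the degree-one sheaves presenting $\nu^\sharp(\gamma)$ and $\nu(\gamma)$, and to conclude from an injection of pro-\'etale sheaves $Q^\sharp\hookrightarrow j_*Q$. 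That injection is precisely the fractional analogue of Lemma \ref{lem_compatibilitylogOmega}, but unlike the integral case it does not follow from $M(D_0)=j_*\calO^*_{X_0}$: here $Q^\sharp$ and $Q$ are cokernels of $\varphi_{r,s}-\iota$ on the ramified (log) de Rham--Witt complexes, and Example \ref{exam_Coker12} shows such cokernels are delicate (already for $\gamma=1/2$ one gets $\Coker(F-V)$ on $W(A_0)$, which cannot be killed pro-\'etale locally). Your proposed route --- a ``de Rham--Witt enhancement'' of the splitting of Lemma \ref{lem_local_var} --- is not a proof: that splitting is an $R$-linear section of de Rham complexes of characteristic-zero lifts, and to descend it to a statement about $\Coker(\varphi_{r,s}-\iota)$ you would need it to be compatible with the Witt-vector $F$, $V$ and the fractional Nygaard filtration, which is exactly the content you are trying to establish. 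As written, the key injectivity is asserted, not proved.

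For comparison, the paper avoids the fractional de Rham--Witt sheaves entirely at this stage. After the descent step it performs a \emph{second} \'etale hypercovering of $Y_0$ (diagram \eqref{eq_InjectivitySlopei-es2}, again exact on $F^s-p^r$ eigenspaces by Corollary \ref{cor_RationalBoundsvar}) so as to place itself, \'etale-locally, in the retraction situation of Corollary \ref{cor_InjectivityRetraction}; there the map $H^q_{\mr{log}\m\cris}((Y,D))\to H^q_{\mr{log}\m\cris}((Y\setminus Z,D\setminus Z))$ is \emph{split} injective integrally and in every slope, by the explicit direct-sum decomposition $\Omega^\bullet_{(Y\setminus Z,D\setminus Z)}\simeq\Omega^\bullet_{(Y,D)}\oplus Q^\bullet_{(Z,D'\cap Z)}$ of Lemma \ref{lem_local_var}, with no Frobenius compatibility required of the splitting. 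Induction on the number of components of $D_0$, together with \cite[1.3.6]{CT12}, then yields the injectivity of the middle vertical arrow. If you want to salvage your route, you would either have to prove $Q^\sharp\hookrightarrow j_*Q$ directly (a statement of independent interest that the paper does not establish), or fall back on the retraction argument, which is what the paper does.
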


\begin{proof}
Again, to prove the statement we may assume $k$ algebraically closed by faithfully flat descent.
Let $\gamma \in (q-1,q)$. Arguing as in Step (a) of the proof of Theorem \ref{thm_InjectivitySlopei}, we reduce to the case where there exists $Y_0$ a smooth projective $k$-scheme and $D_0$ is a strict normal crossing divisor such that $X_0 = Y_0 \setminus D_0$.

\spa
Let $Y_{\bullet,0}\to Y_0$ be an étale hypercovering and $X_{i,0} \coloneqq Y_{i,0} \times_{Y_0} X_0$ for any $i$. Consider the diagram:
\begin{equation}
\label{eq_InjectivitySlopei-es2} 
\xymatrix{{0} \ar[r]^-{}  & {H^r_\mr{rig}(X_0/K)^{F^s-p^r}  }\ar[r]^-{} \ar[d]^-{} & {H^r_\mr{rig}((X_{0,0}, Y_{0,0})/K)^{F^s-p^r}  } \ar[r]^-{} \ar[d]^-{} & {H^r_\mr{rig}((X_{1,0}, Y_{1,0})/K)^{F^s-p^r}  } \ar[d]^-{}\\
{0} \ar[r]^-{} & {H^r_\mr{conv}(X_0/K)^{F^s-p^r}  }\ar[r]^-{} & {H^r_\mr{conv}(X_{0,0}/K)^{F^s-p^r} } \ar[r]^-{}& {H^r_\mr{conv}(X_{1,0}/K)^{F^s-p^r}  }. }
\end{equation}
Similarly to Step (a) of the proof of Theorem \ref{thm_InjectivitySlopei}, using étale cohomological descent (in the context of \cite[7.1.1]{Tsu04}), it follows from Corollary \ref{cor_RationalBoundsvar} that the rows are exact sequences. Hence, we reduce to prove that the middle vertical arrow is injective. 
As for the proof of Proposition \ref{prop_cohomologysheaves_var}, étale locally on $Y_0$ we are in the context of Corollary \ref{cor_InjectivityRetraction}.  
Using 
\cite[Thm. 1.3.6]{CT12} and 
Corollary \ref{cor_InjectivityRetraction} and proceeding by induction on the number of irreducible components of $D_0$, we are done.
\end{proof}
\begin{exam}Note that Theorem \ref{thm_InjectivitySlopei} is false for the subspaces of slope $\gamma\in (r-1,r).$ Indeed, if we take $X_0=\mathbb{A}^1_{\Fp}$ and we consider $\fX\coloneqq\mr{Spf}(\Zp\langle t\rangle)$ as a lift, then $$\omega\coloneqq\sum_{i=0}^\infty p^it^{p^{2i}} \mr{dlog}(t)$$ defines a non-torsion class in $H^1_\mr{dR}(\fX/\Zp).$ It satisfies the identity $$F^2(\omega)=F^2\left(\sum_{i=0}^\infty p^it^{p^{2i}} \mr{dlog}(t)\right)=\sum_{i=0}^\infty p^{i+2}t^{p^{2(i+1)}} \mr{dlog}(t)=p\omega-pdt.$$

    Therefore $F^2([\omega])=[\omega]$, which inplies that $[\omega]$ lies in $H^1_\mr{conv}(\mathbb{A}^1_{\Fp}/\Qp)^{[1/2]}$. On the other hand, $$H^1_\mr{rig}(\mathbb{A}^1_{\Fp}/\Qp)^{[1/2]}\subseteq H^1_\mr{rig}(\mathbb{A}^1_{\Fp}/\Qp)=0.$$
\end{exam}


\begin{prop}
 If $X_0=Y_0\setminus D_0$ with $X_0$ affine,  $Y_0$ is smooth and proper, and $D_0$ a strict normal crossing divisor, then $$H^{r+1}_\mr{syn}(Y_0^\sharp,\Zp(r))\to H^{r+1}_\mr{syn}(X_0,\Zp(r))$$ is injective for $r\geq 0$, where $Y_0^\sharp$ is the log scheme associated to $D_0$ (notation as in \S \ref{synE(r,s)} and \S \ref{Def_logsyn}).
\end{prop}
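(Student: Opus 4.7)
The plan is to translate the statement into one about pro-étale cohomology of the logarithmic de Rham--Witt sheaves and then reduce it to the Leray spectral sequence for the open immersion $j\colon X_0\hookrightarrow Y_0$.

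First, I would identify both syntomic groups with cohomology of logarithmic sheaves. Proposition \ref{BMS8.7preUR} (applied to $E=\Zp$), combined with the formula \eqref{BMS8.21}, yields
\[
H^{r+1}_\mr{syn}(X_0,\Zp(r))\simeq H^1_\mr{pro\et}(X_0,W\Omega^r_{X_0,\mr{log}}).
\]
Similarly, Proposition \ref{prop_BMS8.7prelog} gives the quasi-isomorphism $R\Gamma_\mr{syn}(Y_0^\sharp,\Zp(r))\simeq R\Gamma_\mr{pro\et}(Y_0,W\Omega^r_{Y_0^\sharp,\mr{log}})[-r]$, so that
\[
H^{r+1}_\mr{syn}(Y_0^\sharp,\Zp(r))\simeq H^1_\mr{pro\et}(Y_0,W\Omega^r_{Y_0^\sharp,\mr{log}}).
\]
Under these identifications, the restriction map of the proposition corresponds to the natural map coming from the adjunction $\id\to Rj_*j^{-1}$ applied to $W\Omega^r_{Y_0^\sharp,\mr{log}}$.

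Next, Lemma \ref{lem_compatibilitylogOmega} identifies the pushforward sheaf:
\[
j_*W\Omega^r_{X_0,\mr{log}}=W\Omega^r_{Y_0^\sharp,\mr{log}}.
\]
Hence $H^1_\mr{pro\et}(Y_0,W\Omega^r_{Y_0^\sharp,\mr{log}})=H^1_\mr{pro\et}(Y_0,j_*W\Omega^r_{X_0,\mr{log}})$, and the restriction map in the statement fits into the five-term exact sequence associated to the Leray spectral sequence
\[
H^p_\mr{pro\et}(Y_0,R^qj_*W\Omega^r_{X_0,\mr{log}})\Rightarrow H^{p+q}_\mr{pro\et}(X_0,W\Omega^r_{X_0,\mr{log}}).
\]
The edge map
\[
H^1_\mr{pro\et}(Y_0,j_*W\Omega^r_{X_0,\mr{log}})\to H^1_\mr{pro\et}(X_0,W\Omega^r_{X_0,\mr{log}})
\]
is always injective, so the restriction map of the proposition is injective.

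The only point requiring care is the compatibility between the restriction map in syntomic cohomology (induced by the morphism of log schemes $X_0\to Y_0^\sharp$) and the Leray edge map under the identifications from Proposition \ref{prop_BMS8.7prelog} and \eqref{BMS8.21}; this is just functoriality of the cocone construction combined with the identification of $W\Omega^r_{Y_0^\sharp,\mr{log}}$ with $j_*W\Omega^r_{X_0,\mr{log}}$, so there is no serious obstacle. Neither the affineness of $X_0$ nor the properness of $Y_0$ is used in this argument, although these hypotheses are natural in view of the results they feed into elsewhere in the paper.
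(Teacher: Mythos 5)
Your proof is correct and is essentially the paper's own argument: identify both syntomic groups with $H^1_{\mathrm{pro\acute{e}t}}$ of the logarithmic de Rham--Witt sheaves, apply Lemma \ref{lem_compatibilitylogOmega} to identify $j_*W\Omega^r_{X_0,\mathrm{log}}$ with $W\Omega^r_{Y_0^\sharp,\mathrm{log}}$, and conclude by injectivity of the Leray edge map in degree one. The extra care you take in spelling out the identifications via Proposition \ref{BMS8.7preUR}, \eqref{BMS8.21}, and Proposition \ref{prop_BMS8.7prelog} is implicit in the paper's shorter write-up.
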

\begin{proof}
By Lemma \ref{lem_compatibilitylogOmega}, we have $j_*W\Omega^r_{X_0,\mr{log}}=W\Omega^r_{Y_0^\sharp,\mr{log}}.$ Thus, looking at the low degree exact sequence of Leray spectral sequence for $j\colon X_0\hookrightarrow Y_0$, we deduce that $$H^1_\proet(Y_0,j_*W\Omega^r_{X_0,\mr{log}})\to H^1_\proet(X_0,W\Omega^r_{X_0,\mr{log}})$$ is injective, as we wanted. 
\end{proof}

\begin{lemm}\label{lem_surjectivetorsion}
Let $E$ be a unit-root $F$-crystal on $X_0 /W$. 
The morphism $\TE^{r,r}\xrightarrow{\varphi_r-\iota} \TE^r$ is surjective (see Notation \ref{dfn-TijE}).
\end{lemm}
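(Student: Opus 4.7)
The plan is to reduce the surjectivity to a fixed-point problem that can be solved by $p$-adic iteration, using Proposition \ref{thm_Mazur} as the crucial input.

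First I would observe that Proposition \ref{thm_Mazur}, applied to $\tau_{\leq r}$ and combined with the identification $\tau_{\leq r}\mathrm{dR}^\bullet_E\simeq \tau_{\leq r} p^r\mathrm{dR}^\bullet_E$ via multiplication by $p^r$ (possible since the terms are $p$-torsion free), implies that $\varphi_r$ induces an isomorphism $H^r(\mathrm{dR}^{\bullet,r}_E)\xrightarrow{\sim} H^r(\mathrm{dR}^\bullet_E)$. In particular, $\varphi_r$ restricts to a bijection $\varphi_r\colon \TE^{r,r}\xrightarrow{\sim}\TE^r$. This is the crucial input: both the integral statement and its restriction to $p$-power torsion.

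Using this bijection, the surjectivity of $\varphi_r-\iota\colon \TE^{r,r}\to\TE^r$ becomes equivalent to the surjectivity of the endomorphism $1-\sigma\colon \TE^{r,r}\to\TE^{r,r}$, where $\sigma\coloneqq \varphi_r^{-1}\circ \iota$. Given $x\in\TE^r$, I would construct the candidate preimage by iteration: set $y_0\coloneqq \varphi_r^{-1}(x)$ and $y_{n+1}\coloneqq \varphi_r^{-1}(x+\iota(y_n))$. A direct computation gives $y_{n+1}-y_n=\sigma(y_n-y_{n-1})=\sigma^{n+1}(y_0)$, so $y_n=\sum_{k=0}^n\sigma^k(y_0)$. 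If this series converges to some $y\in\TE^{r,r}$, then $(\varphi_r-\iota)(y)=x$ by passing to the limit, using that $\varphi_r$ and $\iota$ are continuous for the natural $p$-adic structure.

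The main obstacle is establishing the topological nilpotency of $\sigma$ on $\TE^{r,r}$, that is, showing that for each $y\in \TE^{r,r}$ the sequence $\sigma^n(y)$ converges to $0$ in a suitable $p$-adic sense that makes the series $\sum \sigma^k(y_0)$ Cauchy. An alternative (and possibly cleaner) route uses the long exact sequence associated with the defining triangle $R\Gamma_\mathrm{syn}(X_0,E(r))\to \mathrm{dR}^{\bullet,r}_E\xrightarrow{\varphi_r-\iota}\mathrm{dR}^\bullet_E$: the surjectivity of $\varphi_r-\iota$ on $\TE^r$ is equivalent to the vanishing of the connecting map $\partial_\mathrm{syn}$ on $\TE^r$, together with a torsion-lifting argument. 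By naturality applied to the comparison triangle \eqref{syn->esyn-tri} and the surjection $H^r(T^\bullet_E)\twoheadrightarrow \TE^r$ of \eqref{exseq-Ci}, the image $\partial_\mathrm{syn}(\TE^r)$ coincides with the kernel of $u\colon H^{r+1}_\mathrm{syn}\to H^{r+1}_\mathrm{esyn}$. The delicate point, which is the main obstacle in this second approach, is to verify that this kernel vanishes; one would combine Step~(i) of the alternative proof of Theorem \ref{thm_slopeiminus1-variant} (the map $u$ is zero modulo every $p^n$) with the finiteness of torsion in $H^{r+1}_\mathrm{syn}$ granted by Proposition \ref{prop_Bosco}, and separately control the free part of $H^{r+1}_\mathrm{syn}$ via its embedding into the rational esyn group.
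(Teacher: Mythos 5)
You have correctly isolated the two ingredients the paper also uses---Mazur's theorem (Proposition \ref{thm_Mazur}) providing an inverse $\psi_r$ to $\varphi_r$, and a geometric-series inversion of $\varphi_r-\iota=\varphi_r\circ(1-\varphi_r^{-1}\iota)$---but the step you yourself flag as ``the main obstacle'', namely the topological nilpotency of $\sigma=\varphi_r^{-1}\circ\iota$ on $\TE^{r,r}$, is precisely the content of the lemma and is left unproved. Moreover, running the iteration directly on the degree-$r$ torsion group cannot work as stated: in degree $r$ one has $\mr{dR}^{r,r}_E=\mr{dR}^{r}_E$, so $\iota$ is the identity on cochains there and provides no gain of $p$-divisibility, and $\TE^{r,r}$ is a $p$-power torsion group with no completeness in which your series $\sum_k\sigma^k(y_0)$ would converge. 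The paper's proof resolves exactly this point by shifting both the cohomological degree and the coefficients: it works with $H^{r-1}(\mr{dR}^{\bullet,r}_E\otimes_{\Zp}\Zp/p^n)$, where the Nygaard filtration satisfies $\mr{dR}^{j,r}_E\subseteq p\,\mr{dR}^{j}_E$ for all $j\leq r-1$, so that $\iota$ factors through multiplication by $p$ and $(\iota\circ\psi_r)^i$ lands in $H^{r-1}(\mr{dR}^{\bullet}_E\otimes_{\Zp}p^{i-1}\Zp/p^n)$. Hence $\iota\circ\psi_r$ is nilpotent modulo $p^n$, the finite geometric series gives an actual right inverse of $\varphi_r-\iota$ on $H^{r-1}(\cdot\otimes\Zp/p^n)$, and the change-of-coefficients (Bockstein) surjection $H^{r-1}(\mr{dR}^{\bullet,r}_E\otimes\Zp/p^n)\twoheadrightarrow H^{r}(\mr{dR}^{\bullet,r}_E)[p^n]$ transports the surjectivity to the $p^n$-torsion of $H^r$; one concludes by letting $n$ vary. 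This passage to degree $r-1$ with finite coefficients is the missing idea.

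Your second route is incomplete for the same reason: the claimed identification of $\partial(\TE^r)$ with the kernel of $H^{r+1}_{\mr{syn}}\to H^{r+1}_{\mr{esyn}}$ is not justified, the required ``torsion-lifting argument'' (why a preimage of a torsion class under $\varphi_r-\iota$ can be chosen torsion) is not supplied, and the vanishing you would need is again deferred to Step~(i) of the alternative proof of Theorem \ref{thm_slopeiminus1-variant}, which only shows that the map is divisible by every power of $p$ (and only for $r=i-1$), not that its kernel vanishes; Proposition \ref{prop_Bosco} moreover assumes $k$ algebraically closed. In both routes the argument stops exactly where the real work begins.
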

\begin{proof}
   Thanks to Proposition \ref{thm_Mazur}, there exists a morphism in $D(\Zp)$ defined by
    $$\psi_r\colon \tau_{\leq r}\mr{dR}^{\bullet}_E\xrightarrow{\cdot p^r} \tau_{\leq r} p^r\mr{dR}^{\bullet}_E\xleftarrow{F}\tau_{\leq r} \mr{dR}^{\bullet,r}_E.$$ This yields for every $n\geq 1$ a morphism 
    $$\psi_r\colon H^{r-1}(\mr{dR}^{\bullet}_E\otimes_{\Zp}\Zp/p^n\Zp)\to H^{r-1}(\mr{dR}^{\bullet,r}_E\otimes_{\Zp}\Zp/p^n\Zp),$$ which is the inverse of $$\varphi_r\colon H^{r-1}(\mr{dR}^{\bullet,r}_E\otimes_{\Zp}\Zp/p^n\Zp)\to  H^{r-1}(\mr{dR}^{\bullet}_E\otimes_{\Zp}\Zp/p^n\Zp).$$
 We want to prove that $\iota\circ \psi_r$ is a nilpotent $\Zp$-linear endomorphism of $H^{r-1}(\mr{dR}^{\bullet}_E\otimes_{\Zp}\Zp/p^n\Zp)$. For this purpose, we show by induction that $(\iota\circ \psi_r)^i$ factors through a certain morphism $$\widetilde{\psi}_r^i\colon H^{r-1}(\mr{dR}^{\bullet}_E\otimes_{\Zp}\Zp/p^n\Zp)\to H^{r-1}(\mr{dR}^{\bullet}_E\otimes_{\Zp}p^{i-1}\Zp/p^n\Zp) $$ for $i\geq 1$. For $i=1$ we just take $\widetilde{\psi}^1_r\coloneqq\psi_r.$ For $i\geq 2$ we note that, by definition, the morphism

    $$\iota\colon H^{r-1}(\mr{dR}^{\bullet,r}_E\otimes_{\Zp}p^m\Zp/p^n\Zp)\to H^{r-1}(\mr{dR}^{\bullet}_E\otimes_{\Zp}p^{m}\Zp/p^n\Zp)$$ factors through
    $$\epsilon \colon H^{r-1}(\mr{dR}^{\bullet,r}_E\otimes_{\Zp}p^m\Zp/p^n\Zp)\to H^{r-1}(\mr{dR}^{\bullet}_E\otimes_{\Zp}p^{m+1}\Zp/p^n\Zp)$$ for $m\leq n.$ If we set $\widetilde{\psi}^i_r\coloneqq \epsilon \circ\psi_r\circ  \widetilde{\psi}^{i-1}_r,$ we obtain a well-defined morphism $$\widetilde{\psi}_r^i\colon H^{r-1}(\mr{dR}^{\bullet}_E\otimes_{\Zp}\Zp/p^n\Zp)\to H^{r-1}(\mr{dR}^{\bullet}_E\otimes_{\Zp}p^{i-1}\Zp/p^n\Zp) $$ satisfying the desired property.

\spa
    
  The sum $$\sum_{i=1}^{n} (\iota \circ \psi_r)^i\colon H^{r-1}(\mr{dR}^{\bullet}_E\otimes_{\Zp}\Zp/p^n\Zp)\to H^{r-1}(\mr{dR}^{\bullet,r}_E\otimes_{\Zp}\Zp/p^n\Zp)$$ defines a right inverse of $$H^{r-1}(\mr{dR}^{\bullet,r}_E\otimes_{\Zp}\Zp/p^n\Zp)\xrightarrow{\varphi_r-\iota} H^{r-1}(\mr{dR}^{\bullet}_E\otimes_{\Zp}\Zp/p^n\Zp).$$ By the change-of-coefficients exact sequence, we deduce that the two groups surject respectively into $\TE^{r,r}$ and $\TE^r$. This ends the proof.
\end{proof}


\begin{theo}\label{thm_sloperseparated}If $X_0$ is a smooth affine scheme over $k$, and $\calE$ is a unit-root $F$-isocrystal over $X_0/K$, then $H^r_\mr{conv}(X_0,\calE)^{[r]}$ intersects trivially $N^r_\mr{conv}(X_0,\calE).$
\end{theo}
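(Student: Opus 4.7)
The strategy is to reduce the theorem to a $p$-adic separation statement for an integral Frobenius-fixed subspace, and then control that subspace via the finite-rank result of Proposition \ref{prop_Bosco}. First, by faithfully flat descent on $\cV$ I may assume that $k$ is algebraically closed. The Dieudonn\'e--Manin classification then gives $H^r_\mr{conv}(X_0,\calE)^{[r]}=V_{\Qp}\otimes_{\Qp}K$, where $V_{\Qp}\coloneqq H^r_\mr{conv}(X_0,\calE)^{F=p^r}$. Since $F$ preserves $\overline{\im \nabla^{r-1}}$ by continuity, $N^r_\mr{conv}$ is a $K$-subspace stable under the $\sigma$-semilinear operator $p^{-r}F$; a Galois-descent argument in the spirit of Lemma \ref{lemkercokerF} identifies $N^r_\mr{conv}\cap (V_{\Qp}\otimes_{\Qp}K)$ with $(N^r_\mr{conv}\cap V_{\Qp})\otimes_{\Qp}K$, so it suffices to prove $V_{\Qp}\cap N^r_\mr{conv}=0$ as $\Qp$-subspaces of $H^r_\mr{conv}(X_0,\calE)$.

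Let $E$ be a unit-root $F$-crystal with $E_K=\calE$ and set $H^r\coloneqq H^r(\mr{dR}^\bullet_E)=H^r_\cris(X_0,E)$. By Lemma \ref{lemmHiconv} one has $N^r_\mr{conv}=(H^r/\mr{tors})_\mr{div}$, so after defining $V\coloneqq (H^r/\mr{tors})^{F=p^r}$ the claim reduces to $V\cap (H^r/\mr{tors})_\mr{div}=0$. An elementary but crucial observation is that $V$ is \emph{saturated} in $H^r/\mr{tors}$: if $v=pu\in V$ with $u\in H^r/\mr{tors}$, then $pFu=F(pu)=p^{r+1}u$, and the torsion-freeness of $H^r/\mr{tors}$ forces $Fu=p^r u$, so $u\in V$. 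Consequently $V\cap (H^r/\mr{tors})_\mr{div}$ coincides with the strongly $p$-divisible subgroup $V_\mr{div}$ of $V$ itself, and the problem becomes to show $V_\mr{div}=0$.

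The heart of the argument now invokes Proposition \ref{prop_Bosco}: the module $M\coloneqq H^r_\syn(X_0,E(r))\simeq H^0_\proet(X_0,\nu(E,r))$ is a free $\Zp$-module of finite rank. From the syntomic exact sequence one obtains a natural map $\mu\colon M\to V$ via $M\to H^r(\mr{dR}^{\bullet,r}_E)\xrightarrow{\iota}H^r\to H^r/\mr{tors}$, whose rationalisation is the surjection $M_\Q\twoheadrightarrow V_\Q$. In particular $V_\Q$ is a finite-dimensional $\Qp$-vector space and $\mu(M)$ is a $\Zp$-lattice inside it. To conclude I need the integral statement that $V/\mu(M)$ has bounded $p$-exponent; this would make $V$ a finitely generated $\Zp$-module, hence (being torsion-free) free of finite rank, and therefore $V_\mr{div}=0$. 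This is the main obstacle: concretely, given $v\in V$ represented by a closed form $\omega\in E\otimes W\Omega^r_{X_0}(X_0)$ with $\mr{F}\omega-\omega=\nabla\eta$, I must produce, after multiplication by a bounded power of $p$, a correction $\omega-\nabla\zeta$ which is literally $\mr{F}$-fixed, i.e., a global section of $\nu(E,r)$. My plan is to construct such a $\zeta$ by solving $(\mr{F}-1)\zeta\equiv\eta$ modulo closed $(r-1)$-forms on $E\otimes W\Omega^{r-1}_{X_0}$, using integral properties of the divided Frobenius together with Lemma \ref{lem_surjectivetorsion} to dispose of the residual torsion contribution in $\TE^r$.
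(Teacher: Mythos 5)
Your reductions are sound and run parallel to the paper's own argument: the passage to $k$ algebraically closed, the identification $N^r_\mr{conv}=(H^r/\mr{tors})_\mr{div}$ via Lemma \ref{lemmHiconv}, the saturation observation showing $V\cap (H^r/\mr{tors})_\mr{div}=V_\mr{div}$, and the appeal to Proposition \ref{prop_Bosco} to get a finite-rank lattice $\mu(M)\subseteq V$ spanning $V_\Q$ are all correct. But the proof is not complete: the statement you flag yourself as ``the main obstacle'' --- that $V/\mu(M)$ has bounded $p$-exponent, i.e.\ that every integral class fixed by $F/p^r$ comes from syntomic cohomology up to bounded denominators and torsion --- is precisely the content of the theorem. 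Without it, nothing rules out $V$ containing a copy of $\Qp$ (so $V_\mr{div}\neq 0$); a finite-dimensional $V_\Q$ together with a lattice inside it is perfectly compatible with $V$ being all of $V_\Q$. What you offer for this step is a plan (``solving $(\mr{F}-1)\zeta\equiv\eta$ modulo closed forms\dots''), not an argument, so the proposal has a genuine gap at its core.

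The ingredient you name, Lemma \ref{lem_surjectivetorsion}, is indeed the right one, and the way the paper deploys it is worth comparing with your cochain-level plan. Work with $\alpha=\varphi_r-\iota\colon H^r(\mr{dR}^{\bullet,r}_E)\to H^r(\mr{dR}^{\bullet}_E)$ and its rationalisation $\beta$, and set $N=\Ker\alpha$, $M=\Ker\beta$. Then $\alpha(M)$ lands in the torsion $\TE^r$, and Lemma \ref{lem_surjectivetorsion} says $\alpha(\TE^{r,r})=\TE^r$; so any $m\in M$ can be corrected by a \emph{torsion} class $t\in\TE^{r,r}\subseteq M[p^\infty]$ to get $m-t\in N$. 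Hence $N\twoheadrightarrow M/\mr{tors}$ surjects on the nose (no bounded power of $p$ is even needed), and since $N$ is a quotient of the finitely generated $H^r_\syn(X_0,E(r))$ (Propositions \ref{Prop_IntegralBounds} and \ref{prop_Bosco}), $M/\mr{tors}$ is a finitely generated torsion-free $\Zp$-module and $(M/\mr{tors})_\mr{div}=0$. Rewriting your step in this cohomological form --- correcting the class rather than the representing cocycle --- is what turns your plan into a proof; as written, the proposal stops exactly where the theorem begins.
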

\begin{proof}
Let $E$ be a unit-root $F$-crystal such that $E_K=\calE$. 
%
Denote by $$\alpha\colon H^{r}(\mr{dR}^{\bullet,r}_{E}) \overset{\varphi_r-\iota}{\longrightarrow} H^{r}(\mr{dR}^{\bullet}_{E})$$ 
and $$\beta\colon H^{r}(\mr{dR}^{\bullet,r}_{E}) \overset{\varphi_r-\iota}{\longrightarrow} H^{r}(\mr{dR}^{\bullet}_{E_K}).$$
Set $N\coloneqq \mr{Ker} (\alpha)$ and $M\coloneqq \mr{Ker}(\beta)$.
Since $M=\mr{Ker} (\beta)$, we have $\alpha (M) \subseteq  \TE^r$.
According to Lemma \ref{lem_surjectivetorsion}, we have $\alpha  (\TE^{r,r}) =  \TE^r$.
Since $M[p^\infty]$ contains $\TE^{r,r}$, then the inclusions $$\alpha (M[p^\infty]) \subseteq \alpha (M)\subseteq  \TE^r$$ are equalities.
This implies that the composition $N\subseteq M \twoheadrightarrow M/\mr{tors}$ is surjective because $N = \mr{Ker}(\alpha)$.
Thanks to the fact that $$N = \im (H^{r}_\syn(X_0,E(r))\to H^{r}(\mr{dR}^{\bullet,r}_{E}))$$ and 
the group $H^{r}_\syn(X_0,E(r))$ is finitely generated by Proposition \ref{prop_Bosco}, this implies that $M/\mr{tors}$ is a finite type $\Zp$-module. This gives $(M/\mr{tors})_\mr{div}=0.$ 
Observe that $$M/\tors = H^r_\mr{conv}(X_0,\calE)^{F-p^r}\cap (H^{r}(\mr{dR}^{\bullet,r}_{E})/\tors),$$ which implies that
$$H^r_\mr{conv}(X_0,\calE)^{F-p^r}\cap (H^{r}(\mr{dR}^{\bullet,r}_{E,r})/\tors)_{\rm{div}}= (M/\tors)_{\rm{div}}=0.$$ 
We conclude thanks to the canonical isomorphism $$(H^{r}(\mr{dR}^{\bullet,r}_{E})/\tors)_{\rm{div}}= N^r_\mr{conv}(X_0,\calE)$$ of Lemma \ref{lemmHiconv}.
\end{proof}
\begin{coro}\label{cor_RigidSlopeRSeparated}
    If $X_0$ is a smooth affine scheme over $X_0$, then $H^r_\mr{rig}(X_0/K)^{[r]}$ intersects trivially $$N^r_\mr{rig}(X_0/K)=(H^i_\mr{MW}(X_0/W)/\mr{tors})_\mr{div}.$$ Equivalently, $$H^r_\mr{rig}(X_0/K)^{[r]}\cap H^i_\mr{MW}(X_0/W)/\mr{tors}$$ is a free $W$-module of finite rank.
\end{coro}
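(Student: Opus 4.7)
The first identity $N^r_{\mathrm{rig}}(X_0/K) = (H^r_{\mathrm{MW}}(X_0/W)/\mathrm{tors})_{\mathrm{div}}$ is Lemma \ref{lemmHirig} applied to the trivial integral overconvergent connection $\calO^\dagger$, so the content is the trivial intersection and the equivalent reformulation.

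For the trivial intersection, I will combine three of the previously established results. Take any $x \in H^r_{\mathrm{rig}}(X_0/K)^{[r]} \cap N^r_{\mathrm{rig}}(X_0/K)$ and let $y$ denote its image under the comparison map $H^r_{\mathrm{rig}}(X_0/K) \to H^r_{\mathrm{conv}}(X_0/K)$. The cartesian square of Proposition \ref{prop_relationns}, applied to $\calO^\dagger$, forces $y \in N^r_{\mathrm{conv}}(X_0/K)$; since the comparison morphism is compatible with Frobenius, the slope decomposition is preserved and $y \in H^r_{\mathrm{conv}}(X_0/K)^{[r]}$. Theorem \ref{thm_sloperseparated} applied to the trivial unit-root isocrystal gives $H^r_{\mathrm{conv}}(X_0/K)^{[r]} \cap N^r_{\mathrm{conv}}(X_0/K) = 0$, hence $y = 0$. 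Finally, Theorem \ref{thm_InjectivitySlopei} says that the comparison map restricted to the slope-$[r]$ subspace is bijective, so from $y = 0$ we conclude $x = 0$.

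For the "equivalently" part, I would argue as follows. Let $L \coloneqq H^r_{\mathrm{rig}}(X_0/K)^{[r]} \cap H^r_{\mathrm{MW}}(X_0/W)/\mathrm{tors}$, a torsion-free $W$-submodule of $H^r_{\mathrm{rig}}(X_0/K)^{[r]}$. By what was just proved, $L_{\mathrm{div}} \subseteq L \cap (H^r_{\mathrm{MW}}/\mathrm{tors})_{\mathrm{div}} = L \cap N^r_{\mathrm{rig}}(X_0/K) = 0$. Moreover $H^r_{\mathrm{rig}}(X_0/K)^{[r]}$ is finite-dimensional over $K$: by Theorem \ref{thm_InjectivitySlopei} it identifies with $H^r_{\mathrm{conv}}(X_0/K)^{[r]}$, whose $F^s = p^r$-fixed part (for any $s$ with $\gcd(r,s)=1$ and after enlarging $k$) surjects onto it by Dieudonné--Manin, and this fixed part is the base change to $K$ of the $W$-module $M/\mathrm{tors}$ appearing in the proof of Theorem \ref{thm_sloperseparated}, which is finitely generated via Proposition \ref{prop_Bosco}. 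Thus $L$ is a torsion-free $W$-submodule of a finite-dimensional $K$-vector space with no nonzero $p$-divisible submodule, and by a standard DVR argument any such module is finitely generated, hence free of finite rank.

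The arguments above are essentially assemblies of results from earlier sections, so the only real step requiring care is the last paragraph's reduction of "no $p$-divisible submodule" to "finitely generated" for $W$-submodules of a finite-dimensional $K$-space; this is standard over a DVR but is the one point where the preceding theorems do not do all of the work for us. Everything else is direct diagram chasing in the cartesian square of Proposition \ref{prop_relationns}, combined with the slope-compatibility of the rigid-to-convergent comparison.
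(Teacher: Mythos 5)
Your proof of the trivial-intersection statement is correct and follows exactly the paper's route: the commutative square of Proposition \ref{prop_relationns} pushes the problem to convergent cohomology, Theorem \ref{thm_sloperseparated} kills the image there, and the injectivity in Theorem \ref{thm_InjectivitySlopei} pulls the vanishing back. Your extra paragraph justifying the ``equivalently'' clause (finite generation from the absence of a divisible submodule inside a finite-dimensional $K$-vector space, over the complete DVR $W$) is a correct filling-in of a detail the paper leaves implicit, not a different approach.
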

\begin{proof}
    Thanks to Theorem \ref{thm_InjectivitySlopei}, we have that $H^r_\mr{rig}(X_0/K)^{[r]}$ embeds into  $H^r_\mr{conv}(X_0/K)^{[r]}$. The result then follows from Theorem \ref{thm_sloperseparated} thanks to Proposition \ref{prop_relationns}.
\end{proof}

\begin{theo}\label{thm_DeterminationAlgebraicdR}
     Let $Y$ be a smooth proper scheme over $W$, let $D$ be a relative normal crossing divisor, and let $X\subseteq Y$ be the complement of $D$. If $X$ is affine, then
     $$H^i_\dR(X/W)/\mr{tors}\simeq K^{\oplus a}\oplus W^{\oplus b}$$ where $a$ is the dimension of $H^i_\mr{rig}(X_0/K)^{[0,i)}$ and $b$ is the dimension of $H^i_\mr{rig}(X_0/K)^{[i]}.$
\end{theo}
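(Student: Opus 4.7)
The plan is to combine the two main ingredients already established: the integral Monsky--Washnitzer versus de Rham comparison (Theorem \ref{thm_comparisonMWdR}) and the structural description of Monsky--Washnitzer cohomology modulo torsion provided by the slope-obstruction analysis (Corollary \ref{cor_ES20}). The strategy is to first transport the question to Monsky--Washnitzer cohomology, then extract the decomposition from a short exact sequence that splits for purely module-theoretic reasons.

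First I would use Theorem \ref{thm_comparisonMWdR} to identify the integral de Rham complex with the Monsky--Washnitzer one. The weak completion $R\Gamma_\dR(X/W)\xrightarrow{\sim} R\Gamma_{\mr{MW}}(X_0/W)$ is a quasi-isomorphism of complexes of $W$-modules, so in particular it induces a $W$-linear isomorphism
\[
H^i_\dR(X/W)/\mr{tors}\;\riso\;H^i_{\mr{MW}}(X_0/W)/\mr{tors}
\]
in every degree $i$. This reduces the problem to determining the right-hand side.

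Next, I would invoke Corollary \ref{cor_ES20}, which for $i\geq 1$ provides a $W$-linear injection $H^i_{\mr{rig}}(X_0/K)^{[0,i)}\hookrightarrow H^i_{\mr{MW}}(X_0/W)/\mr{tors}$ whose cokernel is a free $W$-module of rank equal to $b\coloneqq\dim_K H^i_{\mr{rig}}(X_0/K)^{[i]}$. This yields the short exact sequence of $W$-modules
\[
0\to H^i_{\mr{rig}}(X_0/K)^{[0,i)}\to H^i_{\mr{MW}}(X_0/W)/\mr{tors}\to W^{\oplus b}\to 0.
\]
Since $W^{\oplus b}$ is free (hence projective) as a $W$-module, the sequence splits. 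Writing $a\coloneqq\dim_K H^i_{\mr{rig}}(X_0/K)^{[0,i)}$, the left-hand term is isomorphic to $K^{\oplus a}$ as a $W$-module (recall that $H^i_{\mr{rig}}(X_0/K)$ is a finite-dimensional $K$-vector space because $X_0$ admits the good compactification $(Y_0,D_0)$). Combining, we obtain
\[
H^i_\dR(X/W)/\mr{tors}\;\simeq\;K^{\oplus a}\oplus W^{\oplus b}.
\]
The degree $i=0$ case is handled separately but trivially: $H^0_\dR(X/W)=W^{\oplus \#\pi_0(X)}$ is torsion-free and matches the slope decomposition of $H^0_\mr{rig}$. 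There is no genuine obstacle in this proof---essentially all the substantive work has already been done in proving Theorem \ref{thm_comparisonMWdR} (the integral de Rham-to-MW comparison with good compactifications) and Corollary \ref{cor_ES20} (itself a consequence of the separation results of Theorem \ref{thm_slopeiminus1-variantIntro} together with Beauville--Laszlo gluing and Corollary \ref{cor_RigidSlopeRSeparated}); the remaining step is the formal splitting of a short exact sequence with projective quotient.
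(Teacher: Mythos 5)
Your proof is correct and follows the same route as the paper, which deduces the theorem from Theorem \ref{thm_comparisonMWdR}, Corollary \ref{cor_ES20} and Corollary \ref{cor_RigidSlopeRSeparated}. One small precision: the body's Corollary \ref{cor_ES20} only identifies the \emph{rationalised} cokernel with $H^i_\mr{rig}(X_0/K)^{[i]}$, and the fact that the cokernel is actually a finite free $W$-module (so that the sequence splits) is precisely the separation statement of Corollary \ref{cor_RigidSlopeRSeparated}, which you do list among the inputs.
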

\begin{proof}
    This follows from Corollary \ref{cor_ES20} and Corollary \ref{cor_RigidSlopeRSeparated} thanks to the comparison with integral Monsky--Washnitzer cohomology of Theorem \ref{thm_comparisonMWdR}.
\end{proof}

\section{On the injectivity for arithmetic $D$-modules}
\label{Sec_InjectivityArithmeticDmodules}
\subsection{Arithmetic $D$-modules and a counterexample}\label{Sec_FailureExtInjectivity}
The purpose of the subsection is to give geometric counterexamples of the noncommutative version of the following proposition.
\begin{prop}\label{prop-ffExt}
Let $A \to B$ be a left faithfully flat morphism of coherent rings. 
Let $M$ be a coherent left $A$-module and $N$ be a left $A$-module. 
Suppose either $A$ and $B$ are commutative or $N$ is flat. Then the canonical map 
	$$\operatorname{Ext}^i_A(M, N) \hookrightarrow \operatorname{Ext}^i_B(B\otimes_A M, B \otimes_A N)$$ is injective for every $i\geq 0$.
\end{prop}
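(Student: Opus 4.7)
The plan is to compute both Ext groups from a single projective resolution of $M$ and to compare the resulting Hom-complexes. Since $A$ is coherent and $M$ is coherent, $M$ admits a resolution $P_\bullet\to M$ by finitely generated projective left $A$-modules, and flatness of $B$ over $A$ makes $B\otimes_AP_\bullet\to B\otimes_AM$ a resolution by finitely generated projective left $B$-modules. Hence
$$\Ext^i_A(M,N)=H^i\bigl(\Hom_A(P_\bullet,N)\bigr),\qquad \Ext^i_B(B\otimes_AM,B\otimes_AN)=H^i\bigl(\Hom_B(B\otimes_AP_\bullet,B\otimes_AN)\bigr),$$
and the canonical comparison is induced on $H^i$ by $\phi\mapsto 1_B\otimes\phi$ at the level of complexes.

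In the commutative case I would first establish the base-change identity $B\otimes_A\Hom_A(P,N)\riso\Hom_B(B\otimes_AP,B\otimes_AN)$ for $P$ finitely generated projective, by reduction to $P=A^n$. Applied termwise to $P_\bullet$ and combined with flatness of $B$ to commute $B\otimes_A-$ past cohomology, this yields the factorisation
$$\Ext^i_A(M,N)\longrightarrow B\otimes_A\Ext^i_A(M,N)\riso\Ext^i_B(B\otimes_AM,B\otimes_AN),$$
in which the first arrow is injective by faithful flatness: for any $A$-module $X$ the composition $X\to B\otimes_AX\to B\otimes_AB\otimes_AX$ is split by the multiplication $B\otimes_AB\to B$, so the kernel of $X\to B\otimes_AX$ becomes zero after tensoring with $B$ and hence vanishes.

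For the noncommutative case with $N$ flat, I plan to reduce to $N=A$ using Lazard's theorem, writing $N=\varinjlim F_j$ with $F_j\cong A^{r_j}$. Since $M$ is coherent and each $P_i$ is finitely generated projective, both $\Ext^i_A(M,-)$ and $\Ext^i_B(B\otimes_AM,-)$ commute with filtered colimits and with finite direct sums in the second variable, reducing the statement to $N=A$. In that case $\Hom_A(P,A)$ carries a natural right $A$-module structure and one has the base-change isomorphism
$$\Hom_A(P,A)\otimes_AB\riso\Hom_B(B\otimes_AP,B),\qquad \phi\otimes b\mapsto\bigl(x\otimes p\mapsto x\phi(p)b\bigr),$$
again by reduction to $P=A^n$; running this termwise along $P_\bullet$ and commuting cohomology past $-\otimes_AB$ identifies the comparison with $\Ext^i_A(M,A)\to\Ext^i_A(M,A)\otimes_AB$, which is injective by faithful flatness. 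The main obstacle — and the reason the two cases must be handled separately — is that in the absence of commutativity $\Hom_A(M,N)$ carries no natural $A$-module structure, so the commutative base-change trick has no direct analogue; in the flat-$N$ case the Lazard reduction instead pushes the base change onto the right $A$-module $\Hom_A(-,A)$, exploiting that $A$ itself is an $A$-bimodule, and one must in addition keep careful track of left- versus right-module structures to ensure that the faithful flatness of $A\to B$ applies on the relevant side at each step.
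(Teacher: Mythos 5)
Your proof is correct and follows essentially the same route as the paper's: both use a finite projective resolution of the coherent module $M$ to reduce the flat-$N$ case to $N=A$, where the comparison map becomes $\Ext^i_A(M,A)\to\Ext^i_A(M,A)\otimes_AB$ and injectivity follows from faithful flatness. The only real difference is cosmetic — you perform the reduction via Lazard's theorem and commutation of $\Ext^i_A(M,-)$ with filtered colimits, whereas the paper invokes the base-change identity $\Ext^i_A(M,N)\cong\Ext^i_A(M,A)\otimes_AN$ directly and then tensors the injection $\Ext^i_A(M,A)\hookrightarrow\Ext^i_A(M,B)$ with the flat module $N$ — and your explicit attention to the left/right module structures (on which side the faithful flatness of $A\to B$ is needed) is a point the paper glosses over.
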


\begin{proof}
The commutative case is well-known. Consider the case when $N$ is flat. By taking a coherent projective resolution of $M$, we deduce that the flat ($A$-bilinear) morphism $A\to B$ induces maps
$$\Ext^i_A(M, A)\to \Ext^i_A(M, B)\riso \Ext^i_A(M, A)\otimes_A B,$$ where the second one is an isomorphism. In addition, since $A\to B$ is fully faithful, we deduce that the first map is injective. Thanks to the fact that $N$ is a flat $A$-module and $M$ is coherent, then the map
$$\Ext^i_A(M, N) = \Ext^i_A(M, A)\otimes_A N \to \Ext^i_A(M, B)\otimes_A N  =\operatorname{Ext}^i_B(B\otimes_A M, B \otimes_A N)$$ is injective, as we wanted.
\end{proof}

\begin{empt}
    
Let $\cV$ be a local ring in $\mr{CDVR}^\star(\Zp)$ and let $\fY$ be a smooth $\cV$-formal scheme. We choose a divisor $T_0\subseteq Y_0$, we write $\fX$ for the open of $\fY$ complementary to $T_0$, and we denote by $j\colon \fX \hookrightarrow \fY$ the open immersion. Berthelot proved that the ring homomorphism
$\cD^\dag_{\fY} (\hdag T_0)_{\bbQ} \to j_*\cD^\dag_{\fX, \bbQ}$ is faithfully flat. 
But, in order to give a counterexample of Proposition \ref{prop-ffExt}, we need a version of such 
faithfully flatness for the global sections in the ample case (see Theorem \ref{Gamma5.3.3-Huyghe}). For completeness, we also add the case where $\fY$ is affine (see Lemma \ref{4.3.10Be1-Gamma}). 
\end{empt}


\spa 
The following theorem is a slight complement of \cite[Thm. 5.3.3]{Huyghe98}.
\begin{theo}[Huyghe]\label{Gamma5.3.3-Huyghe}
Let $\fY$ be a projective and smooth $\cV$-formal scheme, $\fT \subseteq \fY$ be a relative ample divisor over $\fS$. Let $\fX \coloneqq \fY \setminus \fT$ and $\cA$ be either $\widehat{\cB}^{(m)}_{\fY} (T_0)_{\bbQ} $, $\cO _{\fY} (\hdag T_0)_{\bbQ}$, 
or $ \cD^\dag_{\fY} (\hdag T_0)_{\bbQ}$ and write $A$ for $\Gamma (\fY, \cA)$.
\begin{enumerate}[(1)]
\item The ring $A$ is coherent in the last case and Noetherian otherwise.  The functors\footnote{By abuse of notation, we also denote by $A$ the constant presheaf on $\fY$ associated to $A$.}  $\Gamma(\mathfrak{Y}, -)$ and $ \cA\otimes_{A}-$
	induce quasi-inverse  exact equivalences
	$$\left\{\text{Coherent }\cA\text{-modules}\right\} \simeq \left\{\text{Coherent }A\text{-modules}\right\}.$$
	Moreover, any coherent $\cA$-module $\cM$  admits a resolution by free $\cA$-modules of finite rank
	and $H^n (\fY, \cM)=0$ for every $n \geq 1$.  
    \spa
\item The extension $A\to \cA$ is flat. In addition, when $\cA= 	\cO _{\fY} (\hdag T_0)_{\bbQ}$ or $\cD^\dag_{\fY} (\hdag T_0)_{\bbQ}$,
the ring homomorphism  $A \to \Gamma (\fX ,\cA)$ is faithfully flat.
\end{enumerate}
\end{theo}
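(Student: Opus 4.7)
My plan is to reduce Part (1) to \cite[Thm. 5.3.3]{Huyghe98}, which already establishes the coherence (Noetherianity in the non-$\cD^\dag$ cases) of $A$, the quasi-inverse equivalence of categories between coherent $\cA$-modules and coherent $A$-modules, and the vanishing $H^n(\fY, \cM) = 0$ for $n \geq 1$ and coherent $\cM$ (Theorem B). The only complement to add is the existence of resolutions by finite free $\cA$-modules: given a coherent $\cA$-module $\cM$, finite generation of $M = \Gamma(\fY, \cM)$ (as coherent $A$-module) gives a surjection $\cA^{n_0} \twoheadrightarrow \cM$ via the quasi-inverse; the kernel is again coherent since $\cA$ is a coherent sheaf of rings, and we iterate.

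For the flatness assertion in Part (2), I would prove that $\Tor^A_1(\cA, M) = 0$ for coherent $M$ and then extend to all $A$-modules via filtered colimits. Given a short exact sequence $0 \to K \to F \to M \to 0$ with $F$ finite free and $K$ coherent (using coherence of $A$), applying $\cA \otimes_A -$ gives a sequence of coherent $\cA$-modules, which is exact by exactness of the quasi-inverse equivalence (an immediate consequence of $H^1(\fY, -) = 0$ on coherent $\cA$-modules). This yields $\Tor^A_1(\cA, M) = 0$ for coherent $M$, and the general case follows by writing any $A$-module $N$ as a filtered colimit of coherent submodules and using that $\Tor$ commutes with filtered colimits.

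The main step, and the heart of the theorem, is faithful flatness of $A \to B \coloneqq \Gamma(\fX, \cA|_\fX)$ for $\cA \in \{\cO_\fY(\hdag T_0)_\bbQ, \cD^\dag_\fY(\hdag T_0)_\bbQ\}$. Since $T_0$ is ample, the complementary open $\fX$ is affine, so Berthelot's Theorem A applies on $\fX$ to coherent $\cA|_\fX$-modules (which are coherent $\cO_{\fX, \bbQ}$- or $\cD^\dag_{\fX, \bbQ}$-modules). For a nonzero coherent $A$-module $M$ with corresponding coherent $\cA$-module $\cM \coloneqq \cA \otimes_A M$, a diagram chase using a finite presentation of $M$ and exactness of $\Gamma(\fX, -)$ on coherent $\cA|_\fX$-modules identifies $B \otimes_A M$ with $\Gamma(\fX, \cM|_\fX)$. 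By Theorem A on the affine $\fX$, vanishing of the latter would force $\cM|_\fX = 0$. I would then invoke the structural property that any coherent $\cA$-module whose restriction to $\fX$ vanishes must itself vanish: locally near a point of $T_0$ with $T_0 = V(f)$, the ring $\cA$ inverts $f$ overconvergently, while a coherent module supported on $T_0$ must have $f$ acting topologically nilpotently on local sections, forcing it to be zero. Therefore $\cM \neq 0$ implies $B \otimes_A M \neq 0$, and reducing from general $A$-modules to coherent ones via nonzero cyclic submodules completes the proof.

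The main obstacle I anticipate is the structural property in the last step: while intuitive from the nature of overconvergent localization, it requires a careful local analysis of Berthelot's rings $\cO_\fY(\hdag T_0)$ and $\cD^\dag_\fY(\hdag T_0)$. One natural strategy is to first verify it for each level $\widehat{\cB}^{(m)}_\fY(T_0)_\bbQ$ and then pass to the filtered colimit, using that a coherent module over $\cO_\fY(\hdag T_0)_\bbQ$ descends to a coherent module over some $\widehat{\cB}^{(m)}_\fY(T_0)_\bbQ$.
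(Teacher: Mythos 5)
Your overall strategy matches the paper's: Part (1) is assembled from Huyghe's results, flatness of $A\to\cA$ is checked on coherent modules and extended by filtered colimits, and faithful flatness of $A\to B\coloneqq\Gamma(\fX,\cA)$ is reduced, via Theorem A on the affine open $\fX$ and the identification $B\otimes_AM\simeq\Gamma(\fX,(\cA\otimes_AM)|_\fX)$, to the statement that a coherent $\cA$-module vanishing on $\fX$ vanishes. Two points need attention. First, your ``structural property'' is exactly Berthelot's theorem that $\cA\to j_*(\cA|_{\fX})$ is faithfully flat together with its consequence \cite[Thm.~4.3.10, Prop.~4.3.12]{Be1}; this is a substantive known result that should be cited rather than re-derived, and your sketched local argument (``$f$ acts topologically nilpotently on a module supported on $T_0$'') is too vague to stand on its own, although your proposed reduction to the finite levels $\widehat{\cB}^{(m)}_{\fY}(T_0)_{\bbQ}$ is indeed the right mechanism. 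Second, you never actually establish flatness of $A\to B$, which is part of the assertion and is also silently used in your reduction from arbitrary $A$-modules to coherent ones; it does follow from the ingredients you already have (exactness of $M\mapsto\Gamma(\fX,(\cA\otimes_AM)|_\fX)$ on coherent modules, by flatness of $A\to\cA$ and Theorems A and B on the affine $\fX$, then filtered colimits), but this step must be written out, as the paper does. A smaller imprecision: attributing all of Part (1) to \cite[Thm.~5.3.3]{Huyghe98} is loose, since that theorem concerns $\cD^\dag_{\fY}(\hdag T_0)_{\bbQ}$ only; for $\cO_{\fY}(\hdag T_0)_{\bbQ}$ one has to pass to the colimit over the levels $\widehat{\cB}^{(m)}$ (which a priori only yields coherence of $A$, with Noetherianity then deduced from the faithfully flat map to the Noetherian ring $\Gamma(\fX,\cO_{\fX,\bbQ})$).
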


\begin{proof}
We denote by straight letters the global sections of our sheaves.

\spa
{(a)} Let us treat the case  when $\cA= \widehat{\cB}^{(m)}_{\fY} ( T_0)_{\bbQ} $.
The ring $\Gamma (\fY, \widehat{\cB}^{(m)}_{\fY} ( T_0) )$ is Noetherian, as explained in \cite{Huyghe98}, just before Prop. 2.3.2. Since the functor $\Gamma (\fY, -)$  commutes to $-\otimes \bbQ$, then 
$A$ is Noetherian as well. Thanks to [\textit{ibid.}, Prop. 2.3.1.(i)] (resp. [\textit{ibid.}, Prop. 2.3.1.(ii)]) and the beginning of the proof of [\textit{ibid.}, Cor. 4.2.2], we can prove that a coherent $\cA $-module $\cM$ admits a resolution by free $\cA $-modules of finite rank 
(resp. $H^n (\fY, \cM)=0$ for any integer $n \geq 1$). 
Moreover, thanks to [\textit{ibid.}, Prop. 2.3.1] and the five lemma, as in 
\cite[Prop. 5.2.1 and Cor. 5.2.3]{Noot-Huyghe-affinite-proj}, we can check that the functors 
$\cA  \otimes_{A }-$
and $\Gamma (\fY, -)$ induce exact equivalences between the category of 
coherent $\cA $-modules and that of  $A $-modules of finite type. 
Since $A$ is Noetherian, this implies that $\cA  \otimes_{A }-$ is exact on the category of $A$-modules and     then $A \to \cA$ is flat.

\spa

(b) Let us treat now the case  when $\cA= \cO _{\fY} (\hdag T_0)_{\bbQ}$.
\begin{itemize}
    \item[{(i)}]We claim that for any integers $m ' \geq m \geq 0$, the extension $\widehat{B}^{(m)}_{\fY} ( T_0)_{\bbQ} \to \widehat{B}^{(m')}_{\fY} ( T_0)_{\bbQ}$
is flat. Set $\cC \coloneqq \widehat{\cB}^{(m)}_{\fY} ( T_0)_{\bbQ}$, $\cC '\coloneqq \widehat{\cB}^{(m')}_{\fY} ( T_0)_{\bbQ}$, 
$C\coloneqq \Gamma (\fY, \cC)$ and $C'\coloneqq \Gamma (\fY, \cC')$.
Let $I$ be an ideal of $C$. From (a), we get the injection $\cC \otimes_{C} I \subseteq \cC$.
Since $\cC \to \cC'$ is flat (see \cite[Prop. 4.3.2]{Be1}), then 
$\cC' \otimes_{C} I \subseteq \cC'$. 
Hence, $\Gamma (\fY, \cC' \otimes_{C} I)  \subseteq \Gamma (\fY, \cC')=C'$.
Since  $\cC' \otimes_{C} I \riso \cC' \otimes_{C'}  (C' \otimes_C I)$ and
since $C' \otimes_C I$ is a coherent $C'$-module, then using (a), 
$\Gamma (\fY, \cC' \otimes_{C} I) \riso C' \otimes_C I$. Hence, we are done.

\spa
\item[{(ii)}]As $\Gamma (\fY, -)$ commutes with filtered colimits, then $O _{\fY} (\hdag T_0)_{\bbQ}
\riso \varinjlim_m \widehat{B}^{(m)}_{\fY} ( T_0)_{\bbQ}$. 
Using (i) this implies that $O _{\fY} (\hdag T_0)_{\bbQ}$ is coherent. 

\spa
\item[{(iii)}]Since $\widehat{B}^{(m)}_{\fY} ( T_0)_{\bbQ} \to \widehat{\cB}^{(m)}_{\fY} ( T_0)_{\bbQ}$ is flat (see Step (a)),
then by taking the inductive limits so is 
$O _{\fY} (\hdag T_0)_{\bbQ} \to \cO _{\fY} (\hdag T_0)_{\bbQ}$. 
Since $\widehat{\cB}^{(m)}_{\fY} ( T_0)_{\bbQ}\to \cO _{\fY} (\hdag T_0)_{\bbQ}$ is flat, then 
using \cite[Prop. 3.6.2]{Be1}, we get from (a) that 
any coherent $\cO _{\fY} (\hdag T_0)_{\bbQ}$-module $\cM$ admits a resolution by free $\cO _{\fY} (\hdag T_0)_{\bbQ}$-modules of finite rank
(in particular $\cM$ is globally of finite presentation) and that  $H^n (\fY, \cM ) =0$.
Using five lemma, this yields that the functors $\cO _{\fY} (\hdag T_0)_{\bbQ}\otimes_{O _{\fY} (\hdag T_0)_{\bbQ}} -$
and $\Gamma (\fY, -)$ induce exact quasi-inverse equivalence between the category
of coherent $\cO _{\fY} (\hdag T_0)_{\bbQ}$-module and
that of coherent $O _{\fY} (\hdag T_0)_{\bbQ}$-modules.

\spa
\item[{(iv)}]Set $\cB \coloneqq \cA |_{\fX}= \cO_{\fX,\bbQ}$,  $B\coloneqq\Gamma (\fX ,\cA)$. Let us check that $A\to B$ is flat. 
Let $I$ be a finitely generated ideal of $A$. We have to check that the morphism of $B$-modules of finite type
$B \otimes_{A} I\to B$ is injective. 
Since $A\to \cA$ is flat, then $\cA \otimes_{A} I \to \cA$ is injective. 
Hence, so is $\Gamma (\fX, \cA \otimes_{A} I)\to \Gamma (\fX, \cA)= B$.
Since $\fX$ is affine, then the functors 
$\cB \otimes_{B}-$ and $\Gamma (\fX, -)$ induce exact equivalences between the category of 
coherent $\cB$-modules and that of coherent $B$-modules. 
Combining the fact that the functor $ \Gamma (\fX, -)$ commutes with filtered colimits and any $B$-module is a filtered colimits
of $B$-module of finite presentation, then for any $B$-module $N$, 
the canonical morphism $N\to \Gamma (\fX, \cB  \otimes_{B} N) $ is an isomorphism.
This yields the last isomorphism $$\Gamma (\fX, \cA \otimes_{A} I) \riso \Gamma (\fX, \cB \otimes_{A} I) 
\riso \Gamma (\fX, \cB  \otimes_{B} (B  \otimes_{A} I)) \riso  B \otimes_{A} I$$ and we are done. 

\spa

\item[{(v)}]Let us now check that $A \to B$ 
is faithfully flat. Let $M$ be a monogenous $A$-module of finite presentation such that 
$B \otimes_{A} M=0$. 
Following  \cite[Lem. 3.3.5]{Be1}, thanks to (iv), we reduce to prove that  $M=0$. Hence, we get a coherent $\cA$-module by setting
$\cM \coloneqq \cA \otimes_{A} M$.
We have $\cM |_{\fX} \riso   \cB \otimes_{B} (B\otimes_{A} M)$. 
Since $\fX$ is affine, by using the theorem of type $A$ for coherent $\cO _{\fX,\bbQ}$-modules, we get
$\Gamma (\fX, \cM) \riso B\otimes_{A} M=0$
and therefore $\cM |_{\fX}=0$. Since $\cO _{\fY} (\hdag T_0)_{\bbQ} \to j_* \cO _{\fX,\bbQ}$ is faithfully flat (see \cite[Thm. 4.3.10]{Be1}),
then similarly to \cite[Prop. 4.3.12]{Be1}, this yields that $\cM=0$. 
From (iii), we get $M=0$. 
\spa
\item[{(vi)}] Since $A\to B$ is faithfully flat and $B=O_{\fX,\bbQ}$ is Noetherian, then so is $A=O _{\fY} (\hdag T_0)_{\bbQ}$.
\end{itemize}

\spa
(c) Let us prove the case when $\cA=   \cD^\dag_{\fY} (\hdag T_0)_{\bbQ}$.
Set $\cB \coloneqq \cA |_{\fX}$,  $B\coloneqq\Gamma (\fX ,\cA)$. 
Part (1) is \cite[Thm. 5.3.3]{Huyghe98}.

\begin{itemize}

\item[{(i)}] Let us prove that $A \to \cA$ is flat. If $\fa$ is a left ideal of $A$, then it is the filtered colimit of left $A$-modules of finite presentation $(M_i)_{i\in I}$ with $I$ a filtered set.
Let $K_i \coloneqq \Ker (M_i \to A)$ for every $i\in I$. 
Thanks to the exactness of filtered colimits, we get the exact sequence
$$0\to \varinjlim_i K_i \to  \varinjlim_i M_i \to  A.$$ By definition, $\varinjlim_i M_i \to  A$ is injective, which implies the vanishing of $\varinjlim_i K_i$. Set $\cK_i \coloneqq \cA \otimes_A K_i$ and $\cM_i \coloneqq \cA \otimes_A M_i$.
Since $A$ is coherent, each $K_i$ is a coherent $A$-module.
Moreover, in light of the exactness of $\cA \otimes_A -$ for coherent $A$-modules, then 
we get the exact sequence $$0\to \cK_i \to\cM_i \to  \cA.$$
Again, since filtered colimits are exact, this yields the exact sequence
$$0\to \varinjlim_i \cK_i \to  \varinjlim_i \cM_i \to  \cA.$$
In addition, thanks to the isomorphisms $\varinjlim_i \cK_i= \varinjlim_i ( \cA \otimes_A K_i )\riso  \cA \otimes_A  \varinjlim_i K_i  =0$ and $\varinjlim_i \cM_i\riso  \cA \otimes_A  \varinjlim_i M_i  =
\cA \otimes_A \fa$, we deduce the desired result.
\spa

\item[{(ii)}] Using (i), we can check that $A \to B$ is flat as in (b.iv).
Using the faithful flatness of $\cA \to j_* \cB$ (see \cite[Thm. 4.3.10]{Be1}), we can then prove that $A \to B$ is faithfully flat as in (b.v).
\end{itemize}
\end{proof}

\begin{rema}Denote by straight letters the global sections of our sheaves.
With Notation \ref{Gamma5.3.3-Huyghe},  the ring extension 
$\Gamma (\fY, \cO _{\fY} (\hdag T_0)_{\bbQ}\otimes_{\cO_{\fY,\bbQ}} \cD _{\fY,\bbQ})
\to \Gamma (\fX, \cD _{\fX,\bbQ})$ is faithfully flat. 
Indeed, since $\cO _{\fY} (\hdag T_0)_{\bbQ}\otimes_{\cO_{\fY,\bbQ}} \cD _{\fY,n,\bbQ}$ is a coherent
$\cO _{\fY} (\hdag T_0)_{\bbQ}$-module, then the canonical morphism
$$O_{\fX,\bbQ} \otimes _{O _{\fY} (\hdag T_0)_{\bbQ}} \Gamma (\fY, \cO _{\fY} (\hdag T_0)_{\bbQ}\otimes_{\cO_{\fY,\bbQ}} \cD _{\fY,n,\bbQ})
\to  \Gamma (\fX, \cD _{\fX,n,\bbQ})$$
is an isomorphism.  Taking the colimit on $n$, we are done.
\end{rema}

\begin{empt}\label{4.3.10Be1-Gammapre}
Let $\fY$ be a smooth $\cV$-formal scheme, $T_0$ be a divisor of $Y_0$.
Theorems   of type $A$ and $B$ for right or left coherent $\cD^\dag_{\fY} (\hdag T_0)_{\bbQ}$-modules hold (e.g., see \cite[8.7.5.5]{Car25}). In particular, 
for any affine open $\fU' \subseteq \fU$, the morphisms
$\Gamma (\fU, \cD^\dag_{\fY} (\hdag T_0)_{\bbQ}) \to \Gamma (\fU', \cD^\dag_{\fY} (\hdag T_0)_{\bbQ})$
and
$\Gamma (\fU, \cD^\dag_{\fY} (\hdag T_0)_{\bbQ})
\to \cD^\dag_{\fY} (\hdag T_0)_{\bbQ}|_{\fU} $ are flat and the functors
 $\cD^\dag_{\fY} (\hdag T_0)_{\bbQ}|_{\fU} \otimes_{\Gamma (\fU, \cD^\dag_{\fY} (\hdag T_0)_{\bbQ})}-$ 
 and $\Gamma (\fU,-)$ are quasi-inverse exact equivalences between 
 coherent $\cD^\dag_{\fY} (\hdag T_0)_{\bbQ}|_{\fU}$-modules and 
 coherent $\Gamma (\fU, \cD^\dag_{\fY} (\hdag T_0)_{\bbQ})$-modules. 
\end{empt}

\begin{lemm}
\label{4.3.10Be1-Gamma}
Let $\fY$ be an affine smooth $\cV$-formal scheme, $T_0$ be a divisor of $Y_0$, and $\fX\subseteq \fY$ the complement of $T_0$. The extension $\Gamma (\fY, \cD^\dag_{\fY} (\hdag T_0)_{\bbQ}) \to \Gamma (\fX, \cD^\dag_{\fY} (\hdag T_0)_{\bbQ})$ is faithfully flat.      
\end{lemm}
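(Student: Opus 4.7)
The plan is to mimic the arguments of steps (b.iv)--(b.v) and (c.ii) in the proof of Theorem~\ref{Gamma5.3.3-Huyghe}, transposed to the affine setting. Set $\cA \coloneqq \cD^\dag_{\fY}(\hdag T_0)_{\bbQ}$, $\cB \coloneqq \cA|_{\fX} = \cD^\dag_{\fX,\bbQ}$, $A \coloneqq \Gamma(\fY, \cA)$, and $B \coloneqq \Gamma(\fX, \cB)$. The two ingredients I would invoke are, first, the theorems of types A and B recalled in \S\ref{4.3.10Be1-Gammapre}, which on $\fY$ (and on any affine open of $\fX$) give an exact quasi-inverse equivalence between coherent $\cA$-modules and coherent $A$-modules, the flatness of $A \to \cA$, and in particular the coherence of the ring $A$; and second, the faithful flatness of the sheaf morphism $\cA \to j_*\cB$ established by Berthelot in \cite[Thm.~4.3.10]{Be1}.

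First I would establish the flatness of $A \to B$ by testing it on finitely generated left ideals $\fa \subseteq A$. The flatness of $A \to \cA$ gives the injection $\cA \otimes_A \fa \hookrightarrow \cA$, and restricting to $\fX$ (exact) yields $\cB \otimes_A \fa \hookrightarrow \cB$. Since $A$ is coherent, $\fa$ is a coherent $A$-module, so $B \otimes_A \fa$ is finitely presented over the coherent ring $B$, and the $\cB$-module $\cB \otimes_A \fa \simeq \cB \otimes_B (B \otimes_A \fa)$ is coherent. Applying theorems A and B on the affine $\fX$ (so that global sections are exact on coherent sheaves and recover the underlying $B$-module), I deduce the injection $B \otimes_A \fa \hookrightarrow B$, proving flatness.

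Next I would upgrade flatness to faithful flatness by adapting step (b.v) of Theorem~\ref{Gamma5.3.3-Huyghe}. It suffices to show that any monogenic, finitely presented left $A$-module $M$ with $B \otimes_A M = 0$ is trivial. Set $\cM \coloneqq \cA \otimes_A M$, a coherent $\cA$-module; then $\cM|_\fX \simeq \cB \otimes_B (B \otimes_A M)$ has vanishing global sections, so $\cM|_\fX = 0$ by theorem A on $\fX$. The faithful flatness of $\cA \to j_*\cB$ together with the argument of \cite[Prop.~4.3.12]{Be1} then forces $\cM = 0$, and theorem A on $\fY$ yields $M = \Gamma(\fY, \cM) = 0$.

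The conceptual content is already in place: everything follows from flatness of $A \to \cA$, the theorems A and B on affine formal schemes, and the faithful flatness of $\cA \to j_*\cB$. The main obstacle in practice is the bookkeeping of coherence hypotheses to ensure that the sheafifications that appear are coherent $\cA$- or $\cB$-modules, so that theorems A and B can indeed be applied to pass back and forth between sheaves and their global sections. Once this is organised, the proof is formally identical to the projective case, and no input beyond what the excerpt already references is needed.
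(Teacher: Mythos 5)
Your proposal is correct and follows essentially the same route as the paper: flatness of $A\to B$ via the theorems of types A and B on the affine formal schemes $\fY$ and $\fX$ (the paper simply cites the flatness of restriction between global sections over affine opens, where you unpack it with the finitely-generated-ideal test as in the projective case), and faithful flatness by reducing to a monogenic finitely presented $M$ with $B\otimes_A M=0$, sheafifying, and invoking the faithful flatness of $\cA\to j_*\cD^\dag_{\fX,\bbQ}$ together with the argument of \cite[Prop.~4.3.12]{Be1}. The only detail worth making explicit is that $\fX$ is affine because $T_0$ is a divisor in the affine $Y_0$, which is what licenses the use of theorem A on $\fX$.
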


\begin{proof}
    Set $\cA\coloneqq  \cD^\dag_{\fY} (\hdag T_0)_{\bbQ}$, $A\coloneqq\Gamma (\fY ,\cA)$,
$\cB \coloneqq \cA |_{\fX}$,  $B\coloneqq\Gamma (\fX ,\cA)$. 
Since $T_0$ is a divisor, then  $\fX$ is affine.
Hence, it follows from \S \ref{4.3.10Be1-Gammapre} that $A\to B$ is flat. 
Let $M$ be a monogenous $A$-module of finite presentation such that 
$B \otimes_{A} M=0$. 
Following  \cite[Lem. 3.3.5]{Be1}, we reduce to prove that  $M=0$. 
We get a coherent $\cA$-module by setting 
$\cM \coloneqq \cA \otimes_{A} M$.
We have $\cM |_{\fX} \riso   \cB \otimes_{B} (B\otimes_{A} M)$. 
Since $\fX$ is affine, by using the theorem of type $A$ for coherent $\cB$-modules (see \S\ref{4.3.10Be1-Gammapre}), we get
$\Gamma (\fX, \cM) \riso B\otimes_{A} M=0$
and therefore $\cM |_{\fX}=0$.
Following \cite[Prop. 4.3.12]{Be1}, this implies that $\cM=0$. 
Since $\fY$ is affine, then it follows from theorem of type $A$ of coherent $\cA$-modules (see \S\ref{4.3.10Be1-Gammapre}) that $M=0$. 

\end{proof}

\begin{prop}\label{preinjExt-Ddag}
Let $\fY$ be a projective and smooth $\cV$-formal scheme, $\fT \subseteq \fY$ be a relative to $\fS$ ample divisor
(resp. $\fY$ is an affine smooth $\cV$-formal scheme and $T_0$ is a divisor of $Y_0$). Let $\fX \coloneqq \fY \setminus T_0$.
Set $D^\dag_{\fY} (\hdag T_0)_{\bbQ}\coloneqq \Gamma (\fY, \cD^\dag_{\fY} (\hdag T_0)_{\bbQ})$,
$D^\dag_{\fX,\bbQ}\coloneqq \Gamma (\fX, \cD^\dag_{\fX,\bbQ})$.
Let $\cM,\cN $ be two coherent $\cD^\dag_{\fY} (\hdag T_0)_{\bbQ}$-modules and  
$M \coloneqq \Gamma (\fY, \cM)$, $N \coloneqq \Gamma (\fY, \cN)$  be the corresponding coherent $D^\dag_{\fY} (\hdag T_0)_{\bbQ}$-modules.
The following properties are equivalent.
\begin{enumerate}
\item The canonical morphism 
$$\Ext^r_{\cD^\dag_{\fY} (\hdag T_0)_{\bbQ}} (\cM, \cN) \to \Ext^r_{\cD^\dag_{\fX,\bbQ}} (\cM |_{\fX} , \cN |_{\fX} )$$
is injective.
\item The canonical morphism 
$$\Ext^r_{D^\dag_{\fY} (\hdag T_0)_{\bbQ}} (M, N) \to \Ext^r_{D^\dag_{\fX,\bbQ}} (D^\dag_{\fX,\bbQ} \otimes_{D^\dag_{\fY} (\hdag T_0)_{\bbQ} } M, D^\dag_{\fX,\bbQ} \otimes_{D^\dag_{\fY} (\hdag T_0)_{\bbQ} }N  )$$
is injective.
\end{enumerate}
In particular, when $\cN$ is flat, then both maps are injective.
\end{prop}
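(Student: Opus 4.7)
The plan is to translate both $\Ext$ groups into module-theoretic $\Ext$ over rings of global sections and then invoke Proposition \ref{prop-ffExt}. Set $\cA\coloneqq\cD^\dag_{\fY}(\hdag T_0)_{\bbQ}$ and $\cB\coloneqq\cA|_{\fX}=\cD^\dag_{\fX,\bbQ}$, and write $A\coloneqq\Gamma(\fY,\cA)$ and $B\coloneqq\Gamma(\fX,\cB)$. The ring extension $A\to B$ is faithfully flat by Theorem~\ref{Gamma5.3.3-Huyghe}(2) in the projective-ample case and by Lemma~\ref{4.3.10Be1-Gamma} in the affine case.

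First I would identify $\Ext^r_{\cA}(\cM,\cN)$ with $\Ext^r_A(M,N)$. By Theorem~\ref{Gamma5.3.3-Huyghe}(1) (in the projective case) or by the theorems of types A and B recalled in \S\ref{4.3.10Be1-Gammapre} (in the affine case), $\cM$ admits a resolution $\cP^\bullet\to\cM$ by free $\cA$-modules of finite rank. Since $\Gamma(\fY,-)$ is exact on coherent $\cA$-modules, this descends to a free resolution $P^\bullet\to M$ of $A$-modules with $\Gamma(\fY,\cP^n)=P^n$. Because $\Hom_{\cA}(\cA^n,\cN)=N^n=\Hom_A(A^n,N)$, the complexes $\Hom_{\cA}(\cP^\bullet,\cN)$ and $\Hom_A(P^\bullet,N)$ are identical, so their cohomologies agree. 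Restricting the same resolution to $\fX$ yields a free resolution $\cP^\bullet|_{\fX}\to\cM|_{\fX}$ by $\cB$-modules; the analogous type A/B theorems on the affine $\fX$ together with flatness of $A\to B$ identify $\Gamma(\fX,\cP^n|_{\fX})$ with $B\otimes_A P^n$ and show that $B\otimes_A P^\bullet\to B\otimes_A M$ remains a free resolution. The same $\Hom$ computation yields $\Ext^r_{\cB}(\cM|_{\fX},\cN|_{\fX})=\Ext^r_B(B\otimes_A M,B\otimes_A N)$. By construction the restriction map corresponds under these identifications to the base-change map along $A\to B$, proving the equivalence $(1)\Leftrightarrow(2)$.

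For the \emph{in particular} statement, assuming $\cN$ is flat, I would first show that $N$ is a flat $A$-module: given a coherent ideal $I\subseteq A$, flatness of $A\to\cA$ gives $\cA\otimes_A I\hookrightarrow\cA$; tensoring with the flat $\cA$-module $\cN$ preserves injectivity, and applying the exact functor $\Gamma(\fY,-)$ to the resulting inclusion of coherent $\cA$-modules and using the compatibility of the equivalence of Theorem~\ref{Gamma5.3.3-Huyghe}(1) with tensor products of coherent modules gives $N\otimes_A I\hookrightarrow N$, whence $N$ is flat. Proposition~\ref{prop-ffExt}, applied to the faithfully flat extension $A\to B$, the coherent module $M$, and the flat module $N$, then yields the injectivity of the base-change map, which by the equivalence $(1)\Leftrightarrow(2)$ established above gives both conclusions.

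The main obstacle is to track carefully the compatibility between sheaf-theoretic and module-theoretic constructions across the two globally-quasi-coherent settings (projective with ample divisor, and affine), in particular the identification $\Gamma(\fX,\cA\otimes_A N)=B\otimes_A N$ for the coherent $A$-modules appearing in the resolution, which rests on the theorems of type A and B on $\fX$ combined with the flatness of $A\to B$. Once this bookkeeping is in place, the entire statement reduces to Proposition~\ref{prop-ffExt}.
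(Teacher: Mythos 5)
Your proof is correct and follows essentially the same route as the paper's: both arguments reduce the sheaf-theoretic $\Ext$ groups to module $\Ext$ over the rings of global sections via the theorems of type A and B (Theorem \ref{Gamma5.3.3-Huyghe} in the ample case, \S\ref{4.3.10Be1-Gammapre} in the affine case), identify restriction to $\fX$ with base change along $A\to B$, and conclude with Proposition \ref{prop-ffExt}; the paper phrases the reduction through $R\Hom$ and $R\Gamma(\fY,-)$ where you spell out explicit free resolutions, which is the same computation. The only point to watch is in your verification that $N$ is flat: since the rings are noncommutative one cannot tensor the left module $\cN$ against a left ideal, so flatness must be tested against finitely generated \emph{right} ideals $I\subseteq A$ (using that $A\to\cA$ is flat on both sides), but the argument goes through unchanged with that adjustment.
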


\begin{proof}
It follows from Theorem \ref{Gamma5.3.3-Huyghe}.(1) (resp. \cite[4.3.6]{Be1}) that the functor $R\Gamma (\fY, -)$ induces the isomorphism
$$R \Hom_{\cD^\dag_{\fY} (\hdag T_0)_{\bbQ}} (\cM, \cN)
\riso R \Hom_{D^\dag_{\fY} (\hdag T_0)_{\bbQ}} (M, N).$$
Similarly, since $\fX$ is affine, using theorem of type $A$ and $B$ for coherent 
$D^\dag_{\fX,\bbQ}$-modules, we get 
$$R \Hom_{\cD^\dag_{\fX,\bbQ}} (\cM |_{\fX}, \cN |_{\fX})
\riso R \Hom_{D^\dag_{\fX,\bbQ}} (\Gamma (\fX, \cM), \Gamma (\fX, \cN)),$$
with $\Gamma (\fX, \cM) \riso D^\dag_{\fX,\bbQ}\otimes_{D^\dag_{\fY} (\hdag T_0)_{\bbQ}} M$
and $\Gamma (\fX, \cN) \riso D^\dag_{\fX,\bbQ}\otimes_{D^\dag_{\fY} (\hdag T_0)_{\bbQ}} N$.
\end{proof}

\begin{exam}\label{CounterexampleExt}
    In the noncommutative case of Proposition \ref{prop-ffExt}, the hypothesis that $N$ is flat is crucial. 
Indeed, thanks to Theorem \ref{Gamma5.3.3-Huyghe},
setting $B=D^\dag_{\fX,\bbQ}$ and $A= D^\dag_{\fY} (\hdag T_0)_{\bbQ} $,  
$A \to B$ is fully faithful. In the geometric context of Corollary \ref{countex}, for example, by taking $\cM=\cN=\calO_{\mathfrak{Y}}({}^\dagger T_0)_\Q$, 
the comparison morphism
$$\Ext^i_{\cD^\dag_{\fY} (\hdag T_0)_{\bbQ}} (\calO_{\mathfrak{Y}}({}^\dagger T_0)_\Q, \calO_{\mathfrak{Y}}({}^\dagger T_0)_\Q)= H^i_{\mr{rig}}(X_0/K)\to H^i_{\mr{conv}}(X_0/K)
	=\Ext^i_{\cD^\dag_{\fX,\bbQ}} (\calO_{\fX,\bbQ}, \calO_{\fX,\bbQ})$$ is not injective for $i=2.$ Hence, thanks to Proposition \ref{preinjExt-Ddag}, this yields the desired counterexample. 
\end{exam}

	

\subsection{A fully faithful functor}
In this section, we prove Theorem \ref{Hol-ff}, which is an extension of Kedlaya's full faithfulness theorem to overholonomic $D$-modules. As a consequence, we get a generalisation of Theorem \ref{thm_RC} with coefficients in overholonomic $D$-modules.

\begin{nota}\label{ntn-Df-h} We recall some notation of \cite[\S 19]{Car25}. A {\it data of coefficients $\mathfrak{C}$ over $\cV$} is the data for any object
$\cW$ of $\mathrm{CDVR}^\star(\cV)$, 
for any smooth formal scheme $\fX$ over $\cW$ of a strictly full subcategory of
$\underrightarrow{LD}^{\rmb}_{\bbQ,\coh} ( \widehat{\cD}_{\fX}^{(\bullet)})$,
which will be denoted by $\mathfrak{C} (\fX/\cW)$,
or simply $\mathfrak{C} (\fX)$ if there is no ambiguity with the base $\cW$.
If there is no ambiguity with $\cV$,
we simply say a {\it data of coefficients}.

\spa

Let $\cW$ be an object of $\mathrm{CDVR}^\star(\cV)$ and $\fX$ be a smooth formal scheme over $\cW$.
  Let $F\text{-}\mathrm{H} (\cD^\dag_{\fX,\bbQ})$ be the category of overholonomic $\cD^\dag_{\fX,\bbQ}$-modules endowed with a Frobenius structure. 
  We get a data of coefficients $\fA$ over $\cV$ so that $\underrvec{l}_{\bbQ}^{*} (\fA (\fX))= F\text{-}\mathrm{H} (\cD^\dag_{\fX,\bbQ})$. 
  We get  $\underrightarrow{LD}^{\rmb}_{\bbQ,F\text{-}\h} \coloneqq\Delta (\overline{\fA}_{\loc})$ the smallest data of coefficients over $\cV$ which is local and    stable under devissage and  cohomology and containing $\fA$ (see notation  \cite[19.0.2.18]{Car25}).
 The data $\underrightarrow{LD}^{\rmb}_{\bbQ,F\text{-}\h}$ is local, stable by devissages, direct summands,
local cohomological functors, pushforwards, extraordinary pullbacks, base change, tensor products, duals, cohomology.
Such objects are called of Frobenius type. 
\end{nota}

\begin{nota}\label{notaHF}
Let $\fP$ be a smooth $\cV$-formal scheme, $P_0$ be its special fibre, $Y_0,Z_0 \subseteq P_0$ be two closed subscheme,
$X_0\coloneqq Y_0 \setminus Z_0$, and $\fU\coloneqq \fP\setminus Z_0$. We denote by $\underrightarrow{LD}^{\rmb}_{\bbQ,F\text{-}\h} (Y_0, \fP, Z_0)$ the full subcategory of  $\underrightarrow{LD}^{\rmb}_{\bbQ,F\text{-}\h} (\fP)$ consisting of 
objects $\cE$ so that $\cE \to \cE (\hdag Z_0)$ is an isomorphism and $\cE$ has his support in $Y_0$  (which is equivalent to saying
the canonical morphism $R \underline{\Gamma}^\dag_{Y_0} (\cE) \to \cE $ is an isomorphism).
When $Y_0$ is empty, this is written as $\mathrm{H}_F ( \fP, Z_0)$.

\spa

We denote by $\underrightarrow{LD}^{\rmb}_{\bbQ,F\text{-}\h} (X_0, \fP/\cW)$
the full subcategory of
$\underrightarrow{LD}^{\rmb}_{\bbQ,F\text{-}\h}  (\fP)$
of objects $\cE$ such that there exists an isomorphism of the form
$\cE \riso R \underline{\Gamma}^\dag_{X_0} (\cE)$.
We have $$\underrightarrow{LD}^{\rmb}_{\bbQ,F\text{-}\h} (X_0, \fP/\cW)= \underrightarrow{LD}^{\rmb}_{\bbQ,F\text{-}\h} (Y_0, \fP, Z_0).$$ In particular, this latter category is independent on the choice of closed subschemes such that $X_0= Y_0\setminus Z_0$. It follows from \cite[19.2.1.6]{Car25} that there exists a canonical t-structure on  $\underrightarrow{LD}^{\rmb}_{\bbQ,F\text{-}\h} (X_0, \fP/\cW)$. We denote by 
$\underrightarrow{LM}_{\bbQ,F\text{-}\h} (X_0, \fP/\cW)$ or by
$\underrightarrow{LM}_{\bbQ,F\text{-}\h} (Y_0, \fP,Z_0/\cW)$ its heart (beware this is not a subcategory of 
$\underrightarrow{LM}_{\bbQ,\coh} ( \widehat{\cD}_{\fP}^{(\bullet)})$).

\spa

We set $D^{\rmb}_{F\text{-}\h} (Y_0, \fP, Z_0)\coloneqq \underrvec{l}_{\bbQ}^{*} \underrightarrow{LD}^{\rmb}_{\bbQ,F\text{-}\h} (Y_0, \fP, Z_0)$ and
$\mathrm{H}_F (Y_0, \fP, Z_0)\coloneqq \underrvec{l}_{\bbQ}^{*}  \underrightarrow{LM}_{\bbQ,F\text{-}\h} (Y_0, \fP, Z_0)$. We get a t-structure on $D^{\rmb}_{F\text{-}\h} (Y_0, \fP, Z_0)$ so that $\mathrm{H}_F (Y_0, \fP, Z_0)$ is its heart. For any integer $n$, we denote by $H^n \colon D^{\rmb}_{F\text{-}\h} (Y_0, \fP, Z_0) \to \mathrm{H}_F (Y_0, \fP, Z_0) $ the $n$th cohomological space.
Beware that the inclusion $D^{\rmb}_{F\text{-}\h} (Y_0, \fP, Z_0) \to D^{\rmb}_{\coh} ( \cD^\dag_{\fP,\bbQ})$ is fully faithful but not t-exact (except when $Z_0$ is a divisor). On the other hand, the inclusion $\iota\colon D^{\rmb}_{F\text{-}\h} (Y_0, \fP, Z_0) \to D^{\rmb}_{F\text{-}\h} (\fP, Z_0) $ is t-exact.
We have the functor 
$$R \underline{\Gamma}^{\dag}_{Y_0}\colon  D^{\rmb}_{F\text{-}\h} (\fP, Z_0) \to D^{\rmb}_{F\text{-}\h} (Y_0, \fP, Z_0).$$
For any $n \in \bbN$, we set 
$\cH^{\dag , n}_{Y_0}\coloneqq H^n \circ R \underline{\Gamma}^{\dag}_{Y_0} 
\colon  D^{\rmb}_{F\text{-}\h} (\fP, Z_0) \to \mathrm{H}_F (Y_0, \fP, Z_0)$.
The functor $R \underline{\Gamma}^{\dag}_{Y_0}$ is a right adjoint of $\iota$.
Indeed, let $\cE_{Y_0} \in D^{\rmb}_{F\text{-}\h} (Y_0,\fP, Z_0)$ and $\cE \in D^{\rmb}_{F\text{-}\h} (\fP, Z_0)$.
Since $\Hom_{ \cD^\dag_{\fP,\bbQ}}  ( \cE_{Y_0} , (\hdag Y_0) (\cE) ) =0$
(see \cite[13.1.4.1]{Car25}), then the map 
$\Hom_{ \cD^\dag_{\fP,\bbQ}}  ( \cE_{Y_0} ,\cE )  \to \Hom_{ \cD^\dag_{\fP,\bbQ}}  ( \cE_{Y_0} , R \underline{\Gamma}^{\dag}_{Y_0} \cE )$
induced by the canonical morphism
$ R \underline{\Gamma}^{\dag}_{Y_0}  \cE \to \cE$ is an isomorphism.

\spa

We denote by $$\bbD_{\fP, Z_0}\coloneqq (\hdag Z_0) \circ \bbD_{\fP}\colon D^{\rmb}_{F\text{-}\h} (Y_0, \fP, Z_0)  \to D^{\rmb}_{F\text{-}\h} (Y_0, \fP, Z_0) $$ the dual functor
which induces the exact dual functor $(\hdag Z_0) \circ \bbD_{\fP}\colon  \mathrm{H}_F (Y_0, \fP, Z_0)  \to  \mathrm{H}_F (Y_0, \fP, Z_0) $
that we simply denote by $\stackrel {*}{}$. If $\fP$ is proper, we can simply set $\mathrm{H}_F (X_0)\coloneqq \mathrm{H}_F (Y_0, \fP, Z_0)$ because this is independent on the choice of the immersion
$X_0\hookrightarrow \fP$ with $\fP$ a proper smooth $\cV$-formal scheme (see \cite[19.2.3]{Car25}).
\end{nota}

\begin{lemm}
Let $\cG \in \rmH_F (\fP, Z_0) $ and $\cG_{Y_0} \in \rmH_F ( Y_0, \fP, Z_0) $.
The canonical morphism 
\begin{equation}
\label{Hol-ff-proofpre1} 
\Hom_{ \cD^\dag_{\fP,\bbQ}} (\cG_{Y_0} , \cH^{\dag, 0}_{Y_0} (\cG) ) 
\to \Hom_{ \cD^\dag_{\fP,\bbQ}} (\cG_{Y_0} , \cG )
\end{equation}
is an isomorphism.
Denoting by $\overset{*}{}$ the dual functor $\bbD_{\fP, Z_0}$,  we have moreover the canonical isomorphism
\begin{gather} \label{Hol-ff-proofpre2}  
\Hom_{ \cD^\dag_{\fP,\bbQ}} (\cG, \cG_{Y_0}) \riso  \Hom_{ \cD^\dag_{\fP,\bbQ}} ((\cH^{\dag, 0}_{Y_0} ( \cG^*) )^*,\cG_{Y_0}  ).
\end{gather}
\end{lemm}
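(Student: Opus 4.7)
The plan is to deduce both isomorphisms from the adjunction between $\iota$ and $R\underline{\Gamma}^{\dag}_{Y_0}$ already recalled in the excerpt, together with a truncation argument exploiting the t-structure on $D^{\rmb}_{F\text{-}\h}(Y_0,\fP,Z_0)$, and finally the duality $\bbD_{\fP,Z_0}$.

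For the first isomorphism \eqref{Hol-ff-proofpre1}, I would first apply the adjunction stated in Notation \ref{notaHF} to get
$$\Hom_{\cD^\dag_{\fP,\bbQ}}(\cG_{Y_0},\cG)\riso \Hom_{D^{\rmb}_{F\text{-}\h}(Y_0,\fP,Z_0)}(\cG_{Y_0},R\underline{\Gamma}^{\dag}_{Y_0}\cG).$$
Since $\cG_{Y_0}$ lies in the heart $\rmH_F(Y_0,\fP,Z_0)$ of the t-structure, a standard argument gives $\Hom(\cG_{Y_0},\tau_{\leq -1}R\underline{\Gamma}^{\dag}_{Y_0}\cG)=0$ and $\Hom(\cG_{Y_0},\tau_{\geq 1}R\underline{\Gamma}^{\dag}_{Y_0}\cG)=0$, so the two truncation triangles collapse the right-hand side to $\Hom(\cG_{Y_0},H^{0}R\underline{\Gamma}^{\dag}_{Y_0}\cG)=\Hom(\cG_{Y_0},\cH^{\dag,0}_{Y_0}\cG)$. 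Composing these identifications yields \eqref{Hol-ff-proofpre1}.

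For the second isomorphism \eqref{Hol-ff-proofpre2}, I would apply the dual functor $\bbD_{\fP,Z_0}$, which by the paragraph before the lemma restricts to an exact anti-equivalence on the relevant heart and stabilises both $\rmH_F(\fP,Z_0)$ and $\rmH_F(Y_0,\fP,Z_0)$. So
$$\Hom(\cG,\cG_{Y_0})\riso \Hom(\cG_{Y_0}^{*},\cG^{*}),$$
and since $\cG^{*}\in\rmH_F(\fP,Z_0)$ and $\cG_{Y_0}^{*}\in\rmH_F(Y_0,\fP,Z_0)$, the first part of the lemma applied to $(\cG^{*},\cG_{Y_0}^{*})$ identifies this with $\Hom(\cG_{Y_0}^{*},\cH^{\dag,0}_{Y_0}(\cG^{*}))$. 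One last application of the anti-equivalence $\bbD_{\fP,Z_0}$ produces $\Hom((\cH^{\dag,0}_{Y_0}(\cG^{*}))^{*},\cG_{Y_0})$, which is the right-hand side of \eqref{Hol-ff-proofpre2}.

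The main subtlety I expect is checking carefully that the usual truncation-against-the-heart argument is applicable in the t-structure on $D^{\rmb}_{F\text{-}\h}(Y_0,\fP,Z_0)$ built from $\underrightarrow{LD}^{\rmb}_{\bbQ,F\text{-}\h}$, which is not the restriction of the standard t-structure on $D^{\rmb}_{\coh}(\cD^\dag_{\fP,\bbQ})$ when $Z_0$ is not a divisor. Once one reduces to working inside $D^{\rmb}_{F\text{-}\h}(Y_0,\fP,Z_0)$, where $R\underline{\Gamma}^{\dag}_{Y_0}$ lands by construction, the orthogonality $\Hom(\rmH_F(Y_0,\fP,Z_0),D^{\geq 1})=0=\Hom(\rmH_F(Y_0,\fP,Z_0),D^{\leq -1})$ is precisely the defining property of a t-structure. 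The only other point that requires checking is that the duality $\bbD_{\fP,Z_0}$ preserves $\rmH_F(\fP,Z_0)$ and $\rmH_F(Y_0,\fP,Z_0)$ and commutes (up to its sign conventions) with the cohomology functors, but this is built into the definitions recalled in Notation \ref{notaHF}.
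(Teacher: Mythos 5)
Your treatment of the second isomorphism is exactly the paper's: dualise, apply the first isomorphism to the pair $(\cG^*,\cG_{Y_0}^*)$, and dualise back. For the first isomorphism you take a genuinely different route. The paper argues directly in the abelian category $\rmH_F(\fP,Z_0)$: surjectivity because any $\cG_{Y_0}\to\cG$ factors through $\cH^{\dag,0}_{Y_0}(\cG)$ by functoriality of $\cH^{\dag,0}_{Y_0}$ combined with $\cH^{\dag,0}_{Y_0}(\cG_{Y_0})\riso\cG_{Y_0}$, and injectivity because $\cH^{\dag,0}_{Y_0}(\cG)\to\cG$ is a monomorphism of $\rmH_F(\fP,Z_0)$. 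Your adjunction-plus-truncation argument is more formal (you never need to know that $\cH^{\dag,0}_{Y_0}(\cG)\to\cG$ is a monomorphism), at the cost of having to control the cohomological amplitude of $R\underline{\Gamma}^{\dag}_{Y_0}\cG$.

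That is where the one real problem sits. The orthogonality you invoke, $\Hom(\rmH_F(Y_0,\fP,Z_0),D^{\leq -1})=0$, is not a defining property of a t-structure and is false in general: for $A,B$ in the heart one has $\Hom(A,B[1])=\Ext^1(A,B)$, and $B[1]\in D^{\leq -1}$. Only $\Hom(D^{\leq 0},D^{\geq 1})=0$ is an axiom. The step is nevertheless salvageable for the reason you half-gesture at: since $\iota$ is t-exact, its right adjoint $R\underline{\Gamma}^{\dag}_{Y_0}$ is left t-exact, so for $\cG$ in the heart $R\underline{\Gamma}^{\dag}_{Y_0}(\cG)$ lies in $D^{\geq 0}$ and $\tau_{\leq -1}R\underline{\Gamma}^{\dag}_{Y_0}(\cG)=0$; there is then nothing to prove on that side, and the remaining vanishings $\Hom(\cG_{Y_0},\tau_{\geq 1}R\underline{\Gamma}^{\dag}_{Y_0}(\cG))=0=\Hom(\cG_{Y_0},\tau_{\geq 1}R\underline{\Gamma}^{\dag}_{Y_0}(\cG)[-1])$ follow from the genuine axiom. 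With that correction, and the routine check that the composite identification is induced by the canonical map $\cH^{\dag,0}_{Y_0}(\cG)\to\cG$ (via the counit of the adjunction), your argument is complete and equivalent in strength to the paper's.
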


\begin{proof}

Since the canonical map $ \cH^{\dag ,0}_{Y_0} (\cG_{Y_0}) \to \cG_{Y_0}$ is an isomorphism, then the map 
\eqref{Hol-ff-proofpre1}  is surjective. Moreover,  \eqref{Hol-ff-proofpre1} is an injection because $\cH^{\dag, 0}_{Y_0} (\cG ) \to \cG$ is a monomorphism of 
$\rmH_F (\fP, Z_0) $. The dual functor induces the first and last canonical isomorphisms
\begin{gather}
\notag
\Hom_{ \cD^\dag_{\fP,\bbQ}} (\cG, \cG_{Y_0}) 
\riso \Hom_{ \cD^\dag_{\fP,\bbQ}} (\cG_{Y_0}^* , \cG^*)
\\ \notag  \underset {\eqref{Hol-ff-proofpre1} }\riso 
\Hom_{ \cD^\dag_{\fP,\bbQ}} (\cG_{Y_0}^* ,\cH^{\dag, 0}_{Y_0} ( \cG^*))
\riso  \Hom_{ \cD^\dag_{\fP,\bbQ}} (\cH^{\dag, 0}_{Y_0} ( \cG^*)^*,\cG_{Y_0}  ).
\end{gather}

\end{proof}

\begin{theo}\label{Hol-ff}
    If $\fU \coloneqq \fP \setminus Z_0$, the functor $|_{\fU} \colon \rmH_F ( Y_0, \fP, Z_0) \to \rmH_F ( Y_0, \fU) $ is fully faithful. 
\end{theo}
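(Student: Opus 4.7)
The plan is to reduce the statement to Kedlaya's full faithfulness theorem \cite{Ked04} by devissage into overconvergent $F$-isocrystals.

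First, fully faithfulness is equivalent to the bijectivity of the restriction map
$$\Hom_{\cD^\dag_{\fP,\bbQ}}(\cE,\cF) \to \Hom_{\cD^\dag_{\fU,\bbQ}}(\cE|_\fU,\cF|_\fU)$$
for every pair $\cE,\cF \in \rmH_F(Y_0,\fP,Z_0)$. The adjointness isomorphisms \eqref{Hol-ff-proofpre1} and \eqref{Hol-ff-proofpre2} established in the preceding lemma, combined with the compatibility of $|_\fU$ with $\cH^{\dag,0}_{Y_0}$ and with the duality $\bbD_{\fP,Z_0}$, allow us to focus on morphisms between objects that are already supported inside $Y_0$, replacing any general $\cG \in \rmH_F(\fP,Z_0)$ appearing in an adjoint position by its $Y_0$-supported part $\cH^{\dag,0}_{Y_0}(\cG)$.

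Next, by the devissability result of \cite{car06}, every object of $\rmH_F(Y_0,\fP,Z_0)$ admits a finite filtration whose associated graded pieces are of the form $f_+\cG$, where $f\colon T_0 \hookrightarrow Y_0$ is a smooth locally closed immersion and $\cG$ is an overconvergent $F$-isocrystal on $T_0$ with overconvergence along the boundary of $T_0$ together with $Z_0 \cap \overline{T_0}$. The long exact sequences of $\Ext$ arising from these filtrations, computed both in $\rmH_F(Y_0,\fP,Z_0)$ and in $\rmH_F(Y_0,\fU)$, combined with the five lemma, reduce the problem to the case in which both $\cE$ and $\cF$ are pushforwards of overconvergent $F$-isocrystals from a common smooth locally closed subvariety of $Y_0$. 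Via the equivalence of \cite{CT12} between overconvergent $F$-isocrystals on a smooth variety with prescribed overconvergent singularities and the corresponding category of $F$-able overholonomic arithmetic $D$-modules, this base case coincides with Kedlaya's full faithfulness theorem \cite{Ked04}.

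The main obstacle will be to ensure that the devissage procedure interacts well with the restriction $|_\fU$ and that the five-lemma reductions control not only $\Hom$ but also $\Ext^1$: an extension of overconvergent $F$-isocrystals whose restriction to $\fU$ splits must already split, which is again the content of Kedlaya's theorem (exactly as exploited in the proof of Theorem \ref{thm_RC}). Thus the base case has to be invoked simultaneously in cohomological degrees $0$ and $1$ along the induction on the length of the devissage.
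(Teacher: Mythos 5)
Your overall strategy (devissage into overconvergent $F$-isocrystals, then Kedlaya) is the one advertised in the introduction of the paper, but the proposal leaves a genuine gap precisely where the difficulty lies. After dévissage, the graded pieces of $\cE$ and of $\cF$ are pushforwards $f_+\cG$, $g_+\cH$ from \emph{different} locally closed strata of $Y_0$ (even after refining to a common stratification, a graded piece of $\cE$ living on an open stratum must be compared with a graded piece of $\cF$ living on a boundary stratum). The five-lemma reduction therefore requires bijectivity of $\Hom(f_+\cG, g_+\cH)\to\Hom(f_+\cG|_{\fU}, g_+\cH|_{\fU})$ and injectivity of the corresponding map on $\Ext^1$ for all such cross terms, and these are \emph{not} instances of Kedlaya's theorem: Kedlaya (via \cite[Cor. 5.7]{DE20}) only handles the case of two isocrystals on the same open stratum. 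Your claim that "this base case coincides with Kedlaya's full faithfulness theorem" is therefore an overstatement; the cross-strata $\Hom$ and $\Ext^1$ terms are exactly what needs a separate argument, and you give none.

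The paper's actual proof handles this by an induction on $\dim Y_0$ rather than on the length of a dévissage. One chooses a divisor $D_0$ so that $\cE(\hdag D_0)$ and $\cF(\hdag D_0)$ are overconvergent isocrystals (where Kedlaya applies) and uses the four-term localisation sequence $0\to \cH^{\dag,0}_{D_0}(\cE)\to\cE\to\cE(\hdag D_0)\to\cH^{\dag,1}_{D_0}(\cE)\to 0$; the boundary terms have lower-dimensional support, so the induction hypothesis applies to them. The cross terms you are missing are then controlled explicitly: $\Hom(\cG,\cH^{\dag,0}_{D_0}(\cF))$ is computed via the adjunction/duality isomorphisms \eqref{Hol-ff-proofpre1}--\eqref{Hol-ff-proofpre2} together with the induction hypothesis, and the needed injectivity of $\Ext^1(\cE,\cH^{\dag,0}_{D_0}(\cF))\to\Ext^1(\cE|_{\fU},\cH^{\dag,0}_{D_0}(\cF)|_{\fU})$ is deduced from that $\Hom$-bijectivity plus faithfulness of $|_{\fU}$ (itself coming from \cite[15.3.7.7]{Car25}), not from Kedlaya. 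To repair your argument you would have to supply an analogous treatment of the mixed-support $\Hom$ and $\Ext^1$ groups; as written, the proposal does not prove the theorem.
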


\begin{proof}

(i) Let $\cE,\cF $ be two objects of $\rmH_F ( Y_0, \fP, Z_0) $. Using \cite[15.3.7.7]{Car25}, we deduce that the canonical map $$\Hom_{ \cD^\dag_{\fP,\bbQ}} (\cE, \cF ) \to 
\Hom_{ \cD^\dag_{\fU,\bbQ}} (\cE|_{\fU} , \cF|_{\fU} )$$ is injective. To check the surjectivity, we proceed by induction on the dimension of $Y_0$ and the number of connected components of maximal degree. 
Let $\psi \colon \cE |_\fU\to \cF |_\fU$ be an element of  $\Hom_{ \cD^\dag_{\fU,\bbQ}} (\cE|_{\fU} , \cF|_{\fU} )$.
Let $D_0$ be a divisor of $P_0$ such that $X_0 \setminus D_0$ is smooth and dense in a connected component of 
$X_0$ of dimension $\dim(X_0)$ and such that both $\cE (\hdag D_0)$ and $\cF  (\hdag D_0) $ belong to 
$\mathrm{MIC}^{\dag\dag} _F ( Y_0, \fP, Z_0\cup D_0) $.
We have the exact triangle of $\rmH_F ( Y_0, \fP, Z_0) $ of the form
\begin{equation}
\label{Hol-ff-proof1}
0 \to \cH^{\dag, 0}_{D_0} (\cE) \stackrel {a_{\cE}} \to\cE  \stackrel {b_{\cE}} \to   \cE (\hdag D_0)  \stackrel {c_{\cE}}  \to  \cH^{\dag, 1}_{D_0} (\cE) \to 0
\end{equation}
and similarly for $\cE$ replaced by $\cF$.

\spa

(ii) Arguing as in \cite[Cor. 5.7]{DE20}, the functor $$|_{\fU} \colon \mathrm{MIC}^{\dag\dag} _F ( Y_0, \fP, Z_0\cup D_0)\to \mathrm{MIC}^{\dag\dag} _F ( Y_0, \fU, D_0\cap U_0)$$ is fully faithful.
Hence,  there exists a (unique) morphism 
$\theta_1 \colon \cE (\hdag D_0) \to  \cF (\hdag D_0)$ such that $\theta_1 |_\fU=  (\hdag D_0) (\psi)$.
By the induction hypothesis, there exists a (unique) morphism
$\theta_2 \colon\cH^{\dag, 1}_{D_0} (\cE) \to  \cH^{\dag, 1}_{D_0} (\cF)$ of $\rmH_F ( Y_0\cap D_0, \fP, Z_0) $
such that $ {\theta_2} |_ \fU=  \cH^{\dag, 1}_{D_0\cap U_0} (\psi)$.

\spa

Consider the diagram of $\rmH_F ( Y_0, \fP, Z_0) $
\begin{equation}
\label{Hol-ff-proof2}
\xymatrix {
{  \cE (\hdag D_0)} \ar[d]^-{\theta_1} \ar[r]^-{ c_{\cE}}
& { \cH^{\dag, 1}_{D_0} (\cE) }\ar[d]^-{\theta_2}
\\ {  \cF (\hdag D_0)}  \ar[r]^-{c_{\cF}}
& { \cH^{\dag, 1}_{D_0} (\cF) .}
}
\end{equation}

Using \cite[15.3.7.7]{Car25}, the diagram \eqref{Hol-ff-proof2} is commutative because so is on $\fU$.
Since $\mr{Ker} c_{\cE}= \mr{Im}\, b_{\cE}$ and $\mr{Ker} c_{\cF}= \mr{Im}\, b_{\cF}$, we get therefore 
a morphism $\theta \colon \mr{Im}\, b_{\cE}\to \mr{Im}\, b_{\cF}$ such that 
$\theta |_{\fU}$ is the morphism induced by $\psi$. 

\spa

(iii) Let $\cG$ be an object of $\rmH_F ( \fP, Z_0) $. 
Consider the commutative diagram:
$$ \xymatrix{ 
{\Hom_{ \cD^\dag_{\fP,\bbQ}} (\cG , \cH^{\dag, 0}_{D_0} ( \cF)) } 
\ar[r]^-{\sim}_-{\eqref{Hol-ff-proofpre2}}\ar[d]^-{}
& {\Hom_{ \cD^\dag_{\fP,\bbQ}} (\cH^{\dag, 0}_{D_0} ( \cG^*)^*,\cH^{\dag, 0}_{D_0} ( \cF)  )}\ar[d]^-{}
\\ {\Hom_{ \cD^\dag_{\fU,\bbQ}} (\cG |_{\fU}  , \cH^{\dag, 0}_{D_0\cap U_0} ( \cF |_{\fU}) ) } 
\ar[r]^-{\sim}_-{\eqref{Hol-ff-proofpre2}}
& {\Hom_{ \cD^\dag_{\fU,\bbQ}} (\cH^{\dag, 0}_{D_0\cap U_0} ( (\cG |_{\fU} )^*)^* ,\cH^{\dag, 0}_{D_0\cap U_0} ( \cF|_{\fU})   )} .}$$
By induction hypothesis, the right arrow is an isomorphism. Then so is the left one.

\spa

(iv) The map 
$$ \Ext^1_{ \cD^\dag_{\fP,\bbQ}} ( \cE , \cH^{\dag, 0}_{D_0} (\cF)) \to \Ext^1_{ \cD^\dag_{\fU,\bbQ}} ( \cE|_{\fU} , \cH^{\dag, 0}_{D_0} (\cF)  |_{\fU}) $$
is injective. 
Indeed, let $$0\to \cH^{\dag, 0}_{D_0} (\cF) \to \cG \to \cE \to 0$$ be an exact sequence of $\rmH_F ( \fP, Z_0) $. If 
$\cH^{\dag, 0}_{D_0} (\cF) |_\cU\to \cG |_\cU$ has a retract, then it follows from (iii) and the faithfulness of $\cU$ that
$ \cH^{\dag, 0}_{D_0} (\cF) \to \cG$ has also a retract and we are done.

\spa

(v) Consider the commutative diagram of $K$-vector spaces: 
\begin{equation}
\label{Hol-ff-proof3}
\xymatrix { {\Hom_{ \cD^\dag_{\fP,\bbQ}} (\cE, \cF )  } \ar[r]^-{} \ar[d]^-{}
& {\Hom_{ \cD^\dag_{\fP,\bbQ}} (\cE, \mr{Im}\, b_{\cF})  } \ar[r]^-{}\ar[d]^-{}
& { \Ext^1_{ \cD^\dag_{\fP,\bbQ}} ( \cE , \cH^{\dag, 0}_{D_0} (\cF))} \ar@{^{(}->}[d]^-{}
\\ {\Hom_{ \cD^\dag_{\fU,\bbQ}} (\cE|_{\fU} , \cF|_{\fU} )  } \ar[r]^-{}
& {\Hom_{ \cD^\dag_{\fU,\bbQ}} (\cE|_{\fU}, \mr{Im}\, b_{\cF} |_{\fU})  } \ar[r]^-{}
& { \Ext^1_{ \cD^\dag_{\fU,\bbQ}} ( \cE|_{\fU} , \cH^{\dag, 0}_{D_0} (\cF)  |_{\fU})   } } 
\end{equation}
whose rows are exact. According to (iv), the right arrow of \eqref{Hol-ff-proof3} is injective.

\spa

Let $\eta\colon \cE \to \mr{Im}\, b_{\cF}$ be the composition of $\cE \to \mr{Im}\, b_{\cE}$ with the morphism $\theta$ constructed in (ii).
Since the bottom row of \eqref{Hol-ff-proof3} is exact, since $\eta |_{\fU}$ is the composition of $\psi$ with $\cF  |_{\fU}\to \mr{Im}\, b_{\cF} |_{\fU}$, then 
the image of $\eta |_{\fU}$ in $\Ext^1_{ \cD^\dag_{\fU,\bbQ}} ( \cE|_{\fU} , \cH^{\dag, 0}_{D_0} (\cF)  |_{\fU})  $ vanishes. 
Since the right arrow of \eqref{Hol-ff-proof3}  is injective, this yields that 
the image of $\eta$ in $\Ext^1_{ \cD^\dag_{\fP,\bbQ}} ( \cE , \cH^{\dag, 0}_{D_0} (\cF))$ vanishes. 
By the exactness of the top row, we get that $\eta$ comes from an element $\phi$ of $\Hom_{ \cD^\dag_{\fP,\bbQ}} (\cE, \cF )  $.
Since the image of $\psi - \phi |_{\fU}$ in $\Hom_{ \cD^\dag_{\fU,\bbQ}} (\cE|_{\fU}, \mr{Im}\, b_{\cF} |_{\fU})  $ is null,
then it comes from an element $\beta$ of $\Hom_{ \cD^\dag_{\fU,\bbQ}} (\cE |_{\fU}  , \cH^{\dag, 0}_{D_0\cap U_0} ( \cF |_{\fU}) )   $.
It follows from (iii) that there exists $\alpha \in \Hom_{ \cD^\dag_{\fP,\bbQ}} (\cE , \cH^{\dag, 0}_{D_0} ( \cF))$
such that $\alpha |_{\fU} = \beta$. Hence, $\psi - \phi |_{\fU} = (a_{\cE}\circ \alpha) |_{\fU}$, i.e., 
$\psi = (\phi +a_{\cE}\circ \alpha) |_{\fU}$. Hence, we are done.

\end{proof}

\begin{coro}\label{cor_Hol-ff}
Let $\cE,\cF $ be two objects of $\rmH_F ( Y_0, \fP, Z_0) $. The map 
$$\Ext^1_{ \cD^\dag_{\fP,\bbQ}} ( \cE , \cF) \to \Ext^1_{ \cD^\dag_{\fU,\bbQ}} ( \cE|_{\fU} , \cF  |_{\fU})  $$
is injective. 
\end{coro}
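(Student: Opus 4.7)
The plan is to interpret $\mr{Ext}^1$ via Yoneda extensions and reduce the injectivity to the full faithfulness statement of Theorem \ref{Hol-ff}. Let $\xi \in \mr{Ext}^1_{\cD^\dag_{\fP,\bbQ}}(\cE, \cF)$ be represented by a short exact sequence
\[
0 \to \cF \to \cG \to \cE \to 0
\]
of $\cD^\dag_{\fP,\bbQ}$-modules. First I would observe that $\cG$ again lies in $\rmH_F(Y_0, \fP, Z_0)$: the datum of coefficients $\underrightarrow{LD}^{\rmb}_{\bbQ,F\text{-}\h}$ is stable under cohomology (Notation \ref{ntn-Df-h}), and the corresponding heart $\rmH_F(Y_0,\fP,Z_0)$ is therefore closed under extensions in $\cD^\dag_{\fP,\bbQ}$-modules. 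Thus the whole sequence lives inside $\rmH_F(Y_0, \fP, Z_0)$, and similarly after restricting to $\fU$ inside $\rmH_F(Y_0, \fU)$.

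Next, suppose the restricted extension $\xi|_{\fU}$ vanishes, i.e.\ admits a section $s\colon \cE|_{\fU} \to \cG|_{\fU}$ of the projection $\pi|_{\fU}\colon \cG|_{\fU} \to \cE|_{\fU}$. By Theorem \ref{Hol-ff}, the restriction functor
\[
|_{\fU}\colon \rmH_F(Y_0, \fP, Z_0) \to \rmH_F(Y_0, \fU)
\]
is fully faithful, so $s$ lifts uniquely to a morphism $\tilde s\colon \cE \to \cG$ in $\rmH_F(Y_0,\fP,Z_0)$. The composition $\pi\circ \tilde s\colon \cE\to \cE$ then satisfies $(\pi\circ \tilde s)|_{\fU} = \pi|_{\fU}\circ s = \id_{\cE|_{\fU}}$. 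Applying the faithfulness half of Theorem \ref{Hol-ff} to the two morphisms $\pi\circ \tilde s$ and $\id_{\cE}$ in $\Hom_{\cD^\dag_{\fP,\bbQ}}(\cE,\cE)$, we conclude $\pi \circ \tilde s = \id_{\cE}$. Hence $\tilde s$ is a section of $\pi$, so $\xi = 0$ in $\mr{Ext}^1_{\cD^\dag_{\fP,\bbQ}}(\cE, \cF)$.

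Since this argument works for an arbitrary Yoneda class, the restriction map on $\mr{Ext}^1$ is injective. The only genuinely nontrivial input is Theorem \ref{Hol-ff}; the remaining step is a formal Yoneda-theoretic argument, and the main conceptual point (which I would highlight in the final write-up) is that the subcategory $\rmH_F(Y_0,\fP,Z_0)$ is abelian and closed under extensions inside $\cD^\dag_{\fP,\bbQ}$-modules, so that the middle term of any extension automatically enjoys the Frobenius-type overholonomicity needed in order to invoke the full faithfulness theorem.
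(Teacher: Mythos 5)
Your argument is correct and is exactly the paper's intended proof: the paper simply says the corollary ``follows from Theorem \ref{Hol-ff} as in Theorem \ref{thm_RC}'', i.e.\ one interprets $\Ext^1$ via extensions and uses full faithfulness of $|_{\fU}$ to descend a splitting, which is precisely what you do. Your remark that the middle term of an extension stays in $\rmH_F(Y_0,\fP,Z_0)$ (by stability of the data of coefficients under devissage) correctly fills in the one point the paper leaves implicit.
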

\begin{proof}
    This follows from Theorem \ref{Hol-ff} as in Theorem \ref{thm_RC}.
\end{proof}


\bibliographystyle{ams-alpha}
\bibliographystyle{ams-alpha}

\end{document}